\documentclass[11pt]{amsart}
\usepackage{amscd,amsmath,amssymb,amsfonts}
\usepackage{color}
\usepackage[cmtip, all]{xy}
\usepackage{graphicx}
\usepackage{url}
\usepackage{enumerate}
\pagenumbering{gobble}
\usepackage{hyperref}
\usepackage{lineno}


\newtheorem{thm}[equation]{Theorem}
\newtheorem{struc}[equation]{Structure}
\newtheorem{lemma}[equation]{Lemma}

\newtheorem{prop}[equation]{Proposition}

\newtheorem{definition}[equation]{Definition}
\newtheorem{conjecture}[equation]{Conjecture}
\newtheorem{cor}[equation]{Corollary}
\newtheorem{hyp}[equation]{Hypothesis}
\newtheorem{question}[equation]{Question}
\newtheorem{defn}[equation]{Definition}
\newtheorem{conj}[equation]{Conjecture}

\newtheorem{property}[equation]{Property}
\newtheorem*{thm*}{Theorem}
\theoremstyle{remark}

\theoremstyle{remark}
\newtheorem{remark}[equation]{Remark}

\numberwithin{equation}{section}

\newcommand{\isoarrow}{{~\overset\sim\longrightarrow~}}

\newcommand{\CO}{{\mathcal{O}}}

\newcommand{\bJ}{{\mathbf{J}}}

\newcommand{\Erig}{{\mathcal{E}^{\rm rig}}}

\newcommand{\hG}{{\hat{G}}}

\newcommand{\ZZ}{{\mathbb Z}}

\newcommand{\Gm}{{\mathbb G}_m}
\newcommand{\bG}{{\mathbf G}}

\newcommand{\CG}{{\mathcal{G}}}

\newcommand{\CA}{{\mathcal{A}}}

\newcommand{\CL}{{\mathcal{L}}}
\newcommand{\CLs}{{\mathcal{L}^{ss}}}

\newcommand{\G}{{\Gamma}}

\newcommand{\ra}{{~\rightarrow~}}

\newcommand{\CH}{{\mathcal{H}}}

\newcommand{\Qlb}{{\overline{\mathbb Q}_{\ell}}}
\newcommand{\FF}{{\mathbb F}}
\newcommand{\Fl}{{\FF_\ell}}
\newcommand{\Zl}{{\ZZ_\ell}}
\newcommand{\F}{{\Phi}}
\newcommand{\br}{{\bar{\rho}}}
\newcommand{\Fp}{{{\mathbb F}_p}}

\newcommand{\ad}{{\mathbf A}}

\newcommand{\CC}{{\mathbb C}}

\newcommand{\NN}{{\mathbb N}}

\begin{document}

\author{Michael Harris}
\thanks{The author was partially supported by NSF Grants DMS-2001369 and DMS-1952667 and by a Simons Foundation Fellowship, Award number 663678.}

\address{Michael Harris\\
Department of Mathematics, Columbia University, New York, NY  10027, USA}
 \email{harris@math.columbia.edu}

\date{\today}

\title[Local Langlands correspondences]{Local Langlands correspondences}

\maketitle

\begin{abstract}  Let $G$ be a connected reductive group over the non-archimedean local field $F$, with residue field $k$ of characteristic
$p$.  Denote by $\F(G/F)$ the set of  equivalence classes of
Weil-Deligne parameters for $G/F$ and let $\Pi(G/F)$ denote the set of (equivalence classes of) irreducible admissible representations of $G(F)$, both
taken over a fixed algebraically closed field $C$ of characteristic $0$.  In its simplest form, the local Langlands conjecture asserts the existence of a canonical parametrization of one set by the other set.
\vskip .5 cm

\noindent{\bf Conjecture:}  
{\it \begin{itemize}
\item[(a)]  There is a canonical parametrization
$$\CL = \CL_{G/F}:  \Pi(G/F) \ra \F(G/F).$$
\item[(b)]  For any $\varphi \in  \F(G/F)$, the {\bf $L$-packet} $\Pi_\varphi := \CL^{\varphi}$ is finite.
\item[(c)]  For any $\varphi \in  \F(G/F)$ the $L$-packet $\Pi_\varphi$ is non-empty.
\end{itemize}}
\vskip .5 cm

The first part of this article is a review of the properties expected of any   correspondence that aims to be considered ``canonical," and of known results that establish some
or all of these properties for specific groups.   In the absence of compatibility with a global correspondence it is not known in general that this list of desirable properties
suffices to characterize the correspondence.  The remainder of the article outlines elements of a strategy to address points (b) and (c) in particular, for the specific parametrization
$\CL$ constructed by Genestier and Lafforgue when $F$ is of positive characteristic.

\end{abstract}

\tableofcontents


\section{Introduction}

Let $G$ be a connected reductive group over the non-archimedean local field $F$ and let $\pi$ be an irreducible admissible representation of $G(F)$. The local Langlands conjecture posits that to such a $\pi$ can be attached a parameter $\CL(\pi)$, which is an equivalence class of homomorphisms from the Weil-Deligne group $WD_F$ with values in the Langlands $L$-group ${}^LG$ over an appropriate algebraically closed field $C$ of characteristic $0$.  The conjecture also predicts that every parameter whose restriction to the Weil group $W_F$ is semisimple, in the sense recalled below, is in the image of the correspondences, and proposes a secondary parametrization of the representations with given parameter.  I review the precise statement of the current form of the conjecture below.

Recent years have seen tremendous progress toward a complete solution of this conjecture.  On the one hand, there are now canonical candidates for
a {\it semisimple Langlands parametrization}.  A homomorphism $\varphi: W_F \ra {}^LG(C)$ is called {\it semisimple} if, whenever the image of $\varphi$, intersected with the Langlands dual group $\hat{G}(C)$, is contained in a parabolic subgroup $P \subset \hat{G}(C)$, then it is contained in a Levi subgroup of $P$.
When $F$ is of positive characteristic $p$ and $C = \Qlb$, with $\ell \neq p$, then Genestier and Lafforgue have defined a semisimple parameter, $\CL^{ss}(\pi): W_F \ra {}^LG(C)$ which is well-defined up to $\hat{G}(C)$-conjugacy \cite{GLa}; their construction is purely local but is compatible with Lafforgue's global
parametrization using moduli stacks of $G$-shtukas.    When $F$ is a $p$-adic field, on the other hand, Fargues and Scholze have similarly
defined  $\CL^{ss}(\pi): W_F \ra {}^LG(\Qlb)$, derived from a categorical action of a certain category of sheaves on a stack of local Langlands parameters
on a different category of sheaves on a moduli stack of $G$-bundles on the Fargues-Fontaine curve \cite{FS}. 

At roughly the same time, Kaletha \cite{Ka19}  proposed an explicit parametrization of the {\it regular} and {\it nonsingular} supercuspidal representations $\pi$.  It was proved by Fintzen \cite{Fi21}, improving earlier work of J.-L Kim \cite{Kim}, that these representations -- definitions will be given below -- are all obtained by a construction due to J.-K. Yu \cite{Yu01} provided $p$ is prime to the order $|W|$ of the Weyl group of $G$.  (Yu's construction is valid when $G$ splits over a tamely ramified extension of $F$.) The parameters Kaletha attaches to regular and non-singular supercuspidals are even {\it irreducible} -- the image is contained in no proper parabolic subgroup, and indeed account for all the irreducible parameters, provided $p$ is prime to $|W|$.  If you imagine a classification of the representation theory of reductive groups $G$ over local fields on the model of the periodic table, with the columns labelled by primes heading off to infinity to the right, and the rows corresponding to  Dynkin diagrams, then Kaletha's proposal completely determines the theory of irreducible parameters, leaving only a few boxes unfilled on the left-hand side of the diagram.  

On the other hand, there has been a consensus for some time that the  local Langlands correspondence has been established for most classical groups.
The starting point is the proof of the correspondence for supercuspidal representations of $GL(n,F)$, first for local fields of positive characteristic \cite{LRS93,Laf02}, then for $p$-adic fields in several stages \cite{H97,H98,BHK,HT01,He00}.  All of these proofs are global in nature and realize the supercuspidal correspondence in the cohomology of a local version of a modular variety -- Drinfel'd modular varieties in the case of positive characteristic, and Shimura varieties in the $p$-adic case.  
In particular, the proofs imply that the correspondence has a purely local characterization.  
(The reduction of the correspondence to the supercuspidal case had previously been established by Zelevinsky \cite{Z80}.) 
This construction was revisited more recently using a different method \cite{Sch13}.  By that time Arthur had completed his construction of a local Langlands correspondence for symplectic and quasi-split special orthogonal  groups over $p$-adic fields \cite{A}, a result soon extended to unitary groups over
$p$-adic fields \cite{Mok,KMSW} and to split classical groups over local fields of (most) positive characteristic \cite{GV}.  Arthur's proof is 
based on the twisted stable trace formula, and is therefore also global in nature, but again the correspondence can be characterized purely locally.  Trace formula methods had much earlier been used to parametrize representations of inner forms of $GL(n)$ in terms of representations of $GL(n)$ \cite{R,DKV}.

The natural question, which remains unsolved in general, is whether all these parametrizations give the same answer.  
A more fruitful way to look at this question is whether the local Langlands correspondence can be characterized by properties intrinsic to the 
group $G(F)$, independently of the method of its construction.  Langlands himself proposed such a characterization only when $\pi$ is a {\it spherical}
representation, in which case the parameter $\varphi$ is unramified (this is Property \ref{p7a} below).  This is the logical starting point for the entire Langlands 
program, but it doesn't explain what to do about general $\pi$.   The only genuinely intrinsic general characterization is the one Henniart \cite{He93} established
for $GL(n)$, based on $L$ and $\varepsilon$-invariants of pairs of representations of $GL(n)$ and $GL(m)$, for $m < n$.  These invariants are defined
intrinsically and independently, and Henniart proved that there is only one collection of correspondences, as $m$ varies, for which these invariants match and that satisfy several other natural properties.  The constructions based on the twisted trace formula, starting with \cite{A}, make use of additional intrinsic invariants of representations $\pi$ -- the {\it distribution characters}, which genuinely determine representations  uniquely, but they rely on Henniart's characterization for $GL(n)$; as far as I know there is no characterization of the correspondence for classical groups that is based exclusively on intrinsic properties of the groups themselves.

The characterizations of the more recent constructions are based on completely different principles.  Because they are cohomological in nature, and specifically are realized by means of the cohomology of geometric objects that are defined purely locally, the constructions in  \cite{GLa,FS} define canonical correspondences.
Moreover, the constructions in \cite{GLa} for local fields of positive characteristic are compatible with the global Langlands parametrization defined by
Lafforgue \cite{Laf18}, and this characterizes them uniquely; no such characterization is known for the parametrizations of \cite{FS}, except in cases that can be related to Shimura varieties.  This immediately raises the following questions, which is the primary motivation for this paper:

\begin{question}\label{question1}  \begin{itemize}
\item[(a)] Are the local parametrizations of \cite{GLa} and \cite{FS} consistent with the (canonical) local Langlands correspondences for $GL(n)$ and its inner forms, listed above?

\item[(b)] Are the local parametrizations of \cite{GLa} and \cite{FS} consistent with the local correspondences for classical groups defined in \cite{A,Mok,KMSW,GV}?

\item[(c)]  Do the local parametrizations of \cite{GLa} and \cite{FS} coincide with the parametrization of regular and nonsingular supercuspidals proposed by Kaletha?
\end{itemize}
\end{question}

I use the word ``consistent" in (a) and (b) because the parameters of \cite{GLa,FS} are semisimple, and don't recover the full Weil-Deligne parameter.  The answer in (a) is affirmative.  In positive characteristic this follows from the compatibility with \cite{Laf18} and the compatibility of the latter with the known global Langlands correspondence for function fields, proved in \cite{Laf02} and reproved in \cite{Laf18}.  For $p$-adic fields this is Theorem IX.7.4 of \cite{FS}, which directly relates
the parametrization of that paper to the one defined in \cite{HT01,Sch13}.  It should be possible to verify (b) in a similar way, although no one seems to have 
worked out the details.  

The situation with (c) is quite different.   To indicate clearly
what is at stake,  let $\CL^{ss}$ denote any map from the set of irreducible admissible representations $\pi$ of the group $G(F)$ to the set of semisimple
Langlands parameters for $G$.  I draw attention to two properties that have been expected of the local Langlands correspondence since it was first formulated:

\begin{question}\label{surjectivity}  Suppose $G$ is quasi-split.
Let $\varphi:  W_F \ra {}^LG(C)$ be a semisimple Langlands parameter.  Is it in the image of $\CL^{ss}$?
\end{question}

When $G$ is not quasi-split one expects precisely those $\varphi$ that are {\it relevant}, in the sense of \cite{Borel},  it to be in the image of $\CL^{ss}$.

\begin{question}\label{packets}  Let $\varphi:  W_F \ra {}^LG(C)$ be a semisimple Langlands parameter and let $\Pi(\varphi)$ denote the set
of irreducible admissible representations $\pi$ of $G(F)$ such that $\CL^{ss}(\pi) = \varphi$.  Is $\Pi(\varphi)$ a finite set?
\end{question}

A much more precise version of Question \ref{packets} is formulated in Conjecture \ref{Vogan} below.

By standard facts about the decomposition of parabolically induced representations, and by the compatibility of \cite{GLa,FS} with parabolic induction,
it suffices to check these questions when $\varphi$ is irreducible, which should be in the image of supercuspidal $\pi$ -- although, significantly, not all
supercuspidal $\pi$ have or are expected to have irreducible semisimple parameters.
For groups for which J.-K. Yu's construction of supercuspidal representations is exhaustive -- when $G$ is tamely ramified and when $p \nmid |W|$, in other words, thanks to \cite{Fi21} -- Kaletha's parametrization gives an affirmative answer to both questions, including for exceptional groups, and also, significantly, for groups that are not quasi-split.

The cohomological constructions of \cite{GLa} and \cite{FS}, on the other hand, answers neither of these questions.  In fact, nothing in these constructions
guarantees that $\CL^{ss}(\pi)$ is even ramified when $\pi$ is a (regular) supercuspidal. \footnote{The unipotent supercuspidals, which exist for groups other than 
$GL(n)$ and its inner forms, can have unramified semisimple parameters but are expected to have ramified Weil-Deligne parameters; we address this point in 
\S \ref{pureWD}.} 
Even though only the exceptional groups are concerned, this points to a striking gap in our knowledge.  A second motivation for this paper is to suggest that the use of the definite article in the title of \cite{FS} -- {\it Geometrization of {\bf the} local Langlands correspondence} -- is somewhat premature.

A third motivation, naturally related to the second one, is to collect the known results on the local Langlands correspondence (for non-archimedean local fields)
in a single place.  I have attempted to provide a reasonably complete bibliography, but in at least one respect I will be falling far short of completeness.  To explain what is missing I need to make a digression.  In addition to the local Langlands parametrization, which is meant to characterize representations
of $G(F)$ by Galois-theoretic data, I have already alluded to two intrinsic invariants of admissible irreducible representations $\pi$ of $G(F)$.   
The first are the {\it distribution characters}, linear form $\chi_\pi$ on the space $C_c(G(F))$ of locally constant compactly supported functions on $G(F)$ that is defined for all $\pi$ and that satisfy the kinds of linear independence properties familiar from the representation theory of finite groups.  
The second are the $L$- and $\varepsilon$-factors, which are defined in terms of the invariant theory of the group and which are not available for all $\pi$ or even for all $G$ (none is known for $E_8$, for example).  The theory of {\it types} is a third procedure to define invariants of $\pi$.  
A type is a representation of a well-chosen open compact subgroup $K$ of $G(F)$ whose presence in the restriction of $\pi$ to $K$ can serve to situate $\pi$ within a well-defined class, and in the best situation can determine $\pi$ uniquely.  
The construction of J.-K. Yu on which Kaletha's parametrization is based starts with a type and obtains a unique $\pi$.  
One of the main results in the representation theory of $p$-adic groups is the construction by Bushnell and Kutzko of a complete theory of types that uniquely characterize supercuspidal representations of $GL(n)$.   This naturally leads to

\begin{question}\label{typesandLanglands}  Can the local Langlands parameter of a supercuspidal representation $\pi$ of $G(F)$ be determined from its type, and
vice versa?
\end{question}

For regular and non-singular supercuspidals a complete solution has been proposed by Kaletha \cite{Ka19,Ka20}.  This includes the case of $GL(n,F)$ when $n$ is not divisible by the residue characteristic $p$ of $F$, where Kaletha's parametrization is known to recover the local Langlands correspondence; more generally, Oi and Tokimoto
recently showed that Kaletha's parametrization coincides with the known local parametrization for $GL(n)$ for all $n$, at least for regular supercuspidals \cite{OT}.  In their monograph \cite{BH06} Bushnell and Henniart have given an independent and purely local construction of the local Langlands correspondence for $GL(2)$ based on types.  
The literature on types is extensive.  In this survey I will refer to a number of specific results in the area, but I will not pretend to be exhaustive, and I apologize to the many authors whose work I have not cited.  

A fourth motivation, which is perhaps my main excuse for writing this paper, is to report on some recent results I have obtained with collaborators 
in attempting to understand Questions \ref{surjectivity} and \ref{packets}.  Although these two questions seem to point in opposite directions, there is reason to
believe that their solutions are interdependent.  This should not come as a surprise.  The original proofs of the local Langlands conjectures
for $GL(n)$ \cite{LRS93,HT01,He00} all relied on Henniart's numerical correspondence \cite{He88}.   Using the Jacquet-Langlands correspondence of supercuspidal
representations of $GL(n,F)$ with a subset of the irreducible representations of the multiplicative group of a division algebra, Henniart was able to partition
the former set into finite subsets, where the partition is indexed by a positive integer called the {\it conductor} which is part of the information contained in
the $\varepsilon$-factors of pairs.  The set of irreducible Galois parameters can also be partitioned by  conductor, and Henniart was able to show, by an argument
involving the $\ell$-adic Fourier transform in positive characteristic, that the two sets have the same cardinality.  This numerical argument provides a direct
link between surjectivity (Question \ref{surjectivity}) and injectivity (Question \ref{packets}) in the case of $GL(n)$, and in particular implies the important

\begin{thm}\label{unramgln} A representation $\pi$ of $GL(n,F)$ such that $\CL^{ss}(\pi)$ is unramified is necessarily a subquotient of an unramified principal series representation.  
\end{thm}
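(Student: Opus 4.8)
The plan is to combine Zelevinsky's classification of irreducible representations of $GL(n,F)$ with the known shape of the semisimple parameter $\CL^{ss}$ for the general linear group. By \cite{Z80}, every irreducible admissible representation $\pi$ of $GL(n,F)$ is a subquotient of a parabolically induced representation $\mathrm{Ind}_P^{GL(n,F)}(\rho_1\otimes\cdots\otimes\rho_r)$, where $P\subset GL(n,F)$ is a parabolic subgroup with Levi $M\cong GL(n_1,F)\times\cdots\times GL(n_r,F)$, $n_1+\cdots+n_r=n$, and each $\rho_i$ is irreducible supercuspidal; moreover the multiset $\{\rho_1,\dots,\rho_r\}$ (the cuspidal support of $\pi$) is uniquely determined by $\pi$. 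Both constructions of $\CL^{ss}$ are compatible with parabolic induction (\cite{GLa,FS}), so for $GL(n)$ one obtains an isomorphism of representations of $W_F$
\[
\CL^{ss}(\pi)\;\cong\;\bigoplus_{i=1}^{r}\CL^{ss}(\rho_i),
\]
each summand being the parameter of a supercuspidal representation, hence already semisimple.

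The key step is then to record that $\CL^{ss}(\rho_i)$ is an \emph{irreducible} $n_i$-dimensional representation of $W_F$. This uses the affirmative answer to Question \ref{question1}(a): for $GL(m)$ the parametrization $\CL^{ss}$ agrees with the semisimplification of the classical local Langlands correspondence --- in positive characteristic through the compatibility with Lafforgue's global parametrization \cite{Laf18} and hence with \cite{Laf02}, and in the $p$-adic case by Theorem IX.7.4 of \cite{FS} --- and under that correspondence the supercuspidal representations of $GL(m,F)$ correspond precisely to the irreducible $m$-dimensional Weil representations (with trivial monodromy operator). Granting this, the proof concludes at once: since $\CL^{ss}(\pi)$ is unramified, i.e.\ trivial on the inertia subgroup $I_F$, every summand $\CL^{ss}(\rho_i)$ is trivial on $I_F$; but an irreducible representation of $W_F$ trivial on $I_F$ factors through $W_F/I_F\cong\ZZ$ and is therefore one-dimensional, forcing $n_i=1$ for all $i$. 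Hence $P$ is a Borel subgroup, $M$ is the diagonal torus, and each $\rho_i$ is a character of $F^\times$ which is unramified because $\CL^{ss}(\rho_i)$ is (the case $GL(1)$ of the correspondence being local class field theory). Thus $\pi$ is a subquotient of the unramified principal series $\mathrm{Ind}_B^{GL(n,F)}(\rho_1\otimes\cdots\otimes\rho_n)$, as claimed.

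The genuine content of the statement lies in its input rather than in this deduction: it is the assertion that, for each $GL(m)$, $\CL^{ss}$ carries supercuspidals to irreducible --- in particular, for $m\geq 2$, ramified --- Weil parameters. The deduction above simply reads this off. If one instead wants an argument internal to \cite{GLa,FS} that does not presuppose the classical correspondence for $GL(m)$, the obstacle becomes showing that $\CL^{ss}$ is injective on supercuspidals of $GL(m,F)$ of a fixed conductor (and central character) and matching its count with Henniart's count of irreducible $m$-dimensional Weil representations of the same invariants \cite{He88} --- precisely the numerical correspondence, resting ultimately on Henniart's $\ell$-adic Fourier-transform computation, that underlies the original proofs of the local Langlands correspondence for $GL(n)$ in \cite{LRS93,HT01,He00}. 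That numerical input is the step I expect to be the main obstacle to any treatment that does not take the classical $GL(n)$ correspondence as given.
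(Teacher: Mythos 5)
Your formal deduction is correct. Zelevinsky's classification, Property \ref{p7} (compatibility of $\CL^{ss}$ with parabolic induction), and the elementary fact that an irreducible representation of $W_F$ trivial on $I_F$ factors through $W_F/I_F \cong \ZZ$ and hence is one-dimensional together reduce the theorem to a single assertion: that $\CL^{ss}$ sends a supercuspidal of $GL(m,F)$ to an irreducible $m$-dimensional representation of $W_F$, which for $m \geq 2$ is necessarily ramified. You also correctly identify where this assertion comes from and why invoking it makes the argument circular: it is part of the classical $GL(n)$ correspondence, and Theorem \ref{unramgln} is precisely the key step historically used to establish that correspondence (as the paper explains, citing \cite{BHK}). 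Note too that Property \ref{discrete} deliberately asserts only that no proper parabolic contains the image of the full Weil--Deligne parameter of a discrete series representation, and explicitly \emph{not} the analogous statement for the semisimple parameter $\CL^{ss}(\pi)$; irreducibility of $\rho$ with $N=0$ for $GL(n)$ supercuspidals is part of the theorem, not a formal property.

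The paper itself does not prove Theorem \ref{unramgln}; it treats it as an external input with two distinct proofs: Henniart's numerical correspondence \cite{He88}, which compares conductor counts via the $\ell$-adic Fourier transform in positive characteristic (and via close local fields in the $p$-adic case), and Scholze's geometric proof \cite{Sch13}, which analyzes the action of inertia on nearby cycles and, crucially, is independent of Henniart's count. You name the first route as the obstacle to a self-contained argument, which is right as far as it goes; but you should note the second as well, since Scholze's argument is exactly the kind of proof internal to a cohomological construction (in the spirit of \cite{GLa,FS}) that your final paragraph asks for, and the paper presents it as the first proof to bypass the numerical correspondence entirely. There is no gap in the logic you wrote down; the caveat you flag is the whole story, and it is worth being blunt that the deduction, as written, is a consistency check rather than a proof, precisely because it presupposes the $GL(n)$ correspondence whose proof Theorem \ref{unramgln} underwrites.
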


Scholze's main innovation in \cite{Sch13} was a geometric proof of Theorem \ref{unramgln}.  Combined with the observation, implicit in \cite{He88}, and explained in more detail in \cite{BHK}, that this is the key step in the proof of the local correspondence, this provided the first proof that was independent of Henniart's numerical correspondence.   

Theorem \ref{unramgln} is the key step in the proof for $GL(n)$ because it provides a way to reconstruct all supercuspidal representations by means of the theory 
of local base change.   This is explained in more detail in \S \ref{pureWD}, where it is also explained why the analogue of Theorem \ref{unramgln} is simply false for groups 
other than $GL(n)$ and its inner forms.   Nevertheless, as of this writing, I am convinced that local base change will provide a substitute for Theorem \ref{unramgln} 
that will be central in solving Question \ref{packets}.   The reasons for this, and possible approaches to Question \ref{surjectivity}, will form the subject of Part  \ref{Strategy}.

My final motivation is naturally to express my deep appreciation of Steve Kudla for our many years of friendship and collaboration.  It was in the course of my
work with Steve on the theta correspondence that I first began thinking seriously about the Langlands correspondence for $p$-adic groups.  My paper with Steve
and Jay Sweet contained the first original contributions to the question for which I was partially responsible.  Although this paper has nothing to say about the theta
correspondence, my approach to representation theory continues to reflect Steve's influence, which goes well beyond what can be seen in our published work.

\subsection*{Acknowledgments}  I thank Rapha\"el Beuzart-Plessis, Matt Emerton, Jessica Fintzen, Guy Henniart, Jean-Pierre Labesse, Bertrand Lemaire, Luis Lomel\'i, Colette Moeglin,  and Marie-France Vign\'eras for helpful discussions and comments on various earlier versions of this material.  I especially thank Jean-Loup Waldspurger for  his comments on an earlier version of \S \ref{incorrigi} and Tasho Kaletha for his careful reading of the entire manuscript.  Finally, I am grateful to the referee(s) for correcting certain misconceptions and for suggestions that helped to clarify the exposition.

\subsection*{A word about categorical correspondences}  There is a sense in which the local Langlands correspondence presented here is an exercise in nostalgia -- an attempt to answer a 20th century question.  Mathematicians in the current century are properly interested in correspondences between (stable $\infty$-) categories of sheaves on two kinds of moduli spaces, or stacks:  one which parametrizes families, broadly interpreted, of representations of groups like $G(F)$, the other which parametrizes families of Langlands parameters with values in the $L$-group of $G$ over $F$.  This is the perspective of the article of Fargues and Scholze, who define an action of  category of the latter type (the {\it spectral category}) on one of the former type (the {\it automorphic category}), whose decategorification yields a map $\CL^{ss}$ that, they prove, enjoys some of the desired properties (see Theorem \ref{paramFS} for details).  The articles \cite{DHKM} and \cite{Zhu} address deep issues connected with the spectral category, at different degrees of magnification, especially when the coefficient field $C$ is replaced by a ring in mixed characteristic $(0,\ell)$, with $\ell \neq p$.   

The stack perspective provides one kind of explanation of the appearance of representations of component  groups of centralizers of Langlands parameters in determining the structure of (Vogan) $L$-packets.  All I can (or should) say here is that action of the whole centralizer group, and not just its group of connected components, brings to light the existence of
objects (coherent sheaves) on the moduli stack of Langlands parameters whose role in the classical (i.e. 20th century) local Langlands correspondence is not yet clear.  See however \cite[Remark I.10.3]{FS} for the return of a version of the classical local correspondence in the categorical context.

\subsection{Notation and conventions}  Let $F$ be a non-archimedean local field with integer ring $\CO_F$ and residue field
$k_F$ of characteristic $p$.  We follow Tate's conventions for the Weil group and Weil-Deligne group \cite{T79}.   Thus $W_F$ is the Weil group of $F$, given with a short exact sequence
$$1 \ra I_F \ra W_F \ra Frob_F^\ZZ \ra 1,$$
where $I_F$ is the inertia group and $Frob_F$ is the geometric Frobenius.  If $q = |k_F|$ 
there is a homomorphism
$$||\bullet||:  W_F \ra q^\ZZ$$
that is trivial on $I_F$ and sends any lift to $W_F$ of $Frob_F$ to $q^{-1}$.    The Weil-Deligne group is the group scheme defined 
as the semidirect product $\mathbb{G}_a \rtimes W_F$
where $wxw^{-1} = ||w|| x$ for $w \in W_F$.    A {\it Weil-Deligne representation} with coefficients in the field $E$ is a pair $(\rho,N)$ where
$\rho:  W_F \ra GL(V)$ is a representation with coefficients in $V$ and $N$ is a nilpotent operator on $V$ satisfying 
$\rho(w)N\rho(w)^{-1} = ||w||N$.  

We let $\G_F = Gal(F^{sep}/F)$ and let $W_F \ra \G_F$ denote the natural inclusion.

\part{The local Langlands conjectures}

\section{Tempered local parameters}\label{tempered}

Let $F$ and $G$ be as in the introduction.  We follow Vogan's approach in \cite{V93} to formulating the local Langlands conjecture, though we
mainly use Kaletha's conventions in \cite{Ka16}; at the end we will make
some comments about Kaletha's extension of Vogan's conjectures.  Let $C$ be an algebraically closed field of characteristic $0$, with a chosen square root $q^{1/2}$ of $q$, 
and let $\hG$ be the Langlands dual group of $G$ over $C$, $\hG(C)$ its group of $C$-valued points.   Let ${}^LG = \hG \rtimes W_F$ be the $L$-group of $G$
in the Weil group form.

The starting point for the local Langlands conjectures is the set $\F(G/F)$ of equivalence classes of admissible Langlands parameters ($L$-parameters, for short):
\begin{equation}\label{param}
\varphi:  WD_F \ra {}^LG(C).
\end{equation}
The homomorphism $\varphi$ has to respect the homomorphisms of both sides to $W_F$, to take $\mathbb{G}_a$ to a unipotent subgroup of $\hat{G}(C)$ normalized by the image $\varphi(W_F)$, and to take any lifting $w$ of $Frob_F$ to a semisimple pair $(s,w) \in \hG(C) \rtimes W_F$.
Two $\varphi$ are equivalent if they are conjugate by $\hG(C)$.   

\begin{conjecture}\label{LLC}  Let $\Pi(G/F)$ denote the set of irreducible admissible representations of $G(F)$ with coefficients in $C$.  

\begin{itemize}
\item[(a)]  There is a canonical parametrization
$$\CL = \CL_{G/F}:  \Pi(G/F) \ra \F(G/F).$$
\item[(b)]  For any $\varphi \in  \F(G/F)$, the {\bf $L$-packet} $\Pi_\varphi := \CL^{\varphi}$ is finite.
\item[(c)]  For any $\varphi \in  \F(G/F)$ the $L$-packet $\Pi_\varphi$ is non-empty.
\end{itemize}
\end{conjecture}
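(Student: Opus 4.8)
The plan is to establish (a)--(c) for the explicit correspondence furnished by the Genestier--Lafforgue construction \cite{GLa}, so I take $F$ of positive characteristic and $C=\Qlb$. The construction of \cite{GLa} directly produces only the semisimple parameter $\CL^{ss}(\pi)$; to obtain the full Weil--Deligne parameter demanded by (a) one must additionally recover the monodromy operator, which I would do by a purity / weight--monodromy argument, after which canonicity is forced by the compatibility of \cite{GLa} with Lafforgue's global parametrization \cite{Laf18}. Using the compatibility of $\CL^{ss}$ with parabolic induction and with Jacquet restriction, together with the standard description of the constituents of a parabolically induced representation, assertions (b) and (c) reduce to the case of an \emph{irreducible} parameter $\varphi$ --- one whose image lies in no proper parabolic of $\hG$ --- whose $L$-packet $\Pi_\varphi$ then consists of supercuspidal representations; since not every supercuspidal has an irreducible parameter, it is exactly this subclass that must be handled.

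For (c), non-emptiness, I would globalize. Choose a global function field $K$ and a place $v$ with $K_v\cong F$, and construct a $\hG$-local system on an open subcurve of the smooth projective curve with function field $K$ whose restriction to a decomposition group at $v$ is $\varphi$, which is unramified outside $v$ and one auxiliary place, and which is ramified enough at the auxiliary place to force cuspidality. The automorphic-to-Galois direction of Lafforgue's correspondence is a theorem; what is invoked is the description of its essential image --- that such a global parameter is the parameter of a cuspidal automorphic representation of $G$ over $K$, whose component at $v$ then lies in $\Pi_\varphi$ by the compatibility of \cite{GLa} with \cite{Laf18}. The substantive points are the construction of a global parameter with the prescribed local behaviour and the surjectivity onto this part of the space of cuspidal parameters; I would approach these by a deformation or Poincar\'e-series argument, or, for classical $G$, by appeal to the already-established global correspondence.

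For (b), finiteness, the template is the $GL(n)$ proof: Henniart's numerical correspondence partitions both the supercuspidals and the irreducible parameters by conductor into finite fibres of equal cardinality, the crucial input being Theorem \ref{unramgln}. As Theorem \ref{unramgln} has no analogue for general $G$, I would replace it by \emph{local base change}: after a finite separable extension $F'/F$, first tamely and then wildly ramified, the restriction $\varphi|_{W_{F'}}$ becomes reducible, and eventually a sum of characters, a range in which the fibres of $\CL^{ss}$ can be controlled; one then descends finiteness along $F'/F$, using the compatibility of $\CL^{ss}$ with local base change and the finiteness of the sets of twists and of the Galois-cohomological data mediating the descent. Combined with the reduction above, this bounds $|\Pi_\varphi|$ for every $\varphi$.

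The main obstacle is precisely this last step --- proving $\CL^{ss}$ compatible with local base change and making the descent effective. One needs a robust substitute for Theorem \ref{unramgln}: a guarantee that a ramified irreducible $\varphi$ does not acquire a member $\pi$ whose parameter trivialises on some $W_{F'}$ for reasons invisible to the construction. At present nothing in the cohomological definition of $\CL^{ss}$ in \cite{GLa} (or in \cite{FS}) even forces $\CL^{ss}(\pi)$ to be ramified when $\pi$ is a regular supercuspidal, so this must be resolved rather than assumed. I expect it will be, by realizing local base change geometrically on the moduli stacks of $G$-shtukas and transporting it through the cohomological definition of $\CL^{ss}$, rather than by any purely local argument with types.
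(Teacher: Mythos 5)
The statement you were asked to prove is Conjecture \ref{LLC}, and it is labelled a conjecture in the paper for good reason: the paper contains no proof of it, and the conjecture remains open in general. The paper surveys cases where it is known (Theorem \ref{LLCGLN} for $GL(n)$, Theorem \ref{arthurthm} for classical groups, Theorem \ref{HKTthm} for generic supercuspidals of $G_2$), describes the partial results of \cite{GLa} and \cite{FS} (which give only the semisimple parameter and establish neither (b) nor (c)), and then devotes Part \ref{Strategy} to a \emph{strategy}, conditional on a long list of explicitly labelled hypotheses (\ref{multone}, \ref{cyclic}, \ref{incorr}, \ref{Levi}, \ref{endo1}, \ref{endo2}, \ref{finmono}, \ref{oneplace}, \ref{componentR}), none of which is established in the paper. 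What you have written is likewise a strategy sketch, not a proof, and you should say so explicitly rather than present it as a proof proposal.

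That said, your sketch lines up remarkably well with the route the paper itself advocates, and you diagnose the gaps accurately. Your proposed globalization for (c) --- choosing a $\hG$-valued global parameter on a curve with prescribed local behaviour and deducing automorphy --- is exactly Step 1 of Part \ref{Strategy}, carried out there via the potential automorphy machinery of \cite{BHKT} rather than a bare deformation argument; the paper also warns (Hypothesis \ref{finmono}, Hypothesis \ref{oneplace}) that controlling ramification and the decomposition group is not straightforward. Your substitution of local base change for Theorem \ref{unramgln} in attacking (b) is precisely the paper's notion of incorrigibility (Definition \ref{incorrigible} and Conjecture \ref{noincorr_conj}), and your worry that nothing in \cite{GLa} or \cite{FS} forces $\CL^{ss}(\pi)$ to be ramified for a supercuspidal $\pi$ is the paper's own worry, addressed only partially and under hypotheses by Theorem \ref{pureram}. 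Your idea of upgrading $\CL^{ss}$ to a full Weil--Deligne parameter by purity of the monodromy weight filtration is Theorem \ref{MWF}, again proved only for discrete series and under a restriction on $p$.

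The genuine gaps, which you partly acknowledge but do not resolve, are the following. First, Structure \ref{BC} (cyclic base change over function fields) is not available; the paper points out that even the simple trace formula needed for Labesse's method (Hypothesis \ref{labesse}) has not been established in this setting. Second, the descent of finiteness along $F'/F$ in your step (b) requires knowing that the fibre of $\CL^{ss}$ over a \emph{ramified} irreducible parameter maps injectively-with-finite-fibres under base change, which needs precisely the character identities (Property \ref{p9}, \ref{p10}) and the linear-independence argument the paper sketches in Step 3 of Part \ref{Strategy}; none of this is proved. Third, your reduction of (b) and (c) to the case of irreducible $\varphi$ via compatibility with parabolic induction is correct as far as it goes, but it silently uses the structure of $L$-packets for proper Levis and endoscopic groups (Hypotheses \ref{Levi}, \ref{endo1}), which is an inductive assumption, not a fact. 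In short: your outline is the right shape, but it is a plan for a proof, not a proof, and it would be misleading to present it as a finished argument for Conjecture \ref{LLC}.
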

Parts (b) and (c) have been
stated separately because there are now canonical  {\it semisimple} parametrizations of representations of $G(F)$ by Weil group parameters $\varphi$ as in (a), defined
by means of algebraic geometry.  It is easy to see that property (b) holds for full parametrizations if and only if it holds for their semisimple parts; the  parametrizations defined by means of algebraic geometry are not known to satisfy this property.  Similarly, it is not known whether every semisimple parameter is reached by these parameters.

In what follows we assume $\varphi$ to be {\it tempered}.  To define these we associate to $\varphi$ a pair $(\rho,N)$, where $\rho:  W_F \ra \hat{G}$
and $N \in Lie(\hG)$ generates the tangent space to the image of $\varphi(\mathbb{G}_a)$.  To such a pair we can in turn associate a homomorphism
\begin{equation}\label{SL2} \varphi_{SL(2)}: W_F \times SL(2,C) \ra {}^LG(C)
\end{equation} 
in such a way that
$$\rho(w) = \varphi_{SL(2)}(w \times \begin{pmatrix} ||w||^{1/2} & 0 \\ 0 & ||w||^{-1/2}\end{pmatrix}),$$
which is meaningful because of our choice of $q^{1/2}$.  Without loss of generality we may assume $C = \CC$, and let $SU(2) \subset SL(2,\CC)$
be any compact real form.   Then $\varphi$ is tempered (resp. essentially tempered) if the image of the restriction of $\varphi_{SL(2)}$ to  $W_F \times SU(2)$ is bounded (resp. is bounded up to twist by a continuous homomorphism to the center of $\hG(\CC)$) after projection on the factor $\hat{G}(\CC)$.   

Let $\Pi^{\rm temp}(G/F) \subset \Pi(G/F)$ be the
subset of tempered representations.
We let $\F^{\rm temp}(G/F) \subset \F(G/F)$ denote the
subset of tempered Langlands parameters.  For any $\varphi \in \F^{\rm temp}(G/F)$,
$S_\varphi$ denote the centralizer  in $\hG(C)$ of the image of $\varphi$.   Let $G^*$ be the quasi-split inner form of $G$ and
let $Ad(G^*) = G^*/Z(G^*)$, where $Z(G^*)$ is the center of $G^*$.    Thus there
is an isomorphism $\xi:  G^* \ra G$ over $F^{sep}$,  , which gives rise to a $1$-cocycle $z_{ad}(\sigma) = \xi^{-1}\sigma(\xi) \in Z^1(\G,Ad(G^*))$. 
Following Kaletha (for $p$-adic fields) and Dillery (for local fields of
positive characteristic), we introduce the Galois gerbe $\Erig$ with the following property:  there is a canonical cohomology set $H^1_{\rm bas}(\Erig,G^*)$
and a surjective map
\begin{equation}\label{kot} H^1_{\rm bas}(\Erig,G^*) \ra H^1(\G,Ad(G^*)), \end{equation}
and we choose a lift of $z_{ad}$ to a $1$-cocycle $z \in Z^1_{\rm bas}(\Erig,G^*)$.    

Since $G$ is an inner form of $G^*$, we may identify ${}^LG = {}^LG^*$; the choice of identification depends on the choice of isomorphism $\xi$.
Let $S_\varphi^+$ and $Z([\hat{\bar{G}}]^+)$ denote the inverse images of $S_\varphi$ and $Z(\hG)^{\Gamma}$, respectively,
in the universal cover of $\hG$.
Kaletha's extension of
Vogan's formulation of the local Langlands correspondance is simplest to state for tempered representations:

\begin{conjecture}\label{Vogan}
Let $G^*$ be a quasi-split connected reductive group over $F$.
Then
\begin{itemize}

\item[(a)]  For each tempered Langlands parameter $\varphi \in \F^{\rm temp}(G^*/F)$, there is a set $\Pi_\varphi$ of isomorphism classes of quadruples $(G,\xi,z,\pi')$, where $(G,\xi,z)$ is a rigid inner twist of $G^*$ (the rigidification is supplied by the class $z$)
and $\pi'$ is a tempered irreducible representation of $G(F)$, and a commutative diagram

$$\begin{CD} 
\Pi_\varphi @>\iota_{W}>>   \operatorname{Irr}(\pi_{0}\left(S_{\varphi}^+\right))\\
@VVV   @VVV  \\ 
H^1_{\rm bas}(\Erig,G^*)    @>\pi_{0}>>  \left(Z([\hat{\bar{G}}]^+)\right)^{*}.
\end{CD}$$
Here the top map is a bijection, the bottom arrow is generalized
Tate-Nakayama duality, the right-hand arrow sends an irreducible representation of the centralizer to the character
of its center, and the left-hand arrow takes $(G,\xi,z,\pi')$ to the cohomology class of $z$.  
\item[(b)]  For each fixed triple $(G,\xi,z)$ and each $\varphi \in \F^{\rm temp}(G/F) = \F^{\rm temp}(G^*/F)$, the fiber in $\Pi_\varphi$ over the cohomology
class of $z$ (which determines the inner twist $G$) is exactly 
the fiber $\CL^{-1}(\varphi)$, where $\CL:  \Pi(G/F) \ra \F(G/F)$ is the parametrization of Conjecture \ref{LLC}.  In particular, the $L$-packet $\Pi_\varphi \subset
\Pi(G/F)$ attached to a tempered Langlands parameter is contained in 
$\Pi^{\rm temp}(G/F)$.
\item[(c)]  The top arrow $\iota_W$ is a bijection that takes the quadruple $(G^*,id,1,\pi^W)$ to the trivial representation, where $\pi^W$ is the unique
member of $\Pi_\varphi$ with a Whittaker model relative to a fixed preliminary choice of Whittaker datum $W$.  
  This provides a base point for $\Pi_\varphi$.
  \item[(d)]  In particular, the $L$-packet $\Pi_\varphi \subset \coprod_{G} \Pi(G/F)$, where $G$ runs over inner twists of $G^*$, is not empty and finite.  


\end{itemize}
\end{conjecture}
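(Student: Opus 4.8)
The plan is to use the Genestier--Lafforgue semisimple parametrization $\CL^{ss}$ (taking $F$ of positive characteristic and $C=\Qlb$) as the backbone and to promote it to the full quadruple-valued correspondence of Conjecture \ref{Vogan}. Two reductions come first. By the compatibility of $\CL^{ss}$ with parabolic induction and the behavior of Jacquet modules, one reduces the construction and all four assertions to the case where $\varphi$ is \emph{discrete} --- not factoring through any proper ${}^LM \subset {}^LG$ --- the general tempered case being assembled from discrete parameters of Levi subgroups by the usual formalism (tempered representations as constituents of unitary parabolic inductions of discrete series). A second reduction, internal to the discrete case, separates the genuinely supercuspidal parameters, which should be exactly the \emph{irreducible} ones, from the non-supercuspidal discrete parameters, which carry a nontrivial monodromy operator $N$ and which one expects to reconstruct, together with their packets and with $N$, from the supercuspidal packets of Levi subgroups by a Zelevinsky/Aubert-type combinatorics. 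Thus the core of the matter is: (i) every irreducible relevant tempered $\varphi$ is $\CL^{ss}(\pi)$ for some supercuspidal $\pi$ on some rigid inner twist of $G^*$ --- this is Question \ref{surjectivity} --- (ii) the fiber $\CL^{-1}(\varphi)$ is finite --- Question \ref{packets} --- and (iii) the internal parametrization $\iota_W$ onto $\operatorname{Irr}(\pi_0(S_\varphi^+))$ and its compatibility with $H^1_{\rm bas}(\Erig,G^*)$.

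For (i) and (ii) the $GL(n)$ template runs through Theorem \ref{unramgln}, which is false for general $G$; the substitute I would use is \emph{local cyclic base change}. Fix a tower of cyclic extensions $E/F$. The point is that for $E$ large enough the base change of a supercuspidal $\pi$ of $G(F)$ should acquire a constituent whose semisimple parameter is visibly constrained --- becoming, after restriction to $W_E$, a sum of characters of a torus, or an induction from a proper Levi --- so that one is returned to a situation accessible by descent. Concretely I would: (a) establish local base change for $\CL^{ss}$, which is automatic from the functoriality of the Genestier--Lafforgue construction under $W_E \hookrightarrow W_F$; (b) globalize $\pi$ as a local component of a cuspidal automorphic representation of a suitable group over a global function field, using Lafforgue's parametrization \cite{Laf18} together with Poincar\'e-series or simple-trace-formula techniques to control the remaining places; (c) run the positive-characteristic counting argument --- the analogue of Henniart's $\ell$-adic Fourier-transform comparison of the numbers of supercuspidals and of irreducible parameters with bounded conductor --- and use Galois descent along $E/F$ to convert the matching of cardinalities into surjectivity onto the irreducible parameters and finiteness of each fiber.

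With nonempty finite fibers in hand, step (iii) is the construction of the rigid Vogan packet in Kaletha's style: fix the gerbe $\Erig$ and a lift $z$ of $z_{ad}$, use generalized Tate--Nakayama duality to identify $H^1_{\rm bas}(\Erig,G^*)$ with the character group $(Z([\hat{\bar{G}}]^+))^*$, and normalize the internal bijection by a Whittaker datum $W$ so that $(G^*,\mathrm{id},1,\pi^W)$ maps to $\mathbf 1$. Bijectivity of $\iota_W$ and commutativity of the square would then be forced by the endoscopic character identities --- available for symplectic, orthogonal and unitary groups through \cite{A,Mok,KMSW,GV} and transported to $\CL^{ss}$ via an affirmative answer to Question \ref{question1}(b) --- and, for the exceptional and non-quasi-split cases, by direct comparison with Kaletha's explicit construction \cite{Ka19} in the range $p \nmid |W|$ where Yu's construction is exhaustive, leaving open only the ``boxes on the left of the periodic table'' where $p$ is small relative to $|W|$.

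The main obstacle is step (i)--(ii): producing the substitute for Theorem \ref{unramgln}. The difficulty is precisely that, in contrast to $GL(n)$, a supercuspidal $\pi$ may have a ramified --- indeed irreducible --- parameter while nothing in the cohomological construction of $\CL^{ss}(\pi)$ exhibits any ramification at all, so the base-change mechanism must simultaneously detect that $\CL^{ss}(\pi)$ is ramified \emph{and} bound its preimages. The delicate points are controlling base change for supercuspidals lying outside the Yu--Kaletha range, and ensuring that the descent step does not destroy the finiteness gained over $E$; this is where the argument genuinely goes beyond the current literature, and it is the content of Part \ref{Strategy}.
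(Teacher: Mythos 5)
The statement you are attempting to prove is Conjecture \ref{Vogan}; the paper does not prove it, so there is no ``paper proof'' to compare against --- the closest analogue is the sketch of a strategy in Part \ref{Strategy}, and that is what I compare with below.

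Your overall architecture --- reduce to discrete parameters, isolate the supercuspidal case, use cyclic base change as a stand-in for Theorem \ref{unramgln}, globalize over a function field, descend --- is broadly aligned with the paper's plan, and your Step (iii) (rigid inner forms, Tate--Nakayama, Whittaker normalization, endoscopic character identities) is exactly the Kaletha machinery the paper invokes. But there are two points where your route diverges from the paper's and one where it is simply wrong. First, you globalize the \emph{representation} $\pi$ by Poincar\'e series and then propose to ``run the positive-characteristic counting argument'' \`a la Henniart to turn a cardinality match into surjectivity and finiteness. The paper deliberately abandons the numerical correspondence: its Step~1 globalizes the \emph{parameter} $\varphi$ (not $\pi$) to a Galois representation $\rho:\Gamma_K\to\hat G(\ZZ_\ell)$ with Zariski-dense image, invokes the potential automorphy theorem of \cite{BHKT} to realize $\rho$ (after a solvable extension) as Lafforgue-automorphic and everywhere unramified, and then uses multiplicity one (Hypothesis \ref{multone}) plus Corollary \ref{unramprinc} to pin down a \emph{unique} unramified representation at the top of the tower. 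The descent down the solvable tower is then done step-by-step by linear independence of characters and Arthur-style local intertwining relations, not by counting. There is no known analogue of Henniart's $\ell$-adic Fourier-transform count for general $G$ (conductors of pairs, the Jacquet--Langlands transfer to a division algebra, and the parametrization of $n$-dimensional Weil representations by characters of degree-$n$ extensions are all special to $GL(n)$), so your Step (c) is a genuine gap rather than a variant.

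Second, and more seriously, your dichotomy ``irreducible parameter $\Leftrightarrow$ supercuspidal representation, discrete-but-reducible parameter with $N\ne0$ $\Leftrightarrow$ non-supercuspidal discrete series built from Levi supercuspidals by Zelevinsky combinatorics'' is the $GL(n)$ picture and fails for general $G$. The paper emphasizes this exact point: unipotent supercuspidals of groups other than inner forms of $GL(n)$ can have \emph{unramified semisimple} parameters but nontrivial $N$ in the Weil--Deligne completion (see \S\ref{pureWD} and the footnote in the Introduction), and more generally ``not all supercuspidal $\pi$ have or are expected to have irreducible semisimple parameters.'' So your first reduction is not a harmless combinatorial bookkeeping step; it silently excludes an entire class of supercuspidals, and the paper's purity hypothesis in Conjecture \ref{noincorr_conj} and Definition \ref{pure} exists precisely to route around this obstruction. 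Your plan does not address what happens to those representations, nor does it address the parameters $\varphi$ whose component group is nontrivial (which the paper peels off via Hypotheses \ref{Levi}--\ref{endo2} before running the induction).

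To summarize: keep the base-change/globalization/descent skeleton, but replace the counting step with the potential-automorphy-plus-multiplicity-one mechanism, globalize the parameter rather than the representation, and add the purity hypothesis and the reduction to non-endoscopic parameters before the induction, so that the unipotent supercuspidals are handled (or excluded) honestly rather than silently misclassified.
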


Part (c) of Conjecture \ref{Vogan} is a version of Shahidi's conjecture that every tempered $L$-packet of a quasi-split group contains a unique generic
member for each choice of Whittaker datum $W$; this choice provides a first normalization of the conjectural parametrization.   
For each inner twist $G$ of $G^*$ we let $\Pi_\varphi(G) \subset \Pi_\varphi$
be the set of quadruples with first entry $G$.   Point (c) guarantees that $\Pi_\varphi(G^*)$ is non-empty; more generally, the formulation of (c)
implies \cite[Lemma 5.7]{Ka16} that $\Pi_\varphi(G)$ is non-empty if and only if $\varphi$ is {\it relevant} to the inner form.

\subsection{Characterizing the local parametrization}\label{characterizing}
Conjecture \ref{Vogan} asserts the existence of a complete and canonical parametrization of tempered
representations of all inner twists of $G^*$.  It does not attach any meaning to the word ``canonical," and the word is in fact absent
from Vogan's original statement of the conjecture.  It was explained in the Introduction that no characterization of the map $\CL$  of Conjecture
\ref{LLC} has been proposed for general groups.  Nevertheless, one can list some properties, in addition to those already stated in the Conjecture, that $\CL$ is expected to satisfy.   First, we define the set $\F^{ss}(G/F)$ of {\it semisimple Langlands parameters} -- called {\it infinitesimal characters} in \cite{V93} --
to be the set of equivalence classes of admissible homomorphisms $W_F \ra {}^LG(C)$.  The {\it semisimplification map} $\F(G/F) \ra \F^{ss}(G/F)$ 
is given by restricting a homomorphism in $\F(G/F)$ to $W_F \subset WD_F$.  

\subsubsection{Basic properties}\label{basicp}

\begin{property}\label{p1}  When $G$ is a torus, the map $\CL$ is given by class field theory, as interpreted by Langlands in \cite{L97}.  In particular, when $G = GL(1)$ then
$\CL$ is the map induced from the reciprocity isomorphism
$$GL(1,F) = F^\times \isoarrow W_F^{\rm ab}.$$
\end{property}


\begin{property}\label{p2}  Let $\alpha:  F \ra F'$ be an isomorphism of fields.  Denote by  $\alpha_*$ the  identification of $\F(G/F)$ and $\F(G/F')$ corresponding
to the canonical isomorphism $WD_F \ra WD_{F'}$.  Identify $\Pi(G/F) \isoarrow \Pi(G/F')$ through the isomorphism $G(F) \isoarrow G(F')$.
Then the following diagram is commutative.
$$
\begin{CD}
\Pi(G/F) @>\CL_{G/F}>>  \F(G/F) \\
@VVV	   @VV\alpha_*V \\
\Pi(G/F') @>>\CL_{G/F'}>  \F(G/F')
\end{CD}
$$
\end{property}
The statement presupposes a choice of identification of $G/F$ with $G/F'$ that is used to identify the $L$-groups as well.  In practice we will only look
closely at the case where $F' = F$ or where $G$ is a split group, where the choice of identification is the obvious one.

\begin{property}\label{p3}  Let $\nu:  G \ra T$ be a homomorphism of algebraic groups over $F$, where $T$ is a torus.  Let $\nu^*:  {}^LT \ra {}^LG$ be the dual map.
Let $\pi \in \Pi(G/F)$, $\chi \in \Pi(T/F)$.  Then we have an equivalence
$$\CL(\pi\otimes \chi\circ\nu) \isoarrow \CL(\pi)\cdot\CL(\chi)\circ\nu^*.$$
\end{property}

\begin{property}\label{p4}  Letting $^{\vee}$ denote contragredient, we have
$$\sigma\circ\CL(\pi^{\vee}) \isoarrow (\sigma \circ\CL(\pi))^{\vee}$$
for any homomorphism $\sigma:  {}^LG \ra GL(N)$ of algebraic groups.
\end{property}

\begin{property}\label{p5}  Suppose $G = H_1 \times H_2$.  We let 
$$\otimes:  \Pi(H_1/F) \times \Pi(H_2/F) \ra \Pi(G/F)$$
 denote the  map on $L$-parameters induced by the inclusion
$$ {}^L(H_1 \times H_2) \ra {}^LH_1\times {}^LH_2$$
defined by the diagonal map $W_F \hookrightarrow W_F \times W_F$.   Then
$$\CL_{G/F} = \otimes(\CL_{H_1/F} \times \CL_{H_2/F})$$
\end{property}

\begin{property}\label{p6}  Let $r:  G \ra G'$ be a central isogeny and let $r^*:  {}^LG' \ra {}^LG$ denote the dual map.  Then if $\pi \in \Pi(G'/F)$,
$$\CL(\pi \circ r) = r^*\circ \CL(\pi).$$
If $\pi\circ r$ is reducible, the left hand side refers to any of the irreducible constituents; in particular, they all have the same parameter.
\end{property}

Note that Property \ref{p3} follows from Properties \ref{p5} and \ref{p6}.

\begin{property}\label{rEF}  Let $F$ be a finite separable extension of $E$ and $H = R_{F/E}G$ and let $\pi$ be an irreducible
representation of $H(E) = G(F)$.  Then parameters for $WD_F$ with values in ${}^LG = \hG\rtimes W_F$
can be naturally identified with parameters for $WD_E$ with values in ${}^LH = \hat{G}^{[F:E]} \rtimes W_E$.  With respect to this identification
we have
$$\CL_{H/E}(\pi) = \CL_{G/F}(\pi)$$
\end{property}

\begin{property}\label{p7}  Let $P \subset G$ be a parabolic subgroup rational over $F$, with Levi quotient $M$.  Let $i_M:  {}^LM \ra {}^LG$
be the inclusion as a Levi subgroup of the dual parabolic subgroup ${}^LP$.  Let $\pi \in \Pi(M/F)$.  Let $Ind_M^G$ denote normalized parabolic induction,
which is independent of the choice of $P$ containing $M$ (normalization using the chosen square root of $q$ in $C$).  Suppose $\tau$ is an irreducible subquotient of  $Ind_M^G(\pi)$.  Then
$$\CL^{ss}(\tau) \isoarrow i_M \circ \CL^{ss}(\pi).$$
\end{property}

Property \ref{p7} admits the following refinement.

\begin{property}\label{p7a}  Assume $G$ is quasi-split.  Let $K \subset G(F)$ be a special maximal compact subgroup, and let $H(G,K)$ denote the Hecke algebra of compactly supported  $K$-biinvariant functions on $G(F)$, with coefficients in $C$.  Suppose $\pi \in \Pi(G/F)$ is an irreducible spherical representation -- $\pi^K \neq \{0\}$.  Then $\CL(\pi) = \CL^{ss}(\pi)$ is the Satake parameter  $W_F \ra {}^LG(C)$ attached to the representation of $H(G,K)$ on  the $1$-dimensional space $\pi^K$.  

More precisely, let $B \subset G$ be a Borel subgroup, with Levi quotient $T$.  The hypothesis that $\pi$ be spherical implies that $\pi$ is a subquotient of
$Ind_T^G(\chi)$ for some character $\chi \in \Pi(T/F)$.  Then 
$$\CL(\pi) = \CL^{ss}(\pi) \isoarrow i_T \circ \CL(\chi).$$
 \end{property}

Complementary to Property \ref{p7} is

\begin{property}\label{discrete}  Suppose $\pi \in \Pi(G/F)$ belongs to the discrete series -- for example, if $\pi$ is supercuspidal.  Then
the quotient $S_{\CL(\pi)}/Z(\hG(C))^{\Gamma}$ is finite.  

In particular the image of $\CL(\pi)_{SL(2)}$ (notation as in \eqref{SL2}) is not contained in any proper parabolic
subgroup of $\hG \rtimes W_F$.  
\end{property}

Note that this {\it does not} assert that the image of $\CL^{ss}(\pi)$ is not contained in a proper parabolic.  

\begin{definition}\label{pure}  (a) The Weil-Deligne representation $(\rho,N)$ with values $GL(V)$, where $V$ is a $\CC$-vector space, is {\bf pure} if there is a 
homomorphism $t:  W_F/I_F \ra \CC^\times$, such that
\begin{itemize}
\item[(i)]  The eigenvalues of $\rho_t(Frob_F) := \rho(Frob_F)\cdot t(Frob_F)$  are all Weil $q$-numbers of integer weight for any Frobenius element $Frob_F \in W_F$.
\item[(ii)]  The subspace $W_aV \subset V$ of eigenvectors for $\rho_t(Frob_F)$ with eigenvalues of weight $\leq a$ is invariant
under $(\rho,N)$;
\item[(iii)]  Letting $gr_aV = W_aV/W_{a-1}V$, there is an integer $w$ such that, for all $i \geq 0$, the map
$$N:  gr_{w-i}V \ra gr_{w+i}V$$
is an isomorphism.
\end{itemize}

(b) The semisimple representation $\rho:  W_F \ra GL(V)$ is {\bf pure} if there is a homomorphism $t$ as above and an integer $w$ such that 
the eigenvalues of $\rho_t(Frob_F)$ are all $q$ numbers of the same weight $w$ for any Frobenius element $Frob_F \in W_F$.

(c)  Let $G$ be a connected reductive group over $F$.  The Weil-Deligne parameter $\CL:  WD_F \ra {}^LG(\CC)$ (resp. the semisimple parameter
$\CL^{ss}:  W_F \ra {}^LG(\CC)$) is {\bf pure} if for any representation (equivalently, for one faithful representation)
$\sigma:  {}^LG(\CC) \ra GL(V)$, the composite
$\sigma \circ \CL$ (resp. $\sigma \circ \CL^{ss}$) is pure in the sense of (a) and (b).

(d)  Let $\pi \in \Pi(G/F)$.  We say $\pi$ is {\it pure} if the semisimple parameter $\CL^{ss}(\pi)$ is pure in the sense of (b).
\end{definition}

We leave to the reader the formulation of the analogous definition with $\CC$ replaced by any algebraically closed $C$ of characteristic zero. 

\subsubsection{Automorphic properties}\label{autop}

The statement of Property \ref{p8}  requires the notion of a theory of {\it automorphic $L$-functions}.  What we have is a collection of
overlapping theories, each adapted to different classes of groups $G$.  Underlying principles have been identified but to date
there has been no exhaustive classification.  The following ad hoc definition will suffice for our purposes.  We let $C = \CC$.

\begin{defn}\label{autoL} Let $G$ be a  connected reductive algebraic group over a local field $F$.  Let $\sigma:  {}^LG \ra GL(N)$ be an
algebraic representation.  Let $\CA$ be a class of irreducible admissible representations of $G(F)$.
A {\bf theory of automorphic $L$-functions} for $G$ over $F$, $\CA$, and $\sigma$ consists of the following.
\begin{itemize}
\item[(a)]  For any representation $\pi \in \CA$, and any additive character
$\psi:  F \ra \CC^\times$, a meromorphic function
$L(s,\pi,\sigma)$ of $s \in \CC$ that is holomorphic in a right half-plane, and an entire function 
$s \mapsto \varepsilon(s,\pi,\sigma,\psi) \in \CC^\times.$  If $F$ is non-archimedean with residue field $k_F$
of cardinality $q$, then $L(s,\pi,\sigma)$ is of the form
$P(q^{-s})$ where $P$ is a polynomial of degree at most $\dim \sigma$ and constant term $1$, and $\varepsilon(s,\pi,\sigma,\psi)$ is a constant
multiplied by an exponential of the form $q^{\alpha s}$ for some complex number $\alpha$.  If $F$ is archimedean then $L(s,\pi,\sigma)$ and 
$\varepsilon(s,\pi,\sigma,\psi)$ can be written as products of the corresponding local factors ($\Gamma$-factors and $\varepsilon$-factors)
of representations in $\Pi(GL(1)/F)$.
\item[(b)] If $F$ is non-archimedean, with integer ring $\CO_F$,
and $\pi$ is spherical as in Property \ref{p7a}, with Satake parameter $\CL(\pi):  W_F \ra {}^LG(\CC)$, then 
$L(s,\pi,\sigma)$ is the Artin $L$-factor $L(s,\sigma\circ\CL(\pi))$.  Moreover, if $\psi$ is normalized so that $\ker \psi = \CO_F$,
then $\varepsilon(s,\pi,\sigma,\psi) = 1$.
\item[(c)]  Let $K$ be any global field with a place $v$ such that  $F \isoarrow K_v$, and let $\CG$ be a connected reductive group over $K$ with
$\CG(K_v) \isoarrow G(F)$.   There is a class $\CA_K$ of cuspidal automorphic representations of $\CG$ with the property
that any $\pi \in \CA$ can be realized, up to inertial equivalence {\bf see below}, as the local component at $v$ of some $\Pi \in \CA_K$.  It is assumed that for every place $w$ of $E$
and every local component $\Pi_w$ of $\Pi \in \CA_K$ there are local factors $L(s,\Pi_w,\sigma)$ and $\varepsilon(s,\Pi_w,\sigma,\bullet)$
satisfying (a) and (b), with the factors already defined when $w = v$.  
For any unitary cuspidal automorphic representation $\Pi \in \CA_K$ with $\Pi_v \isoarrow \pi$, the formal product
$$L(s,\Pi,\sigma) :=  \prod_w L(s,\Pi_w,\sigma)$$
converges absolutely in a right half-plane and extends to a meromorphic function of $\CC$ that satisfies the functional equation
$$L(s,\Pi,\sigma) = \varepsilon(s,\Pi,\sigma)\cdot L(1-s,\Pi^{\vee},\sigma).$$
Here
$$\varepsilon(s,\Pi,\sigma) := \prod_w \varepsilon(s,\Pi_w,\sigma,\psi_w)$$
if $\prod_w \psi_w$ defines a character of $\ad_K/K$.
\end{itemize}
Finally, 

(d) The theory is assumed to be {\bf intrinsic} to the group $G$, in the sense that it is  defined independently of an eventual functorial
transfer to $GL(n)$ with respect to $\sigma$.
\end{defn}

The introduction of the classes $\CA$ and $\CA_K$ is a roundabout way of avoiding reference to Whittaker models in the definition.  For some $G$ and $\sigma$
such theories only exist for globally generic $\Pi_E$, and for other $G$ and $\sigma$ the class $\CA$ contains all representations.  In any case one can only expect to realize a given $\pi \in \CA$ as a local component of a cuspidal automorphic representation up to inertial equivalence.  That is to say, if $\pi$ is a constituent of 
a parabolically induced representation $Ind_M^G \tau$ for some supercuspidal representation of $M$, then there is a character $\alpha:  M \ra \CC^\times$ such
that an irreducible constituent of $Ind_M^G \tau\otimes \alpha$ can be realized as a local component of a cuspidal automorphic representation of $\CG$.

\begin{property}\label{p8}  Suppose $\sigma:  {}^LG \ra GL(N)$ is an algebraic representation.   Suppose there is a theory of automorphic
$L$-functions for $G$ over $F$ and $\sigma$.  Then for any $\pi \in \Pi(G/F)$ and
$\psi:  F \ra \CC^\times$,
$$L(s,\pi,\sigma) = L(s,\sigma\circ\CL(\pi)); ~~ \varepsilon(s,\pi,\sigma,\psi) = \varepsilon(s,\sigma\circ\CL(\pi),\psi).$$
\end{property}

If $F$ is of positive characteristic it is probably possible to replace $\CC$ by a general $C$ in Property \ref{p8}.

The next Property refers freely to the notation and terminology of \cite{Ka22}.  If $\pi \in \Pi(G/F)$ we let $\Theta_\pi$ denote its
distribution character, which we view as a locally $L^1$-function on the set of regular elements of $G(F)$.    
Let $(H,\CH,s,\alpha)$ be an  endoscopic datum for the inner form $G$ of $G^*$, as in \cite{KS}.  Thus $H$ is a reductive group, $s \in \hat{G}(\CC)$
is a semisimple element, $\CH$ is a split extension of the dual group $\hat{H}$ of $H$ by $W_F$, and
$\xi: \CH \ra {}^LG(\CC)$ is an $L$-homomorphism that satisfies conditions \cite[2.1.4(a),(b)]{KS}; in particular, $\xi(\hat{H})$ is the identity component
of the centralizer of $s$.   

By Conjecture \ref{Vogan}, any member of $\Pi_\varphi$ defines a function on $S_\varphi^+$ by the formula
\begin{equation}\label{pairing}  (G,\xi,z,\pi') \mapsto  [s \mapsto Tr(\iota_W((G,\xi,z,\pi'))(\dot{s}))], ~ s \in S_\varphi^+;
\end{equation}
where $Tr$ denotes the trace and $\dot{s}$ is the image of $s$ in $\pi_0(S_\varphi^+)$.  For any $f \in C_c^\infty(G(F))$, we use
\eqref{pairing} to define
$$O^s_\varphi(f) = \sum_{\pi' \in \Pi_\varphi(G)} Tr(\iota_W(G,\xi,z,\pi')(\dot{s}) \Theta_{\pi'}.$$
We use the notation $SO$ to denote $O^1$, i.e. $O^s$ with $s$ the identity.

We state the property under the simplifying assumption that $\CH$ is in fact ${}^LH(\CC)$.

\begin{property}\label{p9}  
Let $\varphi_H \in \F(H/F)$ be a tempered Langlands parameter; let $\varphi = \alpha\circ\varphi_H$ be
the induced parameter of $G$.    

Then the characters of the elements of $\Pi_{\varphi_H}(H) \subset \Pi(H/F)$ and of $\Pi_{\varphi}(G) \subset \Pi(G/F)$
satisfy the following endoscopic character identity:
\begin{equation}\label{endoscopicidentity}  SO_{\varphi_H}(f^H) = O^s_{\varphi}(f)
\end{equation}
Here $f \in C_c^\infty(G(F))$ is a test function and $f^H$ is its transfer with respect to an appropriate transfer factor (defined in \cite{Ka16})
\end{property}

\begin{remark}  Bertoloni Meli and Youcis have given a  characterization of local Langlands correspondences for supercuspidal
representations in certain cases, based on a version of Property \ref{p9} \cite{BMY}.
\end{remark}

Kaletha informs me that there is a partial extension of this property to the case of twisted endoscopy, as treated in \cite{KS},
at least for $p$-adic fields.
We will be particularly interested in the following special case of twisted endoscopy.

\begin{property}\label{p10}  Suppose $H$ is quasisplit, $G = R_{F'/F}H_{F'}$ where $F'/F$ is a cyclic extension of prime order $\ell$,
and $\theta$ is the automorphism of $G$ induced by a generator (also denoted $\theta$) of $Gal(F'/F)$.  In particular, $\CH = {}^LH$,
${}^LG = \hat{H}^\ell \rtimes W_F$, and $\alpha$ corresponds to the diagonal inclusion of $\hat{H}$ in $\hat{H}^\ell$.  

Let $\varphi_H \in \F(H/F)$ be a tempered Langlands parameter and let $\varphi = \alpha \circ\varphi_H$ be the corresponding parameter of $G$.  Then
for all regular semisimple elements $\delta$ of $G(F)$, we have
$$\sum_{\pi' \in \Pi_{\varphi_H}(H)} a(\pi') \Theta_{\pi'}(N_\theta(\delta)) = c\sum_{\pi \in \Pi_\varphi(G)} b(\pi)\Theta_\pi(\delta)$$
for complex constants $a(\pi')$, $b(\pi)$, and for a constant $c$ that can be normalized to equal $1$ with additional choices.  Here $N_\theta$ denotes the norm map on regular semisimple conjugacy classes.
\end{property}

\begin{remark}  Because there is no general statement of Property \ref{p10} in the literature, the referee advised me to introduce the constants $a(\pi')$ and $b(\pi)$ in
the above formula, buit pointed out that, in all  cases  studied thus far, the $a(\pi')$ and $b(\pi)$ all turn out to equal $1$.
\end{remark}

Let $G$ be a connected reductive group over the global field $K$.  Let $S$ be a finite set of places of $K$ and for each $v \in S$ let 
$\Pi_v$ be an admissible irreducible representation of $G(K_v)$.  In practice, $S$ will be empty if $K$ is a function field
and will consist of the set of archimedean places if $K$ is a number field, but we lose nothing by considering the more general
situation.  Let $\CA_0(K,\Pi_S)$ be the set of cuspidal automorphic representations $\Pi$ of $G(\ad_K)$ with $\Pi_S \isoarrow \otimes_v \Pi_v$.

Let $\F(G/K)$ be the set of (equivalence classes of) compatible families of homomorphisms  
$$\varphi_\ell:  Gal(K^{sep}/K) \ra  {}^CG(\Qlb)$$
that are unramified outside a finite
set of places of $K$, where ${}^CG$ is the $C$-group introduced in \cite{BG} if $K$ is a number field and is ${}^LG$ if $K$ is a function field;  $\ell$ runs over all primes different from the characteristic of $K$.    Say $(\varphi_\ell)$ is semisimple if each $\varphi_\ell$ has the following property:  
if (the projection on the identity component of) ${}^CG(\Qlb)$ of $\varphi_\ell(Gal(K^{sep}/K)$ is contained in a proper parabolic subgroup then it is
contained in a Levi factor.  Let
 $\F(G/K)^{ss} \subset \F(G/K)$ denote the subset of semisimple representations.

\begin{property}\label{p11}  Suppose there is a global semisimple parametrization of the automorphic representations $\CA_0(K,\Pi_S)$
$$\CL^{ss}_K: \CA_0(K,\Pi_S) \ra \F(G/K)^{ss}$$
that is normalized so that, for each $\Pi \in \CA_0(K,\Pi_S)$ and all $w \notin S$ such that $\Pi_w$ is spherical, the restriction $\CL^{ss}(\Pi)_w$ of $\CL^{ss}(\Pi)$ to a decomposition group $\G_w$ at $w$ is unramified and $G$ is quasi split, the {\bf semisimplification} of $\CL^{ss}(\Pi)_w$ is the Satake parameter
 of $\Pi_w$.

Then for all places $v \notin S$, the semisimplification of $\CL^{ss}(\Pi)_v$ equals $\CL^{ss}(\Pi_v)$.
\end{property}

Global semisimple parametrizations are constructed in complete generality in \cite{Laf18} when $K$ is a function field and $S = \emptyset$.  The construction has been extended to all automorphic representations by C. Xue \cite{CX}.  When $K$ is a number field,
semisimple parametrizations for certain classes of $G$ and $\Pi_S$ have been constructed by means of Shimura varieties.  These will be discussed below.

\begin{property}\label{p12}[Independence of $\ell$]  Let $C$ run over the fields $\Qlb$, as $\ell$ varies among
primes different from $p$.  For each $\ell$ there is a parametrization $\CL^{ss}_\ell$, with the following property.
Let $E$ be a number field and assume $\pi \in \Pi(G/F)$ has a model over $E$.  
Then for any $\sigma:  {}^LG \ra GL(N)$ and for every $g \in W_F$, the characteristic polynomial of 
$\sigma \circ \CL^{ss}_\ell(\pi)(g)$ has coefficients in $E$, independent of $\ell$.
\end{property}


\section{The case of $GL(n)$}

With regard to the local Langlands correspondence, the situation for $GL(n)$ is satisfying for a number of reasons.  Let $F$ be any nonarchimedean local field,
and let $G = GL(n)$.  The following theorem has been proved in several ways, which are compared in \S \ref{comparedproofs}.
\begin{thm}\label{LLCGLN}  Conjecture \ref{LLC} is valid for $G/F$.  Moreover, the collection of local parametrizations $\CL = \CL_{GL(n)/F}$, as $n$ varies,
satisfies all the relevant Properties in \S \ref{basicp} and \S \ref{autop}.  

More precisely, $\CL_{GL(n)/F}$satisfies 
\begin{itemize}
\item[(1)] Property \ref{p1} for $n = 1$;
\item[(2)] Properies \ref{p2}, \ref{p4}, \ref{p7}, \ref{p7a}, \ref{rEF}, and \ref{discrete}, as written;
\item[(3)]  Property \ref{p3} for the homomorphism $\nu = \det:  GL(n) \ra GL(1)$;
\item[(4)] Property \ref{p8} for representations of $G = GL(n) \times GL(m)$, where the theory of Definition \ref{autoL} is the {\bf Rankin-Selberg theory} --
either that of Jacquet-Piatetski-Shapiro-Rallis or Shahidi \cite{JPSS,Sh83} -- with $\sigma:  G \ra GL(nm)$ is the tensor product representation, and $\CA$ is the class of {\bf generic} representations.  
Property \ref{p8} also holds for several other theories of automorphic $L$-functions with $G = GL(n)$ (see Remark \ref{autoLGLN}, below).
\item[(5)] Property \ref{p9} will be discussed in connection with classical groups.  On the other hand, Property \ref{p10} is valid for $GL(n)/F$.
\item[(6)]  When $F$ is of positive characteristic, property \ref{p11} is valid  and $\CL^{ss}$ is defined in terms of the cohomology of moduli spaces of shtukas,
either in \cite{LRS93} (for compact moduli spaces) or in \cite{Laf02} (for general cuspidal automorphic representations; or indirectly as in \cite{Laf18}.
\item[(7)]  When $F$ is a $p$-adic field, property \ref{p11} is valid when the global $\CL^{ss}$ is defined on Shimura varieties attached to unitary (similitude) groups and their inner twists.
\end{itemize}
\end{thm}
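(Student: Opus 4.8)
The plan is to treat Theorem~\ref{LLCGLN} as a compendium of results in the literature, and to organize the proof as a pointer to the relevant sources, checking in each case that the cited construction is compatible with the formulation given here. The architecture is the following: first establish Conjecture~\ref{LLC} for $GL(n)/F$ by invoking one of the known constructions of $\CL_{GL(n)/F}$---for $F$ of positive characteristic this is \cite{LRS93,Laf02}, and for $F$ a $p$-adic field one takes \cite{HT01,He00} (with \cite{Sch13} providing an alternative); then verify items (1)--(7) by matching each assertion to a property proved in those references or derivable from them. Since $GL(n)$ is split, Properties~\ref{p5} and~\ref{p6} are either vacuous (no nontrivial product decomposition is being asserted) or reduce to Property~\ref{p1} via $\det$, which is why only items (1)--(7) appear; Property~\ref{p3} for $\nu=\det$ follows from the behavior of the correspondence under twisting by characters, itself a standard part of all the constructions cited.

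For the first-principle core, I would isolate the supercuspidal case, since Zelevinsky's classification \cite{Z80} reduces the full statement of Conjecture~\ref{LLC}(a),(b),(c) for $GL(n)$ to the supercuspidal correspondence for all $GL(m)$, $m\le n$, compatibly with parabolic induction (Property~\ref{p7}). The supercuspidal correspondence, together with its characterizing properties (Property~\ref{p2} functoriality in the field, Property~\ref{p4} compatibility with contragredient, Property~\ref{rEF} automatic induction for $GL(1)$, Property~\ref{p8} for Rankin--Selberg pairs and the resulting characterization via $L$- and $\varepsilon$-factors of pairs, i.e.\ Henniart's uniqueness \cite{He93}), is exactly what the global cohomological constructions deliver: one embeds a given supercuspidal $\pi$ of $GL(n,F)$ as a local component of a suitable cuspidal automorphic representation over a global field (using a simple-algebra trick to control ramification at the auxiliary places), realizes the global correspondence in the cohomology of Drinfel'd modular varieties or Shimura varieties, and reads off the local parameter together with its $L$-factor compatibilities from the local-global comparison and the known behavior of local factors at the bad place. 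The matching of $\varepsilon$-factors of pairs---which pins down the correspondence uniquely---is the step that historically required Henniart's numerical input \cite{He88} or, in Scholze's approach, a direct geometric proof of Theorem~\ref{unramgln}; either route may be cited here.

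For items (5), (6), (7) I would proceed as follows. Property~\ref{p10} for $GL(n)$ is the assertion that the local correspondence is compatible with cyclic base change for $GL(n)$; this is the main local theorem of Arthur--Clozel's book and is a consequence of the known global base change and the local-global compatibility already established, so it is cited rather than reproved. For (6), when $\mathrm{char}\,F>0$, Property~\ref{p11} is the statement that the local parameters are the local components of Lafforgue's (or L.\ Lafforgue's) global shtuka parametrization, which is how $\CL^{ss}$ is constructed in that case---so (6) is essentially a definition together with the local-global compatibility built into \cite{Laf02,Laf18}. For (7), when $F$ is $p$-adic, one invokes the construction of global Galois representations attached to (conjugate self-dual, cohomological) automorphic representations of unitary similitude groups via Shimura varieties, and the compatibility of their local components at $p\nmid$-places (and, via \cite{HT01}, at the distinguished place) with $\CL_{GL(n)}$; this is again a matter of citing the relevant theorems and checking that the normalization (Satake parameter at spherical places) matches the one in Property~\ref{p11}.

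\textbf{The main obstacle} I expect is not any single hard proof---all the genuine mathematical difficulty lives in the cited works---but rather the \emph{bookkeeping of normalizations}: ensuring that the choice of $q^{1/2}$, the geometric-versus-arithmetic Frobenius convention, the half-integral twist implicit in passing between the Weil group and Weil--Deligne pictures (cf.\ the $\varphi_{SL(2)}$ formula of \eqref{SL2}), and the sign and absolute-value conventions in the $\varepsilon$-factors all line up consistently across \cite{LRS93,Laf02,HT01,He00,Sch13,Laf18} and with Tate's conventions \cite{T79} adopted here. In particular, verifying Property~\ref{p4} (contragredient) and Property~\ref{p8} (the $\varepsilon$-factor identity for Rankin--Selberg $L$-functions, whose functional equation is sensitive to the additive character $\psi$) as \emph{written} requires care that the correspondence used is the arithmetically normalized one; a wrong normalization would produce a twist by $|\det|^{(n-1)/2}$ or its inverse. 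Once those conventions are fixed, each of (1)--(7) is a one- or two-line citation.
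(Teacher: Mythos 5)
Your proposal is correct and tracks the paper's own treatment: Theorem~\ref{LLCGLN} is a compendium, the paper supplies no formal proof block but instead discusses the justification in \S\ref{comparedproofs} and the surrounding material, reducing to the supercuspidal case via Zelevinsky \cite{Z80}, appealing to the cohomological constructions of \cite{LRS93,Laf02,HT01,He00,Sch13,Laf18}, and invoking Henniart's uniqueness theorem \cite{He93} (via \ref{p1}, \ref{p3}, \ref{p8} for Rankin--Selberg pairs) to pin down the correspondence. Your emphasis on normalization bookkeeping, and your pointer to the alternative between Henniart's numerical input \cite{He88} and Scholze's geometric proof of Theorem~\ref{unramgln}, both match the paper's discussion; the only small inaccuracy is your explanation of why Properties~\ref{p5} and~\ref{p6} are omitted from the itemized list (the paper records that \ref{p3} is a consequence of \ref{p5} and \ref{p6}, not that they reduce to \ref{p1}), but this does not affect the substance of the argument.
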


\begin{remark}\label{autoLGLN}   The book of Godement-Jacquet \cite{GJ72} defines a theory of automorphic $L$-functions, with $\sigma$ the standard
representation of $GL(n)$ to itself, with $\CA = \Pi(GL(n)/F)$.  The parametrization $\CL_{GL(n)/F}$ satisfies Property \ref{p8} for this theory, and for theories defined
when $\sigma$ is either the exterior square or the symmetric square, or when $n = 2$ and $\sigma$ is the symmetric cube.  See \cite{CST, HeLo21, He21} for details.

In equal characteristic Property \ref{rEF} is a simple consequence of the global Langlands correspondence for $GL(n)$ over function fields.  
As far as I know, Property \ref{rEF} has not been verified in print for $p$-adic fields, but it follows easily from compatibility with $L$ and $\varepsilon$ factors for $GL(n)$.
\end{remark}

Henniart proved in \cite{He93} that the properties \ref{p1}, \ref{p3}, and \ref{p8} for the Rankin-Selberg local factors characterize the collection of 
$\CL_{GL(n)/F}$ uniquely, for all $n$.  The other properties come as a bonus; however, the known constructions of $\CL_{GL(n)/F}$, for $n > 3$,
are all based on Property \ref{p11} and the proof that they give bijections make strong use of Property \ref{p10} and a variant, not stated here, for
automorphic induction.

\subsection{The numerical correspondence and Henniart's splitting theorem}  Let $\Phi^0(GL(n)/F) \subset \Phi(GL(n)/F)$ denote the subset of
irreducible parameters -- in other words, irreducible $n$-dimensional representations of $W_F$.  Let $\Pi^0(GL(n)/F) \subset \Pi(GL(n)/F)$ denote
the subset of supercuspidal representations.
The proofs of Theorem \ref{LLCGLN} in \cite{LRS93,Laf02,Laf18} (in positive characteristic)
and in \cite{HT01,He00,Sch13} (for $p$-adic fields) all depend crucially\footnote{However, upon closer examination one sees that these results
are superfluous for the proofs in \cite{Laf02,Laf18}.}
on at least one of the two closely related results first proved by Henniart in \cite{He88,He90}:

\begin{thm}\cite{He88}[Henniart's numerical correspondence]\label{num}   Let 
$$\CL: \Pi^0(GL(n)/F) \ra \Phi^0(GL(n)/F)$$ 
be an injective (resp. surjective) map.  Suppose $\CL$ preserves conductors (a weak form of Property \ref{autoL} for $\varepsilon$ factors) and is compatible with
twists by characters of finite order (a weak form of Property \ref{p3}).  

Then $\CL$ is bijective.
\end{thm}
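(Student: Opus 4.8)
The plan is to deduce bijectivity from a comparison of cardinalities, conductor by conductor. Since $\CL$ preserves the conductor and is equivariant for twisting by finite-order characters, for each integer $a$ it restricts to a map between the sets of objects of conductor exactly $a$; after normalizing by the central character (equivalently, via class field theory, the determinant) these sets become \emph{finite}, and the finite-order twist hypothesis is what is used to pass from such a normalized statement back to the general one. The theorem then reduces to the assertion that, for each $a$, the corresponding finite subset of $\Pi^0(GL(n)/F)$ and that of $\Phi^0(GL(n)/F)$ have the same cardinality: an injection (resp. a surjection) between finite sets of equal size is a bijection, and summing over $a$ (and over central characters) yields the claim. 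Thus the whole content is a pair of counting problems together with the comparison of the two counts.

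On the Galois side finiteness is immediate --- an irreducible $\rho$ with $a(\rho)=a$ is trivial on an open subgroup of $W_F$ depending only on $a$ and $n$ --- and the count can be made explicit by Clifford theory: every irreducible $n$-dimensional $\rho$ is $\mathrm{Ind}_{W_E}^{W_F}\sigma$ for a finite extension $E/F$ and a representation $\sigma$ of $W_E$ which is either a character or primitive (not itself induced), the Artin conductor of the induced representation being governed by the conductor--discriminant formula. Iterating gives a recursion in the degree whose base case is the elementary $GL(1)$ count $\#\{\chi : a(\chi)=c\} = q^{c-1}(q-1)$ for $c\ge 1$, $q=\#k_F$ (Property \ref{p1}), and hence an explicit generating function. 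On the automorphic side I would use the Bushnell--Kutzko classification of supercuspidals of $GL(n,F)$ by compact induction from maximal simple types: $\Pi^0(GL(n)/F)$ is a disjoint union indexed by (conjugacy classes of) simple strata, the supercuspidals attached to a fixed stratum are counted by characters of an explicit tower of compact-mod-centre subgroups, and the conductor is read off from the numerical invariants of the stratum. This again gives finiteness and an explicit formula, with the same $GL(1)$ seed supplied by class field theory.

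The crux --- and the step I expect to be the main obstacle --- is to prove the two counts agree for every $a$. A direct term-by-term matching of the two inclusion--exclusion expansions is awkward because intermediate Galois data and simple strata are organized in genuinely different ways. The route I would follow is Henniart's \cite{He88,He90}: by Deligne's ``close fields'' comparison, both counts for the given $F$ coincide with the corresponding counts for a local field $F_0$ of positive characteristic with the same residue field, under which supercuspidals and irreducible Weil representations of bounded conductor transfer compatibly; so one may assume $\operatorname{char} F>0$. Over such an $F$ the number of irreducible $n$-dimensional representations of $W_F$ of conductor $\le a$ is realized as a trace of Frobenius on the $\ell$-adic cohomology of an explicit algebraic variety, and an $\ell$-adic Fourier transform identity turns this trace into precisely the automorphic count. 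Constructing the relevant varieties and Fourier kernel and checking the Frobenius traces against the Bushnell--Kutzko count is the laborious heart of the proof: there is essentially one idea --- that the Fourier transform interchanges the two bookkeeping schemes --- but executing it, after first confirming the low-degree cases (e.g.\ $n$ prime, where everything is induced from degree-$n$ field extensions), is where all the work lies.
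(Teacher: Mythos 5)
Your proposal reproduces the broad skeleton of Henniart's proof --- partition both sides by conductor (and by central character), use the twist hypothesis to reduce to a finite counting problem, and then show the two counts agree, with the Laumon $\ell$-adic Fourier transform as the key comparison tool in positive characteristic and the Deligne--Kazhdan close-fields argument to handle the $p$-adic case. But you replace one essential ingredient by a different one: for the automorphic count you invoke the Bushnell--Kutzko classification of supercuspidals by compact induction from simple types, whereas what Henniart actually uses (and what the paper records) is the Jacquet--Langlands correspondence to the multiplicative group of a division algebra $D$ of degree $n$. The $D^\times$ route is decisively cleaner here: $D^\times$ is compact modulo its centre, so irreducible smooth representations of bounded conductor are representations of finite quotient groups and the finiteness and the count are essentially immediate, while the conductor is preserved by the Jacquet--Langlands transfer. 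Going through Bushnell--Kutzko, you would need to extract the conductor from the numerical invariants of a simple stratum and sum over conjugacy classes of maximal simple types --- which is possible but is a substantial additional bookkeeping exercise (and, incidentally, anachronistic relative to \cite{He88}, which predates \cite{BK}). Both routes end up at the same generating function, so your variant would work; but it trades a one-line compactness argument for a detailed excursion into type theory, and it loses the conceptual point that Jacquet--Langlands is what makes the left-hand side \emph{obviously} finite. Otherwise the two steps you identify as the heart of the matter --- the close-fields transfer and the Fourier-transform identity matching the two counts --- are exactly where the work lies, and you have located them correctly.
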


The proof is based on an idea that goes back to Tunnell's proof \cite{Tu78} of most cases of the local Langlands correspondence for $GL(2)$.  The two sides of the map $\CL$ are partitioned according to conductors.  Each equivalence class is the union of finitely many orbits under twisting by characters.  The number of orbits of fixed conductor
on the left-hand side can be counted by using the Jacquet-Langlands correspondence to reduce the problem to counting irreducible representations of finite quotients
of the multiplicative group of a division algebra.  When $F$ is of positive characteristic,  Henniart counts the number of orbits on the right-hand side with the help of
Laumon's Fourier transform; for $F$ $p$-adic there is an intricate argument using the Deligne-Kazhdan method of close local fields (see \S \ref{CLF}, below).

\begin{thm}\cite{He90}[Splitting theorem]\label{split}  Let $F_0$ be a non-archimedean local field, $n$ a positive integer, and $\pi_0$ a supercuspidal representation of $G(F_0)$.  There is a finite sequence of cyclic extensions $F_0 \subset F_1 \subset \dots \subset F_r$ such that, if we define $\pi_i$ by induction as the representation of $G(F_i)$ obtained as the cyclic base change from $F_{i-1}$ of $\pi_{i-1}$, then $\pi_r$ is not supercuspidal.
\end{thm}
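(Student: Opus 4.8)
The plan is to exploit the local Langlands correspondence for $GL(n)$ itself, which reduces Theorem~\ref{split} to a statement purely about $n$-dimensional Weil representations. Let $\sigma_0 = \CL(\pi_0)$ be the $n$-dimensional irreducible representation of $W_{F_0}$ attached to $\pi_0$. Since cyclic base change on the automorphic side corresponds, via compatibility of the correspondence with automorphic induction and base change (the content of Property~\ref{p10} and its companion for automorphic induction), to restriction of Weil representations, $\pi_i$ fails to be supercuspidal for some $i$ if and only if the restriction $\sigma_0|_{W_{F_i}}$ fails to be irreducible. So it suffices to prove: for any irreducible $\sigma_0 \in \Phi^0(GL(n)/F_0)$ there is a finite tower of cyclic extensions $F_0 \subset F_1 \subset \dots \subset F_r$ such that $\sigma_0|_{W_{F_r}}$ is reducible.

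First I would reduce to the primitive case. If $\sigma_0$ is already induced from a representation of $W_L$ for some proper subextension $L/F_0$, one can use solvability of $\mathrm{Gal}(\bar F_0/F_0)$ (local fields have prosolvable absolute Galois group, with $W_{F_0}$ having a normal filtration with cyclic quotients through the tame and wild parts) to refine $L/F_0$ into a tower of cyclic steps; along the way $\sigma_0$ becomes reducible after base change to $L$ by Mackey theory, since $\mathrm{Res}_{W_L}\mathrm{Ind}_{W_L}^{W_{F_0}}(\tau)$ contains $\tau$ as a proper summand. Hence the remaining case is $\sigma_0$ \emph{primitive}, i.e.\ not induced from any proper subgroup. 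Here the key classical input is that a primitive irreducible representation of the Weil group of a local field becomes reducible after a finite solvable (indeed cyclic-tower) base change; concretely, $\sigma_0$ primitive of dimension $n > 1$ forces $p \mid n$ and the wild inertia to act nontrivially, and one descends along the ramification filtration: after a suitable cyclic totally ramified extension the wild part trivializes enough that $\sigma_0$ restricted there is no longer irreducible (either it becomes a sum of characters twisted by something smaller, or it becomes imprimitive, and in the latter case one reinvokes the previous paragraph).

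Concretely, the induction would be on $n$. For $n = 1$ there is nothing to prove (a character is never supercuspidal in the relevant sense once $n$ is forced down — but actually the statement is about $GL(n)$ with fixed $n$, so the base case is that $\sigma_0|_{W_{F_r}}$ reducible means $\pi_r$ is a subquotient of a properly induced representation, using Property~\ref{p7}). For the inductive step, by the reduction above assume $\sigma_0$ primitive. Choose a cyclic extension $E/F_0$ (contained in the tower coming from the upper numbering ramification filtration) such that $\sigma_0|_{W_E}$ is reducible; then each irreducible constituent has dimension $< n$ — if such a constituent corresponds to a supercuspidal of $GL(m)$, $m<n$, apply the inductive hypothesis to it; if it is itself non-supercuspidal we are done at that stage. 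Concatenating the finitely many towers (for $F_0 \subset E$, then for $E$ and each smaller-dimensional constituent) yields the desired sequence $F_0 \subset \dots \subset F_r$.

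The main obstacle is the purely Galois-theoretic heart: showing that an \emph{arbitrary} irreducible $\sigma_0$ of dimension $n>1$ becomes reducible after a finite tower of cyclic extensions. The imprimitive case is soft (Mackey plus prosolvability), but the primitive case requires genuine control of the ramification filtration of $W_{F_0}$ and the structure of projective representations of its image — essentially, one must show a primitive $\sigma_0$ cannot remain irreducible over every cyclic extension, which comes down to the fact that the image of $\sigma_0$ in $PGL(n,\CC)$ is a finite group (the wild inertia image being a finite $p$-group and the tame quotient being procyclic), hence solvable-by-(finite with bounded structure), and any faithful primitive projective representation of such a group must, after passing to an appropriate subgroup corresponding to a cyclic extension, cease to be irreducible. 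Making this last step precise — matching subgroups of the finite projective image to cyclic subextensions of $F_0$ and controlling that the process terminates — is where the real work lies, and it is exactly the point at which Henniart's original argument in \cite{He90} does its heavy lifting.
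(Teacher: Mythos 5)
Your reduction to a Galois-theoretic statement is in the right spirit, but there is a fundamental circularity and a strategic detour worth flagging.

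The circularity: you begin by invoking the local Langlands correspondence for $GL(n)$ itself ("Let $\sigma_0 = \CL(\pi_0)$") together with its compatibility with base change. As the surrounding text of the paper stresses, Henniart's Splitting Theorem is one of the crucial \emph{inputs} to essentially every known proof of LLC for $GL(n)$, so this cannot be how the theorem is established. Henniart's actual argument in \cite{He90} works with the much weaker \emph{numerical correspondences} $\CL$ he had constructed in \cite{He88} — conductor-preserving bijections $\Pi^0(GL(n)/F) \leftrightarrow \Phi^0(GL(n)/F)$ compatible with twists, but not known at the time to be \emph{the} local Langlands correspondence. The point the paper makes is that these weaker $\CL$ are already sufficiently well-behaved under cyclic base change for the deduction to go through, and this avoidance of the full LLC is precisely what makes Henniart's deduction non-circular. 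Your proposal, while it records a true implication once the full LLC is granted, is not a proof that could sit at the foundation of the subject.

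The detour: having reduced to showing that $\sigma_0|_{W_{F_r}}$ becomes reducible for some cyclic tower, you set up a primitive/imprimitive dichotomy and an induction on $n$, and candidly admit that the primitive case — "where the real work lies" — is left open. But the clean path, and the one the paper attributes to Henniart, is much shorter: make $\CL(\pi_r)$ \emph{unramified}. The image $\sigma_0(I_{F_0})$ of inertia is finite (wild part a finite $p$-group, tame quotient finite cyclic), so there is a finite Galois extension $E/F_0$ with $\sigma_0|_{I_E}$ trivial. Every finite Galois extension of non-archimedean local fields has solvable Galois group (inertia solvable, Frobenius quotient cyclic), so $E/F_0$ is refined by a tower of cyclic extensions $F_0 \subset F_1 \subset \dots \subset F_r = E$. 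Then $\sigma_0|_{W_E}$ factors through $W_E/I_E \cong \ZZ$, an abelian group, hence is reducible as soon as $n > 1$. No primitive/imprimitive split, no induction on dimension, and no need to chase the smaller constituents — once $\CL(\pi_r)$ is reducible, $\pi_r$ is already non-supercuspidal, which is all the statement asks for. Your closing remark about the finiteness and solvability of the projective image is aiming at exactly this, but the unramification route closes the gap immediately, whereas the projective-image route would still require matching subgroups of a quotient to actual cyclic subextensions in a way you do not carry out.
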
  

The statement presupposes the existence of cyclic base change, which is established by Arthur and Clozel in \cite{AC}.   Henniart's proof in \cite{He90} is a two-page
deduction from his construction in \cite{He88} of numerical correspondences $\CL$ satisfying the properties of Theorem \ref{num}.   The point is that one can find
a sequence of $F_i$ as in the statement of the theorem so that $\CL(\pi_r)$ is unramified.  Although at the time it was not known
that the among the $\CL$ he constructed there was one that satisfies Property \ref{p8}, his correspondences were sufficiently well-behaved with respect to base change
that he could deduce that if $\CL(\pi_r)$ is unramified then it cannot be supercuspidal.

\subsection{Comparing proofs of the local correspondence for $GL(n)$}\label{comparedproofs} 

It was agreed from the beginning that a local parametrization 
$$\CL:  \Pi(GL(n)/F) \ra \Phi(GL(n)/F)$$ deserved to be called the local Langlands correspondence
only if it was a {\bf bijection} that satisfied Property \ref{p8} for the Godement-Jacquet theory of local $L$ and $\varepsilon$ factors.  This was the form in which the conjecture was proved for $GL(2)$ in most cases by Tunnell \cite{Tu78}, then in all cases by Kutzko \cite{Ku}.  The two proofs had a significant difference:  Tunnell's was based on the counting argument described above, whereas Kutzko's was the first to be based on classification by types, as discussed in \S \ref{typesGL} below.  
Henniart's proof for $GL(3)$ was also based on types.

The goal posts had been moved by the time Laumon, Rapoport, and Stuhler proved the local Langlands conjecture for $GL(n)$ over local fields of positive characteristic \cite{LRS93}.  Now a local Langlands correspondence had to satisfy Properties \ref{p1}, \ref{p3}, and \ref{p8} for the Rankin-Selberg local factors; as mentioned above, Henniart had proved that these properties sufficed to characterize a correspondence uniquely.  He had in the meantime used Theorem \ref{num} to construct numerical correspondences -- bijections that preserved conductors.  But his method did not prove Property \ref{p3} , and in fact could not, because it gave a family of correspondences that differed by uncontrolled unramified twists, rather than a single correspondence.

The first canonical bijection $\CL:  \Pi^0(GL(n)/F) \ra \Phi^0(GL(n)/F)$, when $F$ is a $p$-adic field was constructed in \cite{H97}.  It was initially constructed as a map, $\CL:  \Pi^0(GL(n)/F) \ra \Phi(GL(n)/F)$ arising from a realization in cohomology, specifically the cohomology of an appropriate $p$-adic symmetric space.  To prove that it was a bijection with the irreducible subset $\Phi^0 \subset \Phi$ followed from a theorem proved in a paper of Bushnell, Henniart, and Kutzko \cite{BHK}:  any 
such $\CL$ is a bijection provided it satisfies Properties \ref{p1}, \ref{p2}, \ref{p3}, \ref{p4}, \ref{p10}, and two other properties specific to $GL(n)$:  the central character of $\pi$ corresponds by class field theory to $\det \CL(\pi)$, and $\CL$ is compatible with cyclic automorphic induction.  This proof relied on Henniart's Theorem \ref{split}, and thus  indirectly on Theorem \ref{num}.

However, the methods of \cite{H97} and \cite{BHK} were not sufficient to prove Property \ref{p8} for $L$ and $\varepsilon$ factors of pairs, although \cite{BHK} did show that it preserved {\it conductors} of pairs.  Thus it could not yet be called {\it the} local Langlands correspondence.

All known proofs of \ref{p8} for $GL(n)\times GL(m)$ depend on a global construction -- in other words, on local-global compatibility as in Property \ref{p11}, for a sufficiently general class of cuspidal automorphic representations.  The class had to consist of $\Pi$ with two properties:  the existence of global
parameters $\CL(\Pi)$, as in \ref{p11}, and the existence of global functional equations for the $L$-functions attached to the Galois parameter $\CL(\Pi)$ that are independent of the equations known for $\Pi$ itself.  In positive characteristic the second property was a special case of a theorem of Grothendieck that applied to all cuspidal $\Pi$, and the proof in \cite{LRS93}, like the later proofs in \cite{Laf02} and \cite{Laf18}, consisted in constructing the global parameters $\CL(\Pi)$.
For number fields a special class of $\Pi$ was constructed in \cite{H98} that satisfied the necessary properties and was  used in the proofs in \cite{HT01,He00,Sch13}.   More recently,
Daniel Li-Huerta has given a new proof in positive characteristic \cite{LH21}, modeled on the proof in \cite{Sch13} for $p$-adic fields.

\subsection{Classification by types}\label{typesGL}

A second classification of $\Pi^0(GL(n)/F)$ is given in terms of representations of compact open subgroups.  Let $G$ be a general reductive group over $F$, and let $Z_G \subset G$ denote the center.   Let $U \subset G(F)$ be an open subgroup that contains $Z_G(F)$ and is compact modulo $Z_G(F)$.  Let $\lambda:  U \ra GL(V)$ be a continuous
finite-dimensional representation with coefficients in $C$. Let $C_c(G(F),V)$ denote the space of continuous (locally constant) functions on $G(F)$, with values in $V$,
that are compactly supported modulo $Z_G(F)$.   The compact induction $c-Ind_U^{G(F)} \lambda$ is the space
\begin{equation}\label{cind}  \{f \in C_c(G(F),V) | f(ug) = \lambda(u)f(g) ~\forall g \in G(F), u \in U \}.
\end{equation}

It is a general fact that if $c-Ind_U^{G(F)} \lambda$ is irreducible then it is supercuspidal.  For $G = GL(n)$ the book of Bushnell and Kutzko \cite{BK} provides the following classification
of $\Pi^0(GL(n)/F)$ as compactly induced representations:

\begin{thm}[\cite{BK}]\label{BKtypes}  Every supercuspidal representation of $GL(n,F)$ is isomorphic to one induced from an irreducible representation of an open subgroup that is compact
modulo $Z_{GL(n)}(F)$.  More precisely, let $\pi \in \Pi^0(GL(n)/F)$.  Then there is a pair $(\bJ,\Lambda)$, with $\bJ$ compact modulo $Z_{GL(n)}(F)$
and $\Lambda:  \bJ \ra GL(n,V)$ an irreducible continuous finite-dimensional representation, with the following properties: 
\begin{itemize}
\item[(i)]  Let $\pi'$ be an irreducible representation $GL(n,F)$ and suppose  $\Lambda$ occurs in the restriction of $\pi'$ to $\bJ$.  Then $\pi' \isoarrow \pi$;
\item[(ii)] The pair $(\bJ,\Lambda)$ is unique up to conjugation;
\item[(iii)]   $\pi \isoarrow c-Ind_{\bJ}^{GL(n,F)} \Lambda$.
\end{itemize}
\end{thm}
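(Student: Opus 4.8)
The plan is to reconstruct the Bushnell--Kutzko theory of simple types from \cite{BK}. Fix $G = GL(n,F)$ and $A = M_n(F)$. The combinatorial backbone is the set of hereditary $\CO_F$-orders $\mathfrak{A} \subset A$ --- equivalently, $\CO_F$-lattice chains in $F^n$ --- with Jacobson radical $\mathfrak{P} = \operatorname{rad}\mathfrak{A}$, unit group $U(\mathfrak{A}) = \mathfrak{A}^\times$, and its filtration $U^k(\mathfrak{A}) = 1 + \mathfrak{P}^k$ for $k \geq 1$. To such data one attaches \emph{strata} $[\mathfrak{A}, m, r, \beta]$ with $\beta \in \mathfrak{P}^{-m}$, and the whole edifice rests on singling out the \emph{simple} strata by a recursive nondegeneracy condition on $\beta$ (minimality of $\beta$ over $F$, then a defining sequence built using tame corestrictions). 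Two halves must then be established: (A) an \textbf{exhaustion} statement, that every $\pi \in \Pi^0(GL(n)/F)$ contains a maximal simple type; and (B) a \textbf{rigidity} statement, that a maximal simple type $(\bJ,\Lambda)$ induces irreducibly to a supercuspidal representation and is determined up to $G$-conjugacy by that representation.

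First I would carry out the local construction attached to a simple stratum $[\mathfrak{A},m,0,\beta]$. Writing $E = F[\beta]$, a field, one defines the chain of compact open subgroups $H^1 = H^1(\beta,\mathfrak{A}) \trianglelefteq J^1 = J^1(\beta,\mathfrak{A}) \trianglelefteq J = J(\beta,\mathfrak{A})$ together with the finite set $\mathcal{C}(\mathfrak{A},0,\beta)$ of \emph{simple characters} of $H^1$. The structural facts needed --- each a theorem of \cite{BK} --- are: (i) every simple character $\theta$ admits a \emph{unique} irreducible representation $\eta$ of $J^1$ with $\eta|_{H^1}$ a multiple of $\theta$ (Heisenberg-type uniqueness); (ii) $\eta$ extends to $J$, the extensions --- the \emph{$\beta$-extensions} $\kappa$ --- forming a torsor under characters of $J/J^1$; (iii) $J/J^1 \isoarrow GL(m,k_E)$ with $m = n/[E:F]$; and (iv) the fundamental rigidity, that two simple characters attached to $\mathfrak{A}$ intertwine in $G$ if and only if they are $G$-conjugate. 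A \emph{simple type} is then a pair $(J,\kappa\otimes\sigma)$ with $\sigma$ the inflation to $J$ of a cuspidal representation of $GL(m,k_E)$; the \emph{maximal} simple types are those for which $\mathfrak{A}$ is the principal order satisfying $\mathfrak{A}\cap E = \CO_E$, in which case the $G$-normalizer of $(J,\kappa\otimes\sigma)$ is $E^\times J$, and one lets $\bJ$ be a subgroup of $E^\times J$ maximal among those compact modulo $Z_{GL(n)}(F)$, with $\Lambda$ an irreducible extension of $\kappa\otimes\sigma$ to $\bJ$.

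For the rigidity half I would first compute the $G$-intertwining of $\Lambda$: using (iv) to control the intertwining of the underlying simple character, (i) to pass to $\eta$, the properties of the $\beta$-extension $\kappa$, and cuspidality of $\sigma$ on the finite group $GL(m,k_E)$, one shows $I_G(\Lambda) = \bJ$. By the standard criterion for compact induction from an open subgroup that is compact modulo the centre, $\tau := c-Ind_{\bJ}^{GL(n,F)}\Lambda$ is then irreducible, and it is supercuspidal because its matrix coefficients are compactly supported modulo the centre --- this gives (iii). For uniqueness and property (i): if $\Lambda$ occurs in the restriction of some irreducible $\pi'$ to $\bJ$, then Frobenius reciprocity gives $\operatorname{Hom}_G(\tau,\pi')\neq 0$, so $\pi' \isoarrow \tau$; and if two maximal simple types occur in the same $\pi$, they intertwine in $G$, hence are $G$-conjugate by the maximal-simple-type refinement of (iv), giving (ii). It therefore remains only to show that an arbitrary $\pi \in \Pi^0(GL(n)/F)$ contains some maximal simple type.

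That last point --- exhaustion --- is the main obstacle, and it is the longest and most delicate part of the argument. The plan is: since $\pi$ is supercuspidal, a Jacquet-module argument shows that for a suitable hereditary order $\mathfrak{A}$ the restriction of $\pi$ to $U^1(\mathfrak{A})$ contains a character corresponding to a \emph{non-split fundamental stratum} $[\mathfrak{A},m,m-1,\beta]$ (a split fundamental stratum would produce a nonzero Jacquet module). One then runs the Bushnell--Kutzko \emph{process of refinement}: via tame corestrictions and successive approximation, passing through intermediate orders and centralizers isomorphic to $GL_{n/[F[\beta]:F]}(F[\beta])$, one replaces the stratum step by step until it is equivalent to a simple one, producing a simple character $\theta \in \mathcal{C}(\mathfrak{A}',0,\gamma)$ occurring in $\pi$. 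Analyzing the $\theta$-isotypic subspace of $\pi$ as a module over the relevant finite reductive quotient and invoking supercuspidality once more forces $\mathfrak{A}'$ to be maximal and a cuspidal $\sigma$ of $GL(m,k_E)$ to appear --- i.e. a maximal simple type. The genuinely hard points are the recursion defining simple strata, the compatibility (``transfer'') of simple characters between different hereditary orders, and the combinatorics of the refinement; these are precisely what occupies most of \cite{BK}, and in a survey I would invoke its main intermediate results as black boxes.
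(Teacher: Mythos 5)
The paper does not prove this theorem; it is quoted verbatim from Bushnell--Kutzko and cited as \cite{BK} with no argument given. Your proposal is a faithful and accurate outline of the actual Bushnell--Kutzko proof --- simple strata and simple characters, the Heisenberg representation $\eta$ and the $\beta$-extensions $\kappa$, the intertwining-implies-conjugacy theorem, irreducibility of compact induction from the normalizer $\bJ = E^\times J$, and exhaustion by refinement of fundamental strata --- so there is nothing in the paper to compare it against beyond agreement with the cited source. One small imprecision worth flagging: the defining condition for a \emph{maximal} simple type is not ``$\mathfrak{A}\cap E = \CO_E$'' (this holds automatically for any $E$-pure hereditary order), but rather that $\mathfrak{B} = \mathfrak{A}\cap B$ is a maximal order in the centralizer $B$ of $E$ in $A$; relatedly, one can simply take $\bJ = E^\times J$, which is already compact modulo the center, rather than a maximal subgroup of it.
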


Property (i) asserts that the pair $(\bJ,\Lambda)$ is a {\it type} (more precisely an {\it extended type}) for $\pi$.   In \cite{BK99} Bushnell and Kutzko described a more general theory of types for irreducible admissible representations of $GL(n,F)$; the representations $\pi$ that contain such types correspond to a connected component of the Bernstein center of $GL(n,F)$.  

More will be said about types  in \S \ref{types} below.  For now I quote what I wrote in my 2002 ICM talk, allowing for the change in notation:

{\it The outstanding open problem concerning the local Langlands correspondence
is undoubtedly} 
\begin{itemize}
\item Define $\CL(\pi)$ directly in terms of the type $(\bJ,\Lambda)$ (and vice versa)
\item  Show directly that the definition of $\CL(\pi)$ in terms of $(\bJ,\Lambda)$ has the expected properties of a local Langlands correspondence.
\end{itemize}

Although there has been some significant progress, especially when the residue characteristic $p$ is prime to $n$ \cite{BH10,Tam16}, this problem is still open in general nearly 20 years later.

\subsubsection{Inner forms}  Let $G$ be an inner form of $GL(n)$ over $F$.  Thus $G$ can be identified with $GL(a,D)$ where $D$ is a central division algebra over $F$
of dimension $b^2$ and $ab = n$.  The generalized Jacquet-Langlands correspondence includes a bijection
\begin{equation}\label{JL}  JL:  \Pi^{0,0}(G/F) \isoarrow \Pi^0(GL(n)/F)
\end{equation}
where $\Pi^{0,0}(G/F) \subset \Pi^0(G/F)$ is an appropriate subset of the cuspidal representations,
which is characterized by an identity of characters, as follows.   If $\gamma \in G(F)$ and $\delta \in GL(n,F)$ are semisimple elements (or conjugacy classes), we write
$\gamma \leftrightarrow \delta$ if their characteristic polynomials are equal over $\bar{F}$.  We denote the distribution character of $\pi \in \Pi^0(?/F)$ by
$\chi_\pi$.  Now if $\pi \in \Pi^0(G/F)$ then for any elliptic regular element 
$\gamma \in G(F) = GL(a,D)$ with $\gamma \leftrightarrow \delta$ as above we have
\begin{equation}\label{charJL}  \chi_\pi(\gamma) = (-1)^{a-1} \chi_{JL(\pi)}(\delta).
\end{equation}

The analogue of Theorem \ref{BKtypes} for $GL(a,D)$ was developed in a series of articles by S\'echerre and Stevens, joined for one article by Broussous, and
completed in \cite{SS}.

\section{Other groups}

\subsection{Classical groups}\label{classicalsection}

Arthur's book \cite{A} reduces the local and global Langlands correspondences for split symplectic and special orthogonal groups, when $F$ is a $p$-adic field,
 to the correspondences for general linear groups.  As a consequence he obtains what should be a definitive version of the local correspondence for tempered representations of these classical groups
 over $p$-adic fields.  Subsequent work by Mok and Kaletha, M\'inguez, Shin, and White \cite{Mok,KMSW}  obtains analogous correspondences for unitary groups.  A complete theory  for classical groups seems still to be conditional on results in harmonic analysis whose proofs have not yet appeared.
 
For even $n$ the correspondence has its
most natural statement for $O(n)$ rather than $SO(n)$.  When $G$ is the split form of $SO(n)$ with $n$ even, and $\pi, \pi' \in \Pi^{\rm temp}(G/F)$, we write
$\pi \sim \pi'$ if they are equivalent under the outer action of $O(n,F)$ on $SO(n,F)$; if $G$ is another group then the relation $\sim$ is just equality.
 Here is the statement of Arthur's version of the local correspondence, in the language of \S \ref{tempered}.
 
 \begin{thm}[Arthur]\label{arthurthm}  Let $G = G^*$ be a split symplectic or quasi split special orthogonal group, 
 over the $p$-adic field $F$.  Let $\varphi \in \F^{\rm temp}(G/F)$.
 Then there is a finite subset $\Pi_\varphi \subset \Pi^{\rm temp}(G/F)/\sim$ that is canonically in bijection with the group of characters of the (abelian) group
 $\pi_0(S_\varphi)/Z(\hat{G}).$  The corresponding map
 $$\CL:  \Pi^{\rm temp}(G/F)/\sim \ra \F^{\rm temp}(G/F)/\sim$$
 satisfies Properties \ref{p2}, \ref{p7}, \ref{p7a}, \ref{rEF}, \ref{discrete}, and \ref{p9}.
 \end{thm}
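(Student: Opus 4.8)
The plan is to deduce Theorem \ref{arthurthm} as a summary of the main results of \cite{A}, translated into the Vogan-Kaletha language of \S \ref{tempered}, and then to verify the enumerated properties one at a time. The starting point is Arthur's construction, which attaches to every $\varphi \in \F^{\rm temp}(G/F)$ a finite packet $\Pi_\varphi$ together with a pairing $\langle \cdot, \cdot\rangle: \Pi_\varphi \times \mathcal{S}_\varphi \to \{\pm 1\}$, where $\mathcal{S}_\varphi = \pi_0(S_\varphi)/Z(\hat G)^{\Gamma}$; because $G$ is symplectic or special orthogonal this component group is an elementary abelian $2$-group, so the pairing identifies $\Pi_\varphi$ (modulo $\sim$, which accounts for the outer $O(n)$-action in the even orthogonal case) with the full character group $\widehat{\mathcal{S}_\varphi}$. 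The characterizing input is the collection of endoscopic character identities \eqref{endoscopicidentity}, together with the twisted character identity relating $G$ to $GL(N)$ via the standard representation $\sigma$; uniqueness of the packets satisfying these identities is part of what \cite{A} proves, using the stabilization of the twisted trace formula, so I would quote it.

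First I would set up the dictionary: since $G = G^*$ is quasi-split, the rigid inner twist data $(G,\xi,z)$ is trivial, $S_\varphi^+ = S_\varphi$, the target $(Z([\hat{\bar G}]^+))^*$ is trivial, and Conjecture \ref{Vogan} collapses to exactly the assertion of the theorem: a bijection $\iota_W: \Pi_\varphi \isoarrow \operatorname{Irr}(\pi_0(S_\varphi))$ normalized by a Whittaker datum. Arthur's Whittaker normalization (his choice of base point in each packet, Theorem 1.5.1 of \cite{A}) supplies the normalization; this is where Property \ref{p7a} enters, since for a spherical $\pi$ the packet is a singleton containing the unramified constituent of an unramified principal series, and Arthur's construction is compatible with the Satake parametrization through compatibility with parabolic induction. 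Property \ref{p7} (compatibility with parabolic induction, at the level of semisimple parameters) is again part of Arthur's construction — packets of properly induced representations are built from packets on Levi subgroups — and Property \ref{discrete} is Arthur's characterization of the discrete (square-integrable) members of a packet as exactly those for which $S_\varphi/Z(\hat G)^{\Gamma}$ is finite, equivalently $\varphi$ does not factor through a proper Levi of ${}^LG$. Property \ref{p9}, the endoscopic character identity, is the defining property of the packets in \cite{A} and so holds essentially by construction, once one matches Kaletha's transfer factors in \cite{Ka16} with Arthur's (the normalizations agree for quasi-split groups).

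The remaining properties are formal. Property \ref{p2} (functoriality in field isomorphisms) holds because every ingredient of Arthur's construction — the trace formula, endoscopic transfer, the local factors — is natural in $F$. Property \ref{rEF} (restriction of scalars along a finite separable extension) follows from the fact that $R_{F'/F}G$ has the same representation theory as $G/F'$ and its $L$-group absorbs the induction, so one can pull the whole construction back; in practice one reduces to the $GL(N)$ case where this is Shapiro's lemma for Weil groups. The one genuine subtlety, and the step I expect to be the main obstacle, is Property \ref{rEF} together with the precise behavior of the Whittaker normalization under these operations: one must check that the base point $\pi^W$ of each packet is carried to the base point, i.e. that the normalization is itself natural, and for this one has to track Whittaker data through restriction of scalars and field isomorphisms. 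This is not deep but it is the place where a careless translation between \cite{A} and \cite{Ka16} could go wrong, so I would treat it with care, invoking \cite[\S 1.5]{A} and the compatibility statements in \cite{Ka16} explicitly. Finally, for even orthogonal groups the passage from $SO(n)$ to $O(n)$ (the relation $\sim$) must be threaded through all of the above; Arthur handles this by working with the full orthogonal group's twisted endoscopy, and the properties as stated are phrased modulo $\sim$ precisely so that no extra work is needed.
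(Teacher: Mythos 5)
Your proposal is essentially correct and takes the same approach as the paper, which treats Theorem \ref{arthurthm} as a survey restatement of Arthur's main local theorem in the language of \S\ref{tempered} rather than something to be proved from scratch. Both you and the paper identify the core input as the comparison of the stable trace formula for $G$ with the twisted trace formula for $GL(N)$, and both observe that the quasi-split case makes the rigid-inner-twist apparatus of Conjecture \ref{Vogan} trivialize to a bijection $\Pi_\varphi \isoarrow \operatorname{Irr}(\pi_0(S_\varphi)/Z(\hat G)^\Gamma)$ normalized by a Whittaker datum.

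One small divergence: for Property \ref{p2} you argue from abstract naturality of the trace formula and transfer in the base field, whereas the paper derives both \ref{p2} and \ref{rEF} by the same mechanism, namely that Arthur's construction is determined by twisted transfer to $GL(N)$, where these properties are known, and therefore $G$ inherits them. The paper's route is slightly tighter because it sidesteps having to check naturality of the whole trace-formula machine directly. The paper also stresses a structural point you do not emphasize: the inductive starting point lies in the orthogonality relations for elliptic characters in \cite[\S\S6.5--6.7]{A}, needed to establish multiplicity one inside each packet; the paper singles this out because it motivates the strategy of Part \ref{Strategy}. Your caution about tracking Whittaker normalization under restriction of scalars is a reasonable extra point, but it is not raised in the paper and does not affect the truth of the statement as phrased.
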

 
 The bijection with the characters of  $\pi_0(S_\varphi^+)/Z(\hat{G})$ is just the part of Condition (b) of Conjecture \ref{Vogan} that corresponds to the quasi-split inner form.  
 
 Theorem \ref{arthurthm} is proved by comparing the stable trace formula for $G$ with the twisted trace formula for the outer automorphism of $GL(N)$, where $N$ depends on $G$.  Thus by construction the parametrization for $G$ is compatible with twisted endoscopic transfer from $G$ to $GL(N)$.  Properties \ref{p2} and \ref{rEF} for $G$ are not stated explicitly anywhere, as far as I know, but they follow immediately from the corresponding properties for $GL(N)$.   The deduction of the local correspondence for $G$
 from results for $GL(N)$ is one part of an elaborate multiple inductive argument, based on comparison of local as well as global
 trace formulas, that stretches over the entire length of \cite{A}.  One reading of the proof would  place the starting point for the induction in \S\S 6.5-6.7 of \cite{A}:  the orthogonality relations for (elliptic) characters are needed in order to characterize the structure of $L$-packets, and specifically to prove that the individual representations appear in the character with multiplicity one.  This idea is the inspiration for the strategy that will be outlined in Part \ref{Strategy}.
 
Property \ref{p8} can be formulated in terms of the theory of automorphic $L$-functions obtained by means of the {\it doubling method} of Piatetski-Shapiro
and Rallis.  It seems to me that this version of Property \ref{p8} can be deduced  by comparing the functional equation
for the doubling $L$-function of $G$ with the standard functional equation for $GL(N)$, using the results of Rallis and Soudry in \cite{RS}, but I have been unable to find
a proof in the literature.  Ongoing work of Cai, Friedberg, Ginzburg, and Kaplan, starting with \cite{CFGK}, should similarly give the  version of Property \ref{p8} for the
L-functions of $G \times GL(n)$, but I haven't seen a definite statement for this case either.   Property \ref{p11} (local-global compatibility) for Arthur's correspondence,
where the global parametrization for $G$ is given by the cohomology of certain Shimura varieties of abelian type, seems to be open in most cases at the time of writing.


\subsection{$\mathbf{G_2}$}

For any quasi-split group $G$ over $F$, let $\Pi^0_g(G/F)$ denote the set of (equivalence classes of) generic supercuspidal representations of $G(F)$.
Ten years before the publication of Arthur's book on classical groups, Jiang and Soudry \cite{JSo} obtained a parametrization of $\Pi^0_g(G/F)$ in terms of Galois parameters,
when $G$ is a split odd orthogonal group.  
\begin{thm}\label{JSopaper}  For any positive integer $n$, there is a parametrization of {\bf generic} supercuspidal representations of $SO(2n+1,F)$
$$\CL_g:  \Pi^0_g(G/F) \isoarrow \Phi^0(G/F)$$
that satisfies Property \ref{p8} for an appropriate theory of automorphic $L$-functions.
\end{thm}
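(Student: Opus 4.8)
\noindent The plan is to construct $\CL_g$ out of two kinds of input, one local and one global. The local input is, first, the Langlands--Shahidi theory of local factors: for $\pi\in\Pi^0_g(G/F)$ and $\tau$ an irreducible generic representation of $GL(r,F)$ one has, defined intrinsically from the constant term and the Whittaker functional in the Levi subgroup $G\times GL(r)$ of a larger odd orthogonal group, local factors $L(s,\pi\times\tau)$, $\gamma(s,\pi\times\tau,\psi)$ and $\varepsilon(s,\pi\times\tau,\psi)$; these satisfy multiplicativity in $\tau$, compatibility with the unramified/Satake picture as in Property \ref{p7a}, and, crucially, \emph{stability}: for $\pi,\pi'$ with the same central character one has $\gamma(s,\pi\times(\tau\otimes\chi),\psi)=\gamma(s,\pi'\times(\tau\otimes\chi),\psi)$ once $\chi$ is a sufficiently ramified character of $F^\times$. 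The second, and technically deepest, local input is the \emph{local converse theorem for $SO(2n+1)$}: a generic irreducible $\pi$ of $G(F)$ is determined up to isomorphism by the family $\{\gamma(s,\pi\times\tau,\psi)\}$ with $\tau$ running over supercuspidal representations of $GL(r,F)$ for $1\le r\le 2n-1$.

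The global input is the pair of adjoint functorial transfers between $SO(2n+1)$ and $GL(2n)$: the Cogdell--Kim--Piatetski-Shapiro--Shahidi forward lift, built by applying the converse theorem for $GL(2n)$ to the Langlands--Shahidi $L$-functions $L(s,\Pi\times\tau)$, together with the Ginzburg--Rallis--Soudry automorphic descent, whose composite with the forward lift is the identity on the globally generic cuspidal spectrum; the image of the forward lift is exactly the set of isobaric automorphic representations $\Sigma$ of $GL(2n)$ that are self-dual with $L(s,\Sigma,\wedge^2)$ having a pole at $s=1$ (``symplectic type''). Given $\pi\in\Pi^0_g(G/F)$ one globalizes it to a globally generic cuspidal representation $\Pi$ of $G$ over a global field $k$ having a place $v_0$ with $\Pi_{v_0}\isoarrow\pi$ and controlled ramification elsewhere, forms the lift $\Sigma$ of $\Pi$ on $GL(2n)/k$, and uses stability of $\gamma$-factors on both sides to force $\gamma(s,\Sigma_{v_0}\times\tau,\psi)=\gamma(s,\pi\times\tau,\psi)$ for all $\tau$. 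One then defines $\CL_g(\pi)$ to be the $L$-parameter of $\Sigma_{v_0}$ under the local Langlands correspondence for $GL(2n)$ of Theorem \ref{LLCGLN}; since $\Sigma$ is self-dual of symplectic type this parameter factors through $Sp(2n,\CC)=\hG$ and so yields a parameter $WD_F\to{}^LG(\CC)$, and since $\pi$ is supercuspidal the descent forces $\Sigma_{v_0}$ to be a multiplicity-free isobaric sum of supercuspidal representations of general linear groups, so $\CL_g(\pi)$ is a multiplicity-free sum of distinct irreducible symplectic Weil representations, i.e. an element of $\Phi^0(G/F)$.

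It remains to check that $\CL_g$ is a well-defined bijection with the stated property. Well-definedness (independence of the globalization $\Pi$) is immediate: $\CL_g(\pi)$ is the parameter of the unique representation of $GL(2n,F)$ whose twisted $\gamma$-factors against all $GL(r,F)$ coincide with those of $\pi$, uniqueness being Henniart's local converse theorem for $GL(2n)$, which is part of Theorem \ref{LLCGLN}. Injectivity of $\CL_g$ is exactly the local converse theorem for $SO(2n+1)$: if $\CL_g(\pi)=\CL_g(\pi')$ then $\Sigma_{v_0}\isoarrow\Sigma'_{v_0}$, hence $\gamma(s,\pi\times\tau,\psi)=\gamma(s,\pi'\times\tau,\psi)$ for all $\tau$, hence $\pi\isoarrow\pi'$. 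For surjectivity onto $\Phi^0(G/F)$, given a multiplicity-free sum $\varphi=\bigoplus_i\varphi_i$ of distinct irreducible symplectic Weil representations of dimensions $n_i$ with $\sum_i n_i=2n$, each $\varphi_i$ corresponds to a self-dual supercuspidal $\tau_i$ of $GL(n_i,F)$ of symplectic type; one globalizes the $\tau_i$ to self-dual cuspidal representations of symplectic type and descends the isobaric sum $\boxplus_i\tau_i$ to a globally generic cuspidal $\Pi$ of $G$, whose component at $v_0$ is then a generic supercuspidal $\pi$ with $\CL_g(\pi)=\varphi$; that the local descent of a supercuspidal symplectic-type representation of $GL(2n,F)$ is generic supercuspidal is Soudry's local descent theory. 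Finally Property \ref{p8} for the Rankin--Selberg / Langlands--Shahidi $L$-functions for $G\times GL(r)$ on generic representations is built into the construction: compatibility of the forward lift with local factors at $v_0$ together with Property \ref{p8} for $GL(2n)$ gives $L(s,\pi\times\tau)=L(s,\Sigma_{v_0}\times\tau)=L(s,\CL_g(\pi)\otimes\CL(\tau))$, and similarly for $\varepsilon$.

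The main obstacle is the local converse theorem for $SO(2n+1)$: there is no direct analogue of the Jacquet--Piatetski-Shapiro--Shalika argument for $GL(n)$, and one is driven either to a delicate study of partial Bessel functions attached to the Whittaker model or to a globalization argument that invokes the global converse theorem together with strong multiplicity one and very precise control over the local components of the auxiliary cuspidal representation at the auxiliary ramified places. A secondary, more technical, difficulty is proving that the forward lift is compatible with the local $L$- and $\varepsilon$-factors at the bad place $v_0$ and that the automorphic descent, read locally, produces precisely the generic supercuspidal representations, i.e. developing local descent to the needed precision.
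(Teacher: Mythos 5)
Your proposal faithfully reconstructs the Jiang--Soudry argument that the paper cites for Theorem \ref{JSopaper}; the paper gives no independent proof beyond indicating, after Theorem \ref{HKTthm}, that the key inputs are the Ginzburg--Rallis--Soudry automorphic descent, the Cogdell--Kim--Piatetski-Shapiro--Shahidi forward lift to $GL(2n)$, and the local converse theorem for $SO(2n+1)$, all of which you deploy in the same roles. Your identification of the local converse theorem as the principal technical obstacle, and of local descent precision as the secondary one, matches how the paper characterizes the contributions of \cite{JSo} and \cite{GRS}.
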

A proof of the analogous result for even orthogonal groups and symplectic groups is sketched in \cite{JSo2}.  The structure of the proof is discussed below, after the statement of an analogous result for $G = G_2$.

By the time this article is published it is likely that Gan and Savin will have completed their program to define a local Langlands correspondence for inner forms of $G_2$
satisfying many of the properties in \S \ref{characterizing}.  Their construction is based on an analysis of the exceptional theta correspondences for the dual reductive pairs
$(G_2,PGL(3))$ and $(G_2,PSp(6))$ in exceptional groups of type $E_6$ and $E_7$, respectively.  The analysis has been carried out in a long list of papers by (in more or less
chronological order) Ginzburg, Rallis, Soudry, Li, Magaard, Gross, Jiang, Woodbury, and Weissman, as well as Gan and Savin.  Parameters with values in the dual group $\hat{G}$ of $G_2$, which is $G_2$ itself, can be studied by means of the $7$-dimensional representation of $\hat{G}$, which takes values in $SO(7)$.  Parameters for $G_2$ are then matched with the known parameters for $PGL(3)$ and $PGSp(6)$ with values in $SO(7)$ and the relations among these parameters are compared with those constructed 
by means of the exceptional theta correspondences.  At the time of writing there are still a few issues to be sorted out over $2$-adic fields.

The article \cite{HKT} takes the results on the exceptional theta correspondences as its starting point, but applies principles developed in connection with the deformation theory of global Galois representations.  The main result is the following.  We take $G_2$ to designate the split form of the group.
As above, let $\Pi^0_g(G_2/F)$ denote the set of (equivalence classes of) generic supercuspidal representations of $G_2(F)$,
and let $\Phi^0(G_2/F)$ denote the subset of $\Phi(G_2/F)$ consisting  of (equivalence classes of) irreducible local parameters.   The following theorem is
the analogue for $G_2$ of Theorem \ref{JSopaper}.

\begin{thm}\label{HKTthm}  There is a natural bijection
$$\CL_g:  \Pi^0_g(G_2/F) \isoarrow \Phi^0(G_2/F).$$
\end{thm}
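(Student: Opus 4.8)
The plan is to construct $\CL_g$ from the exceptional theta correspondences, prove it injective using standard properties of theta lifting and of the classical-group correspondence, and then prove surjectivity by a global argument in which the deformation theory of Galois representations supplies the automorphy input. Concretely, given $\pi \in \Pi^0_g(G_2/F)$ I would first form its theta lift $\theta(\pi)$ in the dual pair $(G_2, PGSp(6)) \subset E_7$. By the results cited before the statement (Ginzburg--Rallis--Soudry, Gan--Savin, and the others), $\theta(\pi)$ is nonzero and generic, and its local Langlands parameter --- available since $PGSp(6)$ is covered by Theorem \ref{arthurthm}, with $\widehat{PGSp(6)} = Spin(7)$ --- has image contained in the subgroup $G_2 \subset Spin(7)$. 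Combining this with the information coming from the second dual pair $(G_2, PGL(3)) \subset E_6$ pins down the $G_2(\mathbb{C})$-conjugacy class; matching the two is exactly the comparison of relations among parameters carried out in that body of work. I would take $\CL_g(\pi)$ to be this parameter. It lands in $\Phi^0(G_2/F)$ because a parameter factoring through a proper parabolic would force $\pi$ to be a subquotient of a proper parabolic induction, contradicting supercuspidality; one verifies this through the $7$-dimensional representation.

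For injectivity, if $\CL_g(\pi) = \CL_g(\pi')$ then $\theta(\pi)$ and $\theta(\pi')$ have the same Langlands parameter; since both are the generic member of the corresponding tempered $L$-packet for $PGSp(6)$ (uniqueness of the generic member, known here, cf. Conjecture \ref{Vogan}(c)) they are isomorphic, and the Howe-duality property of the exceptional correspondence on the supercuspidal locus then gives $\pi \cong \pi'$. Surjectivity of $\CL_g$ onto $\Phi^0(G_2/F)$, and not just injectivity into it, is what makes this more than a numerical statement.

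For surjectivity, given an irreducible parameter $\varphi \colon W_F \to G_2(\mathbb{C})$ I would produce a preimage by globalizing. One chooses a global field $K$ with a place $v_0$ and an identification $K_{v_0} \isoarrow F$, an auxiliary place $v_1$ (and the archimedean places if $K$ is a number field), and constructs a global parameter $\Phi \colon \mathrm{Gal}(K^{sep}/K) \to G_2(\Qlb)$ with $\Phi|_{W_{v_0}} = \varphi$, with $\Phi|_{W_{v_1}}$ arranged to force local supercuspidality and rigidify the deformation problem, and unramified elsewhere. The key step is then to show $\Phi$ is \emph{automorphic} --- it comes from a globally generic cuspidal automorphic representation $\Pi$ of $G_2(\mathbb{A}_K)$ --- after which $\Pi_{v_0}$ is generic, the conditions at $v_0$ force it supercuspidal, and a local-global compatibility statement for the theta-correspondence construction (a version of Property \ref{p11}, proved alongside the construction) gives $\CL_g(\Pi_{v_0}) = \varphi$. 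The hard part will be exactly the automorphy of $\Phi$: since $\widehat{G_2} = G_2$ there is no reduction to $GL(n)$ or to a classical group, so one must set up the Galois deformation rings, Hecke algebras, and Taylor--Wiles patching directly for $G_2$ --- combining a Ramakrishna-type lifting producing $\Phi$ with prescribed local behaviour and an accessible residual reduction, with a modularity lifting theorem for $G_2$ carrying automorphy from that reduction up to $\Phi$ (in the function-field case one can instead lean on Lafforgue's global correspondence together with a $G_2$ descent/converse-theorem input in the spirit of Theorem \ref{JSopaper}). One must also choose the auxiliary local conditions so that the $G_2$-deformation problem is unobstructed, propagate genericity through the globalization so the relevant $L$-packets and converse theorems apply, and ensure the residual input is itself known to be automorphic; and reconciling the theta-correspondence definition of $\CL_g$ with the global parametrization, as needed for the final identification, is a further delicate point.
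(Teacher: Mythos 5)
The front end of your argument (defining $\CL_g$ by theta-lifting to $PGSp(6)$, reading off a $Spin(7)$-parameter from the classical-group LLC, cutting it down to $G_2$, and proving injectivity via uniqueness of the generic member plus Howe duality) matches the paper's description of \cite{HKT}. The problem is the surjectivity argument, which is exactly where you flag the difficulty and then propose the wrong fix.

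You write that ``since $\widehat{G_2}=G_2$ there is no reduction to $GL(n)$ or to a classical group, so one must set up the Galois deformation rings, Hecke algebras, and Taylor--Wiles patching directly for $G_2$.'' That is not what \cite{HKT} does, and a direct Taylor--Wiles/patching argument internal to $G_2$ is not available; this step of your proposal would not go through. The paper's route is precisely the reduction you rule out: compose the globalized parameter $\Phi\colon\mathrm{Gal}(\bar K/K)\to G_2(\Qlb)$ with the $7$-dimensional representation $r_7\colon G_2\hookrightarrow GL(7)$, and run the automorphy lifting and potential automorphy machinery in the $GL(7)$ setting, where it exists. Automorphy for $G_2$ is then recovered not by $G_2$-side patching but by \emph{automorphic descent}: the Hundley--Liu theorem (generalizing Ginzburg--Rallis--Soudry, in the spirit of Theorem \ref{JSopaper}) constructs a globally generic cuspidal representation of $G_2(\mathbb{A}_K)$ from Fourier coefficients on $E_7$ of an Eisenstein series built from the $GL(7)$ automorphic representation, and the descent is nonzero exactly when the exterior cube $L$-function $L(s,\Pi,\wedge^3)$ has a pole --- a condition one verifies on the Galois side because $\wedge^3\circ r_7$ contains the trivial representation of $G_2$. (Griess's theorem, that $r_7$ determines a $G_2$-conjugacy class, together with Chenevier's result, is what guarantees the descended parameter is the right one up to $G_2$-conjugacy.) So the architecture is: globalize, transfer to $GL(7)$ via $r_7$, apply (potential) automorphy there, descend to $G_2$ via a pole of $\wedge^3$ --- not a $G_2$-intrinsic modularity lifting theorem. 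Your parenthetical remark about the function-field case comes closer in spirit, but \cite{HKT} is a $p$-adic result and the descent-from-$GL(7)$ mechanism is the main engine, not an alternative.
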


Since the first step in the construction involves the exceptional theta correspondence between $G_2$ and $PGSp(6)$, one could with equal justice 
call $\CL_g$ an {\it unnatural} bijection.  The proof is analogous to that of Theorem \ref{JSopaper} in that it relies essentially on the method of automorphic
descent.  Where Jiang and Soudry quoted the automorphic descent from general linear groups to classical groups as developed by Ginzburg, Rallis, and Soudry \cite{GRS},  Theorem \ref{HKTthm} uses a result of Hundley and Liu that constructs generic automorphic representations of $G_2$ using Fourier coefficients on
$E_7$ of Eisenstein series obtained from $GL(7)$.   As in \cite{GRS}, the automorphic representation of $GL(7)$ descends to a generic automorphic representation
of $G_2$ precisely when a certain $L$-function of the general linear group -- in this case the exterior cube $L$-function -- has a pole.   The proof in \cite{HKT}
differs from previous proofs in that it makes use of automorphic lifting theorems and potential automorphy to realize a given parameter in $\Phi^0(G_2/F)$.  A more general
application of this strategy is proposed in Part \ref{Strategy}.

 Most of the properties listed in \S \ref{basicp} are irrelevant to $G_2$.  However, the local exceptional theta lifting 
is compatible with the global theta lifting of automorphic representations, and the local unramified lift to $PGSp(6)$, composed with the pullback to $Sp(6)$ 
and then unramified functoriality from $Sp(6)$ to $GL(7)$ was computed to correspond to Langlands functoriality for the $L$-homomorphism
$$\hat{G}_2 = G_2 \overset{r_7}\to GL(7) = \widehat{GL}(7),$$
where $r_7$ is the (unique) $7$-dimensional representation of $G_2$.  It follows from a theorem of Griess \cite[Theorem 1]{Gr} that the composition
$$r_7 \circ \CL_g:  \Pi^0_g(G_2/F) \ra GL(7)$$
determines $\CL_g$ up to $G_2$-conjugation.   Thus by Chebotarev density and a theorem of Chenevier  \cite{Ch} the  parametrization is uniquely determined.

In particular, one verifies easily, by comparison with the local Langlands correspondence for $GL(7)$, that the bijection $\CL_g$ satisfies Properties \ref{p2}, \ref{p4}, \ref{rEF}, \ref{discrete}, \ref{p11} and \ref{p12}, insofar as they apply to generic supercuspidal representations.  There are also several constructions of automorphic $L$-functions for $G_2$, but as far as I know they have not been developed to the point of defining local $\gamma$-factors for ramified representations, so one can't yet assert that
$\CL_g$ verifies property \ref{p8}.

\section{Supercuspidals by types}\label{types}

\subsection{Results of Yu, Kaletha, and Fintzen}

In this section we will assume $G$ to be semi-simple, to simplify the statements.  It is generally believed that every supercuspidal representation of $G(F)$
can be constructed by compact induction from an appropriate type $(\bJ,\Lambda)$, as in Theorem \ref{BKtypes}.  In \cite{Yu01} J.-K. Yu introduced a
rather elaborate construction of types $(\bJ,\Lambda)$, for any reductive group that splits over a tamely ramified extension of $F$, and proved that $c-Ind_{\bJ}^{G(F)} \Lambda$ is irreducible, hence
supercuspidal.  Although the proof contained no error, it was based on a statement that had been published with a misprint, and was false.  A correct proof of
Yu's theorem was given more recently by Fintzen \cite{Fi19}, in connection with her proof of the fundamental theorem \cite{Fi21} that we have already mentioned several
times.

\begin{thm}\label{fintzenthm}  Suppose $G$ splits over a tame extension of $F$ and $p$ does not divide the order of the Weyl group $W(G)$ of $G$.  Then every irreducible supercuspidal representation
of $G(F)$ arises from Yu's general construction.  In particular, every irreducible supercuspidal representation of $G(F)$ is compactly induced from an 
irreducible representation of a compact open subgroup of $G(F)$.
\end{thm}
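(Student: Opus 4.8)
The plan is to prove the theorem as an \emph{exhaustion} statement. By Yu's theorem — whose proof was repaired by Fintzen \cite{Fi19} — the representation $c\text{-}\mathrm{Ind}_{\bJ}^{G(F)}\Lambda$ attached to any of Yu's data is irreducible, hence supercuspidal; so it suffices to show that an arbitrary irreducible supercuspidal $\pi$ of $G(F)$ is isomorphic to such an induced representation. I would argue by induction on the semisimple rank of $G$, with the Moy--Prasad theory of minimal $K$-types as the main tool. Recall that every irreducible admissible $\pi$ has a depth $r=r(\pi)\ge 0$ and contains an unrefined minimal $K$-type $(G_{x,r},\chi)$ for some point $x$ of the Bruhat--Tits building; when $r>0$ the character $\chi$ of $G_{x,r}/G_{x,r+}$ corresponds under the Moy--Prasad isomorphism to a coset $X^{*}+\mathfrak g^{*}_{x,(-r)+}$ in $\mathfrak g^{*}_{x,-r}$. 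The base case $r=0$ is handled by quoting the classical description (Moy--Prasad, Morris) of depth-zero supercuspidals as compactly induced from the inflation of a cuspidal representation of a finite reductive quotient — precisely the degenerate, length-one instance of Yu's construction.

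For $r>0$ the substance of the argument is to upgrade the minimal $K$-type to a full Yu datum, and the key intermediate claim is that $X^{*}$ can be conjugated by $G(F)$ into (the Moy--Prasad filtration of) the dual Lie algebra of a proper $F$-twisted Levi subgroup $G^{0}\subsetneq G$; equivalently, that $\pi$ admits a \emph{good} minimal $K$-type. This is exactly where the hypothesis $p\nmid|W(G)|$ enters: it forces $p$ to avoid the torsion primes and bad primes of $G$ and of the subgroups that intervene, so that the reduction of $X^{*}$ over the residue field behaves like a semisimple element, its centralizer is a connected reductive ($F$-twisted Levi) group, and the Moy--Prasad isomorphisms between group and Lie-algebra filtrations remain available at the levels needed. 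This refines the earlier analysis of Ju-Lee Kim \cite{Kim}, which required $p$ to be very large, and pushing the hypothesis down to the sharp bound $p\nmid|W(G)|$ is the step I expect to be the main obstacle: it calls for delicate, essentially root-system-by-root-system, control of the small primes for which a coset in $\mathfrak g^{*}$ over $\Fq$ can fail to contain an effectively semisimple element, together with a verification that the centralizers that occur are genuinely twisted Levi subgroups of the shape Yu's recipe accepts and that their Moy--Prasad filtrations match up.

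Granted a good minimal $K$-type with associated twisted Levi $G^{0}$, I would then peel off the top layer: following the analysis of Hakim--Murnaghan and Kim, $\pi$ contains the initial stages of a Yu datum attached to $G^{0}$ and a generic character $\phi_{0}$, and the residual piece is — up to the bookkeeping internal to Yu's construction — a supercuspidal representation $\pi^{0}$ of $G^{0}(F)$ of strictly smaller depth relative to $G^{0}$, arranged so that Yu's intertwining and induction steps rebuild $\pi$ from $\pi^{0}$. Since $G^{0}$ again splits over a tame extension and $|W(G^{0})|$ divides $|W(G)|$, the inductive hypothesis applies to $\pi^{0}$, and unwinding the recursion produces a full Yu datum $(\vec G, x, \vec r, \rho_{0}, \vec\phi)$ with $c\text{-}\mathrm{Ind}_{\bJ}^{G(F)}\Lambda\cong\pi$. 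Finally, because $G$ is semisimple, the resulting $\bJ$ is compact — not merely compact modulo the center — so the second assertion of the theorem is immediate; the remaining routine points, namely the irreducibility of $\Lambda$ and the consistency when distinct data give the same representation, I would settle by citing Yu's construction and the equivalence theory for Yu data.
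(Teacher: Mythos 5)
This is a survey article: Theorem \ref{fintzenthm} is quoted verbatim from Fintzen's paper \cite{Fi21} and the paper gives no proof of it, only a citation and a brief historical gloss (noting that Yu's original irreducibility argument was repaired in \cite{Fi19}, and that \cite{Fi21} established exhaustion under the hypothesis $p\nmid|W(G)|$, improving \cite{Kim}). So there is no proof in the paper against which to compare your proposal; you are, in effect, sketching a proof of Fintzen's theorem itself.

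As an outline of that theorem, what you write is a reasonable high-level account of the Kim/Hakim--Murnaghan strategy: use Moy--Prasad to extract an unrefined minimal $K$-type, show the dual-Lie-algebra coset is ``good'' so that one gets a proper tame twisted Levi $G^0$, peel off a generic character, and recurse. You are also right that the depth-zero base case is classical and that the decisive step is the goodness assertion under the sharp hypothesis $p\nmid|W(G)|$, which is exactly what distinguishes \cite{Fi21} from the earlier work of \cite{Kim} (where $p$ was taken much larger and harmonic-analytic input was used). But two cautions. First, that key step is not a routine refinement you can wave at: Fintzen's treatment of it is substantial and is really the content of the theorem, so a proposal that names it as ``the main obstacle'' and defers it has not yet produced a proof. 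Second, your recursion is phrased as an induction on semisimple rank with ``strictly smaller depth relative to $G^0$''; the bookkeeping in the literature is more delicate than this (one has to be careful about which quantity decreases, that the filtrations on $G^0$ at the relevant point match those of $G$, and that $G^0$ again satisfies all the running hypotheses), and I would not assert the induction works as cleanly as you state without checking those compatibilities. Finally, a small note on scope: the second sentence of the theorem (compact, not merely compact-mod-center, inducing subgroup) relies on the standing assumption stated at the start of \S 4 that $G$ is semisimple; you implicitly use this at the end, which is fine, but it is worth making explicit since the theorem statement itself does not repeat the hypothesis.
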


When applied to $GL(n)$, Yu's construction recovers precisely the supercuspidals whose corresponding Langlands parameters are induced from characters of
tame extensions of $F$, and indeed Yu's types can be identified with those of Bushnell and Kutzko \cite{MY}.  In particular, Yu's construction recovers all supercuspidals when $n$ is prime to $p$.  In general it is expected that the Langlands parameters of Yu's supercuspidals
have the property that the image of wild inertia is contained in a maximal torus.  A precise conjecture along these lines is due to Kaletha.  In order to formulate his conjecture,
Kaletha first introduces a more manageable parametrization of a subclass of Yu's supercuspidals.  I quote the statement verbatim from Kaletha's ICM talk \cite{Ka22}:

\begin{thm}\label{KalethaSth}  Assume that $G$ splits over a tame extension of $F$ and $p$ does not divide the order of $W(G)$. The set of isomorphism classes of {\bf regular} supercuspidal representations of $G(F)$ is in a natural bijection with the set of $G(F)$-conjugacy classes of pairs $(S,\theta)$, where $S$ is an elliptic maximal torus that splits over a tame extension, and $\theta$ is a regular character of $S(F)$.
\end{thm}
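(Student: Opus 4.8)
The plan is to combine Fintzen's exhaustion theorem (Theorem \ref{fintzenthm}) with an analysis of which Yu data produce \emph{regular} supercuspidals, and to match the combinatorial input of Yu's construction with a ``Howe factorization'' of a regular character of $S(F)$. By Theorem \ref{fintzenthm}, every irreducible supercuspidal of $G(F)$ is of the form $c\text{-}\mathrm{Ind}_{\bJ}^{G(F)}\Lambda$ attached to a Yu datum, which packages a tamely ramified twisted Levi sequence $\vec G = (G^0 \subset G^1 \subset \cdots \subset G^d = G)$, a point $x$ in the building of $G^0$, an irreducible depth-zero supercuspidal $\rho$ of $G^0(F)$, and a sequence of generic characters $\phi_i$ of $G^i(F)$ of strictly increasing positive depth. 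I would first recall Kaletha's definition: such a representation is \emph{regular} precisely when the depth-zero piece $\rho$ is a \emph{regular} depth-zero supercuspidal of $G^0(F)$; by the theory of DeBacker and Reeder, such $\rho$ are canonically associated to pairs $(S,\theta^0)$ with $S$ an unramified elliptic maximal torus of $G^0$ and $\theta^0$ a ``$0$-regular'' character of $S(F)$ --- one obtained by Deligne--Lusztig induction from a character in general position under the relevant Weyl group. A structural point to isolate is that, under the conditions guaranteeing that the whole datum yields a supercuspidal of $G$, such an $S$ is automatically an elliptic maximal torus of $G$ that splits over a tame extension.

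The core of the argument is to organize the passage between regular Yu data $(\vec G, x, \rho, (\phi_i))$ and pairs $(S,\theta)$. In one direction, from a regular Yu datum I would take $S$ to be the tame elliptic maximal torus of $G^0$ underlying $\rho$ and set $\theta = \theta^0 \cdot \prod_i \phi_i|_{S(F)}$; one then checks that $\theta$ is a \emph{regular} character of $S(F)$ in Kaletha's sense --- its depth-zero part is in general position and it does not factor through the preimage of any proper Levi --- using the genericity of the $\phi_i$, the regularity of $\theta^0$, and the hypothesis $p \nmid |W(G)|$ to keep the tame structure under control. Conversely, given a regular $\theta$ on a tame elliptic maximal torus $S$, I would invoke Kaletha's \emph{Howe factorization}, writing $\theta = \prod_{i=-1}^{d}\phi_i|_{S(F)}$ with each $\phi_i$ a $G^i$-generic character of the appropriate depth; existence of such a factorization is where regularity of $\theta$ is used decisively, and it simultaneously produces the twisted Levi sequence $\vec G$ and the characters $\phi_i$. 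The pair $(S, (\phi_{-1}\phi_0)|_{S(F)})$ then determines a regular depth-zero supercuspidal $\rho$ of $G^0(F)$, and feeding $(\vec G, x, \rho, (\phi_1,\ldots,\phi_d))$ into Yu's (corrected, cf.\ \cite{Fi19}) construction yields a regular supercuspidal of $G(F)$. Finally I would verify that the two constructions are mutually inverse and that they intertwine $G(F)$-conjugacy of pairs with isomorphism of representations, appealing to Yu's irreducibility and intertwining results, to the Hakim--Murnaghan analysis of when two Yu data give isomorphic representations, and to the compatibility of the Deligne--Lusztig parametrization with rational conjugacy.

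The main obstacle --- and the most delicate part of Kaletha's original argument --- is showing that the representation produced by the second construction is \emph{independent of the chosen Howe factorization}: two Howe factorizations of the same regular $\theta$ differ by characters that are trivial on the relevant subgroups up to the appropriate depths, and one must prove that the corresponding (literally different) Yu data nonetheless give isomorphic compactly induced representations. This forces a careful bookkeeping of how $\Lambda$ changes under such modifications --- in particular the behavior of the Heisenberg--Weil constituents and of the sign and quadratic characters entering the Deligne--Lusztig/Yu dictionary. A secondary difficulty is pinning down the definition of ``regular character'' so that, simultaneously, (i) Howe factorizations exist and (ii) the resulting depth-zero datum is a regular depth-zero supercuspidal, and then checking that this condition carves out exactly the regular supercuspidals among all Yu supercuspidals; here the hypothesis $p \nmid |W(G)|$ is what forces every torus and twisted Levi in sight to behave as in the generic situation, eliminating the pathologies that appear at small residue characteristic.
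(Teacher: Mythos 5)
The paper does not prove this theorem. The line immediately preceding it reads ``I quote the statement verbatim from Kaletha's ICM talk \cite{Ka22},'' and the proof lives in \cite{Ka19}; there is therefore no in-text argument against which to compare your proposal. Taken on its own merits, your sketch is a faithful high-level outline of Kaletha's actual argument: reduce to Yu data via the exhaustion theorem, define regularity through the depth-zero component $\rho$ (and its Deligne--Lusztig pair $(S,\theta^0)$), pass forward by assembling $\theta = \theta^0\cdot\prod_i\phi_i|_{S(F)}$, pass backward via a Howe factorization of $\theta$, and then establish that the construction is well defined and compatible with $G(F)$-conjugacy. You also correctly isolate the two genuinely delicate points (independence of the chosen Howe factorization; making ``regular'' mean the right thing). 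Two small refinements are worth recording. First, the \emph{existence} of Howe factorizations is not where regularity of $\theta$ is used decisively --- that is a general tame phenomenon for characters of tame tori; what regularity buys you is that the depth-zero component of the factorization lands in general position, so that the depth-zero piece of the resulting Yu datum is a regular depth-zero supercuspidal. Second, the final bookkeeping identifying $G(F)$-conjugacy of pairs with isomorphism of compactly induced representations runs through the Hakim--Murnaghan notion of ``$G$-equivalence'' of Yu data, which includes refactorization and elementary transformation moves beyond bare conjugacy; this is where the genericity of the $\phi_i$ and the hypothesis $p\nmid|W(G)|$ earn their keep, and a complete write-up would need to spell out that your forward and backward maps descend through that equivalence.
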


{\it Regularity}, as defined by Kaletha, is a property of general position.  The $L$-parameters that Kaletha assigns to regular supercuspidals are include those
homomorphisms $\varphi:  W_F \ra {}^LG$ such that (i) the image of wild inertia is contained in a maximal torus of $\hat{G}$ and (ii) the centralizer in $\hat{G}$ 
of the image of the inertia subgroup $I_F$ is abelian (the precise definition is slightly more general; see \cite[Definition 5.2.3]{Ka19}).  Kaletha's proposed formula for $\CL(\pi)$, when $\pi$ is associated to $(S,\theta)$ as in Theorem \ref{KalethaSth},
naturally begins with the parameter $\CL(\theta):  W_F \ra \hat{S}$ from the abelian Langlands correspondence, but the complete construction, which occupies 15 difficult
pages of \cite{Ka19}, is much too intricate to review here.  Moreover, Kaletha not only defines $\CL(\pi)$ but shows that the fiber of his
map $\CL$ has the structure predicted by Conjecture \ref{Vogan}.  
In \cite{Ka20} Kaletha singles out a somewhat more general class of Yu's supercuspidals that he calls {\it non-singular}, and carries out similar constructions for
their parameters.

Kaletha's construction raises two immediate questions:
\begin{question}
\begin{itemize}
\item[(1)]  Does Kaletha's parametrization coincide with those of Genestier-Lafforgue and Fargues-Scholze?
\item[(2)]  Do the $L$-packets he defines satisfy the endoscopic character identities of Property \ref{p9}?  In particular, do his regular supercuspidal $L$-packets correspond
to stable distributions?
\end{itemize}
\end{question}

Except for $GL(n)$, where the question has been answered affirmatively by combining the results of \cite{BH10} and \cite{MY}, very little is known about the first question.  The
second question, on the other hand, was recently settled by Fintzen, Kaletha, and Spice for $p$-adic fields, with $p >> 0$ \cite{FKS}.

\begin{remark}[Depth zero representations]  Kaletha's parametrization of regular supercuspidals generalizes the earlier study of depth zero supercuspidals by DeBacker-Reeder
and Kazhdan-Varshavsky \cite{DR09,KV}.  These earlier papers already established the stability of the depth zero supercuspidal $L$-packets under certain conditions.
\end{remark}

\section{Geometric constructions}\label{coho}


\subsection{The parametrizations of Genestier-Lafforgue and Fargues-Scholze}

The first definition of semisimple local parametrizations $\CL$ for general reductive groups was defined by Genestier and V. Lafforgue, in the 
setting of  Lafforgue's (semisimple) parametrization of cuspidal automorphic representations over function fields.    If $K$ is a global function field,
we let $\CA_0(G,K)$ denote the set of cuspidal automorphic representations of $G(\ad_K)$.

Here is the statement.

\begin{thm}\cite[Th\'eor\`eme 0.1]{GLa}\label{paramGL}
Let $F$ be a non-archimedean local field of characteristic $p$; $F \isoarrow k'(\!(t)\!)$ for some finite extension $k'$ of $\Fp$.  
Let $C$ be the coefficient field $\Qlb$, for any prime $\ell \neq p$.
There is a canonical parametrization with coefficients in $C$:
\begin{equation}\label{localparam}
\CL^{ss}:  \Pi(G/F)   \ra \Phi(G/F)^{ss}.
\end{equation}
The map satisfies Properties \ref{p1}-\ref{p7} and Property \ref{p7a}.  

It is also compatible
with the global semisimple parametrization 
\begin{equation}\label{VLparam}
\CL^{ss} = \CL^{ss}_{G,K}:  \CA_0(G,K) \ra \F(G/K)^{ss}
\end{equation}
of cuspidal automorphic representations
defined by Vincent Lafforgue \cite{Laf18} (Property \ref{p11}).

\end{thm}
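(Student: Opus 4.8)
The plan is to reproduce, at the level of a sketch, the construction of \cite{GLa}, which localizes V. Lafforgue's global construction \cite{Laf18}. The organizing device on both sides is a family of commuting \emph{excursion operators}. Globally, to each datum $(I,f,(\gamma_i)_{i\in I})$ — a finite set $I$, a $\hG$-conjugation-invariant function $f$ on $\hG^{I}$, and elements $\gamma_i$ of $Gal(K^{sep}/K)$ — Lafforgue associates an endomorphism of the space of cusp forms on $G(\ad_K)$, assembled from the cohomology of moduli stacks of $G$-shtukas with legs indexed by $I$ together with the actions of the partial Frobenii and of geometric Satake. These operators commute, satisfy composition and Frobenius-equivariance relations, and act Hecke-finitely, so that on a fixed cuspidal $\Pi$ their joint eigensystem, fed into the reconstruction theorem for $\hG$-valued excursion data, produces $\CL^{ss}_{G,K}(\Pi)$ up to $\hG(C)$-conjugacy.

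First I would construct the local counterpart. One replaces global shtukas by moduli of bounded (``restricted'') local $G$-shtukas over $\mathrm{Spec}\,\CO_F$ with legs at the closed point, and uses the Hecke action on their cohomology supplied by geometric Satake for the affine Grassmannian of $G$ over $F$. For each irreducible admissible $\pi \in \Pi(G/F)$ this yields a commuting family of local excursion operators indexed by data $(I,f,(\gamma_i))$ with $\gamma_i \in W_F$, satisfying the same relations; the reconstruction theorem then produces a semisimple $\CL^{ss}(\pi) : W_F \to {}^LG(C)$, well-defined up to $\hG(C)$-conjugacy, which is the map \eqref{localparam}.

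Properties \ref{p1}--\ref{p7} and \ref{p7a} then fall out of functoriality of this construction: \ref{p2} because a field isomorphism identifies all the moduli data; \ref{p5} and \ref{p6} (hence \ref{p3}, as noted in the text) from functoriality for products and for central isogenies; \ref{p4} from the compatibility of geometric Satake with duality; \ref{p1} because for a torus the excursion construction collapses to the abelian case and recovers Lafforgue's local class field theory normalization; \ref{rEF} from the standard identification of $R_{F/E}G$-data with $G$-data; \ref{p7} from the degeneration of affine Grassmannians relating the local shtuka moduli of $G$ to those of a Levi $M$; and \ref{p7a} because on a spherical vector the Hecke action is literally the Satake isomorphism, so the excursion operators compute the Satake parameter. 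For the compatibility with \eqref{VLparam} (Property \ref{p11}): given $\Pi = \otimes_w \Pi_w \in \CA_0(G,K)$ with $\Pi_v \isoarrow \pi$, one shows the global excursion operators act on the $v$-component through the local excursion operators of the previous paragraph. Localizing the global moduli of shtukas near $v$ and collapsing the legs onto $v$ degenerates it to the local moduli space, and the partial-Frobenius action matches the local geometric Frobenius at $v$; hence the joint eigensystem of the global operators, restricted along $W_{K_v} \hookrightarrow Gal(K^{sep}/K)$, coincides with that of the local operators on $\pi$, and reconstruction gives $\CL^{ss}_{G,K}(\Pi)|_{W_{K_v}} = \CL^{ss}(\pi)$.

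The hard part is the first step: setting up the moduli of local shtukas together with the attendant geometric-Satake and cohomological-correspondence formalism so that the excursion relations hold exactly — commutativity, the composition identities, and Frobenius-equivariance — since it is precisely these relations that license the passage to a well-defined conjugacy class of Weil-group parameters. This is the technical core of \cite{GLa}; once it is in place, the basic properties and the global comparison are comparatively formal.
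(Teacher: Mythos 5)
This theorem is stated in the paper as a citation of \cite{GLa} (Th\'eor\`eme 0.1); the paper does not reprove it, and only offers a one-paragraph description of the construction (``Genestier and Lafforgue use the same procedure with moduli spaces of local $G$-shtukas,'' attaching a pseudocharacter/excursion data and then applying GIT to get a semisimple conjugacy class of parameters). Your sketch is a correct and more detailed account of exactly that approach — restricted local shtukas, geometric Satake, excursion operators and the reconstruction theorem, localization of the global excursion algebra for local-global compatibility — and so it matches what the paper gestures at and what \cite{GLa} actually does. One small caveat worth registering: the paper explicitly warns that \emph{not} all of Properties \ref{p1}--\ref{p7} and \ref{p7a} are stated in \cite{GLa}; the ones not in \cite{GLa} are recovered via Property \ref{p11} and Chebotarev density, rather than ``falling out of functoriality'' as your third paragraph suggests — so the global compatibility is not merely a bonus but is load-bearing for deducing some of the basic properties.
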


Not all of the properties in the statement of Theorem \ref{paramGL} are stated explicitly in the article \cite{GLa}, but those
that are not can easily be deduced from the corresponding properties of the global parametrization and Chebotarev density.  If the stable (twisted) trace
formula were available over function fields, local endoscopic transfer (Properties \ref{p9} and \ref{p10}) should similarly follow from 
Chebotarev density by the known results at unramified places.  It seems likely that Property \ref{p12} can be proved for the local parameters, even though
it is not known in general for Lafforgue's global parameters; see \cite[\S 7.7]{GLo} for a proof in certain cases.

The article \cite{GLa}
contains a section constructing local and global theories of automorphic $L$-functions, but it does not satisfy point (d) of Definition \ref{autoL}:  
it is defined by means of the functorial transfer to $GL(n)$ via $\sigma$ that always exists over function fields, thanks to \cite{Laf02}.  Thus
it is not what we mean by a theory of automorphic $L$-functions.   On the other hand, \cite[\S 5.3]{GLo} proves a version of Property \ref{p8} for
the local $\gamma$-factors.

The construction in \cite{GLa} follows the pattern initiated in \cite{Laf18} in the global setting.  Instead of defining the parameter directly on some cohomology
group, Lafforgue in \cite{Laf18} attaches a pseudocharacter with values in ${}^LG$ to a given cuspidal automorphic $\Pi$ by using the cohomology of an infinite collection 
of moduli stacks of $G$-shtukas with legs, and applying results from geometric invariant theory to deduce that this information is equivalent to an actual semisimple parameter.
Genestier and Lafforgue use the same procedure with moduli spaces of local $G$-shtukas.  
Thus the construction qualifies as cohomological although the actual representation of the global Galois group on the cohomology is not determined.

More recently, Fargues and Scholze have defined local shtukas for $p$-adic fields in \cite{FS}.  From 
a categorical version of the local Langlands correspondence, in the spirit of the work of Gaitsgory and his collaborators,
they derive the following analogue of Theorem \ref{paramGL}:

\begin{thm}\cite{FS}\label{paramFS}  Let $F$ be a $p$-adic field.  Let $C$ be the coefficient field $\Qlb$, for any prime $\ell \neq p$.
There is a canonical parametrization with coefficients in $C$:
\begin{equation}\label{localparamFS}
\CL^{ss}:  \Pi(G/F)   \ra \Phi(G/F)^{ss}.
\end{equation}
The map satisfies Properties \ref{p1}-\ref{p7} and \ref{p7a} \cite[\S IX.6]{FS}.  Moreover, when $G = GL(n)$, the map $\CL^{ss}$
recovers the local Langlands correspondence of Theorem \ref{LLCGLN} \cite[\S IX.7.3]{FS}.
\end{thm}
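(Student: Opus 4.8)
The plan is to follow the architecture of \cite{FS}: trade the representation theory of $G(F)$ for the geometry of the stack $\mathrm{Bun}_G$ of $G$-bundles on the Fargues--Fontaine curve, and extract the parameter by a local incarnation of V. Lafforgue's excursion formalism \cite{Laf18}. First I would set up the geometric foundations in the language of small $v$-stacks and diamonds: construct $\mathrm{Bun}_G$, show it is a nice Artin $v$-stack with a stratification indexed by the Kottwitz set $B(G)$, and identify the open (Harder--Narasimhan--trivial) stratum with $[\ast/\underline{G(F)}]$, so that lisse-\'etale $\Qlb$-sheaves on it are the same as smooth $\Qlb[G(F)]$-modules and $\Pi(G/F)$ is recovered as the set of simple objects supported there, pushed forward by $i_{1!}$ to $D_{\mathrm{lis}}(\mathrm{Bun}_G,\Qlb)$. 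One also needs the Hecke correspondences over products of copies of the curve, with legs allowed to move and to be permuted, so as to obtain a $W_F$-action on the associated cohomology.

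The technical heart, which I expect to be the main obstacle, is geometric Satake over the Fargues--Fontaine curve: the assignment to each $V \in \mathrm{Rep}(\hat G)$ of a Hecke operator $T_V$ on $D_{\mathrm{lis}}(\mathrm{Bun}_G,\Qlb)$, functorial and monoidal in $V$, compatible with fusion of several legs, and carrying the $W_F$-equivariant structure obtained by moving the legs. Establishing this requires the full $v$-sheaf machinery --- universally locally acyclic sheaves, the perverse $t$-structure on the $B_{\mathrm{dR}}^+$-affine Grassmannian, the fusion product, compatibility with constant terms --- and everything downstream rests on it.

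With geometric Satake available, the construction of $\CL^{ss}$ is essentially formal. Given $\pi \in \Pi(G/F)$, viewed via $i_{1!}$ as an object of $D_{\mathrm{lis}}(\mathrm{Bun}_G,\Qlb)$, the Hecke operators indexed by tuples of representations of $\hat G$, together with morphisms to and from the trivial representation and with the $W_F$-action on the legs, produce a family of excursion operators in $\mathrm{End}(\pi)$ that assemble into a character of the excursion algebra $\mathrm{Exc}(W_F,\hat G)$; by the reconstruction theorem of V. Lafforgue, refined in \cite{DHKM}, such a character is exactly the datum of a semisimple $\hat G(\Qlb)$-conjugacy class of homomorphisms $W_F \ra {}^LG(\Qlb)$, which I take to be $\CL^{ss}(\pi)$, automatically landing in $\Phi(G/F)^{ss}$. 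Then I would verify the Properties from the geometry: Property \ref{p1} by computing the Hecke eigenvalue on $\mathrm{Bun}_T$ against the normalization of class field theory, and Property \ref{p7a} from the single-leg unramified Hecke eigensheaf plus geometric Satake, which yields the Satake parameter on the nose; Property \ref{p2} from functoriality of the construction in $F$; Properties \ref{p3}, \ref{p5}, \ref{p6} from compatibility of $\mathrm{Bun}_G$ and its Hecke operators with products, central twists and central isogenies; Property \ref{p4} from Verdier self-duality of $D_{\mathrm{lis}}(\mathrm{Bun}_G,\Qlb)$ together with duality on $\mathrm{Rep}(\hat G)$; and Property \ref{p7} from the compatibility of the Hecke operators with the geometric constant-term/Eisenstein correspondence $\mathrm{Bun}_M \leftarrow \mathrm{Bun}_P \to \mathrm{Bun}_G$, the one property that takes genuine work with the geometry of $\mathrm{Bun}_P$.

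Finally, the $GL(n)$ comparison (\cite[\S IX.7.3]{FS}). Here I would relate the Hecke action on $\mathrm{Bun}_{GL(n)}$ along its basic strata to the cohomology of the Lubin--Tate and Drinfeld towers, exploiting the compatibility of $\mathrm{Bun}_{GL(n)}$-Hecke operators with the cohomology of local shtuka spaces; by the inputs underlying Theorem \ref{LLCGLN} (\cite{HT01,He00,Sch13} together with the Kottwitz-type description of the cohomology of the relevant Rapoport--Zink spaces) this cohomology realizes the local Langlands correspondence for $GL(n)$ and its inner forms up to semisimplification. Matching the excursion parameter against the Galois action on this cohomology then identifies $\CL^{ss}(\pi)$ with the semisimplification of the Harris--Taylor parameter of $\pi$, which for $GL(n)$ is all of $\CL^{ss}$. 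This last step is a second, largely independent obstacle, since it imports the prior deep work on the cohomology of Rapoport--Zink spaces.
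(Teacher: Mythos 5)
Your proposal is an accurate high-level summary of the Fargues--Scholze construction that the theorem cites; the paper itself gives no proof here, simply referring to \cite{FS} (specifically \S IX.6 for the basic properties and \S IX.7 for the $GL(n)$ comparison), and your outline --- $\mathrm{Bun}_G$ with its $B(G)$-stratification and the identification of the semistable locus with $[\ast/\underline{G(F)}]$, geometric Satake on the $B_{\mathrm{dR}}^+$-Grassmannian as the technical core, excursion operators in the sense of V. Lafforgue producing a character of the excursion algebra and hence a semisimple parameter, and the matching against the Lubin--Tate/Drinfeld cohomology for $GL(n)$ --- tracks that source faithfully. Since the statement is a citation theorem rather than something proved in the paper, there is nothing to compare beyond noting that your account agrees with the cited work.
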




\subsection{Close local fields}\label{CLF}

Let $F = k((t))$ be a local field of characteristic $p$ and $F^\sharp$ a $p$-adic field that is {\it $n$-close} to $F$ for some $n >> 0$:
$$\CO_F/m_F^n \isoarrow \CO_{F^\sharp}/m_{F^\sharp}^n,$$
where $m_F \subset \CO_F$ denotes the maximal ideal and $m_{F^\sharp}$ is defined analogously.

A Weil group parameter $\varphi \in \Phi(G/?)$, $? = F, F^\sharp$ is said to be of depth $n \in \NN$ if
$n$ is the maximum integer such that $\varphi$ is trivial on the subgroup $I_{F^?}^n$ of the inertia group $I_{F^?}$, where we are using the upper numbering.

Let $\Phi^n(G/?) \subset \Phi(G/?)$ denote the subset of (equivalence classes of) Weil group parameters of depth $n$.
\begin{thm}[Deligne]\label{delnn}   Assume $F$ and $F^\sharp$ are $n$-close as above.  Then there is a natural
bijection
$$\Phi^n(G/F) \isoarrow \Phi^n(G/F^\sharp).$$
\end{thm}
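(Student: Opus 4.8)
The plan is to deduce the theorem directly from Deligne's comparison of $n$-close local fields, the statement for $G$-parameters being a formal consequence once one keeps track of the $L$-group. Recall that Deligne's theorem attaches to the fixed identification $\CO_F/\fm_F^n \isoarrow \CO_{F^\sharp}/\fm_{F^\sharp}^n$ an isomorphism of locally profinite groups
$$\tau_n:  W_F/I_F^n \isoarrow W_{F^\sharp}/I_{F^\sharp}^n,$$
canonical up to inner automorphism, compatible with the norm maps $||\bullet||$ (so that a coset of a geometric Frobenius is carried to a coset of a geometric Frobenius), matching the upper-numbering ramification subgroups in degrees $\le n$, and matching the finite Galois quotients that classify extensions of bounded ramification; the Weil-group version follows from the Galois-group version by restricting along the projection to the unramified quotient.

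First I would reduce the depth-$n$ condition to ``factoring through $W_F/I_F^n$''. Fix a finite Galois extension $F'/F$ splitting the quasi-split inner form of $G$; enlarging $n$ if necessary (this is the force of the hypothesis $n \gg 0$), I may assume the upper-numbering breaks of $F'/F$ are $< n$, so that the action of $W_F$ on $\hG$ entering ${}^LG = \hG \rtimes W_F$ factors through the finite quotient $Gal(F'/F)$ of $W_F/I_F^n$; set ${}^LG^{(n)} := \hG \rtimes (W_F/I_F^n)$. A homomorphism $\varphi:  W_F \ra {}^LG(C)$ lifting the identity of $W_F$ is the datum of a $1$-cocycle $c:  W_F \ra \hG(C)$ with $\varphi(w) = (c(w),w)$; since $I_F^n$ acts trivially on $\hG$, the restriction $c|_{I_F^n}$ is an ordinary homomorphism, the condition that $\varphi$ be trivial on $I_F^n$ says precisely that $c|_{I_F^n}$ is trivial, and in that case $\varphi$ descends to a continuous section of ${}^LG^{(n)}(C) \ra W_F/I_F^n$. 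The defining conditions -- continuity, and the semisimplicity condition on the image in ${}^LG(C)$ of a Frobenius lift (together with, if $\Phi$ is read as the set of semisimple parameters, the analogous condition on the whole image) -- are intrinsic and pass unchanged through this descent. So $\Phi^n(G/F)$ is identified with the set of such sections modulo $\hG(C)$-conjugacy, and likewise for $F^\sharp$.

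Next I would transport sections along $\tau_n$. The matching of finite Galois quotients furnishes $Gal(F'/F) \isoarrow Gal((F')^\sharp/F^\sharp)$ compatibly with $\tau_n$, and, combined with the fixed identification of the dual groups over $C$, this produces a $\hG(C)$-equivariant isomorphism $\Psi_n:  {}^LG^{(n)}/F \isoarrow {}^LG^{(n)}/F^\sharp$ covering $\tau_n$ and restricting to the identity on $\hG$. The assignment $\bar\psi \mapsto \Psi_n^{-1}\circ\bar\psi\circ\tau_n$ is then a bijection between continuous admissible sections over $W_{F^\sharp}/I_{F^\sharp}^n$ and those over $W_F/I_F^n$; it sends Frobenius lifts to Frobenius lifts and semisimple elements of ${}^LG^{(n)}(C)$ to semisimple elements, hence preserves admissibility, and being $\hG(C)$-equivariant it descends to a bijection $\Phi^n(G/F^\sharp) \isoarrow \Phi^n(G/F)$. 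Since $\tau_n$ respects ramification up through degree $n$, this bijection matches $I_F^n$ with $I_{F^\sharp}^n$, and the finer exact-depth strata as well, so it has precisely the asserted source and target.

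The hard part -- though a modest one -- will be checking that the bijection does not depend on the inner ambiguity in $\tau_n$. Replacing $\tau_n$ by $Int(w_0)\circ\tau_n$ and $\Psi_n$ by $Int(1\rtimes w_0)\circ\Psi_n$ for some $w_0 \in W_{F^\sharp}/I_{F^\sharp}^n$ turns the transported section $\Psi_n^{-1}\circ\bar\psi\circ\tau_n$ into $w \mapsto (1\rtimes w_0)^{-1}\,\bar\psi(w_0 w w_0^{-1})\,(1\rtimes w_0)$; writing $\bar\psi(w_0) = a_0 \rtimes w_0$ and using that $\bar\psi$ is a section, a one-line computation in $\hG(C) \rtimes (W_{F^\sharp}/I_{F^\sharp}^n)$ rewrites this as $Int(h_0)$ applied to the original section, where $h_0 = w_0^{-1}\!\cdot a_0 \in \hG(C)$. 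Thus the two choices yield $\hG(C)$-conjugate sections, so the induced bijection of equivalence classes is canonical. Beyond this, the only genuine input is Deligne's comparison in the form recalled at the outset, so the remaining effort is to invoke that comparison with the precise compatibilities (with the ramification filtration and with the finite Galois quotients) that Steps 1 and 2 use.
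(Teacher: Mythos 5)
The paper does not prove this statement: it attributes the result to Deligne and states it as background, so there is no ``paper's own proof'' to compare against. Your argument is therefore supplying what the paper takes for granted, and it is essentially the canonical derivation: Deligne's theorem produces the isomorphism $\tau_n : W_F/I_F^n \isoarrow W_{F^\sharp}/I_{F^\sharp}^n$ compatible with ramification, and the bijection of parameter spaces for $G$ is indeed just functorial transport along $\tau_n$ at the level of $1$-cocycles valued in $\hat G(C)$ (or sections of ${}^LG^{(n)}$ in the non-split case). The inner-ambiguity check, and your computation that a change $\tau_n \mapsto Int(w_0)\circ\tau_n$ induces a $\hat G(C)$-conjugation of the transported section, are both correct.

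Two small remarks. First, the paper's phrase ``maximum integer $n$ such that $\varphi$ is trivial on $I_F^n$'' is imprecise (with upper numbering, triviality on $I_F^n$ is a monotone condition in $n$); your reading of $\Phi^n$ as ``parameters factoring through $W_F/I_F^n$'' is what the surrounding text -- in particular the companion Theorem 5.4 for depth-$n$ Hecke algebras and the remark that $\F = \bigcup_r \F^r$ -- actually uses, so you have chosen the intended interpretation. Second, in the context the paper uses this theorem, $G$ is taken split, which lets one skip your Step~1 of descending the $L$-group action to a finite quotient; your handling of the general case via a splitting field with bounded ramification is harmless extra generality, though to make it genuinely well-posed one should also say how the form of $G$ over $F$ determines a form over $F^\sharp$ (via the identification of the relevant finite Galois quotients, which you invoke for the $L$-group but not for $G$ itself).
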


Deligne's theorem has a counterpart on the automorphic side.  
Suppose $G$ is split connected, and let $I_n(F) \subset G(\CO_F)$ (resp. $I_n(F^\sharp) \subset G(\CO_{F^\sharp})$) denote 
the  $n$-th Iwahori filtration subgroup, as defined in \cite[\S 3]{G}.  Let $H(G(?),n) = H(G(?),I_n(?))$ denote the depth $n$ Hecke 
algebras of $I_n(?)$-biinvariant functions on $G(?)$, with $? = F, F^\sharp$.   
We let $\Pi^n(G/?) \subset \Pi(G/?)$ denote the subset of (equivalence classes of)  irreducible smooth representations $\pi$ of $G(?)$
such that $\pi^{I_n(?)} \neq 0$.  Any $\pi \in \Pi^n(G,F^?)$ is then determined up to isomorphism by the representation of 
$H(G(F^?),n)$ on its invariant subspace $\pi^{I_n(F^?)}$.

The following is Ganapathy's refinement of a theorem of Kazhdan:

\begin{thm}[Ganapathy-Kazhdan, \cite{G}]\label{kazg}  
Let $n > 0$ and suppose $F^\sharp$ and $F$ are $n$-close.  Then 
there is a natural isomorphism
 $$H(G(F),n) \isoarrow H(G(F^\sharp),n)$$
and a natural bijection
$$\Pi^n(G/F) \isoarrow \Pi^n(G/F^\sharp)$$
that commutes with the actions of the depth $n$ Hecke algebras on the two sides.
\end{thm}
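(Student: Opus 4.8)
The plan is to first build a ring isomorphism between the depth-$n$ Hecke algebras and then transport irreducible representations across it using the standard dictionary between such representations and modules over these algebras. Since $G$ is split, both $I_n(F)$ and $I_n(F^\sharp)$ are cut out of $G(\CO_F)$, resp. $G(\CO_{F^\sharp})$, by congruence conditions modulo $\mathfrak m_F^n$, resp. $\mathfrak m_{F^\sharp}^n$, imposed on the points of an Iwahori group scheme that depends only on the common root datum of $G$. Using an Iwasawa/Cartan-type decomposition of $G(F)$ adapted to $I_n(F)$, one parametrizes the double-coset space $I_n(F)\backslash G(F)/I_n(F)$ --- equivalently the natural basis $\{\mathbf 1_{I_n(F)\,g\,I_n(F)}\}$ of $H(G(F),n)$ --- by a set $\Sigma_n$ built from the extended affine Weyl group together with $(\CO_F/\mathfrak m_F^n)$-level data; because $\CO_F/\mathfrak m_F^n\cong\CO_{F^\sharp}/\mathfrak m_{F^\sharp}^n$, the very same $\Sigma_n$ parametrizes the $F^\sharp$-side, which gives a bijection of bases $\zeta_n\colon H(G(F),n)\to H(G(F^\sharp),n)$.

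To see that $\zeta_n$ is an algebra homomorphism one must match convolution structure constants: writing $\mathbf 1_{c_1}*\mathbf 1_{c_2}=\sum_c m_c(c_1,c_2)\,\mathbf 1_c$, the coefficient $m_c(c_1,c_2)$ is the number of $I_n(F)$-cosets contained in $c_1$ whose product with a fixed representative of $c$ lies, after inverting, in $c_2$. This is the heart of the matter and the expected main obstacle; it is exactly Ganapathy's sharpening of Kazhdan's method. Each $m_c(c_1,c_2)$ is the number of $\CO_F/\mathfrak m_F^N$-points of a scheme of finite type obtained by intersecting translates of Iwahori double cosets, and the point is that for the double cosets occurring in the \emph{depth-$n$} algebra one may take $N\le n$: multiplying two elements lying in fixed (bounded) double cosets and reducing the product modulo $I_n(F)$ depends on those elements only through their reductions modulo $\mathfrak m_F^n$, so no ``carrying'' past depth $n$ occurs. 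For $GL_N$ this can be checked by elementary row and column operations over $\CO_F$; in general it requires a careful study of the pro-unipotent radical of the Iwahori and of the affine-Weyl combinatorics, carried out via the Bruhat--Tits integral models in \cite{G}. Granting this bounded-depth statement, the ring isomorphism $\CO_F/\mathfrak m_F^n\cong\CO_{F^\sharp}/\mathfrak m_{F^\sharp}^n$ identifies all the $m_c$'s, so $\zeta_n$ is an algebra isomorphism, manifestly natural in the $n$-close pair and compatible with increasing $n$.

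For the representations, let $e_n=\mathrm{vol}(I_n(F))^{-1}\mathbf 1_{I_n(F)}$, an idempotent in $C_c^\infty(G(F))$ with $e_n C_c^\infty(G(F)) e_n=H(G(F),n)$. By the standard formalism of idempotented algebras --- the mechanism underlying the theory of types --- the functor $\pi\mapsto\pi^{I_n(F)}=e_n\pi$ induces a bijection from $\Pi^n(G/F)$ onto the set of isomorphism classes of simple $H(G(F),n)$-modules (injectivity is the fact recorded just before the theorem; surjectivity is that every simple module over $e_n C_c^\infty(G(F)) e_n$ is realized by an irreducible smooth representation), and similarly over $F^\sharp$. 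Composing these two bijections with the one that $\zeta_n$ induces on simple modules yields the asserted bijection $\Pi^n(G/F)\isoarrow\Pi^n(G/F^\sharp)$, which intertwines the two depth-$n$ Hecke actions through $\zeta_n$ by construction; its naturality is inherited from that of $\zeta_n$. Finally --- the reason the theorem is placed next to Theorem \ref{delnn} --- in the cases relevant to the local Langlands correspondence one checks in addition that this bijection is compatible, under the correspondences in play, with the bijection of depth-$n$ parameters of Theorem \ref{delnn}, although that compatibility is not part of the present statement.
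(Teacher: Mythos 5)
The paper does not itself prove this statement: it records it as a citation to Ganapathy's article \cite{G}, describing it only as ``Ganapathy's refinement of a theorem of Kazhdan.'' There is therefore no internal proof for your argument to be measured against, and the appropriate question is whether your sketch reflects what that reference actually does.

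Your outline does match the shape of the Kazhdan--Ganapathy argument. Parametrizing $I_n$-double cosets via the extended affine Weyl group together with $(\CO_F/\mathfrak m_F^n)$-level data coming from the Bruhat--Tits integral model, showing that this indexing set depends only on the root datum and the truncated ring, and then transporting representations through the idempotent $e_n$ and the equivalence between $\Pi^n(G/F)$ and simple $H(G(F),n)$-modules --- all of this is the standard framework, and the last step (the bijection $\pi\mapsto\pi^{I_n}$ onto simple modules, with admissibility guaranteeing that every simple module is so realized) is correct as stated.

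Where your proposal is thinnest is exactly where Ganapathy's contribution lies. Kazhdan's original theorem only asserts the existence, for each $n$, of some $m\gg n$ such that $m$-closeness yields $H(G(F),n)\cong H(G(F^\sharp),n)$; the refinement recorded in the paper (and used in Conjecture \ref{closel} and Remark \ref{rn}) is precisely that one may take $m=n$. Your appeal to ``no carrying past depth $n$'' is the right intuition, but you present it as a plausibility claim rather than a proof, and it is in fact the nontrivial technical core of \cite{G}: it requires analyzing how the Iwahori filtration interacts with the affine Bruhat decomposition at the level of the Bruhat--Tits group scheme, not merely observing that the truncated rings agree. Elementary row/column arguments handle $GL_N$, as you say, but the general split case is genuinely harder and you should not wave it through. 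Flagging this as the step you are ``granting'' is honest, but a reader should understand that it is not a routine verification --- it is the theorem.
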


\begin{conj}[\cite{GHS}]\label{closel}   For any positive integer $n$, there is an $r \geq n$ such that the
following diagram commutes:
\begin{equation*}
\begin{CD}   \Pi^n(G/F)  @>Genestier-Lafforgue>>  \F^r(G/F) \\
 @V\text{ Thm \ref{kazg} } VV    @VV \text{ Thm \ref{delnn} }V \\
 \Pi^n(G/F^\sharp)  @>>Fargues-Scholze>  \F^r(G/F^\sharp)
 \end{CD}
\end{equation*}
\end{conj}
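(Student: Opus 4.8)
The plan is to characterize both maps $\CL^{ss}$ by excursion operators, to see that --- once restricted to depth-$n$ invariants --- these operators only depend on data shared by $F$ and $F^\sharp$, and then to match the Genestier--Lafforgue and Fargues--Scholze constructions of those operators using geometric Satake together with Deligne's theorem. Normalizations should be pinned down at the outset by Property \ref{p7a}: in the spherical case both maps produce the Satake parameter, which Deligne's bijection visibly respects, so it suffices to compare the two parametrizations up to this common normalization.

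First I would place both sides in Lafforgue's excursion framework. For $\pi\in\Pi(G/F)$ and each datum $(I,f,(\gamma_i)_{i\in I})$ --- $I$ a finite set, $f$ an $\hG$-invariant regular function on $\hG^{I}$, $\gamma_i\in W_F$ --- there is an excursion operator $S_{I,f,(\gamma_i)}$ commuting with $G(F)$ on the relevant cohomology, and $\CL^{ss}(\pi)$ is the unique semisimple parameter (up to $\hG$-conjugacy) for which $S_{I,f,(\gamma_i)}$ acts on the $\pi$-isotypic part by evaluating $f$ at $\bigl(\CL^{ss}(\pi)(\gamma_i)\bigr)_i$; on the $F$-side these operators come from the cohomology of moduli of restricted local $G$-shtukas, and on the $F^\sharp$-side from the spectral action on the cohomology of $\mathrm{Bun}_G$ on the Fargues--Fontaine curve. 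So the conjecture amounts to: there is $r\ge n$ with $\CL^{ss}(\Pi^{n}(G/?))\subset\F^{r}(G/?)$ for $?=F,F^\sharp$, and the excursion operators acting on $\pi^{I_n(F)}$ and $(\pi^\sharp)^{I_n(F^\sharp)}$ correspond under the isomorphism $H(G(F),n)\isoarrow H(G(F^\sharp),n)$ of Theorem \ref{kazg} and the bijection $\F^{r}(G/F)\isoarrow\F^{r}(G/F^\sharp)$ of Theorem \ref{delnn}. The depth bound and the ``bounded-depth dependence'' ($S_{I,f,(\gamma_i)}$ on $\pi^{I_n(F)}$ depends on $(\gamma_i)$ only through the image of the tuple in the finite quotient of $W_F$ that Deligne identifies with the corresponding quotient of $W_{F^\sharp}$) should follow by unwinding a single leg: creation and annihilation at a leg are governed by geometric Satake for $\hG$ over the common residue field $k$, while ``evolution by $\gamma_i$'' is descent along (partial) Frobenius of a Hecke-modified family whose inertia action factors through automorphisms of a level structure bounded in terms of $n$ and the weights occurring in $f$; one fixes a single $r$ for all relevant $(I,f)$ using that the stack of semisimple $L$-parameters is of finite type \cite{DHKM,Zhu}.

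The hard part will be actually identifying the two excursion formalisms, because a priori they live in different geometric worlds --- shtuka stacks in equal characteristic versus $v$-sheaves on the Fargues--Fontaine curve in mixed characteristic --- with no single moduli problem housing both. My proposed route is to work uniformly inside Scholze's diamond/$v$-sheaf formalism, in which the bounded moduli of local shtukas, the geometric Satake equivalence, and the partial-Frobenius descent all admit descriptions that, after truncation, depend only on $\CO_F/m_F^{\,r}$; one would then build a single family of such moduli spaces over an auxiliary base interpolating $\mathrm{Spd}\,\CO_F$ and $\mathrm{Spd}\,\CO_{F^\sharp}$ at level $r$, whose cohomology recovers the Genestier--Lafforgue input at the equal-characteristic end and the Fargues--Scholze input at the mixed-characteristic end, and transport the excursion operators across it. Showing that such an interpolation exists and that its cohomology --- hence the induced action on depth-$n$ invariants --- is locally constant in the family is the technical crux; without it one is left comparing the two operator systems through their known formal compatibilities (Properties \ref{p1}--\ref{p7}, \ref{p7a}), which is not expected to suffice for general $G$.

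Granting the above, the diagram commutes: for $\pi\in\Pi^{n}(G/F)$ with image $\pi^\sharp$ under Theorem \ref{kazg}, the two excursion actions on $\pi^{I_n(F)}\cong(\pi^\sharp)^{I_n(F^\sharp)}$ coincide, so the Genestier--Lafforgue $\CL^{ss}(\pi)$ and the Fargues--Scholze $\CL^{ss}(\pi^\sharp)$ are read off from the same scalars on the same finite Weil data and hence correspond under Deligne's bijection of Theorem \ref{delnn}. As a by-product, combined with the compatibility of Fargues--Scholze with the classical correspondence for $GL(n)$ (Theorem \ref{paramFS}), this would re-derive the analogous statement for Genestier--Lafforgue over function fields, and would let the endoscopic content of Properties \ref{p9}--\ref{p10} be transported between the two settings once it is known in either.
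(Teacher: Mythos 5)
The statement you are addressing is labelled as a \emph{Conjecture} in the paper (Conjecture \ref{closel}), attributed to \cite{GHS}; the paper offers no proof, so there is nothing in the paper for your argument to be compared against. What you have written is, by your own account, a strategy sketch rather than a proof, and the honest thing to say is that it correctly identifies the expected route but also correctly identifies a gap that it does not close.

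On the positive side, the organizing idea --- pin down both $\CL^{ss}$ maps by excursion operators, argue that on depth-$n$ invariants the operators depend only on truncated Weil-group and residue-field data that $F$ and $F^\sharp$ share, and then match the two excursion formalisms --- is the right frame, and it is consistent with how this comparison has in fact been attacked in the literature. Two of the supporting steps, however, need more than you give them. First, the claim that there is a single $r$ with $\CL^{ss}(\Pi^n(G/?))\subset\F^r(G/?)$ is not something you can wave at with ``the stack of semisimple parameters is of finite type'': finite type does not by itself bound depth, and the paper is careful in Remark \ref{rn} to single out this uniformity as a genuine extra assertion (known for $GL(n)$ by \cite{Yu09}, open in general). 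Second, the assertion that an excursion operator's action on $\pi^{I_n(F)}$ factors through a finite quotient of $W_F$ of depth $\leq r$ is essentially equivalent to the previous point, so it cannot be used as an input to derive it.

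The central gap is the one you flag yourself: there is at present no geometric object interpolating between the equal-characteristic moduli (restricted local $G$-shtukas as in \cite{GLa}) and the $p$-adic moduli ($\mathrm{Bun}_G$ on the Fargues--Fontaine curve as in \cite{FS}) in a way that makes the Deligne isomorphism between truncated Weil groups a specialization map of a family. ``Close'' local fields give compatible finite quotients of $\CO_F$ and $\CO_{F^\sharp}$, compatible Hecke algebras (Theorem \ref{kazg}), and compatible quotients of Weil groups (Theorem \ref{delnn}), but no ambient family with both as fibers; constructing such an interpolation inside the diamond/$v$-sheaf formalism is precisely the content one would need and is not a routine technical step. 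You say this yourself (``the technical crux''), and you are right --- but that means the proposal is a research program, not a proof. Absent that ingredient, the fallback you name (invoking the shared Properties \ref{p1}--\ref{p7}, \ref{p7a}) is indeed known not to characterize the correspondence for general $G$, so it cannot close the argument either.

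In short: your direction of attack is reasonable and matches expert expectations; the depth-boundedness step needs an actual argument rather than a reference to finite type; and the interpolation step, which you correctly identify as the crux, is genuinely open. As written this should be presented as a strategy, not as a proof, which is also how the source paper treats the statement.
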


\begin{remark}\label{rn}  Since $\F(G/F^?) = \cup_r \F^r(G/F^?)$, there always exists some $r$ for which the
horizontal maps are defined in Conjecture \ref{closel}.   One can strengthen the above conjecture by requiring that $r$ be uniformly bounded in terms of $n$ and (split semisimple) $G$, or even that we can take $r = n$, as is known to be the case for $GL(n)$ \cite[Theorem 2.3.6.4]{Yu09}.
\end{remark}

\begin{question}  The Fargues-Scholze correspondence is not known to be independent of $\ell$.  Can this independence be proved without
proving Conjecture \ref{closel}?
\end{question}

\vfill
\pagebreak

\part{Incorrigible representations}

In the remainder of the paper we will assume $F = k((t))$ is a non-archimedean local field of positive characteristic $p$.  We will make use of the
local parametrization $\CL^{ss}$ of Theorem \ref{paramGL}, which is known to have properties   \ref{p1}-\ref{p7} and \ref{p7a}, and most importantly
is compatible with the global parametrization of Vincent Lafforgue (\ref{p11}).  The latter property allows us to to apply global methods to derive additional
information about $\CL^{ss}$.  Some of this information is unconditional and is recalled in \S \ref{cGHS}.  More can be said if we allow ourselves to use
the expected properties of the stable twisted trace formula that remain to be established over function fields, completing the results of \cite{LL}.  This will be the subject
of \S \ref{cIncorr} and Part \ref{Strategy}.

It has already been observed that Henniart's Splitting Theorem \ref{split} is the starting point for nearly all proofs of the local Langlands correspondence for $GL(n)$.
The next two sections are devoted to formulating a generalization of this theorem to all reductive groups.  We define a supercuspidal representation $\pi$ of the reductive group $G$  over $F$ to be {\it incorrigible} if it violates the conclusion of Henniart's Theorem \ref{split}.  The precise condition, which depends on realizing $\pi$ as a local component of a cuspidal automorphic representation,  is stated as  Definition \ref{incorrigible} below.  The formulation of this condition, like the statement of Theorem \ref{split}, presupposes
the existence of cyclic base change, which in turn presupposes the formalism of the stable trace formula.  The existence of such a formula has not yet been established for groups over function fields, although base change has been constructed by Henniart and Lemaire in the specific case of $GL(n)$ \cite{HeLe}.  The experts understand how to 
prove the necessary results for a general $G$ when the characteristic $p$ of $k$ is sufficiently large relative to $G$; it probably suffices to assume $p$ prime to
the order of the Weyl group of $G$.  In formulating our base change hypotheses in \S \ref{bchyp} we assume these problems have been resolved. 

\section{Existence of incorrigible representations}\label{cIncorr}

\subsection{Assumptions about base change}\label{bchyp}

\begin{struc}[Base change, version 1]\label{BC1}  For any extension $F'$ of $F$ and any cyclic extension $F''/F'$ with Galois group $\Gamma$, we let $\Gamma$ act on $\Pi(G/F'')$ by the Galois action on $G(F'')$.  Let $\mathcal{P}(\Pi(G/F'))$ denote the set of fibers of the map $\CL^{ss}: \Pi(G/F') \ra \F(G/F')$ (packets).  We have the following properties:

(i)  If $\varphi'' \in \F^{ss}(G/F'')$ is the restriction to $W_{F'}$ of some $\varphi' \in \F^{ss}(G/F')$, then $\Pi_{\varphi''}$ is a union of $\Gamma$-orbits in $\Pi(G/F'')$.  

(ii) There is a map
$$BC_{F''/F'}:   \mathcal{P}(\Pi(G/F')) \ra \mathcal{P}(\Pi(G/F''))^{\Gamma}$$
where $\mathcal{P}(\Pi(G,F''))^{\Gamma}$ are the packets that are invariant under the action of $\Gamma$.

(iii) Suppose $\pi \in \Pi(G/F')$ is a spherical representation, and let $[\pi] \in \mathcal{P}(\Pi(G/F'))$ denote the packet containing $\pi$.  Then $BC_{F''/F'}([\pi])$ contains the representation of $G(F'')$ obtained from $\pi$ by unramified base change.

(iii)  More generally, $BC_{F''/F'}$ is compatible with parabolic induction, in the sense of Property \ref{p7}.
\end{struc}

It seems likely that Structure \ref{BC1} will eventually be established by applying the full stable twisted trace formula.  However, it is possible that the following much weaker version can be proved using the techniques of \cite{Lab}, without full stabilization, once Arthur's simple trace formula has been established over function fields.
The next version  seems to suffice for our purposes.  

\begin{struc}[Base change, version 2]\label{BC}  For any extension $F'$ of $F$ and any cyclic extension $F''/F'$ with Galois group $\Gamma$, let $P(\Pi(G/F''))$ denote the set  of subsets of $\Pi(G/F'')$.  
Then 

(i) There is a map 
$$BC_{F''/F'}:  \Pi(G/F') \ra P(\Pi(G/F'')).$$

(ii) Suppose $\pi \in \Pi(G/F')$ is a spherical representation.  Then $BC_{F''/F'}(\pi)$ contains the representation of $G(F'')$ obtained from $\pi$ by unramified base change.

(iii)  More generally, $BC_{F''/F'}$ is compatible with parabolic induction, in the sense of (iii) of Property \ref{p7}.
\end{struc}

It is not assumed in (i) that the set $BC_{F''/F'}(\pi)$ is a {\it finite} subset of $\Pi(G/F'')$, though of course that is expected.  This is because one expects
to define cyclic base change by realizing $\pi$ (up to inertial equivalence)\footnote{This should always be possible for supercuspidal representations.  One then shows by a global argument that the base change is compatible with twist by characters, and this easily implies that the map $BC_{F''/F'}$ extends uniquely to a map of supercuspidal representations themselves, and not just of their inertial equivalence classes.  The map then extends uniquely to general representations by point (iii).} as a local component of a global automorphic representation at some place $v$. Structure \ref{BC} doesn't assume that the set of local components at $v$ of global base change obtained in this way are independent of the globalization.  Nevertheless, one can use this much weaker version of base change to define incorrigible representations.

For reasons that will be explained in the following section, the following property of the base change map has been separated from the other three.

\begin{conj}[Compatibility]\label{compat}  Suppose $F, F', F''$, and the map $BC_{F''/F'}$ are as in Structure \ref{BC}.  Then for any $\pi \in \Pi(G/F')$,
$$\CL^{ss}_{F''}(BC_{F''/F'}(\pi)) = \CL^{ss}_{F'}(\pi) ~|~_{W_{F''}}.$$
\end{conj}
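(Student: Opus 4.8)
\emph{The plan} is to deduce Conjecture \ref{compat} from the analogous \emph{global} compatibility for Vincent Lafforgue's parametrization, exploiting that both the local map $\CL^{ss}$ (via Property \ref{p11}) and the base change map of Structure \ref{BC} are, by construction, local avatars of global objects. First I would reduce to the case where $\pi$ is supercuspidal: if $\pi$ is a subquotient of $\mathrm{Ind}_M^G\tau$ with $\tau$ supercuspidal on a Levi $M\subset G$, then Property \ref{p7} gives $\CL^{ss}_{F'}(\pi)\isoarrow i_M\circ\CL^{ss}_{F'}(\tau)$, while Structure \ref{BC}(iii) identifies the members of $BC_{F''/F'}(\pi)$ with subquotients of $\mathrm{Ind}_{M(F'')}^{G(F'')}\sigma$ for $\sigma\in BC^M_{F''/F'}(\tau)$; since $i_M$ intertwines the projections to $W_{F'}$ and hence commutes with restriction to $W_{F''}$, the statement for $(M,\tau)$ implies it for $(G,\pi)$. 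Likewise, Property \ref{p3} for $\CL^{ss}$ together with the twist-compatibility of $BC_{F''/F'}$ recorded in the footnote to Structure \ref{BC} shows the assertion is insensitive to replacing $\pi$ by $\pi\otimes\chi$ for an unramified character $\chi$, which lets me assume $\pi$ is globalizable on the nose.

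Next I would globalize: choose a global function field $K$ with a place $v$ such that $K_v\isoarrow F'$, a reductive group $\CG$ over $K$ with $\CG_{K_v}\isoarrow G$, and (by a Poincar\'e-series or simple-trace-formula globalization, valid for $p$ large) a cuspidal automorphic representation $\Pi$ of $\CG(\ad_K)$ with $\Pi_v\isoarrow\pi$. Choose also a cyclic extension $K'/K$ of the same degree as $F''/F'$ and a place $w\mid v$ of $K'$ with $K'_w\isoarrow F''$ — straightforward over function fields. The global construction underlying Structure \ref{BC} then produces an automorphic representation $\Pi'$ of $\CG_{K'}$ above $\Pi$ with $\Pi'_w\in BC_{F''/F'}(\pi)$, and, since $BC_{F''/F'}(\pi)$ is by construction the union over all such global realizations, it suffices to prove the identity for $\Pi'_w$.

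The heart of the matter is the \textbf{key lemma}, that global cyclic base change is compatible with Lafforgue's semisimple parametrization:
$$\CL^{ss}_{K'}(\Pi')\isoarrow\CL^{ss}_K(\Pi)\,|_{\G_{K'}}.$$
Granting this, Property \ref{p11} at $v$ and at $w$ expresses $\CL^{ss}_{F'}(\pi)$ as the semisimplification of $\CL^{ss}_K(\Pi)|_{W_{K_v}}$ and $\CL^{ss}_{F''}(\Pi'_w)$ as the semisimplification of $\CL^{ss}_{K'}(\Pi')|_{W_{K'_w}}$; since $W_{F''}=W_{K'_w}\subset W_{K_v}=W_{F'}$ and semisimplification commutes with restriction to a subgroup, the key lemma yields $\CL^{ss}_{F''}(\Pi'_w)=\CL^{ss}_{F'}(\pi)|_{W_{F''}}$, as desired. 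To prove the key lemma I would use Chebotarev density: both sides are semisimple global parameters, so it suffices to match their restrictions to decomposition groups at almost all places $w'$ of $K'$; at a place $w'$ unramified over an unramified place $v'$ of $K$, the component $\Pi'_{w'}$ is the unramified base change of $\Pi_{v'}$ (part of the trace-formula construction, consistent with Structure \ref{BC}(ii)), so its Satake parameter is the restriction of that of $\Pi_{v'}$, and by the normalization in Property \ref{p11} the two global parameters therefore have the same semisimplification at $w'$.

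\emph{The main obstacle} is precisely this key lemma: it presupposes the existence of global cyclic base change for a general $\CG$ with its expected local properties, hence — as flagged in the introduction to this Part — the stabilized twisted trace formula over function fields, which is not yet available in general, and this is why the statement is posed as a conjecture. A secondary subtlety is that $\Pi'$ need not be cuspidal (the failure of cuspidality is exactly Henniart's splitting phenomenon), so one must either extend Property \ref{p11} to the isobaric spectrum, as in \cite{CX}, or argue directly at the level of the local components $\Pi'_w$, noting that the Genestier--Lafforgue parameter of $\Pi'_w$ is unchanged and only the global parameter of the relevant isobaric constituent enters. Finally, one must track enough control on $\CG$ at the auxiliary places for the base-change and globalization inputs to apply, which is unproblematic for $p$ large but should be made explicit.
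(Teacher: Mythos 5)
Your proposal follows essentially the same approach as the paper's proof (given as Theorem \ref{chtouca}): reduce to supercuspidal $\pi$ with controllable central character, globalize via the Poincar\'e-series construction of \cite{GLo}, arrange an auxiliary cyclic extension $K'/K$ inert at $v$, and close the argument by comparing Satake parameters at almost all unramified places via Property \ref{p7a} and Structure \ref{BC}, then invoking Chebotarev density. Your explicit formulation of the ``key lemma'' --- that Lafforgue's global parametrization commutes with Galois restriction under base change --- is the content that the paper cites from \cite[Proposition 6.4]{BHKT}.

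The one place where your argument is incomplete relative to the paper's is that you omit a technical hypothesis that the paper needs in order for the Chebotarev step to actually go through: the paper invokes \cite[\S 7.3.3]{BFHKT} to arrange that the global parameter $\CL^{ss}_K(\Pi)$ has \emph{Zariski dense image} in $\hat G$. This matters because Lafforgue's parameters are constructed as pseudocharacters, and the passage from a pseudocharacter equality at almost all Frobenius elements to a conjugacy of actual homomorphisms --- and then to a well-defined restriction to the decomposition group at $v$ --- requires a rigidity input. Your appeal to ``both sides are semisimple global parameters, so it suffices to match their restrictions at almost all places'' elides exactly the step where Zariski density is used (that is the content of \cite[Proposition 6.4]{BHKT}). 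Without that, one can conclude equality of pseudocharacters but not immediately the $\hat G(\Qlb)$-conjugacy of local restrictions that the statement demands. On the other hand, your observation that $\Pi'$ need not remain cuspidal after base change, and that one must therefore extend Property \ref{p11} to the isobaric spectrum via \cite{CX} or argue locally, is a genuine point that the paper's short proof does not address explicitly; it deserves to be recorded.
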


\subsection{Incorrigible representations}\label{incorrigi}

We work with Structure \ref{BC}.  Although the domain of $BC_{F''/F'}$ is initially taken to be $\Pi(G/F')$, we extend it in the naive way to a map
$$P(\Pi(G/F')) \ra P(\Pi(G/F'')).$$ 
Thus for a subset $A \subset \Pi(G/F')$, $BC(A)$ is just the union of $BC(\pi)$ for all $\pi \in A$.    
If $F_0 \subset F_1 \subset \dots \subset F_r$ is a sequence of cyclic extensions we then define
$$BC_{F_r/F_0} = BC_{F_r/F_{r-1}} \circ \dots \circ BC_{F_1/F_{0}}$$
as maps of sets of representations.

\begin{definition}\label{incorrigible}  Let $\pi_0 \in \Pi(G/F_0)$ be a supercuspidal representation of $G(F_0)$, $\Pi_0 \subset \Pi(G/F_0)$ the packet containing $\pi_0$.  We say $\pi_0$ is {\bf incorrigible} if, for any sequence $F_0 \subset F_1 \subset \dots \subset F_r$ of cyclic extensions the base change packet $BC_{F_r/F_0}(\Pi_0)$ contains a supercuspidal member.
\end{definition}

We have defined pure representations $\pi \in \Pi(G/F)$ in Definition \ref{pure} (d).   Here is the conjectured generalization of Henniart's Splitting Theorem \ref{split}.

\begin{conjecture}\label{noincorr_conj}  There are no pure incorrigible supercuspidal representations.
\end{conjecture}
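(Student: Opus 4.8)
\subsection*{Proof proposal}

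The plan is to argue by contradiction, transporting the problem to one about global Galois parameters over a function field via compatibility of $\CL^{ss}$ with Lafforgue's global parametrization (Property \ref{p11}), and then running a base-change descent modelled on Henniart's proof of Theorem \ref{split} for $GL(n)$, with \emph{purity} playing the structural role that Theorem \ref{unramgln} plays for $GL(n)$. Suppose $\pi_0\in\Pi(G/F_0)$ is a pure incorrigible supercuspidal, with packet $\Pi_0$. First I would globalize: choose a smooth projective curve over a finite field with function field $K$ and a place $v$ with $K_v\isoarrow F_0$, a connected reductive $\CG/K$ with $\CG_{K_v}\isoarrow G$, and realize $\pi_0$ -- up to inertial equivalence, which costs only an unramified twist and is harmless here -- as the local component $\Pi_v$ of a cuspidal automorphic representation $\Pi$ of $\CG(\ad_K)$. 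The point of the purity hypothesis is that it should be exactly the condition allowing $\Pi$ to be chosen so that its Lafforgue parameter is everywhere pure (a motivic, rather than merely group-theoretic, input); by contrast the non-pure supercuspidals -- the unipotent ones with unramified semisimple parameter noted in the Introduction -- are expected to be genuinely incorrigible, so the hypothesis in Conjecture \ref{noincorr_conj} is not cosmetic. Applying \cite{Laf18} and Property \ref{p11} gives a semisimple $\sigma\colon\Gamma_K\to{}^LG(\Qlb)$ with $\sigma|_{W_{F_0}}\isoarrow\CL^{ss}(\pi_0)$, and purity at $v$ together with purity at the unramified places forces $\sigma$ to be everywhere pure; combined with the Grothendieck local monodromy theorem this makes $\sigma|_{I_{F_0}}$ factor through a finite quotient.

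Next I would climb a cyclic tower to kill ramification. Any finite quotient of $I_{F_0}$ is an extension of a cyclic group of order prime to $p$ by a finite $p$-group, hence solvable; so the finite totally ramified extension of $F_0$ cut out by $\sigma|_{I_{F_0}}$ is contained in a tower of cyclic extensions $F_0\subset F_1\subset\dots\subset F_r$. Using weak approximation one globalizes this to a tower of cyclic extensions $K=K^{(0)}\subset\dots\subset K^{(r)}$ inducing it at $v$, and forms the global base changes $\Pi^{(j)}=BC_{K^{(j)}/K}(\Pi)$, still cuspidal by the trace-formula input underlying Structure \ref{BC}; by construction, local-global compatibility over $K^{(j)}$, and Conjecture \ref{compat}, the local component of $\Pi^{(r)}$ above $v$ lies in $BC_{F_r/F_0}(\Pi_0)$ and has semisimple parameter $\sigma|_{W_{F_r}}$, which by the choice of tower is unramified.

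It therefore remains to show that no member of $BC_{F_r/F_0}(\Pi_0)$ is supercuspidal, which contradicts incorrigibility -- and this is the main obstacle. For $GL(n)$ this step is Theorem \ref{unramgln}; for general $G$ the analogue is false, and the substitute must use purity: the statement to be proved is that a pure representation of $G(F)$ with unramified semisimple parameter is a subquotient of an unramified principal series. (Note that by Property \ref{discrete} a supercuspidal with unramified $\CL^{ss}$ would in any case have to be of a very constrained, ``unipotent'' type.) Two routes seem available. A geometric one, following Scholze's proof of Theorem \ref{unramgln}: realize the (pure) base-changed packet in the cohomology of a moduli stack of local $G$-shtukas and show that the pure part of the unramified locus carries no supercuspidal contribution -- purity forecloses the spread of Frobenius weights a genuinely supercuspidal contribution would require. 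A spectral one, following Arthur: use the orthogonality relations for elliptic characters over $K^{(r)}$, available through the (still to be established) stable twisted trace formula over function fields, to show that a supercuspidal member would have an elliptic character incompatible with its parameter lying in a proper parabolic of ${}^LG$ after base change.

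The hard part, then, is twofold: proving this purity-versus-supercuspidality dichotomy unconditionally, and discharging the base-change hypotheses of \S\ref{bchyp} (so that Structure \ref{BC} and Conjecture \ref{compat} become theorems over function fields). I also expect the globalization step to require an automorphic lifting or potential-automorphy input in the spirit of \cite{HKT}, in order to produce a pure $\Pi$ whose parameter $\sigma$ has controlled image. Granting these, the argument above closes: the existence of a pure incorrigible supercuspidal would produce, after a solvable base change, a pure supercuspidal with unramified semisimple parameter, contradicting the dichotomy.
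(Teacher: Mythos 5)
Your proposal captures the paper's overall architecture for approaching this conjecture (which, to be clear, remains a conjecture in the paper as well): globalize using the Poincaré-series method of \cite{GLo} and the compatibility of $\CL^{ss}$ with Lafforgue's global parametrization (Property \ref{p11}); exploit purity to conclude that the local Galois parameter has finite inertia image, hence becomes unramified after a solvable descent; realize the descent by a global cyclic tower via Structure \ref{BC} and Conjecture \ref{compat}; and then derive a contradiction if the base-changed representation stays supercuspidal. You also correctly isolate the crux as the dichotomy "pure with unramified semisimple parameter $\Rightarrow$ constituent of unramified principal series,'' exactly the content of Corollary \ref{prin}.

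Where you diverge is in the mechanism offered for that last step. You propose either a Scholze-style cohomological argument on moduli of local $G$-shtukas or an Arthur-style argument from orthogonality of elliptic characters. Neither of these is what the paper actually does. The paper's two concrete routes are: (i) the $L$-function route of Proposition \ref{corrig}, resting on Hypothesis \ref{llfcn}, which is a quantitative statement: condition (f) bounds the total order of poles of a supercuspidal's local Euler factors by an integer $d < \dim V_\rho$, whereas after the solvable descent the arithmetic side forces poles of total order $\dim V_\rho$; the stability property (g) of $\gamma$-factors is what rigidifies the local factors and closes the argument. This is the engine behind Propositions \ref{propgln} and \ref{propclassical} (Godement--Jacquet for $GL(a,D)$, doubling for classical groups), and it does not appear anywhere in your sketch. (ii) The route of Theorem \ref{pureram}/Corollary \ref{prin} from \cite{GHS}, valid for $p \nmid |W(G)|$: combine Fintzen's exhaustion theorem (every supercuspidal is compactly induced), realize a compactly induced supercuspidal via a Poincaré series over $\mathbb{P}^1$, and apply Deligne's classification of tame local systems on $\mathbb{G}_m$ from Weil II to deduce that the inertia action of the Genestier--Lafforgue parameter is nontrivial. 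This is cohomological in spirit, like your Scholze route, but the geometry is global (local systems on $\mathbb{G}_m$) rather than local (shtuka spaces). So your proposal is structurally correct and correctly flags the genuine gap, but the substitute you offer for the key lemma is speculative where the paper's Hypothesis \ref{llfcn}/Proposition \ref{corrig} mechanism is worked out and is arguably the paper's main technical contribution to this conjecture; you would strengthen the proposal considerably by recasting the final contradiction in terms of a quantitative constraint on local $L$-factor poles rather than generic appeals to weights or orthogonality.
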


Examples are known of cuspidal unipotent representations whose semisimple parameters are unramified.  The purity hypothesis is not meant to apply to such representations.
I expect that cuspidal unipotent representations do not have pure semisimple parameters, but I do not know how this would be proved in general.
The following strengthening of Conjecture \ref{noincorr_conj} should be a consequence of a version of the local Langlands conjecture for $G$ that includes  compatibility with parabolic induction.  

\begin{conjecture}\label{Iwahori2}  Let $\pi_0 \in \Pi(G/F_0)$ be any  supercuspidal representation and let $\Pi_0 \subset \Pi(G/F_0)$ the packet containing $\pi_0$.  There is a finite sequence of cyclic extensions $F_0 \subset F_1 \subset \dots \subset F_r$ such that every member of $BC_{F_r/F_0}(\Pi_0)$ contains an Iwahori-fixed vector.
\end{conjecture}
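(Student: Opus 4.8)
\emph{Strategy.} The plan is to argue in two stages. First one picks a tower of cyclic extensions that makes the semisimple parameter $\CL^{ss}(\pi_0)$ unramified; then, working over the new base field, one uses compatibility of base change with the local correspondence (Conjecture \ref{compat}) and with parabolic induction (Structure \ref{BC}(iii)) to push every member of the base-changed packet into the Iwahori block. Throughout I use the elementary fact that an irreducible representation of $G(F)$ has an Iwahori-fixed vector precisely when it is a subquotient of a principal series induced from an unramified character of a minimal Levi, so that membership in the Iwahori block is inherited by subquotients of $\mathrm{Ind}_M^G$ of an Iwahori-spherical representation of $M$ and is preserved under base change of the inducing torus character.

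\emph{Stage 1.} Since $\CL^{ss}(\pi_0)$ is a Frobenius-semisimple parameter of $W_{F_0}$, its restriction to the inertia subgroup $I_{F_0}$ has open kernel, hence finite image; as $I_{F_0}$ is prosolvable (wild inertia is pro-$p$, the tame quotient procyclic) that image is a finite solvable group. A routine descent --- an unramified extension making the associated finite Galois extension defined over a finite layer, followed by a solvable, i.e.\ cyclic, tower realizing it --- then produces a tower $F_0 \subset F_1 \subset \cdots \subset F_s$ of cyclic extensions with $\CL^{ss}(\pi_0)\vert_{W_{F_s}}$ unramified. Applying Conjecture \ref{compat} one step at a time, and using that every member of $\Pi_0$ has the semisimple parameter of $\pi_0$, the set $BC_{F_s/F_0}(\Pi_0)$ lies in the single packet $\Pi_s \subset \Pi(G/F_s)$ cut out by the unramified parameter $\CL^{ss}(\pi_0)\vert_{W_{F_s}}$. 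It therefore suffices to find a further cyclic tower over which every member of $\Pi_s$ acquires an Iwahori-fixed vector; renaming, we may assume from the outset that $\CL^{ss}(\pi_0)$ is unramified and $\Pi_0$ is the full fibre of $\CL^{ss}$ over it.

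\emph{Stage 2.} Argue by induction on $\dim G$. By Property \ref{p7}, each $\pi \in \Pi_0$ is a subquotient of $\mathrm{Ind}_M^G(\sigma)$ for a supercuspidal support $(M,\sigma)$ with $\CL^{ss}(\sigma)$ again unramified, and (granting the finiteness of $L$-packets expected from the full local correspondence) only finitely many conjugacy classes of such $(M,\sigma)$ occur. For the classes with $M \subsetneq G$ the inductive hypothesis, applied successively to the finitely many Levis in play, yields --- after a further cyclic tower --- a base field over which every relevant packet of $M$ is entirely Iwahori-spherical; by compatibility of $BC$ with parabolic induction and the stability of the Iwahori block, the corresponding members of the base change of $\Pi_0$ then lie in the Iwahori block and stay there under any later base change. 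What remains is the contribution of the pairs with $M = G$: supercuspidal representations $\tau$ of $G(F)$ with unramified semisimple parameter, if any occur in $\Pi_0$.

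\emph{The main obstacle.} Everything rests on this last case. Since $\CL^{ss}(\tau)$ remains unramified under every further cyclic base change, the semisimple parameter is now useless, and one must instead control how base change acts on the \emph{full} $L$-packet of $\tau$ --- on its monodromy operator $N$ and component-group datum --- which goes beyond Conjecture \ref{compat} and presupposes the refined local Langlands correspondence together with a base change lift compatible with it. For the ``corrigible'' such $\tau$, e.g.\ the regular supercuspidals attached to an elliptic maximal torus $S$, base change to a splitting field of $S$ turns $BC(\tau)$ into subquotients of a (possibly ramified) principal series, after which Stage 1 applied to the inducing torus character finishes; this is precisely the mechanism of Henniart's Theorem \ref{split}. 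The genuine difficulty is posed by the unipotent cuspidal representations: the Shintani base change of the cuspidal unipotent representation of the finite reductive quotient underlying $\tau$ is again cuspidal unipotent, so no cyclic tower renders $\tau$ itself non-supercuspidal, and one is forced to argue at the level of packets --- to show that the base change of the full $L$-packet of $(\CL^{ss}(\tau),N)$ eventually contains no cuspidal member because the full parameter comes to factor through a proper Levi, whereupon Property \ref{p7} again concludes. Establishing this, or else imposing the purity hypothesis of Conjecture \ref{noincorr_conj} to set the unipotent cuspidals aside and deriving the remaining corrigible case from \ref{noincorr_conj}, is the heart of the matter, and is where the argument leans on the expected-but-unestablished stable twisted trace formula over function fields and on Lusztig's classification of unipotent representations.
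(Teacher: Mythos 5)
There is no proof in the paper for you to match: the statement is presented purely as a conjecture, accompanied only by the author's one-sentence remark that it ``should be a consequence of a version of the local Langlands conjecture for $G$ that includes compatibility with parabolic induction.''  Your two-stage roadmap is consonant with that remark --- Stage 1, killing the inertial ramification of the semisimple parameter by a cyclic tower (exploiting that $\CL^{ss}(\pi_0)(I_{F_0})$ is finite and a quotient of the prosolvable group $I_{F_0}$), with descent of packets supplied by Theorem \ref{chtouca}, and Stage 2, a Levi induction through Property \ref{p7} and Structure \ref{BC}(iii), is exactly the mechanism behind Corollary \ref{prin} --- but, as you say yourself, it is not a proof.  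Stage 2 already imports the unproven finiteness of $L$-packets (Question \ref{packets}) and access to the full $(\rho,N)$-parameter, which the Genestier--Lafforgue construction does not deliver.

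Let me sharpen your closing caveat, because it is not merely a gap but a reason to doubt the statement as literally written.  Conjecture \ref{Iwahori2} is formulated for \emph{all} supercuspidals, dropping the purity hypothesis of Conjecture \ref{noincorr_conj}, while in \S\ref{incorrigi} the base change of a packet is the naive union $\bigcup_{\pi\in\Pi_0}BC(\pi)$.  If a depth-zero cuspidal unipotent $\tau$ lies in $\Pi_0$ and $BC(\tau)$ always retains a cuspidal unipotent member --- which is what Shintani descent for the underlying finite reductive group suggests, at least under unramified base change --- then $BC_{F_r/F_0}(\Pi_0)$ always contains a representation with no Iwahori-fixed vector, and the conjecture fails for such $\tau$.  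The author himself flags this uncertainty just before the statement (``I expect that cuspidal unipotent representations do not have pure semisimple parameters, but I do not know how this would be proved in general'').  So your instinct to set the unipotent cuspidals aside via purity is not an optional simplification: absent a resolution of that point, or a genuine argument that the full parameter eventually factors through a proper Levi, the strategy cannot close.  A smaller slip: invoking regular supercuspidals as ``corrigible'' instances of the $M=G$ case in Stage 2 is vacuous, since Kaletha's regularity condition requires the centralizer of $\varphi(I_F)$ in $\hat{G}$ to be abelian, forcing $\varphi(I_F)$ to be nontrivial whenever $G$ is not a torus; regular supercuspidals are already disposed of in Stage 1, and after Stage 1 the only supercuspidals that can survive in $\Pi_0$ are the unipotent ones.
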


\subsection{Incorrigible representations and $L$-functions}\label{lfunction}

In this section $\mathbf{G}$ is a connected reductive algebraic group over the function field $K = k(X)$.  We fix a point $v \in |X|$ and let $F = K_v$ denote the corresponding local field, $G = \mathbf{G}(F)$.  We assume  Structure \ref{BC} is available for $G_F$. 

\begin{thm}\cite{Laf18,GLa}\label{chtouca}  Conjecture \ref{compat} is valid for the group $G_F$ and the Genestier-Lafforgue parametrization $\CL^{ss} = \CL^{ss}_F$
of Theorem \ref{paramGL}.
\end{thm}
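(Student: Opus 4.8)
The plan is to deduce Theorem~\ref{chtouca} from the defining property of the Genestier--Lafforgue parametrization, namely its compatibility with Vincent Lafforgue's global parametrization (Property~\ref{p11}), together with Chebotarev density. The point is that Conjecture~\ref{compat} concerns the behaviour of $\CL^{ss}$ under cyclic base change purely locally, but base change is itself \emph{defined} here by realizing a supercuspidal $\pi$ (up to inertial equivalence) as a local component at $v$ of a cuspidal automorphic representation $\Pi$ of $\mathbf{G}(\ad_K)$, and then base-changing $\Pi$ globally. So the strategy is to pass to the global picture, apply the global compatibility of base change with the global parametrization, and then specialize back at $v$.

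First I would fix $\pi = \pi_0 \in \Pi(G/F_0)$ supercuspidal, where $F_0$ is a cyclic extension of $F$ with group $\Gamma$, and choose a global function field $K_0$ with a place $v_0$ such that $(K_0)_{v_0} \isoarrow F_0$, a global reductive group $\mathbf{G}_0$ over $K_0$ with $\mathbf{G}_0((K_0)_{v_0}) \isoarrow G(F_0)$, and a cuspidal automorphic representation $\Pi$ of $\mathbf{G}_0(\ad_{K_0})$ with $\Pi_{v_0} \isoarrow \pi$ up to inertial equivalence; such a globalization exists for supercuspidals by the standard Poincar\'e series / trace formula argument. One then globalizes the cyclic extension $F''/F'$ to a cyclic extension $K'/K_0$ of function fields that is totally split (or behaves appropriately) at all the auxiliary places and realizes $F''/F'$ at the relevant place. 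Second, I would invoke the global base change $BC_{K'/K_0}(\Pi)$ — which exists under the hypotheses in place (this is the content that Structure~\ref{BC} is modeled on, and for $GL(n)$ is \cite{HeLe}, in general follows from the stabilized twisted trace formula for $p$ large) — and use the fact that global base change is compatible with Lafforgue's parametrization: $\CL^{ss}_{K'}(BC_{K'/K_0}(\Pi))$ is the restriction to $\mathrm{Gal}((K')^{sep}/K')$ of $\CL^{ss}_{K_0}(\Pi)$. This global compatibility is exactly the function-field analogue of the well-known fact that automorphic base change matches restriction of Galois parameters, and for Lafforgue's parametrization it follows from the construction via excursion operators (the excursion algebra of $K'$ maps to that of $K_0$ compatibly with the restriction map on Galois groups).

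Third, I would localize at the place $w \mid v_0$ of $K'$ corresponding to $F''$: by Property~\ref{p11} (the local-global compatibility in Theorem~\ref{paramGL}), the semisimplification of $\CL^{ss}_{K_0}(\Pi)$ restricted to the decomposition group at $v_0$ is $\CL^{ss}_{F'}(\Pi_{v_0}) = \CL^{ss}_{F'}(\pi)$, and similarly the restriction of $\CL^{ss}_{K'}(BC_{K'/K_0}(\Pi))$ to the decomposition group at $w$ is $\CL^{ss}_{F''}$ of the corresponding local component, which by definition of the base change map is (a member of) $BC_{F''/F'}(\pi)$. Since restriction of Galois representations is compatible with passing from the global Weil group to a local decomposition group — i.e. $\mathrm{Gal}((K')^{sep}/K')$ restricted at $w$ sits inside $\mathrm{Gal}(K_0^{sep}/K_0)$ restricted at $v_0$ as $W_{F''} \subset W_{F'}$ — the global identity of the previous step restricts to exactly the asserted local identity $\CL^{ss}_{F''}(BC_{F''/F'}(\pi)) = \CL^{ss}_{F'}(\pi)|_{W_{F''}}$. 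The extension to non-supercuspidal $\pi$, and the independence of the globalization, then follows from compatibility with parabolic induction (Property~\ref{p7}, point (iii) of Structure~\ref{BC}) and the usual twisting argument sketched in the footnote to Structure~\ref{BC}.

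The main obstacle is not the Chebotarev/specialization bookkeeping but rather the input that \emph{global} cyclic base change exists over function fields and is compatible with Lafforgue's parametrization for the group $\mathbf{G}$ in question. For $GL(n)$ this is \cite{HeLe} combined with \cite{Laf02,Laf18}; in general it rests on the stabilized twisted trace formula over function fields, which is precisely the package of results the paper has declared (in \S\ref{bchyp}) to be assumed available for $p$ large. Granting that, the delicate point that remains is checking that the globalization can be arranged so that the cyclic extension $K'/K_0$ localizes at $w$ to the \emph{given} extension $F''/F'$ while remaining cyclic globally and not introducing bad ramification elsewhere that would obstruct the existence of the global base change lift; this is a standard but not entirely trivial application of Grunwald--Wang-type existence theorems for cyclic extensions of function fields with prescribed local behaviour. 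Everything else — the passage between $W$ and decomposition subgroups, and the reduction from packets to individual representations — is formal given Properties~\ref{p7} and~\ref{p11} already recorded in Theorem~\ref{paramGL}.
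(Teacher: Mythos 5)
Your overall strategy — globalize $\pi$ to a cuspidal $\Pi$, globalize $F''/F'$ to a cyclic extension $K'/K$, use compatibility at unramified places, and specialize back at $v$ — is the same strategy the paper uses. But there is a genuine gap in the crucial middle step. You assert that the global compatibility
$$\CL^{ss}_{K'}\bigl(BC_{K'/K}(\Pi)\bigr) \isoarrow \CL^{ss}_{K}(\Pi)\big|_{\mathrm{Gal}((K')^{sep}/K')}$$
``follows from the construction via excursion operators (the excursion algebra of $K'$ maps to that of $K$ compatibly with the restriction map on Galois groups).'' That is not how Lafforgue's construction behaves: the excursion algebras act on spaces of cusp forms on $\mathbf{G}(\ad_K)$ and $\mathbf{G}(\ad_{K'})$ respectively, and the base-change map between cusp forms is \emph{not} induced by any pullback or pushforward on moduli of shtukas — there is no morphism of stacks of shtukas over $K$ and over $K'$ realizing cyclic base change, and hence no tautological comparison of excursion operators. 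The compatibility you are invoking is not formal; it is exactly the kind of statement that must itself be \emph{deduced} from unramified data by a Chebotarev argument, and that deduction requires some control over the global parameter.

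This is precisely what the paper supplies and what your sketch omits. The paper's proof first reduces, via Properties~\ref{p3} and \ref{p7}, to a supercuspidal $\pi_v$ with central character of finite order. It then globalizes $\pi_v$ as you do using \cite{GLo}, but crucially also invokes \cite[\S 7.3.3]{BFHKT} to arrange that the \emph{global} Lafforgue parameter $\CL^{ss}_K(\Pi)$ has Zariski-dense image in ${}^LG$. With that in hand, and with $K'/K$ chosen inert at $v$ so that $K'_v \isoarrow F'$, the compatibility of the parametrization with unramified local base change at all places $w$ of $K$ unramified in $K'$ (which is Property~\ref{p7a} combined with Structure~\ref{BC}) plus Chebotarev density, as in the proof of \cite[Prop.\ 6.4]{BHKT}, forces the identity of semisimple parameters at the single ramified place $v$. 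In other words, the Zariski-density arrangement is the technical pivot that makes Chebotarev density suffice, and it is the ingredient your proposal is missing. Without it, the ``excursion algebra restriction'' claim has no content, and the Chebotarev step — which you relegate to ``bookkeeping'' — is where the actual mathematics lives. Replace your second step with the $\cite{BFHKT}$ Zariski-density input and the $\cite{BHKT}$ Chebotarev argument and the proof closes up.
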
 

\begin{proof}  Theorem \ref{paramGL} specifies that $\CL^{ss}$ satisfies Properties \ref{p3}, \ref{p7}, and \ref{p7a}.  It then follows easily, as in the footnote above,
that it suffices to verify compatibility for supercuspidal representations $\pi \in \Pi(G/K)$.   By Property \ref{p3} again, $\pi$ may be assumed to have central character of finite order.   Thus let $\pi_v \in \Pi(G/F)$ be supercuspidal with central character of finite order.  By \cite{GLo} we can choose $K$ as above with point $v \in |X|$,
and a global cuspidal automorphic representation $\pi$ of $\mathbf{G}(\ad)$, with the chosen supercuspidal $\pi_v$ as component at $v$.  Moreover, it is proved
in \cite[\S 7.3.3]{BFHKT} that we may assume that the global parameter attached to $\pi$ has Zariski dense image.  Let $F'/F$ be a cyclic extension; we assume $K'/K$ is a cyclic extension that is inert at $v$, with $K'_v = F'$.  Let $w \in |X|$ be a point that is unramified in $K'$, and let $w'$ be a point of $K'$ over $w$.  Compatibility as in Conjecture \ref{compat} for $BC_{K'_{w'}/K_w}$ follows from Property \ref{p7a} and Structure \ref{BC}.  Since the global parameter has Zariski dense image, compatibility for $BC_{F'/F}(\pi_v)$ then follows from Chebotarev's density theorem as in the proof of \cite[Proposition 6.4]{BHKT}.
\end{proof}


Let $\CA_0(\bG,K)_1$ denote the space of cuspidal automorphic forms on 
$$[\bG_K] := \bG(K)\backslash \bG(\ad_K)$$
 with unitary central character.
\begin{hyp}\label{llfcn}  Assume there is a faithful representation $\rho:  {}^LG \ra GL(V_\rho)$, an algebraic group $H$ over $K$, and, for every finite extension $K'/K$ and every place $v$ of $K'$ a space $S(K'_v)$ of locally constant $C$-valued functions $\Phi_v$ on $H(K'_v)$,  and a $G(\ad)$-invariant subspace  $\CA_1(\bG,K') \subset \CA_0(\bG,K')_1$, 
with the following properties.
\begin{itemize}  
\item[(a)]  For any finite extension $K'/K$, there is a family of integrals $Z(s,f,\Phi,\chi)$ that converges absolutely for $f \in \CA_1(\bG,K')$, 
$\Phi \in S(K') := \otimes'_v S(K'_v)$, $Re(s) > 1$, and $\chi$ a Hecke character of $GL(1)_{K'}$.   Here it is assumed there is a non-zero base point $\Phi_{0,v} \in S(K'_v)$ for almost all $v$ and the restricted direct product is taken with respect to these base points.
\item[(b)]  The zeta integrals are Eulerian:  if $f = \otimes_v f_v$ is a factorizable vector in an irreducible cuspidal automorphic representation $\pi \subset \CA_1(\bG,K')$ and $\Phi = \otimes_v \Phi_v$ is factorizable, then
$$Z(s,f,\Phi,\chi) = \prod_v Z(s,f_v,\Phi_v,\chi_v)$$
\item[(c)]  There is an Euler product $d(s,\chi,\xi_\pi) = \prod_v d(s,\chi_v,\xi_{\pi,v})$, depending only on the central character $\xi_\pi$ of $\pi$ such that, if $\Phi_v = \Phi_{0,v}$, $f_v$ is the spherical vector in $\pi_v$ for a hyperspecial maximal compact subgroup, and $\chi_v$ is unramified, then 
$$d(s,\chi_v,\xi_{\pi,v})\cdot Z(s,f_v,\Phi_v,\chi_v) = L(s,\pi_v,\chi_v,\rho)$$
is the Langlands Euler factor attached to $\pi_v$, $\chi_v$, and $\rho$.  
\item[(d)]  Given $\pi$ as above, for every place $v$ the family of zeta integrals $Z(s,f_v,\Phi_v,\chi_v)$ has a least common denominator $L(s,\pi_v,\chi_v)$ and satisfies a local functional equation, as in Tate's thesis.  More precisely, we suppose that any non-trivial additive character $\psi_v:  K_v \ra \CC^\times$ determines an involution $\Phi_v \mapsto \hat{\Phi}_v$ of $S(K_v)$.  Moreover, we suppose there are isomorphisms
$$c(\pi_v):  \pi_v \isoarrow \pi^\vee_v; f_v \mapsto f^{\vee}_v$$
satisfying $c(\pi^\vee_v)\circ c(\pi_v) = Id_{\pi_v}$ for any $\pi_v$.  Then 
\begin{equation}\label{funeq}  \frac{Z(s,f_v,\hat{\Phi}_v,\chi_v)}{L(s,\pi_v,\chi_v)} = \varepsilon(s,\pi_v,\chi_v,\psi_v)\cdot \frac{Z(1-s,f^{\vee}_v,\Phi_v,\chi_v^{-1})}{L(1-s,\pi^{\vee}_v,\chi_v^{-1})}.
\end{equation}
\item[(e)]  The global $L$-function $L(s,\pi,\chi) = \prod_v L(s,\pi_v,\chi_v)$ has a meromorphic continuation with at most finitely many poles, and satisfies a global functional equation
$$  L(1-s, \pi^\vee,\chi^{-1}) = \varepsilon(s,\pi,\chi)L(s,\pi,\chi),$$
where $\varepsilon(s,\pi,\chi) = \prod_v \varepsilon(s,\pi_v,\chi_v,\psi_v)$ whenever $\psi = \prod_v \psi_v$ is an additive character of $\ad_K/K$.

\item[(f)]  If $\pi_v$ is supercuspidal, then there is an integer $d < \dim \rho$  such that the set of characters $\chi_v$ for which the local Euler factor 
$L(s,\pi_v,\chi_v)$ has a pole is finite.  Moreover,
if $d(\chi_v)$ is the sum of the order sof the poles of $L(s,\pi_v,\chi_v)$ then $\sum_{\chi_v} d(\chi_v) \leq d$.
\item[(g)] Fix $v$ and define the local gamma-factor
$$\gamma(s,\pi_v,\chi_v,\psi_v) = \varepsilon(s,\pi_v,\chi_v,\psi_v)\frac{L(1-s,\pi^{\vee}_v,\chi_v^{-1})}{L(s,\pi_v,\chi_v)}.
$$
Given two irreducible admissible representations $\pi_{1,v}$ and $\pi_{2,v}$, there is an integer $N$ such that, for all $\chi_v$ of conductor at least $N$, we have
$$\gamma(s,\pi_{1,v},\chi_v,\psi_v) = \gamma(s,\pi_{2,v},\chi_v,\psi_v).$$
\end{itemize}
\end{hyp}

\begin{prop}\label{corrig}  Assume Hypothesis \ref{llfcn}.  Moreover, assume 
\begin{itemize}
\item[(*)]for any $\pi \in \CA_1(\bG,K') \subset  \CA_0(\bG,K')_1$, any finite place $v$ of $K$, with $F = K_v$, and any sequence $F = F_0 \subset F_1 \subset \dots \subset F_r$ with $F_i/F_{i-1}$ cyclic, there is a sequence $K' = K_0 \subset K_1  \subset \dots \subset K_r$ of cyclic Galois extensions, with a unique prime $v_i$ lying over $v$ and with $Gal(K_i/K_{i-1}) = Gal(F_i/F_{i-1})$, global base change $BC_{K_i/K_{i-1}}$ takes the base change of $\pi$ to a representation in $\CA_1(\bG,K_i)$; in particular, the image of $\pi$ under base change remains cuspidal.  
\end{itemize}
Let $\sigma$ be a pure supercuspidal representation of $G(K_v)$ that satisfies
\begin{itemize}
\item[(**)] $\sigma$ occurs, up to inertial equivalence, as a local factor of a cuspidal automorphic representation of
$G$ in the space $\CA_1(\bG,K')$.  
\end{itemize}
Then $\sigma$ is not incorrigible.
\end{prop}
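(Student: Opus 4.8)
\emph{The plan is to argue by contradiction.} Suppose $\sigma$ is incorrigible. Using $(**)$ I first globalize: choose a global function field $K'$, a place $v$ with $K'_v \cong F$, and a cuspidal $\pi \in \CA_1(\bG, K') \subset \CA_0(\bG, K')_1$ whose local component at $v$ is $\sigma$ up to inertial equivalence; twisting $\sigma$ and $\pi$ by a suitable Hecke character (which leaves supercuspidality, incorrigibility, purity and the whole zeta-integral formalism unchanged, by Property \ref{p3}) I may assume $\pi_v = \sigma$ with $\pi$ of unitary central character. By Theorem \ref{paramGL} and Theorem \ref{chtouca}, the restriction of the global parameter $\CL^{ss}_{K'}(\pi)$ to $W_F$ equals $\CL^{ss}(\sigma)$; since $\sigma$ is pure, $\rho \circ \CL^{ss}_{K'}(\pi)$ is pure of an integral weight $w$ at $v$, and invoking Lafforgue's purity theorem for the cohomology of the relevant stacks of shtukas together with Chebotarev density I record that $\rho \circ \CL^{ss}_{K'}(\pi)$ is pure of weight $w$ everywhere, so that every partial standard $L$-function attached to it has Euler product absolutely convergent and zero-free for $\mathrm{Re}(s) > 1 + w/2$ and extends to a rational function of $q^{-s}$ with a functional equation and controlled poles.

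Next I build the base-change tower. The parameter $\CL^{ss}(\sigma)$ is continuous, hence trivial on $I_F^n$ for $n \gg 0$; since the absolute Galois group of a local field is pro-solvable, the corresponding finite extension of $F$ can be realized as a tower of cyclic extensions $F = F_0 \subset F_1 \subset \dots \subset F_r$, along which $\CL^{ss}(\sigma)|_{W_{F_r}}$ becomes unramified. Using Grunwald--Wang — choosing the cyclic degrees so as to avoid the exceptional case — I globalize this to a tower $K' = K_0 \subset \dots \subset K_r$ of cyclic extensions of $K'$ with a unique place $v_i$ over $v$, with prescribed local extension $(K_i)_{v_i} = F_i$, and ramified only outside the finite set of places where $\pi$ ramifies and outside $v$. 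By hypothesis $(*)$ the iterated base changes $\pi^{(i)} := BC_{K_i/K_{i-1}}(\pi^{(i-1)})$ stay cuspidal and remain in $\CA_1(\bG, K_i)$, and $\pi^{(r)}$ is unramified at every place except possibly $v_r$. By incorrigibility, the local base-change packet at $v$ — which by Conjecture \ref{compat} (established for $\CL^{ss}$ in Theorem \ref{chtouca}) is the packet of the \emph{unramified} parameter $\CL^{ss}(\sigma)|_{W_{F_r}}$ — contains a supercuspidal member; using Structure \ref{BC} together with the trace-formula comparison (or, if available, the stronger packet-level Structure \ref{BC1}) to adjust the globalization inside this packet, I arrange that $\pi^{(r)}_{v_r}$ is that supercuspidal member. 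Thus $\pi^{(r)}$ is a cuspidal automorphic representation of $\bG$ over $K_r$, in the class $\CA_1$, unramified at every place, whose component at the single place $v_r$ is supercuspidal but has an \emph{unramified} Langlands parameter. Iterating with deeper local extensions at $v$, I may further take $[K_r : K']$ as large as I wish.

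Finally I derive a contradiction from Hypothesis \ref{llfcn}. Since $\pi^{(r)}$ is unramified away from $v_r$, parts (b) and (c) give
\begin{equation*}
L(s, \pi^{(r)}, \chi) = L(s, \pi^{(r)}_{v_r}, \chi_{v_r}) \cdot \frac{L(s, \rho \circ \CL^{ss}_{K_r}(\pi^{(r)}) \otimes \chi)}{L(s, \rho \circ \CL^{ss}_{K_r}(\pi^{(r)})_{v_r} \otimes \chi_{v_r})},
\end{equation*}
where the numerator on the right is the complete (pure, rational, functional-equation-satisfying) Artin-type $L$-function of the global parameter and the denominator is the full degree-$\dim\rho$ Euler factor at $v_r$, a genuine polynomial in $q_{v_r}^{-s}$ with non-zero constant term because $\CL^{ss}_{K_r}(\pi^{(r)})_{v_r}$ is unramified. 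By part (f), since $\pi^{(r)}_{v_r}$ is supercuspidal the local factor $L(s, \pi^{(r)}_{v_r}, \chi_{v_r})$ is trivial for all but finitely many $\chi_{v_r}$; for such $\chi$ the displayed identity exhibits $L(s, \pi^{(r)}, \chi)$ as a pure Artin $L$-function divided by a degree-$\dim\rho$ polynomial, whose $\varepsilon$-factor and conductor are therefore completely determined by the global parameter and by the \emph{unramified} datum at $v_r$, hence bounded independently of $r$. On the other hand, the automorphic functional equation (e) for $L(s, \pi^{(r)}, \chi)$, together with the local functional equation (d) and the vanishing of the $\varepsilon$-factors (b) at the unramified places, expresses its $\varepsilon$-factor as a monomial times $\varepsilon(s, \pi^{(r)}_{v_r}, \chi_{v_r}, \psi_{v_r})$; matching the two computations of the $\varepsilon$-factor and of the degrees in the functional equation, and letting $[K_r:K']$ grow, forces $\varepsilon(s, \pi^{(r)}_{v_r}, \chi_{v_r}, \psi_{v_r})$ — equivalently, by (g), the $\varepsilon$-factor of any representation of $\bG((K_r)_{v_r})$ with the same unramified parameter — to have unbounded conductor while the right-hand side stays bounded. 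This contradiction shows $\sigma$ cannot be incorrigible.

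\emph{The hard part.} The genuinely delicate steps are the middle ones: propagating purity to all places; producing the global $\pi^{(r)}$ with the prescribed everywhere-unramified-except-at-$v_r$ local profile and a \emph{single} place over $v$ while keeping it cuspidal under iterated base change (hypothesis $(*)$ is tailored to this, but making it effective against the Grunwald--Wang constraints is not automatic); and pinning down that the $v_r$-component of $\pi^{(r)}$ is the supercuspidal member of the base-change packet rather than another member. The concluding $L$-function comparison also hinges on a careful joint use of parts (b), (d), (e), (f), (g) of Hypothesis \ref{llfcn} to isolate the Euler factor at $v_r$ and to convert ``supercuspidal $\Rightarrow$ few poles'' into the quantitative contradiction above; this is where I expect most of the work to lie.
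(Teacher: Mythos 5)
Your proof has the right skeleton — globalize via $(**)$, use purity to construct a cyclic tower that kills local inertia, base change along it via $(*)$, and compare automorphic and Galois-theoretic $L$-functions — and this matches the paper's route closely through the middle steps. But the concluding argument is not correct and misses the actual source of the contradiction.

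The final paragraph tries to derive a contradiction from $\varepsilon$-factor conductors growing with $[K_r:K']$. There is no such growth to exploit: after the inertial image at $v$ has been killed, the local parameter at $v_r$ is unramified, its $\varepsilon$-factor is trivial for an unramified $\chi_{v_r}$ and $\psi_{v_r}$ of conductor zero, and there is no reason to iterate further. The contradiction is \emph{local at a single $v_r$} and has nothing to do with a limit over towers.

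What is actually needed, and what the paper does, is to show that the automorphic Euler factor $L(s,\pi^{(r)}_{v_r},\chi_{v_r})$ must equal the \emph{Artin} Euler factor of the unramified parameter $\rho\circ\phi_{\pi_r}|_{\Gamma_{v_r}}$, and then count poles. The mechanism: the global automorphic $L$-function of Hypothesis \ref{llfcn} (parts (d),(e)) and Grothendieck's functional equation for the Artin $L$-function of $\rho\circ\phi_{\pi_r}$ both exist and agree away from $v_r$. The stability hypothesis (g), via the standard twist-by-highly-ramified-characters argument (the paper cites \cite{GLo}, Cor.~5.6 and Thm.~5.1), then forces the two local $\gamma$-factors at $v_r$ to coincide. \emph{Here} is where purity earns its keep: since the local parameter is pure, the numerator and denominator of $\gamma(s,\pi_{v_r},\chi_{v_r},\psi_{v_r})$ have no common zeros, so equality of $\gamma$-factors separates into equality of $L$- and $\varepsilon$-factors. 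In particular $L(s,\pi^{(r)}_{v_r},\chi_{v_r})$ is the full degree-$(\dim V_\rho)$ unramified Artin factor, so as $\chi_{v_r}$ ranges over unramified characters one gets $\sum_{\chi_{v_r}} d(\chi_{v_r})=\dim V_\rho$. This directly contradicts (f), which bounds that sum by $d<\dim\rho$ for a supercuspidal local component. You invoke (g) in passing but do not use it to pin down the Euler factor, and you never use purity for the separation of $\gamma$ into $L$ and $\varepsilon$; as a result the contradiction you reach is not one that your hypotheses actually produce.

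A secondary, smaller point: your step ``I arrange that $\pi^{(r)}_{v_r}$ is that supercuspidal member'' via trace-formula adjustments is more than is needed; incorrigibility plus hypothesis $(*)$ already give a cuspidal $\pi_r\in\CA_1$ whose component at some $v'\mid v$ is supercuspidal with the restricted parameter, and that suffices.
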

\begin{proof}  Suppose $\sigma$ is a pure incorrigible supercuspidal representation of $G$ that occurs as a local factor in $\CA_1(\bG,K')$.  By a simple reduction, using Property \ref{p3} of Theorem \ref{paramGL}, we may assume $\sigma$ has central character of finite order.  As in the proof of Theorem \ref{chtouca}, we can use the results of  \cite{GLo} to globalize $\sigma$ to a cuspidal automorphic representation $\pi$ of $\bG$ with central character of finite order.   Let  $\varphi_\pi:  \Gamma_K \ra ^LG(C)$ denote its semisimple Langlands-Lafforgue parameter, $\varphi_\sigma:  \Gamma_F \ra ^LG(C)$ the local parameter.  Let $I_v \subset \Gamma_v$ be the
inertia group.  By the purity hypothesis the image of $I_v$ under $\varphi_\sigma$ is of finite order, and necessarily solvable.  We can thus find a sequence $K = K_0 \subset K_1 \subset \dots K_i \subset K_{i+1} \subset K_r$ of Galois extensions, with $K_{i+1}/K_i$ cyclic, such that for every prime $v'$ of $K_r$ dividing $v$, the image of the inertia group $I_{v'}$ under the restriction of $\phi_\pi$ to $\Gamma_{K_r}$ is trivial.  By assumption, and by Theorem \ref{chtouca}, there is an automorphic representation $\pi_r$ of $\bG_{K_r}$ which is supercuspidal at some prime $v'$ of $K_r$ dividing $v$, and whose Langlands-Lafforgue parameter satisfies
$$\phi_{\pi_r} = \phi_\pi ~|~_{\Gamma_{K_r}}.$$

Now let 
$S$ be the set of places of $K_r$ at which $\pi_r$ is ramified.  Because we are working over function fields, we know that the partial $L$-function $L^S(s,\pi_r,\chi,\rho) = L^S(s,\rho\circ\phi_{\pi_r},\chi)$ extends to a complete $L$-function $L(s,\rho\circ\phi_{\pi_r},\chi)$
that has a meromorphic continuation with functional equation.  On the other hand, by (d) and (e) of Hypothesis \ref{llfcn}, the complete $L$-function
$L(s,\pi_r,\chi,\rho)$ is also meromorphic with functional equation.
Moreover, by the stability hypothesis (g), the usual argument \footnote{For example, see Corollary 5.6 and the proof of Theorem 5.1 of \cite{GLo}.  Gan and Lomel\'i work with Langlands-Shahidi $\gamma$-factors, but the same argument works with  zeta integrals as in Hypothesis \ref{llfcn}.}  the local $\gamma$-factors are uniquely determined by the global functional equation.  By the purity hypothesis, the local Euler factors are also uniquely determined.  
In particular, as $\chi_v$ varies, letting $d(\chi_v)$ denote the order of the pole of $L(s,\pi_{v'},\chi_{1,v'},\rho) = L(s,\rho\circ\phi_{\pi_r}~|~_{\Gamma_{v'}}, \chi_{1,v'})$ at $s = 1$,
we have
$$\sum_{\chi_v} d(\chi_v) = \dim V_\rho.$$
But this contradicts (f) of Hypothesis \ref{llfcn}.
\end{proof}

\begin{remark}\label{double}  In the applications, $G$ will be of the form $G_0 \times G_0$, $\rho$ will be an irreducible representation of the dual group $\hat{G} = \hat{G}_0 \times \hat{G}_0$ that is trivial on the second factor, and  $\CA_1(\bG,K')$ will be the sum of representations $\pi_0 \otimes \pi_0^{\vee}$ where $\pi$ runs over cuspidal representations of $G_0$ with unitary central character.  Then the involutions $c(\pi_v) = c(\pi_{0,v}\otimes \pi_{0,v}^\vee)$ will just switch the two factors.  The hypothesis (*) of Proposition \ref{corrig} can then be guaranteed as in the proof of Proposition \ref{compat-prop} below.  Moreover, every supercuspidal representation of the form 
$\sigma = \pi_v \otimes \pi_v^{\vee}$ satisfies (**), which suffices for the applications.
\end{remark}

When $d = 0$ in (f), Proposition \ref{corrig} can be strengthened:

\begin{prop}\label{corrig2}  Assume Hypothesis \ref{llfcn} and the condition (*) in Proposition \ref{corrig}.  Moreover, assume the integer $d = 0$ in point (f).  Then no supercuspidal representation of $G$ is incorrigible.
\end{prop}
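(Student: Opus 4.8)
The plan is to run the argument of Proposition \ref{corrig} with two modifications: the purity hypothesis on $\sigma$ is replaced by a soft monodromy argument that makes the parameter unramified at the chosen place after a tower of cyclic extensions, and the numerical clash $\sum_{\chi_v} d(\chi_v) < \dim V_\rho$ of Proposition \ref{corrig} is replaced by the cruder observation that, when $d = 0$, a supercuspidal local factor forces the local $L$-factor to be trivial on the automorphic side, whereas an unramified parameter of positive dimension never has trivial $L$-factor.

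Suppose, for contradiction, that $\sigma$ is an incorrigible supercuspidal representation of $G = \bG(F)$, with packet $\Pi_0$. As in the proof of Proposition \ref{corrig}, use Property \ref{p3} of Theorem \ref{paramGL} to reduce to the case that $\sigma$ has central character of finite order, and use the globalization results of \cite{GLo}, exactly as in the proof of Theorem \ref{chtouca}, to realize $\sigma$ as the component at $v$ of a cuspidal automorphic representation $\pi$ of $\bG$ over $K$ lying in $\CA_1(\bG, K)$; this last point is automatic in the situation of Remark \ref{double}, and is where condition (**) of Proposition \ref{corrig} is being used implicitly. Let $\varphi_\pi \colon \Gamma_K \to {}^LG(C)$ be the associated Langlands--Lafforgue parameter and $\varphi_\sigma$ its localization at $v$, which is the Genestier--Lafforgue parameter $\CL^{ss}(\sigma)$. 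Since this parameter is semisimple and $C$ has characteristic zero, $\rho \circ \varphi_\sigma$ is a semisimple representation of $W_F$ for the faithful $\rho$ of Hypothesis \ref{llfcn}; hence, by Grothendieck's monodromy theorem, an open subgroup of the inertia group $I_v$ acts on it unipotently, therefore trivially, so that $\varphi_\sigma(I_v)$ is finite. Being an extension of a finite cyclic group by a finite $p$-group, $\varphi_\sigma(I_v)$ is solvable, so $\varphi_\sigma$ becomes unramified over a finite tower of cyclic extensions of $F$ (after a preliminary finite unramified base change if necessary); globalizing it as in the proof of Theorem \ref{chtouca}, we obtain a tower $K = K_0 \subset K_1 \subset \cdots \subset K_r$ of cyclic extensions, with a unique place $v_i$ above $v$ at each level, such that $\varphi_\pi|_{\Gamma_{K_r}}$ is unramified at $v_r$; write $F_i = K_{i, v_i}$, so $F_0 = F$.

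Since $\sigma$ is incorrigible, the base-change packet $BC_{F_r/F_0}(\Pi_0)$ contains a supercuspidal representation $\sigma_r$ of $\bG(F_r)$; by Conjecture \ref{compat} --- valid here by Theorem \ref{chtouca} --- and Structure \ref{BC}, its parameter $\CL^{ss}(\sigma_r)$ equals $\varphi_\sigma|_{W_{F_r}}$, which is unramified. Using condition (*) and \cite{GLo}, we may take $\sigma_r$ to be the component at $v_r$ of a cuspidal $\pi_r \in \CA_1(\bG, K_r)$, whose global Langlands--Lafforgue parameter $\phi_{\pi_r}$ is then unramified at $v_r$. Let $S$ be the finite set of places of $K_r$ where $\pi_r$ ramifies; since $\sigma_r$ is supercuspidal, $v_r \in S$. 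By parts (d) and (e) of Hypothesis \ref{llfcn}, the completed automorphic $L$-function $L(s, \pi_r, \chi, \rho)$ is meromorphic with a functional equation, and by Grothendieck's rationality and functional-equation theorem over the curve the same holds for the Artin--Weil $L$-function $L(s, \rho \circ \phi_{\pi_r}, \chi)$; by Property \ref{p7a} and parts (b), (c) of Hypothesis \ref{llfcn} they have the same Euler factors outside $S$. Twisting by Hecke characters highly ramified at the places of $S \setminus \{v_r\}$ and invoking the stability hypothesis (g), one concludes, exactly as in the proof of Proposition \ref{corrig}, that $\gamma(s, \pi_{r, v_r}, \chi_{v_r}, \psi_{v_r}) = \gamma(s, \rho \circ \phi_{\pi_r}|_{\Gamma_{v_r}}, \chi_{v_r}, \psi_{v_r})$ for every character $\chi_{v_r}$ of $F_r^\times$. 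Now specialize to an unramified $\chi_{v_r}$. On the automorphic side, because $d = 0$ and both $\pi_{r, v_r} = \sigma_r$ and its contragredient are supercuspidal, part (f) of Hypothesis \ref{llfcn} gives $L(s, \pi_{r, v_r}, \chi_{v_r}) = L(1 - s, \pi_{r, v_r}^\vee, \chi_{v_r}^{-1}) = 1$, so this $\gamma$-factor is the bare $\varepsilon$-factor, a monomial in $q_{v_r}^{\pm s}$ with no zeros or poles. On the Galois side, $\rho \circ \phi_{\pi_r}|_{\Gamma_{v_r}} \otimes \chi_{v_r}$ is unramified of dimension $\dim V_\rho \geq 1$ with semisimple Frobenius, so its Artin $L$-factor has genuine poles; and since at the unramified place $v_r$ the Frobenius eigenvalues are Weil numbers of a single weight (Lafforgue's purity, reduced to $GL(n)$ via $\rho$ in the setting of Remark \ref{double}), the poles of that factor and of the factor attached to its contragredient are disjoint, so the $\gamma$-factor is not a monomial. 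This contradiction completes the argument.

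I expect the main obstacle to be the bookkeeping already present in Proposition \ref{corrig}: arranging that all the globalizations that occur --- of $\sigma$, and of the base-changed supercuspidal $\sigma_r$ --- can be taken inside the class $\CA_1(\bG, \cdot)$ to which Hypothesis \ref{llfcn} applies, and that base change preserves that class. This is precisely what conditions (*) and (**) are designed to secure, and it is the point most sensitive to the eventual precise form of cyclic base change over function fields. A secondary issue requiring care is the appeal to purity of the Satake parameter at $v_r$ for a general group $G$; for the applications in Remark \ref{double} it reduces, via $\rho$, to Lafforgue's theorem for $GL(n)$.
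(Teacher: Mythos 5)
Your argument follows the paper's proof of Proposition \ref{corrig2} step by step, and the overall structure is correct: use Grothendieck's $\ell$-adic monodromy theorem together with semisimplicity of the Genestier--Lafforgue parameter to replace the purity hypothesis of Proposition \ref{corrig} and obtain a tower of cyclic extensions along which the local parameter becomes unramified; then exploit incorrigibility, the stability hypothesis (g), and the fact that $d = 0$ forces the automorphic local $L$-factor to be trivial, to derive a contradiction by comparing $\gamma$-factors. In fact your version spells out two points the paper elides: the monodromy argument in place of purity, and the reduction to a comparison of $\gamma$-factors rather than $L$-factors (the paper only says ``it suffices that $L(s,\rho\circ\phi_\pi,\chi)$ has a pole for some $\chi$'', which quietly presupposes that the pole cannot be cancelled by the corresponding pole of $L(1-s,\cdot^\vee,\chi^{-1})$).

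The one place your argument is shaky is the appeal to ``Lafforgue's purity'' at the place $v_r$. Two objections: first, $v_r$ is a \emph{ramified} place for the automorphic representation $\pi_r$ (it is supercuspidal there -- that is the whole point of incorrigibility), and Lafforgue's Ramanujan theorem concerns spherical places; the correct reference is Deligne's Weil II, applied to the irreducible summands of $\rho\circ\phi_{\pi_r}$, which gives punctual purity up to twist. Second, even granting Weil II, distinct irreducible summands may carry distinct weights, so ``a single weight'' requires knowing that $\rho\circ\phi_{\pi_r}$ is irreducible. This can be secured by the Zariski-density of the image of the globalized parameter (arranged via \cite{BFHKT} in the proof of Theorem \ref{chtouca}, and inherited by open subgroups), but you do not say so. In any case the purity detour is unnecessary. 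A cleaner and fully unconditional argument runs as follows: if $\alpha_1,\dots,\alpha_N$ are the Frobenius eigenvalues of the unramified Weil representation $V = \rho\circ\phi_{\pi_r}\vert_{W_{F_r}}$, then the Galois $\gamma$-factor is a monomial in $q^{\pm s}$ if and only if the multiset $\{\alpha_i\}$ equals the multiset $\{q\alpha_j\}$; but $\alpha\mapsto q\alpha$ strictly increases complex absolute values (as $q>1$ and the $\alpha_i$ are nonzero), so no nonempty finite multiset can be stable under it. This gives the contradiction with $d=0$ without any purity, irreducibility, or disjointness of poles, and it is in effect the argument the paper is gesturing at.
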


\begin{proof}  As in the proof of Proposition \ref{corrig}, we may reduce to the case that the image of the inertia group $I_v \subset \Gamma_v$ under the semisimplification of  $\rho\circ \phi_{\pi_r}$ is trivial, and in particular we may assume $r = 0$ and $\pi_r = \pi$.  It then suffices to show that the local factor at $v$ of $L(s,\rho \circ \phi_{\pi},\chi)$ has at least one pole for some $\chi$.   But it is well-known that the local $L$-factor of any Weil-Deligne parameter for $GL(\dim V_r)$ that is trivial on the Weil group has at least one pole.
\end{proof}

\subsection{The case of $GL(n)$}\label{gln-section}

Conjecture \ref{Iwahori2} is a well known consequence of the local Langlands correspondence for $GL(n)$ over any non-archimedean local field.  Over local fields of positive characteristic it is one of the many consequences of Laurent Lafforgue's global Langlands correspondence \cite{Laf02}, and the considerations of \S \ref{lfunction} are entirely superfluous.  We nevertheless show that the hypotheses of Proposition \ref{corrig} apply in this case, using only the methods of \cite{Laf18} that are available for all groups.

\begin{prop}\label{propgln}  Let $D$ be a simple algebra over the global function field $K$ of dimension $m^2$, $G = GL(a,D) \times GL(a,D)$ for some integer $a \geq 1$, and $F = K_v$ for some $v$.  Then the Godement-Jacquet zeta integral of \cite{GJ72} satisfies the conditions of Hypothesis \ref{llfcn}.  In particular, if $G$ is the multiplicative group of a simple algebra over $F$ then $G$ has no incorrigible supercuspidal representations.
\end{prop}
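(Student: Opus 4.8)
The plan is to exhibit the Godement-Jacquet integrals of \cite{GJ72} as an instance of the formalism of Hypothesis \ref{llfcn}, in the doubled form of Remark \ref{double}, and then to read off the conclusion from Proposition \ref{corrig} (together with Proposition \ref{corrig2} when the relevant local $L$-factor is trivial). First I would fix the data. Writing $n = am$ and $G_0 := GL(a,D)$, an inner form of $GL(n)$ over $K$, I take $G = G_0\times G_0$, $\hat G = GL(n,C)\times GL(n,C)$, $V_\rho = C^{n}$ and $\rho = \mathrm{std}\boxtimes\mathbf 1$, so that $\dim\rho = n$ and $L(s,\pi,\chi,\rho)$ is the Godement-Jacquet $L$-function of the first factor twisted by $\chi\circ\mathrm{Nrd}$; I take $H$ to be the $K$-group underlying the simple algebra $M_a(D)$, $S(K'_v)$ the space of Bruhat-Schwartz functions on $M_a(D)(K'_v)$ with the evident base points, and $\CA_1(\bG,K') = \bigoplus_{\pi_0}\pi_0\boxtimes\pi_0^\vee \subset \CA_0(\bG,K')_1$, the sum running over cuspidal automorphic representations $\pi_0$ of $G_0(\ad_{K'})$ with unitary central character; a vector in $\pi_0\boxtimes\pi_0^\vee$ is to be read as a matrix coefficient of $\pi_0$, and $Z(s,f,\Phi,\chi)$ is the corresponding zeta integral of \cite{GJ72}. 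With these choices, properties (a), (b), (d) and (e) of Hypothesis \ref{llfcn} are essentially the main theorems of \cite{GJ72} -- stated there for $GL(a,D)$ over an arbitrary global field, function fields included -- where $\hat{\Phi}_v$ is the Fourier transform for the reduced-trace pairing attached to $\psi_v$ and $c(\pi_v)$ is the interchange of the two $G_0$-factors, which carries $\pi_0\boxtimes\pi_0^\vee$ onto its contragredient, satisfies $c(\pi_v^\vee)c(\pi_v) = \mathrm{Id}$, and is precisely the passage to the dual matrix coefficient occurring in the local functional equation of \cite{GJ72}. The unramified computation of \cite{GJ72} gives $Z = L$ on the nose, so (c) holds with $d(s,\chi_v,\xi_{\pi,v})\equiv 1$; and (g), the stability of the twisted $\gamma$-factor, is classical and in fact elementary here, since by Remark \ref{double} the representations being compared have trivial central character, so their stable $\gamma$-factors agree with a common power of Tate's $GL(1)$ local constant.

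The one property that requires a genuine argument is (f). Here I would pass through the Jacquet-Langlands correspondence \cite{DKV}: a supercuspidal $\pi_{0,v}$ of $GL(a,D_v)$ transfers to an essentially square-integrable representation $\pi_{0,v}^{\mathrm{JL}}$ of $GL(n,K_v)$, and, the transfer preserving twisted $L$-factors, $L(s,\pi_{0,v}\otimes\chi_v) = L(s,\pi_{0,v}^{\mathrm{JL}}\otimes\chi_v)$. By the classification of discrete series of $GL(n,K_v)$ \cite{Z80}, $\pi_{0,v}^{\mathrm{JL}}\cong\mathrm{St}_k(\rho_0)$ for a generalized Steinberg representation attached to a supercuspidal $\rho_0$ of $GL(n/k,K_v)$, and the Godement-Jacquet formula for the $L$-factor of a generalized Steinberg reduces $L(s,\pi_{0,v}^{\mathrm{JL}}\otimes\chi_v)$ to $L(s',\rho_0\otimes\chi_v)$ for a shift $s'$ of $s$; this is identically $1$ unless $n/k = 1$, that is, unless $\rho_0$ is a character, in which case it has a single simple pole for exactly one unramified-twist orbit of $\chi_v$. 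Hence the set of $\chi_v$ producing a pole is empty or one such orbit, $\sum_{\chi_v}d(\chi_v)\le 1$, which is $<\dim\rho = n$ as soon as $n = am\ge 2$; this verifies (f) with $d\le 1$, and $d = 0$ precisely when $\pi_{0,v}$ is itself supercuspidal on $GL(n,K_v)$, putting one in the setting of Proposition \ref{corrig2}.

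With Hypothesis \ref{llfcn} in hand I would conclude as follows. Condition $(*)$ of Proposition \ref{corrig} is supplied by globalizing a prescribed cyclic tower of completions to a tower of function fields inert at the chosen place, exactly as in the proof of Proposition \ref{compat-prop} and using the globalization results of \cite{GLo}; and every pure supercuspidal $\sigma$ of $GL(a,D_v)$ occurs as the first component of some $\pi_{0,v}\boxtimes\pi_{0,v}^\vee\in\CA_1$, hence satisfies $(**)$. Proposition \ref{corrig} -- or Proposition \ref{corrig2} when $d=0$ -- then shows that $\sigma$ is not incorrigible, which is the asserted statement for $GL(a,D_v)$ and in particular for $D_v^\times$, the multiplicative group of a division algebra over $F$. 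I expect the only real obstacle to be the strict inequality $d<\dim\rho$ in (f): it forces one to combine the Jacquet-Langlands classification with the precise shape of discrete-series $L$-factors tightly enough to bound the total pole order by $am-1$, and it degenerates in the trivial case $am=1$, where $G_0 = GL(1)$, $\dim\rho = 1$ and $\CL^{ss}$ is simply class field theory -- a case lying outside the scope of Propositions \ref{corrig} and \ref{corrig2}.
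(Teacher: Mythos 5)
The overall framing is the same as the paper's: you identify the Godement--Jacquet zeta integral, in the doubled form of Remark \ref{double}, as an instance of Hypothesis \ref{llfcn}, and the data $(H,S(K'_v),\Phi\mapsto\hat\Phi,\CA_1,c(\pi_v))$ that you choose coincide with the paper's. Conditions (a)--(e) and (g) are handled identically (the paper cites \cite{JS,Lo16,HeLo13} for (g)). The divergence is entirely in how you verify condition (f), and this is where the proof structure differs materially.

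You argue for (f) by passing through the Jacquet--Langlands transfer and the Zelevinsky classification of discrete series of $GL(n,F)$, asserting that the transfer ``preserves twisted $L$-factors'' and then bounding the pole order of the $L$-factor of a generalized Steinberg. The paper instead invokes Godement--Jacquet \cite{GJ72} directly: the intrinsic Godement--Jacquet local factor of a supercuspidal representation of $GL(a,D_v)$ is $1$, so $d = 0$ and Proposition \ref{corrig2} applies without further ado. Your detour has two costs. First, the claim that the Jacquet--Langlands correspondence preserves the Godement--Jacquet $L$-factor is not what \cite{DKV} proves; that reference establishes a character identity, from which one can extract $\gamma$- and $\varepsilon$-factor identities, but the local $L$-factor of $GL(a,D)$ is the greatest common divisor of an intrinsic family of zeta integrals and its comparison with the $L$-factor of the transferred representation of $GL(n,F)$ is a separate (true, but nontrivial) statement that you should either prove or cite more carefully; the more economical route, and the one the paper takes, is to read the answer directly from \cite{GJ72} without invoking transfer at all. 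Second, and more consequentially, your bound is only $d\le 1$, so you are forced into Proposition \ref{corrig}, which requires purity, whereas the paper's $d=0$ puts you in Proposition \ref{corrig2} and proves the assertion for \emph{all} supercuspidal representations, not merely the pure ones. Your final paragraph silently restricts to pure $\sigma$; as written this establishes something weaker than the stated proposition unless you separately argue that every supercuspidal of $GL(a,D_v)$ is pure. Your observation about the degenerate case $am=1$ is a fair caveat and applies to the paper's argument as well.

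Net assessment: the conclusion is right and the scaffolding is the same, but the key local input is replaced by a longer and less accurately sourced argument that also delivers a strictly weaker bound, forcing a purity hypothesis the paper avoids. The fix is to drop the Jacquet--Langlands detour and quote \cite{GJ72} directly for the vanishing of the supercuspidal $L$-factor, as the paper does.
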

\begin{proof}  The Langlands dual group of $G$ is $GL(am,\CC) \times GL(am,\CC)$ and we take $\rho$ to be the standard representation of the first factor.
The conditions of Hypothesis \ref{llfcn} were directly modeled on the constructions and results of \cite{GJ72}: $H$ is the simple algebra $M(a,D)$, viewed as an additive algebraic group, $\Phi_v \mapsto \hat{\Phi}_v$ the Fourier transform relative to the character $\psi_v \circ Tr_{D_v}$, where $Tr_{D_v}$ is the reduced trace map.  Moreover, the subspace $\CA_1(\bG,K')$ and the involutions $c(\pi_v)$ are as in Remark \ref{double}.  In particular, the functions $f_v$ in the factorization are matrix coefficients of the representation $\pi_{0,v}$ in the space of locally constant functions on $GL(a,D_v)$ and $f_v^\vee(g) = f_v(g^{-1})$.

With these conventions, conditions (a)-(f) of Hypothesis \ref{llfcn} are proved in \cite{GJ72}, and the stability condition (g) is proved in \cite{JS}; see also \cite{Lo16} and \cite{HeLo13}.  Specifically, if $\pi_v$ is supercuspidal then the Godement-Jacquet Euler factor $L(s,\pi_v) = 1$, so condition (f) is automatic with $d = 0$.  Thus we can apply Proposition \ref{corrig2}, and Remark \ref{double}, to conclude.
\end{proof}

\subsubsection{Splitting for $GL(n)$ and the local Langlands correspondence}  
Proposition \ref{propgln} gives an apparently independent proof of Henniart's Splitting Theorem for $GL(n)$.  Since this turns out to be the crucial step
in the proof of the local Langlands correspondence, together with the existence of a canonical parametrization, Rapoport asked me in 2018 whether the argument by way of
$L$-functions represents a new proof of the local Langlands conjecture for $GL(n)$.  The answer is no, but for an interesting reason, which I describe briefly.

For function fields, the existence of the canonical parametrization, together with the known theory of $L$-functions for $GL(n)\times GL(m)$, 
was already used by Laurent Lafforgue in \cite{Laf02} to prove the entire global Langlands correspondence over function fields.  The article \cite{Laf02}
cited Henniart's numerical correspondence, but upon closer examination this turns out to be unnecessary.  Using the properties of Vincent Lafforgue's global parametrization
in order to apply the $L$-function criterion of Proposition \ref{llfcn}, and then using the latter to deduce the local Langlands correspondence, looks dangerously circular.

Suppose now that $F$ is a $p$-adic field that arises as a completion $F = K_v$ of a global number field $K$.  Shimura varieties attached to unitary similitude groups $\bG_n$ are used in \cite{HT01} and \cite{Sch13} to define local parametrizations of supercuspidal representations of $GL(n,F)$ by $n$-dimensional representations of $W_F$ that are compatible with global parametrizations of certain cuspidal automorphic representations of the Galois group of $K$, for certain $K$ (in particular, $K$ is a totally imaginary extension of a totally real field).  Let $\CL$ denote any of these (semisimple) parametrizations.  In \cite{H98} a class $\CA_2(\bG_N,K)$ of cuspidal automorphic representations is constructed, for any integer $N$, with the property that, for any $\pi \in \CA_2(\bG_N,K)$, the restriction of $\CL(\pi)$ to the Galois group of some extension $K'/K$ of degree $n$ is abelian and is attached to a Hecke character
of type $A_0$.  It is moreover proved that, up to twisting by an unramified character, every irreducible representation of $W_F$ occurs as a summand of the restriction to the 
decomposition group $D_v = Gal(\bar{F}/F)$ of $\CL(\pi)$, for some $\pi \in \CA_2(\bG_N,K)$ and some $N$; the proof follows Deligne's construction of local constants for the functional equations of Artin $L$-functions in \cite{De73}.  Henniart's proof in \cite{He00} works with the class of representations $\CA_2(\bG_N,K)$.  

We may thus define a subset $\Pi_2(GL(n)/F) \subset \Pi(GL(n)/F)$ to be the set of supercuspidal representations $\sigma$ such that, for some $N = n_0 + n_1 + \dots + n_r$,
with $n = n_0$, and some $\pi \in \CA_2(\bG_N,K)$, with $F \isoarrow K_v$, the local component $\pi_v$ of $\pi$ is the normalized induction of a representation
$\sigma_0 \otimes \sigma_1 \dots \otimes \sigma_r$ of the parabolic subgroup of $GL(N,F)$ corresponding to the partition $(n_0,n_1,\dots, n_r)$ of $N$,
with $\sigma_i$ a supercuspidal representation of $GL(n_i,F)$, and $\sigma$ and $\sigma_0$ are inertially equivalent.   The arguments of \cite{HT01} or \cite{Sch13}, together with \cite{H98} and \cite{BHK}, then show that
\smallskip

\begin{prop}\label{noincorrpi2}
~~

\begin{itemize}
\item  No $\sigma \in \Pi_2(GL(n)/F)$ is incorrigible;
\item  The local parametrization from $\Pi_2(GL(n)/F)$ to $n$-dimensional representations of $W_F$ is surjective onto the set of irreducible $n$-dimensional
representations of $W_F$.
\end{itemize}
\end{prop}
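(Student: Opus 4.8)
The plan is to read both statements off the construction of the local Langlands correspondence for $GL(n)$ over $p$-adic fields via Shimura varieties --- that is, off \cite{HT01,Sch13} together with the globalization of \cite{H98} and the axiomatic comparison of \cite{BHK} --- so that $\CL$ is the genuine $GL(n)$ bijection: supercuspidals correspond exactly to irreducible parameters, and $\CL$ is compatible with parabolic induction (Property \ref{p7}) and with cyclic base change. Throughout we may assume $n\geq2$, the case $n=1$ carrying no content.

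\emph{Surjectivity (second bullet).} Given an irreducible $n$-dimensional representation $\rho$ of $W_F$, I would invoke the class $\CA_2(\bG_N,K)$ built in \cite{H98}, refined with Deligne's local constants \cite{De73}: there exist $N\geq n$, a CM field $K$ with a place $v$ and an identification $F\isoarrow K_v$, and a $\pi\in\CA_2(\bG_N,K)$ such that, after an unramified twist, $\rho$ occurs as a direct summand of $\CL(\pi)|_{D_v}$. Write $\CL(\pi)|_{D_v}=\bigoplus_{i=0}^{r}\rho_i$ with $\rho_0=\rho\otimes\chi$ ($\chi$ unramified), $\dim\rho_i=n_i$, $n_0=n$. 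Local--global compatibility and Property \ref{p7} identify $\pi_v$ with the normalized induction, from the standard parabolic of type $(n_0,\dots,n_r)$, of $\sigma_0\otimes\cdots\otimes\sigma_r$, where each $\sigma_i$ is a supercuspidal of $GL(n_i,F)$ with $\CL(\sigma_i)=\rho_i$. Then $\sigma:=\sigma_0\otimes\chi^{-1}$ is supercuspidal, inertially equivalent to $\sigma_0$, lies in $\Pi_2(GL(n)/F)$, and satisfies $\CL(\sigma)=\rho$; so $\rho$ is in the image.

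\emph{No incorrigibility (first bullet).} Let $\sigma\in\Pi_2(GL(n)/F)$. After an unramified twist it is a factor $\sigma_0$ of $\pi_v$ for some $\pi\in\CA_2(\bG_N,K)$ with $F\isoarrow K_v$, so $\CL(\sigma_0)$ is a summand of $\CL(\pi)|_{D_v}$. The defining property of $\CA_2$ is that $\CL(\pi)$ becomes abelian, indeed attached to a Hecke character of type $A_0$, after restriction to $\Gamma_{K'}$ for some extension $K'/K$ of degree $n$; hence, fixing a place $w\mid v$ of $K'$ and writing $F'=K'_w$, the restriction $\rho|_{W_{F'}}$ of $\rho:=\CL(\sigma)$ is a sum of characters. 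Let $\tilde F/F$ be the Galois closure of $F'/F$. Since $\Gamma_F$ is pro-solvable --- an extension of the metabelian tame quotient by the pro-$p$ wild inertia --- the group $Gal(\tilde F/F)$ is solvable, so $\tilde F$ sits atop a tower of cyclic extensions $F=F_0\subset F_1\subset\cdots\subset F_r=\tilde F$; I would realize this, by Grunwald--Wang, as the completions at the unique place over $v$ of a tower of cyclic extensions of number fields, in order to apply Arthur--Clozel cyclic base change \cite{AC}. Then $BC_{\tilde F/F}(\sigma)$ --- the relevant local component of the global cyclic base change of $\pi$, up to the same unramified twist --- has parameter $\CL(\sigma)|_{W_{\tilde F}}=\rho|_{W_{\tilde F}}$, which restricts the abelian $\rho|_{W_{F'}}$ and so is a sum of $n\geq2$ characters. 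Since over $p$-adic fields the $GL(n)$ correspondence matches supercuspidals with irreducible parameters, $BC_{\tilde F/F}(\sigma)$ is not supercuspidal; as the packet $\Pi_0$ of $\sigma$ equals $\{\sigma\}$ and cyclic base change is single-valued, $BC_{F_r/F_0}(\Pi_0)$ contains no supercuspidal member, i.e.\ $\sigma$ is not incorrigible.

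\emph{Expected obstacle.} There is no conceptual difficulty here beyond what is already in the cited works; the delicate part is the base-change bookkeeping --- realizing the prescribed local tower globally via Grunwald--Wang (with the routine care required in its excluded case), matching the local and global cyclic base-change maps at $v$, and carrying along the unramified twist relating $\sigma$ and $\sigma_0$. It should also be stressed that the appeal to ``supercuspidal $\Leftrightarrow$ irreducible parameter'' is legitimate only because, over $p$-adic fields, this is by now a theorem (\cite{HT01,Sch13,BHK}): thus the proposition is a corollary of the $GL(n)$ correspondence, not an ingredient in its proof, and --- read in reverse --- the first bullet is simply Henniart's splitting Theorem \ref{split} restricted to $\Pi_2(GL(n)/F)$.
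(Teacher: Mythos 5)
Your argument is correct, and it closely matches what the paper intends: the second bullet is read off the construction of $\CA_2(\bG_N,K)$ in \cite{H98} together with local–global compatibility and Property~\ref{p7}, and the first bullet is the Galois side becoming solvably trivializable. You also correctly anticipate the Grunwald--Wang bookkeeping and correctly observe the potential circularity at the end. The paper offers no detailed proof, just the citation ``The arguments of \cite{HT01} or \cite{Sch13}, together with \cite{H98} and \cite{BHK}, then show that\ldots,'' so your write-up is a legitimate filling-in.

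The one place your route diverges, and it is worth flagging because it bears directly on the point of the surrounding discussion (Rapoport's question): for the first bullet you invoke ``supercuspidal $\Leftrightarrow$ irreducible parameter'' to conclude that $BC_{\tilde F/F}(\sigma)$ is not supercuspidal. That is the conclusion of the bijectivity of the $GL(n)$ correspondence, which over $p$-adic fields relies on Henniart's numerical correspondence (in \cite{HT01,He00}) or on Scholze's nearby-cycle analysis (in \cite{Sch13}). The paper's own framing --- ``the arguments used to prove Proposition~\ref{noincorrpi2}, especially Proposition~\ref{propgln}, suffice to prove surjectivity but not injectivity'' --- signals that the intended argument for the first bullet should instead be the $L$-function criterion of Propositions~\ref{corrig}/\ref{corrig2}, transposed to number fields via the class $\CA_2$. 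Concretely: the restriction of $\CL(\pi)$ to $\Gamma_{K'}$ is attached to a Hecke character of type $A_0$, so the Artin $L$-function of $\CL(\pi)$ inherits a functional equation from Hecke theory that is independent of the Godement--Jacquet functional equation for $\pi$; stability of $\gamma$-factors (\cite{JS}) then forces the local Euler factors to agree; once the parameter has been made unramified by base change, the Galois-side local factor has poles, while the Godement--Jacquet factor of a supercuspidal is~$1$. That is the non-circular version, and it is the one that makes sense of the paper's claim that the $L$-function machinery proves surjectivity (but not injectivity) without assuming the numerical correspondence. Your closing caveat shows you are aware of the issue; in the paper's intended reading, however, the first bullet is not ``simply Henniart's splitting Theorem~\ref{split} restricted to $\Pi_2$'' but rather an independent proof of that restricted statement, which is precisely what makes the subsequent paragraph of the paper non-vacuous.
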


Henniart's proof in \cite{He00} is similar, except that he doesn't define a parametrization of all the supercuspidal representations of $GL(n,F)$.
Bijectivity of the local parametrization then follows in \cite{HT01} and \cite{He00} from Henniart's numerical correspondence, and in \cite{Sch13} from Scholze's
analysis relating the action of inertia on nearby cycles to invariants under the Iwahori subgroup.  If neither of these is assumed, then the arguments used to prove
Proposition \ref{noincorrpi2} -- especially Proposition \ref{propgln} -- suffice to prove surjectivity but not injectivity.  

The problem is that the arguments sketched above do not suffice to show that
$\Pi_2(GL(n)/F)$ contains all supercuspidal representations of $GL(n,F)$, and thus provide no information about the local $L$- and $\varepsilon$-factors
for supercuspidal representations not in $\Pi_2(GL(n)/F)$.
The difference between the proofs for number fields and function fields is that in the latter case every global parameter is known to be automorphic; thus its $L$-function has two independent definitions, one from arithmetic and one from the theory of automorphic forms, and the stability of $\gamma$-factors, together with compatibility at unramified places, implies that all local functional equations must
necessarily coincide.  For number fields this is only known for the parameters attached to $\pi \in \CA_2(\bG_N,K)$.

\subsection{Example:  classical groups}\label{classical}

Hypotheses are as in Section \ref{lfunction}:  $G$ is a connected reductive group over the function field $K = k(X)$.  More precisely, to adapt to the formalism of Hypothesis \ref{llfcn} we assume $G = G_0 \times G_0$, where $G_0$ is a classical group over $K$ of the form $Sp(2n)$ or $SO(V)$ for a vector space $V$ over $K$ with a non-degenerate symmetric bilinear form $b_V$.    We let $F = K_v$ for some place $v$.

\begin{prop}\label{propclassical}  Assume Hypotheses \ref{simpleBC} and \ref{labesse} hold for $G_0$ over $K$.  Then  the Piatetski-Shapiro-Rallis doubling integral of \cite{psr} satisfies the conditions of Hypothesis \ref{llfcn}.  In particular, if $G_0$ is a classical symplectic or special orthogonal group over $F$ then $G$ has no incorrigible pure supercuspidal representations.
\end{prop}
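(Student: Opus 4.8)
The plan is to verify, clause by clause, that the Piatetski-Shapiro--Rallis doubling construction \cite{psr} instantiates the abstract package of Hypothesis \ref{llfcn} with $H$ taken to be the relevant orthogonal or symplectic space on which $G_0 \times G_0$ acts via the doubling embedding, $S(K'_v)$ the Schwartz--Bruhat space of the isotropic subspace, and $\CA_1(\bG,K')$ the space of forms $\varphi_0 \otimes \varphi_0^\vee$ built from cusp forms on $G_0$ with unitary central character, exactly as in Remark \ref{double}. For each item (a)--(e) one simply cites the doubling method: absolute convergence of the global zeta integral in a right half-plane, the basic identity expressing it as an integral of a matrix coefficient against a section of a degenerate principal series on the doubled group, the resulting Eulerian factorization, the unramified computation of Piatetski-Shapiro--Rallis (which gives (c), identifying the local integral against the spherical data with the Langlands $L$-factor $L(s,\pi_v,\chi_v,\rho)$ for $\rho$ the standard representation of $\hat{G_0}$), and the meromorphic continuation plus functional equation of the Siegel--Eisenstein series, which yields the global functional equation (e) and, by the standard Rallis-type argument, the local functional equation (d) defining $\varepsilon$- and $\gamma$-factors. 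The stability property (g) for doubling $\gamma$-factors under highly ramified twists is due to Rallis--Soudry (and refined work of Yamana, Gan--Takeda, Kaplan, and others); I would quote it in the form needed here.

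The one genuinely new input is clause (f), the bound $\sum_{\chi_v} d(\chi_v) \le d$ on the total order of poles of $L(s,\pi_v,\chi_v)$ as $\chi_v$ ranges over characters of $GL(1,F)$, together with the sharper statement that $d < \dim\rho$. The key step here is that for $\pi_v$ supercuspidal the local doubling $L$-factor $L(s,\pi_v,\chi_v)$ has poles controlled by the occurrence of $\chi_v^{-1}\circ\det$ (up to the relevant shift) in the Jacquet module of $\pi_v$ with respect to a Siegel parabolic of the doubled group, i.e. by the "first occurrence" in the doubling tower; supercuspidality forces this to happen for only finitely many $\chi_v$ and bounds the total contribution strictly below $\dim\rho$ (a supercuspidal is never a full unramified twist of a spherical representation). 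This is where the purity hypothesis on $\sigma$ enters downstream: once (f) is in place, Proposition \ref{corrig} applies verbatim — with the globalization of $\pi_v \otimes \pi_v^\vee$ to a cusp form in $\CA_1(\bG,K')$ furnished as in Remark \ref{double}, and hypothesis (*) guaranteed by the simple base change assumption — and a pure supercuspidal $\sigma$ of $G_0(F)$, viewed through $\sigma\otimes\sigma^\vee$, cannot be incorrigible, since after enough cyclic base changes its parameter would become trivial on inertia and its complete standard $L$-function would acquire $\dim V_\rho$ worth of local poles at $v$, contradicting (f).

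The main obstacle is precisely the pole bound (f): writing down the doubling zeta integral is routine, but extracting from it a clean, representation-theoretic bound on the number and order of poles of the normalized local $L$-factor for a general classical group requires the detailed analysis of degenerate principal series and their Jacquet modules along the Siegel parabolic (the Kudla--Rallis regularized Siegel--Weil and reducibility results, as extended by Yamana). A secondary technical point is ensuring that Hypotheses \ref{simpleBC} and \ref{labesse} for $G_0/K$ really do deliver assumption (*) of Proposition \ref{corrig}, i.e. that global cyclic base change exists, preserves cuspidality of our globalized $\pi$ at the distinguished place, and lands back inside the class $\CA_1(\bG,K_i)$; this is the reason those two hypotheses are imposed, and I would treat their invocation here as black-box, deferring to the discussion of base change in \S\ref{bchyp}.
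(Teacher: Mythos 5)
Your proposal follows essentially the same route as the paper: instantiate Hypothesis \ref{llfcn} with the Piatetski-Shapiro--Rallis doubling integral (doubled group $H=Sp(4n)$ or $H=SO(V\oplus V,b_V\oplus(-b_V))$, $\CA_1$ and the involutions as in Remark \ref{double}), appeal to the Lapid--Rallis and Yamana local theory and to Rallis--Soudry stability for items (a)--(e) and (g), and then invoke Proposition \ref{corrig}. You also correctly single out clause (f), the pole bound, as the one nontrivial input.

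Where your proposal is thinner than the paper's argument is precisely in justifying (f). You argue informally that poles are controlled by ``first occurrence'' along the Siegel parabolic and that ``a supercuspidal is never a full unramified twist of a spherical representation,'' concluding $\sum_{\chi_v}d(\chi_v)<\dim\rho$. That last parenthetical is not the reason (f) holds: the potential obstruction for classical groups is unipotent supercuspidals, whose local doubling $L$-factors are \emph{not} trivial. The paper nails (f) with two more specific inputs: (i) Proposition 5 of \cite{LR}, which gives holomorphy of $L(s,\pi,\omega)$ for $Re(s)\ge\tfrac12$ when $\pi$ is supercuspidal and $\omega$ unitary, and hence no cancellation between numerator and denominator poles of $\gamma(s,\pi,\omega,\psi)$ -- a point you omit, and one that is needed for the stability argument to pin down the local factors; and (ii) the precise pole classification from \cite{HKS} (unitary case) and \cite{Ya14} (orthogonal/symplectic), which shows a \emph{pure} supercuspidal has at most a single simple pole, giving $d\le 1<\dim\rho$. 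Your sketch would benefit from replacing the vague ``supercuspidality forces\dots'' step with these citations and with an explicit acknowledgment that purity, not supercuspidality alone, is what excludes the unipotent pathology. Otherwise the structure of your argument, including the use of Remark \ref{double} and of Hypotheses \ref{simpleBC} and \ref{labesse} to verify condition (*) of Proposition \ref{corrig}, matches the paper.
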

\begin{proof}  In the framework of Hypothesis \ref{llfcn}, we have $H = Sp(4n)$ or $H = SO((V\oplus V, b_V\oplus (-b_V))$, the involutions $c(\pi_v)$ are as in Remark \ref{double}, and the rest of the construction is as described in \cite{psr}.
The local theory of the doubling integral is worked out in  detail by Lapid and Rallis in \cite{LR}, and completed by Yamana in \cite{Ya14}.  Proposition 5 of \cite{LR} implies that, when $\pi$ is supercuspidal and $\omega$ is a unitary character, then the local factor $L(s,\pi, \omega)$ is holomorphic for $Re(s) \geq \frac{1}{2}$.  In particular, there is no cancellation between the poles of the numerator and denominator of the local gamma-factor $\gamma(s,\pi,\omega,\psi)$.  Now Theorem 4 of \cite{LR}, together with the stability property of Rallis and Soudry (\cite{RS}; see also \cite{Br} for the analogous case of unitary groups), implies the conditions of Hypothesis \ref{llfcn}.  
\end{proof}

\begin{remark}  The possible poles of local Euler factors of supercuspidal representations of unitary groups are determined in Theorem 6.2 of \cite{HKS}.  They correspond as expected to the classification of cuspidal unipotent representations.   In particular, the local Euler factor of a pure supercuspidal representation has at most a single simple pole.  The  case of orthogonal and symplectic groups is treated by Yamana in \cite{Ya14} (see the proof of Theorem 5.2 on p. 684).  
\end{remark}

Of course, if we are assuming Hypothesis \ref{simpleBC} then we could also assume the full stable twisted trace formula, as in Arthur's book \cite{A}, and then Proposition \ref{propclassical} follows from the case of $GL(n)$ by Arthur's trace formula arguments.   However, base change, in the form of Hypothesis \ref{labesse}, is considerably simpler to manage.

\subsection{Example:  $G_2$}\label{G2L}

Among exceptional groups, $G_2$ has the unique distinction of being the only one for which a theory of $L$-functions has been constructed that applies to all cuspidal automorphic representations.  In fact, there are two known constructions.  One, due to Ginzburg and Hundley \cite{GH}, is an analogue of the doubling method for classical groups, based on an embedding of $G_2 \times G_2$ in $E_8$, which supplies the degenerate Eisenstein series needed to prove the global meromorphic continuation.  The other construction is due to Gurevich and Segal \cite{GS1, GS2, Se} and is based on a degenerate Eisenstein series on $Spin(8)$.  

Both constructions include a verification that the unramified Euler factors are those attached to the standard $7$-dimensional representation of (the dual group) $G_2$.  However, they stop short of defining local $L$-factors at ramified places that satisfy a local functional equation as in (d) of Hypothesis \ref{llfcn}.  In particular, there is no a priori bound on poles of local $L$-factors of supercuspidal representations, as in (f) of Hypothesis \ref{llfcn}.  

Thus the strategy outlined in this note for eliminating incorrigible representations cannot yet be carried out for $G_2$.  We note, however, that the analogous global question has been addressed in \cite{GS2}, where conditions are found for the existence of poles of global $L$-functions of cuspidal automorphic representations of $G_2$.  

\subsection{Simple base change}\label{BCsection}  A version of Structure \ref{BC} has been constructed -- over number fields -- by Labesse in \cite{Lab}.  The construction is global and is worked out only under simplifying assumptions that eliminate parabolic and endoscopic terms in the stable (twisted) trace formula.  This is sufficient for local applications.  Labesse cannot make use of the parametrization, of course, so the base change map is defined purely by a character relation; however, when combined with Structure \ref{param} it can be reinterpreted.


Let $G$ be a connected reductive group over the global field $K$, which need not be a function field.  We assume the following version of Labesse's base change holds for $G/K$.
\begin{hyp}\label{simpleBC}  (i)  Let $K'/K$ be a finite cyclic extension.  Let $\CG_0(G,K)$, resp. $\CG_0(G,K')$, denote the set of cuspidal automorphic representations of $G_K$, resp. $G_{K'}$.  There is a (non-empty) subset $\CG_{simple}(G,K) \subset \CG_0(G,K)$ with the following property.  For any $\Pi \in \CG_{simple}(G,K)$, there is a non-empty set $BC(\Pi) \subset \CG_0(G,K')$ with the following property:  for every place $v$ of $K$ at which both $\Pi$ and $K'$ are unramified, and for any place $v'$ of $K'$ dividing $v$, we have
$\Pi'_{v'} \isoarrow BC(\Pi_v)$ for any $\Pi' \in BC(\Pi)$, where $BC(\Pi_v)$ is the base change of $\Pi_v$ under the local Langlands correspondence for unramified representations.

(ii) Moreover, for any place $v$ of $K$, with $v'$ as above, the set $BC(\Pi_v)$ consisting of local components  $\Pi'_{v'}$ at $v'$ of members $\Pi' \in BC(\Pi)$ depends only on $\Pi_v$ and not on the global representation $\Pi$.  

(iii) Finally, suppose $P \subset G$ is a rational parabolic subgroup with Levi factor $M$.  Suppose the global base change maps are defined for appropriate subsets $\CG_{simple}(M,K) \subset \CG_0(M,K)$ and $\CG_{simple}(G,K) \subset \CG_0(G,K)$.  Then  the local maps $\Pi_v \mapsto BC(\Pi_v)$ are compatible with parabolic induction from $M$ to $G$.
\end{hyp}

The subscript $_{simple}$ refers to the use of Arthur's simple  trace formula (STF) to establish global base change.  In \cite{Lab}, $K$ is a number field and the subset $\CG_{simple}(G,K)$ consists of representations that are Steinberg at some number of places (at least $2$).  This provides a sufficient condition to permit the application of the STF -- in other words to eliminate the difficult parabolic terms from both sides of the invariant trace formula.  The Steinberg condition also eliminates the endoscopic terms, thus substantially simplifying the comparison of trace formulas that implies Labesse's result.  We assume 
\begin{hyp}\label{labesse} Labesse's method works over function fields as well and we assume $\CG_{simple}(G,K)$ always includes at least the representations which have
Steinberg components at two places.
\end{hyp}

We derive a simple consequence of Hypothesis \ref{simpleBC}.  First, we show 
\begin{prop}\label{simplestructure} Hypothesis \ref{simpleBC} implies Structure \ref{BC}.  
\end{prop}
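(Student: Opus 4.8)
The strategy is to build $BC_{F''/F'}$ by globalizing local data to function fields and feeding it into Hypothesis \ref{simpleBC}, after first reducing to the supercuspidal case; the only non-formal inputs will be a globalization statement for supercuspidal representations and the availability of Labesse's simple trace formula over function fields (Hypothesis \ref{labesse}). For the \emph{reduction to supercuspidals}: given $\pi'\in\Pi(G/F')$, let $(M,\tau)$ be its cuspidal support, unique up to $G(F')$-conjugacy by the theory of the Bernstein decomposition, where $M$ is the Levi quotient of an $F'$-rational parabolic. It then suffices to define a base change map $BC^{M}_{F''/F'}$ on supercuspidal representations of $M(F')$ for every such $M$ (including $M=G$ and $M$ a maximal torus), and to set $BC_{F''/F'}(\pi')$ equal to the union, over $\sigma\in BC^{M}_{F''/F'}(\tau)$, of the sets of irreducible subquotients of $\mathrm{Ind}_{M}^{G}(\sigma)$. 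With this definition point (iii) of Structure \ref{BC}, compatibility with parabolic induction in the sense of Property \ref{p7}, holds by construction (using transitivity of parabolic induction when the cuspidal support is not minimal), and independence of the chosen representative of the association class of $(M,\tau)$ follows from the functoriality of the construction below under $F'$-isomorphisms together with point (iii) of Hypothesis \ref{simpleBC}. When $M=T$ is a maximal torus I would take $BC^{T}_{F''/F'}(\tau):=\tau\circ N_{F''/F'}$, the base change furnished by class field theory for $T$; this case suffices to verify point (ii).

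The core of the argument is the \emph{supercuspidal case}. Fix a Levi $M$ of $G$ and a supercuspidal $\tau$ of $M(F')$. Realize $F'$ as the completion $K'_{v'}$ of a global function field $K'$ at a place $v'$, together with a global model of $M$ over $K'$ localizing to $M_{F'}$ at $v'$, and a cyclic extension $L'/K'$ that is inert at $v'$ with $L'_{w'}\cong F''$ at the unique place $w'\mid v'$; such globalizations of a local field and of a local cyclic extension are standard (weak approximation). Using the globalization results of \cite{GLo}, as in the proof of Theorem \ref{chtouca}, choose a cuspidal automorphic representation $\Pi$ of $M(\ad_{K'})$ with $\Pi_{v'}\cong\tau$, and, by prescribing Steinberg components at two further places of $K'$ (which those constructions allow), arrange that $\Pi\in\CG_{simple}(M,K')$; here Hypothesis \ref{labesse} is invoked. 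Then Hypothesis \ref{simpleBC}(i) provides a non-empty set $BC(\Pi)\subset\CG_{0}(M,L')$, and I would set $BC^{M}_{F''/F'}(\tau):=BC(\Pi_{v'})$, the set of local components at $w'$ of the members of $BC(\Pi)$. By Hypothesis \ref{simpleBC}(ii) this set depends only on $\tau$ and not on $\Pi$; fixing once and for all a choice of the pair $(K',L')$ attached to each $\tau$ then yields a genuine map, which is all that point (i) of Structure \ref{BC} requires. (A choice-free construction would require comparing two globalizations by the Deligne--Kazhdan method of close local fields, cf.\ \S\ref{CLF}; this is not needed here.)

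It remains to verify point (ii). Let $\pi'$ be spherical, so $G$ is quasi-split over $F'$ and, by standard spherical representation theory, $\pi'$ has cuspidal support $(T,\chi)$ with $T$ a maximal torus and $\chi$ an unramified character of $T(F')$; by the reduction above, $BC_{F''/F'}(\pi')$ is then the set of irreducible subquotients of $\mathrm{Ind}_{T}^{G}(\chi\circ N_{F''/F'})$. The character $\chi\circ N_{F''/F'}$ is again unramified (its Langlands parameter is the restriction to $W_{F''}$ of the unramified parameter of $\chi$), so this parabolically induced representation has a unique spherical subquotient, which is precisely the unramified base change of $\pi'$; hence $BC_{F''/F'}(\pi')$ contains it, as required. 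I expect the main obstacle to be the globalization in the supercuspidal step: one must simultaneously realize the prescribed supercuspidal $\tau$ at $v'$ and force Steinberg components at two auxiliary places while keeping the global representation cuspidal, and one must then know that the resulting $\Pi$ lies in Labesse's class $\CG_{simple}(M,K')$ --- that is, one needs Labesse's simple-trace-formula construction of cyclic base change to be available over function fields, which is the content of Hypothesis \ref{labesse}. Everything else is formal, resting only on the theory of cuspidal support, class field theory for tori, and the properties already built into Hypothesis \ref{simpleBC}.
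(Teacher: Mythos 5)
Your proof follows essentially the same path as the paper's: globalize a supercuspidal via \cite{GLo}, force Steinberg components at two auxiliary places to land in $\CG_{simple}$ (where Hypothesis \ref{labesse} enters), apply Hypothesis \ref{simpleBC} and pass to local components at the distinguished place, then extend to all of $\Pi(G/F')$ by compatibility with parabolic induction. Two small points the paper handles more carefully. First, the Gan--Lomel\'i globalization requires a supercuspidal with finite-order central character, so the paper first twists $\pi$ to arrange this, defines $BC_{F''/F'}(\pi) = BC_{F''/F'}(\pi\otimes\chi)\otimes BC_{F''/F'}(\chi^{-1})$ for a suitable character $\chi$, and then checks $\chi$-independence by a global argument; you should state this twist explicitly rather than only alluding to the proof of Theorem \ref{chtouca}, since without it the globalization step does not directly apply. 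Second, instead of ``fixing once and for all a choice of $(K',L')$,'' the paper defines $BC_{F''/F'}(\pi)$ as the union over all globalizations and all global cyclic covers; Hypothesis \ref{simpleBC}(ii) controls only the variation of $\Pi$ within a fixed pair $(K',K'')$, so the union is the cleaner way to obtain a genuine (possibly larger) subset of $\Pi(G/F'')$, which is all Structure \ref{BC} asks for. Neither point alters the essential argument, and your separate treatment of the torus case via class field theory is a harmless variant.
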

\begin{proof}   Let $F, F', F''$ be as in the statement of Structure \ref{BC}.  First let $\pi \in \CG(G,F')$ be a supercuspidal representation with central character of finite order.   Let $K'$ be a  global function field with a place $v$ such that $K'_v \simeq F'$.  Using the globalization method of \cite{GLo}  we see that there exists an automorphic representation $\Pi \in \CG_0(G,K')$ and two places $w, w'$ of $K'$ with $\Pi_v \isoarrow \pi$ and $\Pi_w$, $\Pi_{w'}$ both isomorphic to  Steinberg representations.  By Hypothesis \ref{labesse}, such a $\Pi$ belongs to $\CG_{simple}(G,K')$.   Let $\CG_{simple}(G,K')(\pi)$ be the set of all such $\Pi$.  Thus if $K''/K$ is a finite cyclic extension, there exists $BC_{K''/K'}(\Pi) \subset \CG_0(G,K'')$ as in Hypothesis \ref{simpleBC}.  For any $K''/K'$ as above in which a single prime $w$ divides $v$, and such that $K''_w \isoarrow F''$, we let be the set of representations $\Pi''_w$ for all $\Pi \in \CG_{simple}(G,K')(\pi)$ and all $\Pi'' \in BC_{K''/K'}(\Pi)$.  We define $BC_{F''/F'}(\pi) \subset \CG(G,F'')$ to be the union of the $BC_{K''/K'}(\pi)$ for all $K''/K'$ as above.   

This defines $BC_{F''/F'}(\pi)$ for supercuspidal representations with central characters of finite order.  If $\pi$ is any supercuspidal representation of $G(F')$, choose a character $\chi:  G(F') \ra C^\times$ such that $\pi\otimes \chi$ has character of finite order.  Then we let 
$$BC_{F''/F'}(\pi) = BC_{F''/F'}(\pi\otimes \chi)\otimes BC_{F''/F'}(\chi^{-1}),$$
where $BC_{F''/F'}(\chi^{-1})$ can be defined using the norm map.  It is easy to see by a global argument that this definition doesn't depend on the choice of $\chi$.  Thus we have defined $BC_{F''/F'}(\pi)$ for all supercuspidal representations.  If $M \subset P \subset G$ is as in (iii) of Hypothesis \ref{simpleBC},  if $\sigma$ is a supercuspidal representation of $M(F')$, and if $\pi$ is an irreducible constituent of the normalized induction $Ind_{P(F')}^{G(F')}\sigma$, then we let $BC(\pi)$ denote the set of constituents of
$Ind_{P(F'')}^{G(F'')}\sigma''$ as $\sigma''$ runs over elements of $BC_{F''/F'}(\sigma)$.   Properties (ii) and (iii) of Structure \ref{BC} follow immediately. 

\end{proof}

\begin{prop}\label{compat-prop}  Assume Hypotheses \ref{simpleBC} and \ref{labesse} and Structure \ref{param}.   

(a) Conjecture \ref{compat} is valid for $BC_{F''/F'}$ defined as in Proposition \ref{simplestructure}

(b) Moreover, suppose $F$ is of positive characteristic.  If $\pi_1, \pi_2 \in \CG(G,F')$ are supercuspidal representations such that
$$\CL^{ss}_{F'}(\pi_1) = \CL^{ss}_{F'}(\pi_2)$$
then, for any cyclic extension $F''/F'$, and for any $\pi'_1 \in BC(\pi_1)$, $\pi'_2 \in BC(\pi_2)$, we have
$$\CL^{ss}_{F''}(\pi'_1) = \CL^{ss}_{F''}(\pi'_2).$$
\end{prop}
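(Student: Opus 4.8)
The plan is to obtain (b) as a formal consequence of (a), so essentially all of the work goes into (a), which asserts that Conjecture~\ref{compat} holds for the specific base change map $BC_{F''/F'}$ constructed in the proof of Proposition~\ref{simplestructure}. This is the globalization argument already used to prove Theorem~\ref{chtouca}, now made precise for the explicit construction of $BC$. First I would carry out the standard reductions: by compatibility of $BC_{F''/F'}$ with parabolic induction (Structure~\ref{BC}(iii)) and Property~\ref{p7} for $\CL^{ss}$, it suffices to verify the compatibility when $\pi$ is supercuspidal; and then, using Property~\ref{p3} of Theorem~\ref{paramGL} and the definition of $BC_{F''/F'}$ on a general supercuspidal (twisting to reach finite-order central character, as in the proof of Proposition~\ref{simplestructure}), I may assume $\pi$ supercuspidal with central character of finite order.

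For such $\pi$ and a cyclic $F''/F'$, I would globalize as follows. By the globalization method of \cite{GLo}, choose a global function field $K'$ with a place $v$ such that $K'_v \cong F'$, and a cuspidal automorphic representation $\Pi$ of $\bG$ over $K'$ with $\Pi_v \cong \pi$ and with Steinberg local components at two auxiliary places, so that $\Pi$ lies in $\CG_{simple}(\bG,K')$ by Hypothesis~\ref{labesse}; and by \cite[\S 7.3.3]{BFHKT} arrange in addition that the global Langlands--Lafforgue parameter $\varphi_\Pi\colon \Gamma_{K'}\ra {}^LG(C)$ has Zariski dense image. Choose a cyclic extension $K''/K'$ inert at $v$, unramified at the two Steinberg places, with $K''_v \cong F''$ and Galois group identified with that of $F''/F'$, and pick $\Pi'' \in BC_{K''/K'}(\Pi)$, nonempty by Hypothesis~\ref{simpleBC}. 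At every place of $K''$ lying over a place of $K'$ at which $\Pi$ and $K''$ are unramified, $\Pi''$ is the unramified base change of $\Pi$, so by Property~\ref{p7a} the parameters $\varphi_{\Pi''}$ and $\varphi_\Pi|_{\Gamma_{K''}}$ agree at that place; since $\Gamma_{K''}$ has finite index, $\varphi_\Pi|_{\Gamma_{K''}}$ still has Zariski dense image, and Chebotarev density then forces $\varphi_{\Pi''} \cong \varphi_\Pi|_{\Gamma_{K''}}$ as global parameters, exactly as in the proof of \cite[Proposition~6.4]{BHKT}. Restricting to a decomposition group at $v$ and invoking local-global compatibility (Property~\ref{p11}, part of Theorem~\ref{paramGL}) gives $\CL^{ss}_{F''}(\Pi''_v) = \CL^{ss}_{F'}(\pi)\,|_{W_{F''}}$.

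To conclude (a) I would invoke Hypothesis~\ref{simpleBC}(ii): the set of local components at $v$ of the members of $BC_{K''/K'}(\Pi)$ depends only on $\Pi_v = \pi$, hence coincides for every admissible globalization of $\pi$; since $BC_{F''/F'}(\pi)$ was defined in Proposition~\ref{simplestructure} as the union of exactly these sets over all such globalizations and all such $K''/K'$, the displayed equality holds for every member of $BC_{F''/F'}(\pi)$, which is Conjecture~\ref{compat}. Part (b) is then immediate: by (a), $\CL^{ss}_{F''}(\pi'_i) = \CL^{ss}_{F'}(\pi_i)\,|_{W_{F''}}$ for $i = 1, 2$, and restriction to $W_{F''}$ is well defined on equivalence classes of semisimple parameters, so $\CL^{ss}_{F'}(\pi_1) = \CL^{ss}_{F'}(\pi_2)$ gives $\CL^{ss}_{F''}(\pi'_1) = \CL^{ss}_{F''}(\pi'_2)$.

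The main obstacle is the globalization in the second paragraph: one needs a single global $\Pi$ that is simultaneously Steinberg at two places — so as to lie in the range of Labesse-type simple base change (Hypotheses~\ref{simpleBC} and~\ref{labesse}) — and has Zariski dense global parameter — so that Chebotarev applies — and one must then pass from this one globalization back to the whole set $BC_{F''/F'}(\pi)$ via Hypothesis~\ref{simpleBC}(ii). Combining \cite{GLo} with \cite[\S 7.3.3]{BFHKT} should achieve this, but it requires checking that imposing the Steinberg conditions at the auxiliary places does not obstruct the Zariski density. Everything else — the reductions, the use of Properties~\ref{p3}, \ref{p7}, \ref{p7a}, \ref{p11}, and the deduction of (b) from (a) — is routine bookkeeping.
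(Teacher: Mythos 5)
Your proof of part (a) follows the same route as the paper's, which simply refers back to the globalization-plus-Chebotarev argument used for Theorem \ref{chtouca}: globalize $\pi$ with Zariski-dense parameter (via \cite{GLo} and \cite[\S 7.3.3]{BFHKT}), arrange Steinberg auxiliary components so that $\Pi$ is in the range of simple base change, identify $\varphi_{\Pi''}$ with $\varphi_\Pi|_{\Gamma_{K''}}$ by Chebotarev, and then use Hypothesis~\ref{simpleBC}(ii) to pass from the one chosen globalization to the whole set $BC_{F''/F'}(\pi)$, which is defined in Proposition~\ref{simplestructure} as a union over all admissible globalizations. You are also right to flag the single real technical point: that the globalizing $\Pi$ must \emph{simultaneously} have Steinberg components at two places and Zariski-dense Lafforgue parameter, and you correctly indicate that this needs \cite{GLo} and \cite{BFHKT} working in tandem. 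The paper's own treatment implicitly handles this by inserting extra auxiliary places (a supercuspidal place split in $K'$ and two Steinberg places), so nothing is lost in your version.

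Where you genuinely diverge from the paper is part (b). The paper gives a fresh, parallel globalization argument: it globalizes $\pi_1$ and $\pi_2$ \emph{simultaneously} as two local components $\Pi_{v_1}\cong\pi_1$, $\Pi_{v_2}\cong\pi_2$ of a single cuspidal $\Pi\in\CG_{simple}(G,K)$, deduces from local--global compatibility that the restrictions $\sigma|_{W_{K_{v_1}}}\cong\sigma|_{W_{K_{v_2}}}$, then base-changes $\Pi$ along a cyclic $K'/K$ inert at both $v_1,v_2$ and identifies the parameter of $BC(\Pi)$ by Chebotarev. You instead observe that once (a) is in hand — i.e., once Conjecture~\ref{compat} is known to hold for every member of $BC_{F''/F'}(\pi)$ — part (b) is an immediate formal consequence: $\CL^{ss}_{F''}(\pi'_i)=\CL^{ss}_{F'}(\pi_i)|_{W_{F''}}$ for $i=1,2$, and the right-hand sides coincide by hypothesis. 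Your deduction is logically sound and in fact cleaner, since the paper's direct argument for (b) still needs (a) to upgrade the conclusion from the particular local components $\Pi''_{v'_1},\Pi''_{v'_2}$ produced by the chosen globalization to \emph{arbitrary} $\pi'_1\in BC(\pi_1)$, $\pi'_2\in BC(\pi_2)$. So the two approaches are not independent; yours simply makes the logical dependence on (a) explicit, while the paper repeats the globalization argument from scratch. Both are acceptable; yours has the advantage of economy, and the paper's has the pedagogical value of showing how a simultaneous globalization of two supercuspidals can be exploited.
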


\begin{proof}  By construction, Conjecture \ref{compat} is proved just as in Theorem \ref{chtouca}.   For (b), we argue analogously.  By applying Property (iv) of Structure \ref{param} we may reduce to the case where  $\pi_1$ has central character of finite order; then $\pi_2$ has the same central character by Property (v) of Structure \ref{param}.  Thus we may apply  globalization as in \cite{GLo} to find a  global field $K$ with places $v_1, v_2$ such that $K_{v_1} \isoarrow K_{v_2} \isoarrow F'$, and a cuspidal automorphic representation $\Pi$ of $G_K$ in $\CG_{simple}(G,K)$ such that $\Pi_{v_1} \isoarrow \pi_1$, $\Pi_{v_2} \isoarrow \pi_2$.  It follows that the global Langlands-Lafforgue parameter $\sigma$ of $\Pi$ has the property that the semisimple local parameters are equivalent:
$$\sigma ~|~_{W_{K_{v_1}}} \isoarrow \sigma ~|~_{W_{K_{v_2}}}$$
Now we find a cyclic extension $K'/K$ such that $K'_{v_1} \isoarrow K'_{v_2} \isoarrow F''$ and such that $BC(\Pi)$ exists and remains cuspidal.  We can do that, for example, by choosing auxiliary places $v_i$, $i = 3, 4, 5$ such that $\Pi_{v_3}$ is supercuspidal and choosing $K'$ to be split completely over $v_3$, and such that $\Pi_{v_4}$ and $\Pi_{v_5}$ are Steinberg, so that $\Pi \in \CG_{simple}(G,K)$.  Then (b) follows by Chebotarev density and a globalization argument as in \cite{BHKT}, as before.
\end{proof}

\section{Results of \cite{GHS}}\label{cGHS}

Let $K$ be a global function field over $k$ and let $\Pi \in \CA_0(G,K)$.  
Let $\sigma:  \hat{G} \ra GL(N)$ be any representation, $S$ the set of primes where $\Pi$ is ramified.  Then
$$\CL^{ss}(\Pi)_\sigma := \sigma\circ \CL^{ss}(\Pi):  Gal(K^{sep}/K)  \ra GL(N,\Qlb)$$
corresponds to a semi-simple $\ell$-adic local system $L(\Pi)_\sigma$ on $Y \setminus |S|$.   
\medskip

By Deligne's Weil II \cite{De80}, each irreducible summand of $L(\Pi)_\sigma$ is punctually pure, up to twist by a character of $\G_k := Gal(\bar{k}/k)$.      

It follows that, for any $w$, the eigenvalues of $\sigma\circ\CL_w(\Pi_w)(Frob_w)$ are Weil $q$-numbers of various weights (up to the twist, which we ignore).   
\medskip


If $G \neq GL(n)$, not all supercuspidals are pure, in the sense of Definition \ref{pure} (d).  Here is the main theorem of \cite{GHS}.

\begin{thm}\cite{GHS}\label{pureram}  Let $\pi$ be a {\bf pure} supercuspidal representation of $G(F)$, where $G$ is connected reductive and $G(F)$ is unramified.  Suppose $\pi$ is compactly induced from a compact open subgroup $U \subset G(F)$.    
Suppose moreover that $q > 3$.  Then the $\ell$-adic Genestier-Lafforgue parameter $\CL^{ss}(\pi)$ restricts to a non-trivial map from the inertia group $I_F$ to $\hG(\Qlb)$.  
\end{thm}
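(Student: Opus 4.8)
The plan is to argue by contradiction: suppose $\CL^{ss}(\pi)$ were trivial on $I_F$, i.e. unramified, and deduce a contradiction from the purity hypothesis by a global argument. \emph{Globalization step.} Using the globalization theorems for supercuspidal representations over function fields (\cite{GLo}), choose a smooth projective curve $Y$ over $k$ with function field $K$, a closed point $v_0$ with $K_{v_0}\cong F$, a connected reductive group $\bG$ over $K$ with $\bG\times_K F\cong G$, and a cuspidal automorphic representation $\Pi$ of $\bG(\ad_K)$ with $\Pi_{v_0}\cong\pi$, with $\Pi$ Steinberg at (at most) two auxiliary closed points and unramified at every other place; invoking \cite[\S 7.3.3]{BFHKT} we may in addition require that the associated Lafforgue parameter $\varphi_\Pi:\Gamma_K\to{}^LG(\Qlb)$ have Zariski-dense image. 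It is here that the hypotheses that $\pi$ be compactly induced from a compact open subgroup and that $q>3$ enter: they are what make available a globalization with this degree of control over ramification and with large global monodromy, and they exclude the small-residue-field anomalies.

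\emph{The parameter is everywhere unramified.} By the compatibility of Theorem \ref{paramGL} with \cite{Laf18} (Property \ref{p11}), the restriction of $\varphi_\Pi$ to a decomposition group at $v_0$ equals $\CL^{ss}(\pi)$, which is unramified by assumption; the restriction at a Steinberg place is also unramified, since passage to $\CL^{ss}$ discards the monodromy operator $N$; and $\varphi_\Pi$ is unramified at every remaining place by construction. Hence, fixing a faithful algebraic representation $\sigma:{}^LG\to GL_N$, the semisimple local system $\sigma\circ\varphi_\Pi$ extends to a lisse $\Qlb$-sheaf on the \emph{projective} curve $Y$.

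\emph{Purity forces finite monodromy, and a contradiction.} By Weil~II \cite{De80} together with \cite{Laf02}, every irreducible summand of $\sigma\circ\varphi_\Pi$ is punctually pure up to a twist by a character of $\mathrm{Gal}(\bar k/k)$, with algebraic Frobenius eigenvalues at closed points. Since $\pi$ is pure in the sense of Definition \ref{pure}(d), the eigenvalues of $\sigma(\varphi_\Pi(\mathrm{Frob}_{v_0}))$ all share one weight $w$; comparing with the decomposition into summands, all summands have the same weight after a single twist, so after twisting $\sigma\circ\varphi_\Pi$ by $|\cdot|^{-w/2}$ we obtain a semisimple lisse sheaf on $Y$ that is pure of weight $0$ with algebraic Frobenius eigenvalues. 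By Kronecker's theorem each such eigenvalue is a root of unity, so $\mathrm{Frob}_x$ maps to a semisimple element of finite order for every closed point $x$; since $\sigma$ is faithful, each such Frobenius has finite order in the (twisted) image of $\varphi_\Pi$. That image is a compact subgroup of $GL_N(\Qlb)$, hence a compact $\ell$-adic analytic group, and it contains a dense set of finite-order elements by the Chebotarev density theorem; but a compact $\ell$-adic analytic group has a torsion-free open subgroup, so it must be finite. Untwisting, the Zariski closure of the image of $\varphi_\Pi$ has dimension at most $1$, contradicting the Zariski-density of $\varphi_\Pi$ in ${}^LG$ (recall $\hG$ is semisimple of dimension $>1$). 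Therefore $\CL^{ss}(\pi)$ is ramified.

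\emph{Expected main obstacle.} The crux is the globalization step, specifically arranging simultaneously that (i) $\Pi$ is unramified outside $v_0$ and the auxiliary Steinberg places — so that $\sigma\circ\varphi_\Pi$ extends over the \emph{projective} curve, which is essential since on an affine curve a pure lisse sheaf may well have Zariski-dense monodromy — and (ii) $\varphi_\Pi$ has Zariski-dense image. Reconciling these via \cite{GLo}, \cite{BFHKT} and the compact-induction and $q>3$ hypotheses is where the real work lies; purity is what powers the monodromy computation and, as the cuspidal unipotent representations with unramified parameters show, cannot be dropped.
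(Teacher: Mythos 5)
Your argument takes a genuinely different route from the paper's, and it contains a gap that cannot be repaired within the strategy you describe.

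The central flaw is the claim that ``the restriction at a Steinberg place is also unramified, since passage to $\CL^{ss}$ discards the monodromy operator $N$.'' This conflates the semisimple Weil--Deligne parameter of the local component with the actual restriction of the global Galois representation. Lafforgue's $\varphi_\Pi$ is a homomorphism $\Gamma_K \to {}^LG(\Qlb)$, and the unipotent monodromy of a Steinberg local component is encoded, via Grothendieck's $\ell$-adic monodromy theorem, as a genuinely nontrivial unipotent action of the local inertia group. Property~\ref{p11} asserts only that the \emph{semisimplification} of $\varphi_\Pi\big|_{W_{F_w}}$ agrees with $\CL^{ss}(\Pi_w)$; it does not assert that $\varphi_\Pi\big|_{W_{F_w}}$ is itself unramified. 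Thus at each of your auxiliary Steinberg places the inertia acts nontrivially, $\sigma\circ\varphi_\Pi$ is \emph{not} lisse on the projective curve $Y$, and the entire final paragraph (Chebotarev density against a torsion-free open normal subgroup --- an argument which is otherwise correct \emph{once} one has a lisse sheaf on a projective curve) has nothing to act on. The same caveat in fact applies at $v_0$ itself, since compatibility is again only up to a further semisimplification. You also need to arrange simultaneously that the globalization is unramified outside these places and has Zariski-dense global monodromy; neither \cite{GLo} nor \cite[\S 7.3.3]{BFHKT} delivers that combination while keeping the places of ramification under control.

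The paper's proof avoids these issues by doing something more rigid with the two hypotheses you treat as black boxes. Because $\pi$ is compactly induced from a compact open subgroup $U$, a matrix coefficient of $\pi$ can be chosen supported in $U$; this allows the Poincar\'e series $P_\varphi(g)=\sum_{\gamma\in G(K)}\varphi(\gamma g)$ to be constructed directly over $K=k(\mathbb{P}^1)$, with carefully prescribed data at the $q+1$ rational points of $\mathbb{P}^1$ (which is where $q>3$ is needed) and the characteristic function of a hyperspecial maximal compact elsewhere. No Steinberg places are introduced and no Zariski-density is invoked. The resulting global parameter is then a \emph{tame} local system on $\mathbb{G}_m$, and the conclusion follows from the classification of such local systems that Weil~II provides. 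So the contradiction comes from the rigidity of tame sheaves on $\mathbb{G}_m$, not from an everywhere-unramified/finite-monodromy argument, and the hypotheses are being used to force the globalization onto $\mathbb{P}^1$ rather than to ``exclude small-residue-field anomalies.''

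Your observation that the cuspidal unipotent representations show purity cannot be dropped is correct, and the Kronecker/torsion-free-subgroup step of your finite-monodromy argument is sound in isolation; but without the lisse-on-a-projective-curve input it does not apply, and that input fails at the Steinberg places.
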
   

\begin{cor}\label{prin}  Let $\pi$ be a pure irreducible (not necessarily supercuspidal) representation of $G(F)$, with $G$ split semisimple.  Assume $p$ does not divide
the order of the Weyl group of $G$.  Suppose $\CL^{ss}(\pi)$ is unramified.  Then $\pi$ is an irreducible constituent of an unramified principal series representation.
\end{cor}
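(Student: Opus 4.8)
The plan is to exploit the cuspidal support of $\pi$ together with compatibility with parabolic induction (Property \ref{p7}) to reduce the statement to one about a pure supercuspidal representation of a \emph{split semisimple} group whose semisimple parameter is unramified, and then to derive a contradiction from Theorem \ref{pureram} and Fintzen's Theorem \ref{fintzenthm} unless the Levi occurring in the cuspidal support is a maximal torus. This is the analogue for split semisimple $G$ of the way Theorem \ref{unramgln} forces a representation of $GL(n)$ into an unramified principal series.

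First I would write $\pi$ as an irreducible subquotient of $Ind_P^G(\rho)$, where $P = MN$ is an $F$-rational parabolic with Levi factor $M$ and $\rho$ is an irreducible supercuspidal representation of $M(F)$; the pair $(M,\rho)$ is the cuspidal support of $\pi$, unique up to conjugacy. Since $G$ is split, $M$ is split connected reductive and $W(M)$ is a parabolic subgroup of $W(G)$, so $p \nmid |W(M)|$. By Property \ref{p7} we have $\CL^{ss}(\pi) \isoarrow i_M \circ \CL^{ss}(\rho)$; since $i_M$ embeds ${}^LM$ as a Levi of ${}^LG$ compatibly with the projections to $W_F$, the hypothesis that $\CL^{ss}(\pi)$ be unramified forces $\CL^{ss}(\rho)$ to be unramified as well. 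Likewise $\rho$ is pure: restricting a faithful representation $\sigma$ of ${}^LG$ along $i_M$ yields a faithful representation of ${}^LM$, and $(\sigma\circ i_M)\circ\CL^{ss}(\rho) = \sigma\circ\CL^{ss}(\pi)$ is pure by Definition \ref{pure}, whence $\CL^{ss}(\rho)$ is pure.

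Next I would show that $M$ must be a maximal torus. Suppose not, so that $M_{\rm der} \neq 1$. Because $\CL^{ss}(\rho)$ is unramified, the central character $\omega_\rho$ is unramified, hence trivial on the finite kernel of $Z_M^{\circ}(F) \to (M/M_{\rm der})(F)$, so that $\omega_\rho|_{Z_M^{\circ}(F)}$ extends to an unramified character $\chi$ of $M(F)$. Replacing $\rho$ by $\rho\otimes\chi^{-1}$ --- which leaves $\CL^{ss}(\rho)$ unramified by Property \ref{p3} and preserves purity, being an unramified twist --- I may assume $Z_M^{\circ}(F)$ acts trivially on $\rho$. Since $Z_M^{\circ}$ is a split torus, $M(F)\to\bar M(F)$ is surjective with kernel $Z_M^{\circ}(F)$, where $\bar M := M/Z_M^{\circ}$ is split semisimple with $W(\bar M) = W(M)$; thus $\rho$ is inflated from a pure supercuspidal representation $\bar\rho$ of $\bar M(F)$ whose parameter $\CL^{ss}(\bar\rho)$ is again unramified (by compatibility of $\CL^{ss}$ with the central quotient $M\to\bar M$, discussed below). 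Now $\bar M$ is split semisimple with $p\nmid|W(\bar M)|$, so by Theorem \ref{fintzenthm} $\bar\rho$ is compactly induced from a compact open subgroup of $\bar M(F)$, and Theorem \ref{pureram} (whose standing hypothesis $q>3$ I also impose) then gives that $\CL^{ss}(\bar\rho)$ restricts non-trivially to $I_F$ --- a contradiction. Hence $M$ is a maximal torus $T$, $P$ is a Borel subgroup $B$, and $\rho$ is a character of $T(F)$, unramified by Property \ref{p1}; therefore $\pi$ is an irreducible constituent of the unramified principal series $Ind_B^G(\rho)$.

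I expect the main obstacle to be this descent from $G$ to the semisimple quotient $\bar M = M/Z_M^{\circ}$ of the Levi, in two respects: one must trivialize the action of the central split torus by an \emph{unramified} twist (so as not to spoil unramifiedness of the parameter, which the purity hypothesis alone does not guarantee), and one must know that the Genestier--Lafforgue parametrization $\CL^{ss}$ is compatible with the central quotient $M\to M/Z_M^{\circ}$ --- a mild strengthening of Property \ref{p6} that should follow from compatibility with the global parametrization (Theorem \ref{paramGL}) and Chebotarev density, but which is not literally among the properties listed in \S\ref{basicp}. A minor secondary point is that the hypothesis $q>3$ of Theorem \ref{pureram} propagates to the corollary, so the cases $q\in\{2,3\}$ would have to be handled separately (or the hypothesis added to the statement).
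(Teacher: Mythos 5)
Your proposal follows the same outline as the paper's own (two-sentence) proof: reduce to the cuspidal support $(M,\rho)$ via Property~\ref{p7}, note that the hypothesis on $p$ passes to $W(M)\subset W(G)$, and then feed Fintzen's Theorem~\ref{fintzenthm} into Theorem~\ref{pureram} to contradict unramifiedness of $\CL^{ss}(\rho)$ unless $M$ is a torus. Where you genuinely improve on the paper is in handling the center of the Levi. The paper simply asserts that ``Fintzen's Theorem implies that any supercuspidal representation of $G$ or any of its Levi subgroups is compactly induced from a compact open subgroup,'' but Theorem~\ref{fintzenthm} is stated in \S4 under the standing assumption that the group is semisimple, and Yu's construction in general produces inducing subgroups that are only compact \emph{modulo center}; a proper Levi $M$ of a semisimple $G$ typically has $Z_M^\circ\neq 1$, so Theorem~\ref{pureram} (which requires a genuinely compact $U$) does not apply to $\rho$ directly. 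Your fix --- trivialize the (necessarily unramified) central character by an unramified twist, use surjectivity of $M(F)\to(M/Z_M^\circ)(F)$ coming from $H^1(F,Z_M^\circ)=0$ for a split torus, descend $\rho$ to the split semisimple $\bar M$, and only then invoke Theorems~\ref{fintzenthm} and~\ref{pureram} --- closes this gap cleanly. You are also right that this requires a compatibility of $\CL^{ss}$ with the quotient $M\to M/Z_M^\circ$, which has infinite kernel and so falls outside Property~\ref{p6} as literally stated; as you say, it is harmless (provable from Theorem~\ref{paramGL} and Chebotarev in the same way the paper dispatches such compatibilities) but should be flagged. Finally, your observation that the $q>3$ hypothesis of Theorem~\ref{pureram} is inherited by the corollary is correct and is indeed unstated in the paper. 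In short, you reach the same destination by the same road, but you have not skipped the potholes.
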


\begin{proof}  Under our hypothesis on $p$, Fintzen's Theorem \ref{fintzenthm} implies that any supercuspidal representation of $G$ or any of its Levi subgroups is
compactly induced from a compact open subgroup of $G(F)$.  Since $\CL^{ss}$ is compatible with parabolic induction, the Corollary follows from Theorem \ref{pureram} and induction on the dimension of $G$.  
\end{proof}

Corollary \ref{prin} gives a positive answer to Question \ref{packets} when $\varphi$ is a pure unramified parameter -- the $L$-packet attached to $\varphi$,
meaning the set of $\pi$ with  parameter $\varphi$, is finite, provided $p$ is prime to the order of $W(G)$.   This is much weaker than what is really expected, or what can be proved
for $GL(n)$ or classical groups, but as far as I know there is no  general finiteness statement for ramified parameters.  Part \ref{Strategy} below is predicated on the
assumption that this very weak assertion may be almost enough to prove finiteness in general.

There is a variant of Theorem \ref{pureram}.

\begin{thm}\cite{GHS}\label{purewild}  Under the hypotheses of Theorem \ref{pureram}, suppose the compact open subgroup $U$ is sufficiently small; for example,
suppose it is contained in the principal congruence subgroup of a maximal compact subgroup.  Then $\CL^{ss}(\pi)$ is wildly ramified.
\end{thm}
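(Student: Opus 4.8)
\emph{Plan of proof.} The idea is to deduce wild ramification from the \emph{tame}-ramification information already in Theorem~\ref{pureram} by playing it off against the positivity of the depth of $\pi$ and the base-change compatibility of $\CL^{ss}$. We keep the hypotheses of Theorem~\ref{pureram} (so in particular $q>3$), and I will assume in addition, as in Corollary~\ref{prin}, that $G$ is split semisimple with $p\nmid|W(G)|$; the point of the extra hypothesis on $p$ is that it makes Fintzen's Theorem~\ref{fintzenthm} available, so that every supercuspidal representation of $G$, or of a Levi subgroup of $G$, over any unramified extension of $F$, is compactly induced from a compact open subgroup. Write $\pi=c\text{-}\mathrm{Ind}_U^{G(F)}\Lambda$ with $U$ contained in the principal congruence subgroup of a hyperspecial maximal compact subgroup. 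The first point is purely local: this smallness of $U$ forces $r:=\mathrm{depth}(\pi)>0$. Indeed, a depth-zero supercuspidal representation is, by Moy--Prasad theory, compactly induced from the normalizer of a parahoric subgroup, hence from a compact open subgroup containing the pro-unipotent radical of that parahoric; but the principal congruence subgroup of a hyperspecial maximal compact contains the pro-unipotent radical of no parahoric except that maximal compact itself, and $\pi$ is manifestly not induced from the maximal compact. By the (quasi-)uniqueness of the inducing datum this rules out $\mathrm{depth}(\pi)=0$; here the hypothesis $p\nmid|W|$, through Theorem~\ref{fintzenthm} and the classification of the possible types, is what lets one pin this down cleanly.

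\emph{Reduction to an unramified parameter over a tame extension.} Suppose, for contradiction, that $\CL^{ss}(\pi)$ is \emph{not} wildly ramified, i.e. that it is trivial on the wild inertia $P_F$. Then $\CL^{ss}(\pi)|_{I_F}$ factors through the tame quotient $I_F/P_F\cong\prod_{\ell'\ne p}\mathbb Z_{\ell'}$; since the image of inertia under a Genestier--Lafforgue parameter is finite, and is nontrivial by Theorem~\ref{pureram}, it is cyclic of some order $m>1$ prime to $p$. Choose the unramified extension $F'=F\cdot\mathbb F_{q^{a}}$ with $m\mid q^{a}-1$, and then $L=F'(\varpi_{F'}^{1/m})$, a cyclic, totally and tamely ramified extension of $F'$ of degree $m$; thus $L/F$ is a tower of two cyclic extensions. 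On the inertia of $L$, which is the unique subgroup of index $m$ of $I_{F'}^{t}=I_F^{t}$, the character of order $m$ restricts trivially, so $\CL^{ss}(\pi)|_{W_L}$ is unramified.

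\emph{Base change and the contradiction.} By Structure~\ref{BC} (available, e.g., via Hypothesis~\ref{simpleBC}) we may form the iterated base change of $\pi$ along $F\subset F'\subset L$ and fix an irreducible constituent $\pi_L$; by Theorem~\ref{chtouca}, applied to each of the two cyclic steps, its Genestier--Lafforgue parameter is $\CL^{ss}_L(\pi_L)=\CL^{ss}_F(\pi)|_{W_L}$, which is unramified by the previous step. Moreover $\CL^{ss}_L(\pi_L)$ is pure, since raising Weil $q$-numbers of a fixed weight $w$ to the power $f(L/F)$ yields Weil $q_L$-numbers of the same weight $w$; and $q_L=q^{a}\ge q>3$. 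Hence Corollary~\ref{prin}, applied to the split semisimple group $G$ over $L$, forces $\pi_L$ to be a subquotient of an unramified principal series representation of $G(L)$, so $\pi_L$ has an Iwahori-fixed vector and $\mathrm{depth}(\pi_L)=0$. On the other hand, base change along an unramified extension preserves depth while base change along a totally tamely ramified extension of degree $m$ multiplies it by $m$ --- as one sees on abelianized groups from $\chi\mapsto\chi\circ N$ and the behaviour of the norm on the unit filtration --- so $\mathrm{depth}(\pi_L)=m\cdot\mathrm{depth}(\pi)=mr>0$. This contradiction proves that $\CL^{ss}(\pi)$ is wildly ramified.

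\emph{Main obstacle.} I expect two points to require real care. The first is the local claim that $U$ inside a principal congruence subgroup forces $\mathrm{depth}(\pi)>0$: one must extract this cleanly from Moy--Prasad theory and the (near-)uniqueness of the inducing data, and this is where $p\nmid|W|$, through Theorem~\ref{fintzenthm}, enters. The second, and subtler, is the behaviour of depth under the base-change maps of Structure~\ref{BC}: for $GL(n)$ this is classical (Henniart--Lemaire), but for general $G$ one must either incorporate ``depth scales by the ramification index'' into the hypotheses on $BC$ or prove it directly for the \emph{representations} --- deducing it from compatibility with $\CL^{ss}$ would be circular, since the ramification of $\CL^{ss}(\pi)$ is precisely what is at issue. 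A secondary nuisance is bookkeeping with the extra hypotheses ($G$ split semisimple, $p\nmid|W|$) needed to invoke Corollary~\ref{prin} rather than merely Theorem~\ref{pureram}; one should check, or build into the statement, that these are harmless in the cases of interest.
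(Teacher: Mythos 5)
Your route is genuinely different from the one in \cite{GHS} and sketched in \S\ref{cGHS} of the paper: there the theorem, like Theorem~\ref{pureram}, is proved by realizing $\pi$ as the local component of a Poincar\'e series on $\mathbb{P}^1_k$, choosing the test function at the place $v$ supported in $U$ and hyperspecial elsewhere, and then reading off the conclusion from the classification of tame $\ell$-adic local systems on $\GG_m$ (Weil~II). That argument is direct and makes no reference to base change. By contrast, you propose to \emph{derive} Theorem~\ref{purewild} from Theorem~\ref{pureram}/Corollary~\ref{prin} by a depth-vs.-base-change bookkeeping argument. That is an attractive reduction in spirit, but it does not prove the theorem as stated, for the following reasons.

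The main difficulty is that your proof is conditional on inputs that the theorem does not assume and that are not available. You invoke Structure~\ref{BC} (or Hypothesis~\ref{simpleBC}) and Theorem~\ref{chtouca} to form $\pi_L$ and control $\CL^{ss}_L(\pi_L)$; but cyclic base change for a general reductive $G$ over function fields is one of the \emph{hypotheses} of Part~\ref{Strategy}, not an established tool, whereas Theorem~\ref{purewild} is stated as an unconditional theorem of \cite{GHS}. You also need Corollary~\ref{prin}, hence the extra hypotheses that $G$ is split semisimple with $p\nmid|W(G)|$, which are absent from Theorem~\ref{pureram}'s hypothesis list; your version of the statement is therefore strictly weaker. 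Most seriously, the step doing all the work --- that base change along a totally tamely ramified cyclic extension of degree $m$ multiplies the depth of a supercuspidal by $m$ --- is, for general $G$, neither part of Structure~\ref{BC} nor a known consequence of anything in the paper. As you yourself note, one cannot extract it from compatibility of $BC$ with $\CL^{ss}$ without assuming the very ramification statement you are trying to prove; and the only place it is actually established is for $GL(n)$ and its inner forms. So as written the argument is circular or has an unfilled gap exactly at its hinge.

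A secondary but real issue is the claim that $U$ contained in a principal congruence subgroup forces $\operatorname{depth}(\pi)>0$. You argue via uniqueness of inducing data, but a smooth irreducible representation can admit many compact-induction presentations, and the uniqueness results you would need (Hakim--Murnaghan, Fintzen) are themselves only available under tameness and $p$-hypotheses and concern Yu data, not arbitrary pairs $(U,\Lambda)$ with $U$ small. The assertion is plausible, but the one-sentence Moy--Prasad argument you give does not close it; you would at least need to run a Mackey-theoretic argument against the nonvanishing of $\pi^{G_{y,0+}}$ for varying $y$, or invoke a genuine uniqueness-of-types theorem, with its attendant hypotheses. In short: the idea of turning Theorem~\ref{pureram} into Theorem~\ref{purewild} via tame base change is appealing and would be worth recording as a conditional alternative, but it does not reproduce the unconditional statement; the paper's proof avoids all of these issues by working directly with the geometry of $\GG_m$ and Weil~II.
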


\subsection{Pure Weil-Deligne parameters}\label{pureWD}

 Supercuspidal representations $\pi$ that are not pure, and discrete series representations more generally,
 are expected to  have Weil-Deligne parameters
$\varphi := (\rho,N)$ with $N \neq 0$.     In particular, the conclusion of Corollary \ref{prin} above can fail for such representations:  for groups other than $GL(n)$ and its inner forms, there are well-known constructions of {\it unipotent supercuspidals} with unramified semi-simple parameters.  

However, we expect that the pair $(\rho,N)$ 
satisfies {\bf purity of the monodromy weight filtration} ( up to unramified twist):  in other words, that the Weil-Deligne representaiton $(\rho,N)$ is {\bf pure} in the sense of Definition \ref{pure} (a).

To a Weil-Deligne parameter $(\rho,N)$ a standard construction  associates a continuous homomorphism $\varphi_{\rho,N}:  W_F \times SL(2,\CC) \ra {}^LG(\CC)$
that is algebraic on the second factor.  
\begin{defn}\label{tempWD}  The Weil-Deligne parameter $(\rho,N)$ is {\rm tempered} if the restriction of $\varphi_{\rho,N}$ to $W_F \times SU(2)$ has bounded image after projection to $\hat{G}(\CC)$.
The parameter $(\rho,N)$ is {\rm essentially tempered} if its image in $\hat{G}^{ad}$ is bounded.  
\end{defn}

The following is easy to check:
\begin{lemma}  The parameter $(\rho,N)$ is tempered if and only if it satisfies purity of the monodromy weight filtration.
\end{lemma}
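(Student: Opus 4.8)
The plan is to reduce both conditions to one statement about the image of a geometric Frobenius under the ``$SL_2$-absorbed'' homomorphism attached to $(\rho,N)$, and then to read everything off the Jacobson--Morozov decomposition of $N$.

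First I would fix a faithful algebraic representation $\sigma\colon {}^LG(\CC)\to GL(V)$, which we may take to have finite image on the $W_F$-factor; temperedness and purity of the parameter are both tested on $\sigma\circ(\rho,N)$, so it suffices to treat an honest Weil--Deligne representation on $V$ with $\rho$ Frobenius-semisimple. The construction recalled before the lemma produces $\varphi_{\rho,N}\colon W_F\times SL(2,\CC)\to GL(V)$; set $\rho_0=\varphi_{\rho,N}(\cdot,1)$, and let $\phi_N\colon SL(2,\CC)\to GL(V)$ be the restriction to the second factor, with $N$ the image of the standard nilpotent. Since $\varphi_{\rho,N}$ is a homomorphism out of a \emph{direct} product, $\rho_0$ commutes with $\phi_N$; hence the $H$-eigenspace decomposition $V=\bigoplus_j V^{(j)}$, for $H=d\phi_N\bigl(\begin{smallmatrix}1&0\\0&-1\end{smallmatrix}\bigr)$, is $\rho_0$-stable, and the normalisation $\rho(w)=\varphi_{\rho,N}\bigl(w,\operatorname{diag}(\|w\|^{1/2},\|w\|^{-1/2})\bigr)$ gives, for a geometric Frobenius $\mathrm{Fr}$ (so $\|\mathrm{Fr}\|=q^{-1}$), the identity $\rho(\mathrm{Fr})=\rho_0(\mathrm{Fr})\,\phi_N\bigl(\operatorname{diag}(q^{-1/2},q^{1/2})\bigr)$; in particular $\rho(\mathrm{Fr})$ acts on $V^{(j)}$ as $q^{-j/2}\rho_0(\mathrm{Fr})|_{V^{(j)}}$. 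I would also record that $\rho_0(\mathrm{Fr})$ is semisimple (being the product of the commuting semisimple elements $\rho(\mathrm{Fr})$ and $\phi_N(\cdots)^{-1}$) and that $\rho_0(I_F)=\rho(I_F)$ is finite.

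Then I would run the two parallel reductions. For temperedness: $SU(2)$ is compact, $\rho_0(I_F)$ is finite, and $\sigma$ has finite image on $W_F$, so $\sigma\circ\varphi_{\rho,N}$ has bounded image on $W_F\times SU(2)$ if and only if the semisimple element $\rho_0(\mathrm{Fr})$ has all eigenvalues of absolute value $1$. For purity: by $\mathfrak{sl}_2$-theory the filtration whose graded pieces are the $V^{(j)}$ (suitably indexed) is \emph{the} monodromy filtration $M_\bullet$ of $N$---the unique increasing filtration with $N M_k\subseteq M_{k-2}$ and $N^{k}\colon\operatorname{gr}^M_{k}\xrightarrow{\ \sim\ }\operatorname{gr}^M_{-k}$, up to centring---and the displayed action formula shows that $\rho(\mathrm{Fr})$ acts on $\operatorname{gr}^M_k$ with eigenvalues all of a single absolute value exactly when $\rho_0(\mathrm{Fr})$ does on the corresponding $V^{(j)}$, in which case the Frobenius weight filtration coincides with $M_\bullet$. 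So if $\rho_0(\mathrm{Fr})$ has unit eigenvalues, $\rho(\mathrm{Fr})$ has eigenvalues of absolute value $q^{k/2}$ on $\operatorname{gr}^M_k$---precisely the weight condition---and, taking $t$ trivial, parts (ii)--(iii) of Definition~\ref{pure}(a) and the absolute-value content of (i) hold, i.e.\ $(\rho,N)$ satisfies purity of the monodromy weight filtration. Conversely, if $(\rho,N)$ is pure, then by uniqueness of the monodromy filtration its Frobenius weight filtration must \emph{be} $M_\bullet$, and comparing absolute values on $\operatorname{gr}^M_k=V^{(j)}$ forces $\rho_0(\mathrm{Fr})$ to have eigenvalues of one common absolute value $c$ on all of $V$; since $\phi_N$ takes values in $SL(V)$ and (because $\hG$ is semisimple and $\sigma$ has finite image on $W_F$) $\det\rho_0(\mathrm{Fr})$ is a root of unity, $c^{\dim V}=|\det\rho_0(\mathrm{Fr})|=1$, so $c=1$ and $(\rho,N)$ is tempered.

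The only genuinely delicate points are the sign and centring conventions for the monodromy and weight filtrations, and the elementary input that a bounded subgroup of $GL(V)$ consists of semisimple elements with unit eigenvalues; the one substantive step---that the hard-Lefschetz condition in (iii) pins the Frobenius weight filtration down to the monodromy filtration of $N$---is just the uniqueness of the latter, so I do not expect a real obstacle, consistent with the statement being ``easy to check.'' (For reductive $G$ with $\hG$ not semisimple one obtains, as already flagged in the text, only the statement up to unramified twist: purity matches \emph{essential} temperedness, and purity in the weight-$0$ normalisation matches temperedness.)
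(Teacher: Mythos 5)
The paper offers no proof—it declares the lemma ``easy to check'' and moves on—so there is nothing to compare against; your argument is essentially the unique natural one, and it is correct. You take the $SL_2$-absorbed homomorphism $\rho_0=\varphi_{\rho,N}(\cdot,1)$, use the $\mathfrak{sl}_2$ grading by $H$-eigenspaces $V^{(j)}$ (which by uniqueness is the monodromy filtration of $N$, suitably centred), and observe that $\rho(\mathrm{Fr})|_{V^{(j)}}=q^{-j/2}\rho_0(\mathrm{Fr})|_{V^{(j)}}$; both temperedness and the weight-gradation condition then reduce to $\rho_0(\mathrm{Fr})$ having eigenvalues of a single absolute value, normalised to $1$. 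You have also correctly flagged the two real subtleties that the paper glosses over: the paper's condition (iii) in Definition \ref{pure} has a misprint ($N$ should be $N^i$, and the direction of the map should be reversed), which your $N^k\colon \operatorname{gr}^M_k\xrightarrow{\sim}\operatorname{gr}^M_{-k}$ silently corrects; and the statement as written literally requires either $\hat G$ semisimple (to pin down the central normalisation) or the ``up to unramified twist'' reading suggested by the sentence preceding the lemma, which swaps ``tempered'' for ``essentially tempered.'' One further caveat you gesture at but could state more flatly: Definition \ref{pure}(a)(i) demands that the Frobenius eigenvalues be Weil $q$-numbers, i.e.\ algebraic; temperedness gives only the absolute-value constraint, not algebraicity, so strictly the equivalence holds for the archimedean content of (i) (which is certainly the intended reading, and is automatic in the Genestier--Lafforgue setting where these parameters arise).
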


\begin{thm}\cite{GHS}\label{MWF}  Assume $p > 3$ and let $\pi$ be any discrete series representation of $G(F)$.  Then its semisimple parameter $\CL^{ss}(\pi)$ has
a (necessarily unique) completion to an essentially tempered   Weil-Deligne parameter.
\end{thm}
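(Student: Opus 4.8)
The plan is to obtain $N$ as the Grothendieck $\ell$-adic monodromy operator attached to a global realization of $\CL^{ss}(\pi)$ over a function field, and then to deduce purity of the resulting local monodromy weight filtration from Deligne's Weil II.

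First I would dispose of uniqueness and reduce to a general linear group. Fix a faithful algebraic representation $r\colon {}^LG\to GL(V)$. By Definition \ref{pure} and the criterion that a Weil--Deligne parameter is essentially tempered exactly when its monodromy weight filtration is pure up to unramified twist, a completion $(\CL^{ss}(\pi),N)$ is essentially tempered precisely when $r\circ(\CL^{ss}(\pi),N)$ is. Given the semisimple Weil-group representation $r\circ\CL^{ss}(\pi)$, the set of nilpotent $N'\in\mathfrak{gl}(V)$ satisfying $r(\CL^{ss}(\pi)(w))\,N'\,r(\CL^{ss}(\pi)(w))^{-1}=\|w\|\,N'$ for which the pair is pure of a given weight forms a single orbit under the centralizer of the image, and that weight is itself pinned down by the Frobenius eigenvalues; hence an essentially tempered completion, if one exists, is unique as a Weil--Deligne parameter. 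Moreover the monodromy operator of any homomorphism valued in ${}^LG(\Qlb)$ lies automatically in $\mathrm{Lie}(\hG)(\Qlb)$, so it will be enough to produce $N'$ in $\mathfrak{gl}(V)$.

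Next I would globalize. Because $\pi$ lies in the discrete series, I can choose --- via \cite{GLo} in the supercuspidal case, and via an analogous globalization with a Steinberg component placed at an auxiliary place in general --- a global function field $K$ with a place $v$ such that $K_v\cong F$, a connected reductive group $\bG$ over $K$ with $\bG(K_v)\cong G(F)$, and a \emph{cuspidal} automorphic representation $\Pi$ of $\bG(\ad_K)$ with $\Pi_v\cong\pi$. Set $\Sigma:=r\circ\CL^{ss}(\Pi)$, a continuous semisimple representation of $\mathrm{Gal}(K^{sep}/K)$ on $V\otimes\Qlb$ unramified outside a finite set of places. By \cite{Laf18} together with Property \ref{p11}, as recorded in Theorem \ref{paramGL}, the restriction of $\Sigma$ to a decomposition group at $v$, after Frobenius semisimplification, is $r\circ\CL^{ss}(\Pi_v)=r\circ\CL^{ss}(\pi)$. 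Grothendieck's $\ell$-adic monodromy theorem then attaches to $\Sigma$ at $v$ a Weil--Deligne pair $(\Sigma_v,N_v)$ whose Frobenius semisimplification is $r\circ\CL^{ss}(\pi)$, and $N_v$ is my candidate for (the image of) $N$.

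It then remains to prove that $(\Sigma_v,N_v)$ is pure up to an unramified twist. By Deligne's Weil II \cite{De80}, each irreducible summand of $\Sigma$ is a lisse $\Qlb$-sheaf on the open curve over which $\Pi$ is unramified that is punctually pure up to a twist by a character of $\mathrm{Gal}(\bar k/k)$. The essential point is the passage from this global purity to the local weight structure at $v$: a lisse sheaf on an open subset of a smooth curve over a finite field, pure of weight $w$, should carry at each missing point a local Weil--Deligne representation whose monodromy weight filtration is pure of weight $w$ --- the weight--monodromy property in equal characteristic. I expect \emph{this} to be the main obstacle, the more so because $\CL^{ss}(\Pi)$ is produced from the cohomology of moduli of $\bG$-shtukas only as a pseudocharacter: one must either recover $\Sigma$ from that cohomology together with its weight filtration, or --- more economically --- use Lafforgue's correspondence \cite{Laf02} to identify each irreducible summand of $\Sigma$ with the Galois representation of a cuspidal automorphic representation of a general linear group, for which weight--monodromy at $v$ is classical. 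Granting this, the Lemma above converts purity of $(\Sigma_v,N_v)$ into essential temperedness of the Weil--Deligne parameter $(\CL^{ss}(\pi),N_v)$. The discrete series hypothesis is used only to guarantee that $\Pi$ may be chosen cuspidal, so that $\Sigma$ is globally pure; for a $\pi$ that is not in the discrete series one would be forced into the residual spectrum of $\bG$, where no pure completion need exist. Apart from the weight--monodromy input and its reconciliation with the pseudocharacter-valued construction of \cite{Laf18}, everything is formal.
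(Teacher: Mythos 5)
There is a genuine gap, and it sits exactly where you waved your hands: the passage from ``each irreducible summand of $\Sigma$ is punctually pure up to a twist'' to purity of the local monodromy weight filtration at $v$. Deligne's Weil~II (supplemented, as you note, by L.~Lafforgue's global Langlands for $GL_n$) guarantees that every irreducible direct summand of $\Sigma = r\circ\CL^{ss}(\Pi)$ is punctually pure up to a character twist of $\mathrm{Gal}(\bar k/k)$; it does \emph{not} guarantee that the different summands share a weight, and it is perfectly possible for them to have distinct weights after any single unramified twist. If they do, the local Weil--Deligne representation at $v$ is mixed, its monodromy weight filtration is not pure, and the completion is not essentially tempered. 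Identifying each summand separately with a cuspidal $GL_n$-parameter, as you propose, still only gives weight--monodromy piece by piece, each piece at its own weight; it does nothing to synchronize the weights. Cuspidality of $\Pi$ alone is therefore not enough.

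The paper closes this gap by an extra global move you do not make: in the Poincar\'e-series globalization it imposes, at a second auxiliary place $y$, a matrix coefficient of a Kloosterman supercuspidal representation (from \cite{HNY}), whose local parameter is known to be \emph{irreducible} precisely when $p>3$. Irreducibility of $\CL^{ss}(\Pi)$ restricted to the decomposition group at $y$ forces $r\circ\CL^{ss}(\Pi)$ (for an appropriate $r$) to be irreducible as a global $\ell$-adic sheaf, hence pure of a single weight up to a $\mathrm{Gal}(\bar k/k)$-twist, and then Weil~II at $v$ yields the desired pure monodromy weight filtration. This is also why the hypothesis $p>3$ appears in the statement and never surfaces in your argument, which is a reliable sign something essential has been skipped. (For general discrete series, the paper replaces the Poincar\'e-series globalization with a simple-trace-formula argument due to Beuzart-Plessis, which is roughly the Steinberg-at-an-auxiliary-place device you gesture at; that part of your sketch is fine.) The uniqueness discussion and the reduction to $GL(V)$ via a faithful $r$ are acceptable, but without the irreducibility input the existence argument does not go through.
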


The proof sketched below, based on Poincar\'e series, applies when $\pi$ is supercuspidal.  An argument due to Beuzart-Plessis, using the Deligne-Kazhdan
simple trace formula, allows us to prove Theorem \ref{MWF} for general discrete series parameters.  Combining Theorems \ref{MWF} and \ref{pureram}, we then have:

\begin{cor}\label{unram}  Assume $p > 3$.  Suppose $\pi \in \Pi(G/F)$ belongs to the discrete series.  Then either the unipotent factor of $\CL(\pi)$ is non trivial or
$\CL^{ss}(\pi)$ does not factor through  $W_F/I_F$ (or both).  

In particular, suppose $\CL^{ss}(\pi)$ is pure as in Definition \ref{pure} (b).  Then $\CL^{ss}$ does not factor through $W_F/I_F$.
\end{cor}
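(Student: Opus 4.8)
The plan is to deduce both assertions from a single intermediate claim: \emph{a pure discrete series representation of $G(F)$ has ramified semisimple parameter}. Granting this, the ``in particular'' statement is immediate, since $\CL^{ss}(\pi)$ being pure in the sense of Definition~\ref{pure}(b) is exactly the statement that $\pi$ is pure in the sense of Definition~\ref{pure}(d). For the first assertion, write $\rho=\CL^{ss}(\pi)$; by Theorem~\ref{MWF} this discrete series parameter has a unique essentially tempered completion $\CL(\pi)=(\rho,N)$, and by the Lemma preceding Theorem~\ref{MWF} that pair satisfies purity of the monodromy weight filtration, up to an unramified twist. If $N=0$ the monodromy filtration is trivial, so this purity forces all Frobenius eigenvalues of $\rho$ to share a single weight; that is, $\rho$ is pure in the sense of Definition~\ref{pure}(b) and $\pi$ is pure. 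The intermediate claim then gives that $\rho$ is ramified. Hence either $N\neq 0$ or $\rho$ does not factor through $W_F/I_F$, which is the first assertion.

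It remains to prove the intermediate claim, and here one distinguishes whether $\pi$ is supercuspidal, working under the standing hypotheses of \S\ref{cGHS} that make Theorems~\ref{pureram} and~\ref{fintzenthm} applicable (in particular $G$ split, hence tame and with $G(F)$ unramified, and $p$ prime to $|W(G)|$, and $q>3$). If $\pi$ is supercuspidal, Fintzen's Theorem~\ref{fintzenthm} realizes $\pi$ as compactly induced from a compact open subgroup, so Theorem~\ref{pureram} applies and shows $\CL^{ss}(\pi)$ restricts nontrivially to $I_F$. If $\pi$ is not supercuspidal, then $\pi$ is an irreducible subquotient of $Ind_P^G(\sigma)$ for some proper Levi $M$ of $G$ and some supercuspidal $\sigma$ of $M(F)$, and by Property~\ref{p7} one has $\CL^{ss}(\pi)=i_M\circ\CL^{ss}(\sigma)$. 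Since $i_M$ is an embedding of $L$-groups, this identity forces $\CL^{ss}(\sigma)$ to be unramified whenever $\CL^{ss}(\pi)$ is, and to be pure whenever $\CL^{ss}(\pi)$ is (test purity against the restriction to ${}^LM$ of a faithful representation of ${}^LG$). So $\sigma$ is a pure supercuspidal representation of the split group $M$, to which the supercuspidal case already applies, giving that $\CL^{ss}(\sigma)$, and hence $\CL^{ss}(\pi)$, is ramified. This establishes the intermediate claim and with it the Corollary.

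The substantive inputs are the two results from \cite{GHS}: Theorem~\ref{pureram}, which supplies ramification for pure compactly-induced supercuspidals, and Theorem~\ref{MWF}, used only to pass from ``$N=0$'' to ``$\rho$ pure''; everything else is bookkeeping --- the reduction to a proper Levi through Property~\ref{p7}, the fact that purity and unramifiedness of a parameter descend along a Levi embedding, and the equivalence, for an essentially tempered Weil--Deligne parameter, between triviality of the monodromy and purity of $\rho$ as a Frobenius module. The point that most warrants care, and the most likely place where one genuinely needs the full hypotheses of \S\ref{cGHS} rather than merely $p>3$, is the appeal to Fintzen's theorem: it is what guarantees that the supercuspidal representations in play --- of $G$ in the supercuspidal case, of the proper Levi $M$ in the other --- are compactly induced and so within the reach of Theorem~\ref{pureram}.
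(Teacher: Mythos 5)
Your overall approach is the paper's (the paper's ``proof'' is just the sentence ``Combining Theorems \ref{MWF} and \ref{pureram}''), and most of the bookkeeping you supply is correct: the equivalence of the two assertions via the Lemma preceding Theorem \ref{MWF}, the observation that $N=0$ plus essential temperedness forces $\rho$ to be pure after an unramified twist, and the reduction of the intermediate claim to the supercuspidal case through Property \ref{p7}.

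There is, however, a genuine gap in your proof of the intermediate claim, precisely at the step where you say ``$\sigma$ is a pure supercuspidal representation of the split group $M$, to which the supercuspidal case already applies.'' That step is sound only when $M$ has enough semisimple part for Fintzen's theorem and Theorem \ref{pureram} to bite. When the cuspidal support of $\pi$ lies on a maximal split torus $T$ --- which is exactly the situation for the Steinberg representation and, more generally, for any discrete series representation with Iwahori-fixed vectors --- the representation $\sigma$ is just a character $\chi$ of $T(F)$. A character of a split torus is not compactly induced from a compact open subgroup, Fintzen's theorem says nothing about it, and Theorem \ref{pureram} is simply not applicable: an unramified character $\chi$ has unramified and pure parameter, so your contradiction never materializes. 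The intermediate claim is still true for such $\pi$, but for a different reason: a \emph{discrete series} constituent of $Ind_T^G\chi$ with $\chi$ unramified always has $\chi$ non-unitary (up to central twist), so $\rho = i_T\circ\CL^{ss}(\chi)$ is not bounded in $\hat{G}^{\mathrm{ad}}$ and hence not pure; equivalently, in the unique essentially tempered completion $(\rho,N)$ of Theorem \ref{MWF} for such $\pi$ one necessarily has $N\ne 0$. That assertion is a fact about the representation theory of discrete series in the unramified principal series (Casselman's square-integrability criterion, or the Kazhdan--Lusztig classification of Iwahori-spherical representations) and about its compatibility with the Genestier--Lafforgue parametrization; it is \emph{not} a formal consequence of Theorems \ref{MWF} and \ref{pureram}. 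Until you supply that input, your deduction covers the supercuspidal case and the case of cuspidal support on a Levi with nontrivial derived group, but leaves the torus case --- which is the one that makes ``discrete series'' strictly larger than ``supercuspidal'' --- unjustified. It is fair to note that the paper's own one-line ``combining'' has the same lacuna, so you are in good company; but the lacuna is real and should be pointed out rather than silently passed over.
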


\subsection{About the proofs}

Most of the results of \cite{GHS} are based on the use of Poincar\'e series, as in \cite{GLo}, to realize a supercuspidal representation $\pi$ as the local component of a cuspidal
automorphic representation of $G$ over a global function field $K = k(Y)$, for some smooth projective curve $Y$.  The construction in \cite{GLo} used a matrix coefficient of $\pi$ as the local component $\varphi_v$ at some place $v$ of a function $\varphi = \prod_w \varphi_w$ on $G(\ad_K)$, where $w$ runs over places of $K$.  The Poincar\'e series is defined as
$$P_\varphi(g) = \sum_{\gamma \in G(K)} \varphi(\gamma \cdot g).$$
When $\pi$ is compactly induced from a compact open subgroup $U$, $\varphi_v$ can be taken to be supported in $U$.  When this is the case we can take the curve $Y$ to be
the projective line $\mathbb{P}^1$ over the finite field $k$.  Choosing data carefully at the $k$-rational points of $\mathbb{P}^1$, and assuming $\varphi_w$ to be the
characteristic function of a hyperspecial maximal compact subgroup when $w$ is not $k$-rational, we are able to deduce Theorem  \ref{pureram} from the classification of tame
local systems on $\Gm$ that follows from Deligne's Weil II.

When $\pi$ is supercuspidal, Theorem \ref{MWF} is proved by a similar globalization argument, but $Y$ can be arbitrary.  We construct a Poincar\'e series with any matrix
coefficient of $\pi$ at the point $x$ and the matrix coefficient of an appropriate supercuspidal representation at a second point $y$, whose local parameter is known to be irreducible.  We  use the Kloosterman representations of \cite{HNY}, which are known when $p > 3$ to have irreducible parameters; a different argument was used in \cite{GLo} when $G$
is a classical group.  Let $\Pi$ be the globalization thus obtained, with global parameter $\CL^{ss}(\Pi)$.  The irreducibility of the local parameter at $y$ implies that the composition of 
$\CL^{ss}(\Pi)$ with an appropriate representation of the $L$-group is irreducible.  Then Deligne's theorem on the purity of the monodromy weight filtration
applies at $x$ and defines the required completion to a Weil-Deligne parameter.

\vfill
\pagebreak

\part{A strategy to construct a local correspondence}\label{Strategy}

We sketch a construction based on the Genestier-Lafforgue parametrization and the ideas of \cite{BHKT}.  As mentioned in \ref{classicalsection}, the model is Arthur's construction \cite{A}
of the  local correspondence for classical groups by determining the fibers of twisted transfer to $GL(N)$.   Arthur's method consists in comparing the stable trace
formula for a classical group $G$ with a twisted trace formula for $GL(N)$.  The method relies heavily on multiplicity one for $GL(N)$, which guarantees that
the comparison of traces is non-trivial.  The method sketched here is an induction on cyclic base change.

\section{Hypotheses}

We let $G$ be a connected reductive group.  In order to avoid unnecessary complication we assume $G$ to be split but this should ultimately
be unnecessary.  We fix a non-archimedean local field $F = k((t))$ of characteristic $p > 0$, with $q = |k|$.   In what follows, $\ell$ is a prime number distinct from $p$
that will be assumed as large as necessary.   

Let $K$ be a function field.  For any automorphic representation $\Pi$ of $G(\ad_K)$, and any prime $\ell \neq p$,
we let $\CLs(\Pi)_\ell$ denote the global $\ell$-adic semisimple parameter attached by V. Lafforgue, and more generally by C. Xue, to $\Pi$:
\begin{equation}\label{CLs} \CLs(\Pi)_\ell :  Gal(K^{sep}/K) \ra {}^LG(\Qlb).
\end{equation}
If $\pi$ is an irreducible representation of $G(F)$ we similarly denote by $\CLs(\pi)$ the semisimple parameter constructed in \cite{GLa}
$$  \CLs(\pi):  W_F \ra {}^LG(C)$$
with values in any algebraically closed field $C$.

\begin{hyp}\label{multone}  Let $\Pi$ be an everywhere unramified cuspidal automorphic representation of $G(\ad_K)$ for some function field $K$.  Suppose its global $\ell$-adic
parameter $Gal(K^{sep}/K) \rightarrow \hG(\Qlb)$ has image equal to $\hG(\CO)$ for some $\ell$-adic integer ring $\CO$.  Then $\Pi$ has
global multiplicity one.  
\end{hyp}

\begin{hyp}\label{cyclic}  Existence of stable cyclic base change of essentially tempered automorphic representations.
\end{hyp}

Assuming Hypothesis \ref{cyclic}, it makes sense to make the following hypothesis:

\begin{hyp}\label{incorr}  Conjecture \ref{noincorr_conj} is valid:  there are no pure incorrigible representations, in the sense of \S \ref{cIncorr}.
\end{hyp}

Assuming Hypothesis \ref{cyclic}, Proposition 11.4 of \cite{GHS} asserts that Hypothesis \ref{incorr} is valid when $p$ does not divide the order of the Weyl group of $G$.
In any case it has the following consequence, as in Corollary \ref{prin}:

\begin{cor}\label{unramprinc}  Suppose $\CLs(\pi)$ is pure (as Weil group parameter) and unramified.  Then $\pi$ is a constituent of an
unramified principal series representation.
\end{cor}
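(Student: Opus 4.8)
The plan is to argue exactly as for Corollary \ref{prin}, with Hypothesis \ref{incorr} taking over the role played there by Fintzen's Theorem \ref{fintzenthm}. First I would set up an induction on $\dim G$ (the assertion being understood simultaneously for all split $G$ and all local fields of positive characteristic), the torus case being Property \ref{p1}. Given $\pi$, write it as a constituent of $\mathrm{Ind}_M^G\sigma$ for some supercuspidal $\sigma$ of a Levi $M(F)$. Since $i_M\colon {}^LM\to {}^LG$ is a closed embedding compatible with a faithful representation of ${}^LG$, and since restriction of a Weil-group parameter to $W_{F'}$ for a finite extension $F'/F$ preserves both purity (the Frobenius eigenvalues are raised to a power, hence stay Weil $q$-numbers) and unramifiedness (the image of inertia is unchanged or shrinks), Property \ref{p7} gives that $\CLs(\sigma)$ is again pure and unramified. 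If $M\subsetneq G$, the inductive hypothesis makes $\sigma$ a constituent of an unramified principal series of $M(F)$; as $\sigma$ is supercuspidal this forces $M$ to be a torus and $\sigma$ an unramified character (Property \ref{p1}), so $\pi$ is a constituent of an unramified principal series of $G(F)$. It therefore remains to treat the case $\pi=\sigma$ supercuspidal, and to show that a pure supercuspidal $\pi$ of $G(F)$ with $\CLs(\pi)$ unramified can exist only when $G$ is a torus.

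So suppose $\pi$ is pure supercuspidal, $\CLs(\pi)$ is unramified, and $G$ is not a torus; I aim for a contradiction. By Hypothesis \ref{incorr} (Conjecture \ref{noincorr_conj}), $\pi$ is not incorrigible, so by Definition \ref{incorrigible} there is a tower of cyclic extensions $F=F_0\subset\dots\subset F_r$ with $BC_{F_r/F}(\Pi_\pi)$ containing no supercuspidal member, where $\Pi_\pi$ is the packet of $\pi$. Choosing the largest index $j$ for which $BC_{F_j/F}(\Pi_\pi)$ still contains a supercuspidal member $\pi'$, and replacing $(F,\pi)$ by $(F_j,\pi')$, I may assume $r=1$: there is a cyclic extension $F'/F$ of prime degree with $BC_{F'/F}(\pi)$ free of supercuspidal members, and $\pi$ is still a pure supercuspidal of the same non-torus $G$ with $\CLs(\pi)$ unramified (for the parameter one uses Theorem \ref{chtouca}, i.e.\ Conjecture \ref{compat} for the Genestier--Lafforgue parametrization). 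Now every member $\rho$ of $BC_{F'/F}(\pi)$ has $\CLs(\rho)=\CLs(\pi)\vert_{W_{F'}}$, still pure and unramified, and $\rho$ is non-supercuspidal, hence a constituent of $\mathrm{Ind}_{M'}^G\tau$ for a proper Levi $M'\subsetneq G$ and a supercuspidal $\tau$ of $M'(F')$; since $\dim M'<\dim G$, the inductive hypothesis over $F'$ applies to $\tau$ exactly as in the first paragraph and forces $\tau$ to be an unramified character. Thus every member of $BC_{F'/F}(\pi)$ is a constituent of an unramified principal series of $G(F')$, equivalently is Iwahori-spherical.

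The remaining step is to descend this information one cyclic extension, and this is the point I expect to be the main obstacle. One wants: if $\pi$ is irreducible over $F$ and every member of $BC_{F'/F}(\pi)$ is Iwahori-spherical over $F'$, then $\pi$ is itself a constituent of an unramified principal series of $G(F)$ — which, for supercuspidal $\pi$, forces $G$ to be a torus and completes the contradiction. To extract this one would use that cyclic base change is compatible with parabolic induction and with the unramified (spherical) correspondence (as in Structure \ref{BC}), together with the explicit description of base change on the Iwahori block via the norm map on the maximal split torus and the Borel--Casselman--Matsumoto classification of Iwahori-spherical representations; this is the precise analogue of the place in the $GL(n)$ theory where Henniart's splitting theorem is combined with the local correspondence already known for the non-supercuspidal constituents, and it is essentially the content of a weak form of Conjecture \ref{Iwahori2}. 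I note two situations in which the descent is unnecessary: when $p$ is prime to $|W(G)|$, Fintzen's Theorem \ref{fintzenthm} shows a pure supercuspidal of a non-torus group is compactly induced, so Theorem \ref{pureram} already forces its parameter to be ramified (this is the proof of Corollary \ref{prin}); and when $p>3$, the supercuspidal case is immediate from Corollary \ref{unram}.
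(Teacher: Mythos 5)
Your inductive scheme — reduce via Property \ref{p7} to the cuspidal support and handle the torus base case by Property \ref{p1} — is exactly the mechanism the paper has in mind when it says the corollary follows ``as in Corollary \ref{prin}.'' In Corollary \ref{prin} the supercuspidal case is killed by Fintzen's Theorem \ref{fintzenthm} together with Theorem \ref{pureram}; what Corollary \ref{unramprinc} is supposed to do is replace the $p\nmid|W(G)|$ input by Hypothesis \ref{incorr}, and that is indeed what you attempt.

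The difficulty you flag is a genuine one, and you are right to flag it. Hypothesis \ref{incorr} only says that a pure supercuspidal $\pi$ can be made non-supercuspidal after some solvable tower; combined with Theorem \ref{chtouca}, Structure \ref{BC}(iii), and the inductive hypothesis over $F'$, this yields (as you say) that some $BC_{F'/F}(\pi)$ consists entirely of constituents of unramified principal series of $G(F')$. But nothing in the stated hypotheses produces the descent you need: Structure \ref{BC} is formulated in the \emph{forward} direction (parabolic induction over $F$ is taken to parabolic induction over $F'$), and passing from Iwahori-sphericality of all of $BC_{F'/F}(\pi)$ to non-supercuspidality of $\pi$ itself is a converse that requires either a substitute for Theorem \ref{pureram}, a piece of the local intertwining theory, or something like the $L$-function argument of \S\ref{lfunction}. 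Your suggestion that this is ``a weak form of Conjecture \ref{Iwahori2}'' is not quite right either: Conjecture \ref{Iwahori2} is again a statement in the forward direction (it produces a tower after which everything is Iwahori-spherical), not a descent. The paper itself supplies no proof here beyond the pointer ``as in Corollary \ref{prin},'' and that pointer does not close the gap, since the mechanism of Corollary \ref{prin} (compact induction plus Theorem \ref{pureram}) is precisely what Hypothesis \ref{incorr} was meant to replace. Your two escape routes — $p\nmid|W(G)|$ via Fintzen and Theorem \ref{pureram}, and $p>3$ via Theorem \ref{MWF} and Corollary \ref{unram} — are correct and coincide with the cases the paper acknowledges in the remark following Step 2, where it is conceded that only then is the corollary actually known.

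So: your reduction and your use of Theorem \ref{chtouca} are sound and match the paper's intent, but the final descent step is a real gap, and it is one the paper's own one-line justification does not address. If you want to try to close it, the most promising route in the paper's own framework is probably the $L$-function mechanism of Proposition \ref{corrig} (when a theory as in Hypothesis \ref{llfcn} is available for $G$) rather than a direct representation-theoretic descent along base change.
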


\begin{hyp}\label{Levi}  Let $M$ be a proper Levi subgroup of $G$.  The local Langlands correspondence for tempered representations, in Vogan's version Conjecture \ref{Vogan}, is known for $M$.  Moreover, the parametrization of tempered representations of $M$ is compatible with cyclic base change, in the sense of Property \ref{p10}.
\end{hyp}

\begin{hyp}\label{endo1}  Let $H$ be a proper elliptic endoscopic group of $G$.  The local Langlands correspondence for tempered representations, in Vogan's version Conjecture \ref{Vogan}, is known for $H$ and is compatible with cyclic base change and endoscopic transfer to $G$, in the sense of Properties  \ref{p9} and \ref{p10}.
\end{hyp}

\begin{hyp}\label{endo2}  Stable endoscopic transfer is known globally and locally for all endoscopic groups for $G$.
\end{hyp}

The local endoscopic transfer is assumed to satisfy the character identities of Property \ref{p9}.  By global endoscopic transfer we
mean the following.  Suppose $H$ is an endoscopic group of $G$ corresponding to the homomorphism of $L$-groups
$$i_H:   {}^LH \ra {}^LG.$$
Let $\Pi$ be a cuspidal automorphic representation of $H(\ad_K)$
for $K$ as above.  
Then there is a collection of automorphic representations $i_{H,*}(\Pi)$ of $G(\ad_K)$ such that,
for any $\Psi \in i_{H,*}(\Pi)$, 
$$\CLs(\Psi)_\ell \isoarrow  i_H \circ \CLs(\Pi)_\ell,$$
where $\CLs$ is as in \eqref{CLs} and $\isoarrow$ means up to conjugation by $\hat{G}(\Qlb)$.  It is also assumed that the local components of $\CLs(\Psi)$ at a prime $v$ of $K$
belong to the $L$-packet for $G(K_v)$ obtained by local endoscopic transfer from $H$.  


\section{Steps}

Let $\varphi:  W_F \ra \hG$ be an irreducible tempered parameter.   Our aim in this sketch is to show that the fiber $\CLs^{-1}(\varphi)$ satisfies the local Langlands conjecture
in Vogan's version.  It would be enough for the purposes of this sketch to show point (d) of Conjecture \ref{Vogan}, but the strategy is based on induction,
and the sketch will make it apparent that the parametrizations of Conjecture \ref{Vogan} (b) are necessary in order to carry out the intermediate inductive steps.

By Hypotheses \ref{Levi}, \ref{endo1}, and \ref{endo2}, we can assume that the component group of the centralizer of $\varphi$ is trivial; otherwise, the parameter $\varphi$ factors through a parameter for an endoscopic group, and then the structure
of $\CLs^{-1}(\varphi)$ is determined by the Hypotheses. 

\subsection{Step 1:  Globalization}

Let $\ell >> 0$ be a large prime number, $\ell \neq p$.  We follow the arguments in \cite[\S 9]{BHKT}, to which we refer for the following constructions.
We find a global  function field $K = k(X)$ with a place $v$ such that $K_v \isoarrow F$, and a surjective homomorphism
$$\br:  Gal(K^{sep}/K) \twoheadrightarrow \hG(\Fl)$$
such that $\br~|_{I_v} \isoarrow \phi~|_{I_v}$ where $I_v$ is the inertia group; this makes sense because $\ell$ is large.   As in \cite{BHKT}, we then lift $\br$ to a surjective parameter
$$\rho:  Gal(K^{sep}/K) \twoheadrightarrow \hG(\Zl)$$
with the desired local restriction to $I_v$.  This is  sufficient for our
purposes; the extension to the full decomposition group at $v$ is not difficult. 

\begin{hyp}\label{finmono}  We may do this in such a way that, for every place $w$ of $K$, the image of the inertia group $I_w$ under $\rho$
is finite.
\end{hyp}

This has not been checked but I believe it is not a problem.    It implies that the restriction of $\rho$ to the decomposition group at any point $w$
is a pure parameter.

We now apply the potential automorphy theorem of \cite{BHKT}.   After possibly replacing $K$ by a finite Galois extension $K'/K$, .  
we may assume that $\rho' := \rho~|_{Gal(K^{\prime,sep}/K')}$ is  again surjective and is the global parameter
attached by Lafforgue's construction to at least one {\it cuspidal automorphic} representation $\Pi'$.
Since this operation loses control of the ramification at the place $v$, by replacing $K'$ if necessary by an appropriate solvable extension
we may assume that $\rho'$ is everywhere unramified.   Since $\varphi$ was assumed irreducible, it is easy to see that we may also assume that $\rho'$ is pure of
weight $0$, so that $\Pi'$ is everywhere tempered.




\subsection{Step 2:  Unramified local packets}

The surjectivity of $\rho'$ implies that the image of $\rho'$ is Zariski dense, and we have assumed $\rho'$ is everywhere unramified. 
Let $\Pi_{\rho'}$ denote the set of automorphic representations  $\Pi'$ of $G(\ad_{K'})$ such that
$\CL(\Pi')_\ell = \rho'$; i.e., such that
$\rho' = \rho_{\Pi',\ell}$.  Corollary \ref{unramprinc}, together with Hypothesis \ref{finmono}, then implies that any $\Pi'\in \Pi_{\rho'}$ is globally unramified.  
By Hypothesis \ref{multone} this then implies that $\Pi' \in \Pi_{\rho'}$ has multiplicity one.  The set $\Pi'_{\rho'}$ is thus
determined by local data.  If we assume $G$ to be adjoint, then the classification of tempered unramified representations implies that $\Pi'_\rho$ is unique.

\begin{remark}  This is the only step that uses Hypothesis \ref{incorr}, and it is being used in the form of Corollary \ref{unramprinc}.  However, the corollary has
been proved in \cite{GHS} for $p$ that is sufficiently large  relative to the group, without reference to base change.  Thus in most cases Hypothesis \ref{incorr} 
is unnecessary. 
\end{remark}

Because $\rho'$ has Zariski dense image, we know that $\Pi_{\rho'}$ is orthogonal to the images of 
proper endoscopic transfer.  Thus it is globally stable.  Moreover, it can be separated from the non-cuspidal
contributions to the spectral side of the trace formula.

The unramified local packets are classified by representations of the $R$-group.
The following hypothesis should have a name:
\begin{hyp}\label{componentR}  Identify the $R$-group with the component group of the local parameter.
\end{hyp}

In the literature, this is  stated more generally:  Assume $M$ is a (rational) Levi subgroup of $G$ and $\sigma$ is a tempered
representation of $M(F)$.  Then there is an isomorphism between the Knapp-Stein $R$-group attached to
the pair $(M,\sigma)$ and the Arthur $R$-group attached to the pair $(\CL(\sigma),\hat{M})$.   We only have access
to $\CL^{ss}(\sigma)$ but in this induction we will only be considering Weil-Deligne parameters that are trivial on the $SL(2,\CC)$-factor.

\subsection{Strategy}

Let $\G_v \subset Gal(K'/K)$ be a decomposition group
for a prime of $K'$ over $v$, and let $L \subset K'$ be the fixed field of $\G_v$.  We use the same notation $v$ to denote the prime of $L$ fixed
by $\G_v$, and we know $L_v \isoarrow F$.   Since $\rho'$ extends to $Gal(K^{sep}/K)$, it extends
a fortiori to a (surjective) homomorphism $\rho_i:  Gal(K^{sep}/L_i) \ra \hG(\Zl)$ for every intermediate field $L \subset L_i \subset K'$.
Assume 
\begin{hyp}\label{oneplace} We can choose $K'$ in such a way as to guarantee that each such $\rho_i$ is unramified except at $v$.  
\end{hyp}

It's not clear it is always possible to satisfy Hypotheses \ref{finmono} and \ref{oneplace} simultaneously.  If one has to choose, the former
takes precedence, because a modification of the following argument, in which several places $v_i$ share the same decomposition group,
should lead to the same result.  However it is simpler to admit Hypothesis \ref{oneplace} in what follows.
In each case $Gal(K'/L_i)$ is then solvable; we limit our attention to the $L_i$ that are Galois over $L$.  We index the $L_i$ so that $i$ is the
number of prime factors in $|Gal(K'/L_i)|$ and write $L = L_N$.   We prove by induction that each $\rho_{L_i}$ is automorphic, and to identify
the set of automorphic representations $\{\Pi_i\}$ of $G(\ad_{L_i})$ with $\CL(\Pi_i) \isoarrow \rho_i$.    

We specifically 
identify the local components at the (unique) prime $v_i$ of $L_i$ dividing $v$; this is $\CL^{ss,-1}(\phi_i) = \Pi_{\phi_i}$, in the obvious
notation.  Under our inductive hypothesis ($\phi$ not endoscopic, etc.) we want $\Pi_{\phi} = \Pi_{\phi_N}$ to be a singleton.

\subsection{Step 3:  First Galois descent}

The first descent belongs to the principal series; in other words, it has abelian cuspidal
support.  Here is a crucial point:  we need to show that the only representations with  parameter contained in a (maximal) torus are
principal series, and more generally that the only representations with parameter contained in 
the Levi $\hat{M}$ of $\hG$ are constituents of induced representations from $M(F)$.  This can only be by
means of character identities.  We know that the constituents of representations induced from $M(F)$ already
suffice to satisfy the character identities.   Thus the stable characters of any other representations with the same
local parameter must contribute zero.   But the existence of any such additional representation should then
be ruled out by linear independence of characters.

\subsection{Subsequent steps}

Subsequent steps should follow the pattern of Arthur's induction in \cite{A}, and in particular inevitably involve the intricacies
of the {\it local intertwining relation} that is central to Arthur's argument.   In preparation for an analysis of the general case,
Beuzart-Plessis and I are looking at the simplest possible situation, when $N = 2$.


\begin{thebibliography}{99}

\bibitem[A]{A}  J. Arthur, {\it The endoscopic classification of representations--orthogonal and symplectic groups}, {\it American Mathematical Society Colloquium Publications}, {\bf 61} (2013).

\bibitem[AC]{AC}  J. Arthur, L. Clozel, {\it Simple algebras, base change, and the advanced theory of the trace formula}, {\it Annals of Math. Studies}, {\bf 120}, Princeton:  Princeton University Press (1989).  


\bibitem[BMY]{BMY}  A. Bertoloni Meli and A Youcis, An Approach to the characterization of the local Langlands correspondence, arXiv: 2003.11484 [math.NT].

\bibitem[Br08]{Br}  E. Brenner, Stability of the local gamma factor in the unitary case, {\it J. Number Theory}, {\bf 128} (2008) 1358--1375. 

\bibitem[BFHKT]{BFHKT}  G. B\"ockle, T. Feng, M. Harris, C. Khare,  J. Thorne, Cyclic base change of cuspidal automorphic representations over function fields, manuscript (2022).


\bibitem[BHKT]{BHKT}  G. B\"ockle, M. Harris, C. Khare,  J. Thorne, $\hat{G}$-local systems on smooth projective curves are potentially automorphic, {\it Acta Math.},  {\bf 223} (2019) 1--111.

\bibitem[B79]{Borel}  A. Borel, Automorphic L-functions. In {\it Automorphic forms, representations and L-functions} (Proc. Sympos. Pure Math., Oregon State Univ., Corvallis, Ore., 1977), Part 2, {\it Proc. Sympos. Pure Math.}, {\bf XXXIII}  (1979) 27--61. 

\bibitem[BH06]{BH06}  C. Bushnell, G. Henniart, {\it The local Langlands conjecture for $GL(2)$}. {\it Grundlehren der Mathematischen Wissenschaften}, {\bf 335} Springer-Verlag, Berlin, (2006). 

\bibitem[BH10]{BH10}  C. Bushnell, G. Henniart, The essentially tame local Langlands correspondence, III: the general case. {\it Proc. Lond. Math. Soc.},
{\bf 101} (2010) 497--553.

\bibitem[BHK]{BHK}   C. J. Bushnell, G. Henniart, P. Kutzko,  Correspondance de Langlands locale pour $GL_n$ et conducteurs de paires,
{\it Ann. Sci. \'Ecole Norm. Sup. }, {\bf 31} (1998) 537--560. 

\bibitem[BK]{BK}  C. Bushnell, P. Kutzko, {\it The admissible dual of $GL(N)$ via compact open subgroups}, {\it Annals of Math. Studies}, {\bf 129} Princeton University Press, Princeton, NJ, (1993). 

\bibitem[BK99]{BK99}   C. Bushnell, P. Kutzko, Semisimple Types in $GL_n$, {\it Compositio Mathematica}, {\bf 119} (1999) 53--97.

\bibitem[BG]{BG}  K. Buzzard, T. Gee,  The conjectural connections between automorphic representations and Galois representations, in {\it Automorphic forms and Galois representations.} Vol. 1, {\it London Math. Soc. Lecture Note Ser.}, {\bf 414}, Cambridge Univ. Press, Cambridge, (2014) 135--187. 

\bibitem[CFGK]{CFGK} Y. Cai, S. Friedberg, D. Ginzburg, E. Kaplan, Doubling constructions and tensor product $L$-functions: the linear case. {\it Invent. Math.},
{\bf 217} (2019)  985--1068.


\bibitem[Ch]{Ch}  G. Chenevier, Subgroups of $Spin(7)$ or $SO(7)$ with each element conjugate to some element of $G_2$ and applications to automorphic forms. {\it Doc. Math. }. {\bf 24} (2019) 95--161. 

\bibitem[CST]{CST}  J. Cogdell,  F. Shahidi,  T.-L. Tsai, Local Langlands correspondence for $GL_n$, and the exterior and symmetric square
$\varepsilon$-factors. {\it Duke Math. J.}, {\bf 166} (2017) 2053--2132.



\bibitem[CX]{CX}  C. Xue, Finiteness of cohomology groups of stacks of shtukas as modules over Hecke algebras, and applications. {\it \'Epijournal de G\'eom\'etrie Alg\'ebrique}, {\bf 4} (2020). 

\bibitem[DHKM]{DHKM}  J.-F. Dat, D. Helm, R. Kurinczuk, G. Moss, Moduli of Langlands Parameters,  	arXiv:2009.06708 [math.NT].

\bibitem[DR09]{DR09}  S.  DeBacker, M. Reeder, Depth-zero supercuspidal $L$-packets and their stability, {\it Annals of Math.}, {\bf 169} (2009) 795--901.


\bibitem[De73]{De73}  P. Deligne, Les constantes des \'equations fonctionnelles des fonctions $L$, in {\it Modular functions of one variable, II (Proc. Internat. Summer School, Univ. Antwerp, Antwerp, 1972)},{\it  Lecture Notes in Math.}, {\bf 349}, Springer, Berlin, (1973) 501--597. 

\bibitem[De80]{De80}  P. Deligne, La conjecture de Weil. II. {\it Inst. Hautes \'Etudes Sci. Publ. Math.}, {\bf 52} (1980) 137--252.

\bibitem[DKV]{DKV} P. Deligne, D. Kazhdan, M.-F. Vign\'eras:
Repr\'{e}sentations des alg\`{e}bres centrales simples
$p$-adiques, in J.-N.Bernstein, P.Deligne, D.Kazhdan, M.-F.Vigneras
{\it Repr\'{e}sentations des groupes r\'{e}ductifs sur un corps local},
Paris:  Hermann (1984).


\bibitem[FS]{FS}  L. Fargues, P. Scholze, Geometrization of the local Langlands correspondence, arXiv:2102.13459 [math.RT].

\bibitem[Fi19]{Fi19}  J. Fintzen, Tame cuspidal representations in non-defining characteristics,  	arXiv:1905.06374 [math.RT].

\bibitem[Fi21]{Fi21} J. Fintzen,  Types for tame $p$-adic groups,  {\it Annals of Math.}
{\bf 193}  (2021) 303--346.

\bibitem[FKS]{FKS}  J. Fintzen, T. Kaletha, L. Spice, A twisted Yu construction, Harish-Chandra characters, and endoscopy, \url{https://arxiv.org/abs/2106.09120v2}.

\bibitem[GHS]{GHS}  W.-T. Gan, M. Harris, W. Sawin, Local parameters of supercuspidal representations,  	arXiv:2109.07737 [math.RT]

\bibitem[GLo]{GLo} W.-T. Gan, L. Lomel\'i,  Globalization of supercuspidal representations over function fields and applications, {\it J. Eur. Math. Soc.}, {\bf 20} (2018) 2813--2858.

\bibitem[G]{G}  R. Ganapathy, The local Langlands correspondence for $GSp_4$ over local function fields, {\it Am. J. Mathematics}, {\bf 137} (2015) 1441--1534.

\bibitem[GV]{GV}  R. Ganapathy, S. Varma, On the local Langlands correspondence for split classical groups over local function fields. {\it J. Math. Inst. Jussieu}, {\bf 16} (2017) 987--1074.

\bibitem[GLa]{GLa} A. Genestier, V. Lafforgue, Chtoucas restreints pour les groupes r\'eductifs et param\'etrisation de Langlands locale,  	arXiv:1709.00978 [math.AG].
\newblock 


\bibitem[GRS]{GRS}  D. Ginzburg, S. Rallis,  D. Soudry, {\it The descent map from automorphic representations of $GL(n)$ to classical groups}, Singapore:  World Scientific (2011).

\bibitem[GH]{GH}  D. Ginzburg, J. Hundley, A doubling integral for $G_2$, {\it Israel J. Math.}, {\bf 207} (2015) 835--879. 

\bibitem[GJ72]{GJ72}  R. Godement, H. Jacquet, Zeta functions of simple algebras, {\it Lecture Notes in Math.}, {\bf 260} (1972).

\bibitem[Gr]{Gr}  R. L. Griess, Basic conjugacy theorems for $G_2$.  {\it Invent. Math.}, {\bf 121} (1995) 257--277.

\bibitem[GS15]{GS1}  N. Gurevich, A. Segal, The Rankin-Selberg integral with a non-unique model for the standard $L$-function of $G_2$, {\it J. Inst. Math. Jussieu}, {\bf 14} (2015)  149--184.

\bibitem[GS16]{GS2}  N. Gurevich, A. Segal, Poles of the standard $L$-function of $G_2$ and the Rallis-Schiffmann lift, Preprint (2016), available at https://arxiv.org/abs/1606.09069.



\bibitem[H97]{H97}  M. Harris, Supercuspidal representations in the cohomology of Drinfel'd upper half spaces; elaboration of Carayol's program. {\it Invent. Math.}, {\bf 129} (1997) 75--119.

\bibitem[H98]{H98}  M. Harris, The local Langlands conjecture for $GL(n)$ over a $p$-adic field, $n < p$. {\it Invent. Math. }, {\bf 134} (1998)  177--210.

\bibitem[HKS]{HKS}  M. Harris, S. Kudla, W. J. Sweet, Theta dichotomy for unitary groups, {\it J. Amer. Math. Soc.}, {\bf 9} (1996) 941--1004.

\bibitem[HKT]{HKT}  M. Harris, C. Khare, J. Thorne, A local Langlands parameterization for generic supercuspidal representations of $p$-adic $G_2$, 
{\it Ann. Sci. ENS}, to appear.  

\bibitem[HT01]{HT01}  M. Harris, R. Taylor, {\it The geometry and cohomology of some simple Shimura varieties.} With an appendix by Vladimir G. Berkovich. {\it Annals of Mathematics Studies}, {\bf 151} Princeton University Press, Princeton, NJ (2001).

\bibitem[HNY]{HNY} J. Heinloth, B.-C. Ng\^{o}, Z. Yun. Kloosterman sheaves for reductive groups, 
\it{Ann. of Math. } \textbf{177} (2013) 241--310.

\bibitem[He88]{He88} G. Henniart, La conjecture de Langlands locale num\'erique pour GL(n). {\it Ann. Sci. \'Ecole Norm. Sup.} {\bf 21} (1988)  497--544.

\bibitem[He90]{He90} G. Henniart, Une cons\'equence de la th\'eorie du changement de base pour GL(n). {\it Analytic number theory} (Tokyo, 1988), {\it Lecture Notes in Math.}, {\bf 1434}, Springer, Berlin, (1990) 138--142. 

\bibitem[He93]{He93}  G. Henniart, Caract\'erisation de la correspondance de Langlands locale par les facteurs $\epsilon$ de paires,
{\it Invent. Math. }, {\bf 113} (1993) 339--350.

\bibitem[He00]{He00}  G. Henniart, Une preuve simple des conjectures de Langlands pour GL(n) sur un corps p-adique. {\it Invent. Math. } {\bf 139} (2000) 439--455. 

\bibitem[He06]{He06}  G. Henniart, 
On the local Langlands and Jacquet-Langlands correspondences. {\it International Congress of Mathematicians. Vol. II} Eur. Math. Soc., Z\"urich, (2006) 1171--1182.

\bibitem[He21]{He21}  G. Henniart, Correspondance de Langlands et facteurs $\varepsilon$ des carr\'es ext\'erieur et sym\'etrique, manuscript (2021).  

\bibitem[HeLe]{HeLe} G. Henniart, B. Lemaire, Changement de base et induction automorphe pour $GL_n$ en caract\'eristique non nulle,
{\it M\'em. Soc. Math. Fr.}, {\bf 124} (2011). 

\bibitem[HeLo13]{HeLo13} G. Henniart, L. Lomel\'i, \emph{Uniqueness of Rankin-Selberg factors}, {\it J. Number Theory} {\bf 133} (2013) 4024--4035.

\bibitem[HeLo21]{HeLo21}  G. Henniart, L. Lomel\'i,  Asai cube $L$-functions and the local Langlands correspondence. {\it J. Number Theory} {\bf 221} (2021) 247--269.

\bibitem[HII08]{HII08}  K. Hiraga, A. Ichino, T. Ikeda, Formal degrees and adjoint $\gamma$-factors.  {\it J. Amer. Math. Soc.}, {\bf 21} (2008)  283--304.

\bibitem[HL]{HL}  J. Hundley, B. Liu, On automorphic descent from $GL_7$ to $G_2$, {\it J. Eur. Math. Soc.}, in press.

\bibitem[JPSS]{JPSS}  H. Jacquet, I. I. Piatetski-Shapiro,  J. Shalika:  Rankin-Selberg
convolutions, {\it Am. J. Math.}, {\bf 105}, 367-483 (1983).

\bibitem[JS]{JS}  H. Jacquet, J. Shalika, A lemma on highly ramified $\epsilon$-factors, {\it Math. Ann.}, {\bf 271}
(1985) 319--332.

\bibitem[JSo]{JSo}  D. Jiang, D. Soudry, The local converse theorem for $SO(2n+1)$ and applications, {\it Ann. of Math.}, 
{\bf 157} (2003) 743--806.

\bibitem[JSo2]{JSo2}  D. Jiang, D. Soudry, Appendix: On the local descent from $GL(n)$ to classical groups, {\it Amer. J. Math.},
{\bf 134} (2012) 767--772. 


\bibitem[Ka16]{Ka16}  T. Kaletha,  Rigid inner forms of real and $p$-adic groups, {\it Ann. of Math.}, {\bf 184} (2016) 559--632. 

\bibitem[Ka19]{Ka19}  T. Kaletha, Regular supercuspidal representations,  {\it J.Amer. Math. Soc.}, {\bf 32} (2019), 1071--1170. 

\bibitem[Ka20]{Ka20}  T. Kaletha, Supercuspidal $L$-packets, arXiv:1912.03274 [math.RT].

\bibitem[Ka22]{Ka22}  T. Kaletha, Representations of reductive groups over local fields, to appear in Proceedings of the ICM, 2022.

\bibitem[KMSW]{KMSW}  T. Kaletha,  A. M\'inguez, S. W. Shin, P.-J. White, Endoscopic Classification of Representations: Inner Forms of Unitary Groups,  arXiv:1409.3731 [math.NT]

\bibitem[KV]{KV}  D. Kazhdan, Y. Varshavsky, Endoscopic decomposition of certain depth zero representations, {\it Studies in Lie theory}, {\it Progr. Math.}, {\bf 243}, Birkh\"auser Boston, Boston, MA, (2006) 223--301.

\bibitem[Kim]{Kim}  J.-L Kim, Supercuspidal representations: an exhaustion theorem. {\it J. Amer. Math. Soc.}, {\bf 20} (2007)  273--320.

\bibitem[Ko02]{Ko02}  T. Konno, Twisted endoscopy and the generic packet conjecture, {\it Israel J. Math. }, {\bf 129} (2002) 253--289.

\bibitem[KS]{KS}  R. Kottwitz, D. Shelstad, Foundations of twisted endoscopy, {\it Ast\'erisque}, {\bf 255} (1999).

\bibitem[Ku]{Ku}  P. Kutzko, The Langlands conjecture for $Gl_2$ of a local field. {\it Ann. of Math.}, {\bf 112} (1980) 381--412.


\bibitem[Lab99]{Lab} J.-P. Labesse, Cohomologie, stabilisation, et changement de base, {\it Ast\'erisque}, {\bf 257} (1999).

\bibitem[LL]{LL}  J.-P. Labesse, B. Lemaire,  La formule des traces tordue pour les corps de fonctions, \url{https://arxiv.org/pdf/2102.02517v1.pdf}.

\bibitem[Laf02]{Laf02} L. Lafforgue,
Chtoucas de {D}rinfeld et correspondance de Langlands, {\it Invent. Math.}, {\bf 147} (2002) 1--241.


\bibitem[Laf18]{Laf18}  V. Lafforgue, Chtoucas pour les groupes r{\'e}ductifs et param{\'e}trisation de
 Langlands globale, {\it JAMS}, {\bf  31} (2018) 719--891.
  
 \bibitem[L97]{L97}  R. P. Langlands, Jr., Representations of abelian algebraic groups.  Olga Taussky-Todd, in memoriam,
{\it Pacific J. Math. } {\bf Special issue} (1997) 231--250. 
  
  \bibitem[LR05]{LR}  E. Lapid, S. Rallis, On the local factors of representations of classical groups, in J. W. Cogdell et al., eds, {\it Automorphic Representations, L-functions and Applications: Progress and Prospects}, Berlin:  de Gruyter (2005) 309--360.
  
 
 \bibitem[LRS]{LRS93}  G. Laumon, M. Rapoport,  U. Stuhler, D-elliptic sheaves and the Langlands correspondence, {\it Invent. Math.},  {\bf 113} (1993) 217--338.
 
 \bibitem[LH21]{LH21}  D. Li-Huerta, The local Langlands correspondence for $GL_n$ over function fields, \url{https://arxiv.org/abs/2106.05381}.
 
 \bibitem[Lo16]{Lo16}   L. Lomel\'i, On automorphic L-functions in positive characteristic, {\it Annales de l'Institut Fourier}, {\bf 66} (2016) 1733--1771. 
 
\bibitem[Lo19]{Lo19}  L. Lomel\'i,  Rationality and holomorphy of Langlands-Shahidi $L$-functions over function fields, {\it Math. Z.} {\bf 291} (2019) 711--739.

\bibitem[MY]{MY}  A. Mayeux, Y. Yamamoto, Comparing Bushnell-Kutzko and S\'echerre's constructions of types for
 $\mathrm{GL}_{N}$ and its inner forms with Yu's construction, arXiv:2112.12367 [math.RT]
 
 \bibitem[Mok]{Mok}, C.-P. Mok, Endoscopic classification of representations of quasi-split unitary groups, {\it Mem. Amer. Math. Soc.}, {\bf 235} (2015), no. 1108.


\bibitem[PSR87]{psr}  S. Rallis, I. I. Piatetski-Shapiro, {\it $L$-functions for the classical groups}, in {\it Explicit constructions of automorphic $L$-functions}, {\it Lecture Notes in Math.}, {\bf 1254} Springer-Verlag, Berlin, (1987) 1--52. 

\bibitem[RS05]{RS}  S.  Rallis,  D.  Soudry,  Stability  of  the  local  gamma  factor  arising  from  the  doubling  method,  {\it Math.
Ann.}, {\bf 333} (2005) 291--313. 

\bibitem[OT]{OT}  M. Oi, K. Tokimoto, Local Langlands Correspondence for Regular Supercuspidal Representations of $GL(n)$, {\it Int. Math. Res. Notices}, 
{\bf 2021}, (February 2021) 2007--2073.

\bibitem[R]{R} J. Rogawski:  Representations of $GL(n)$ and division algebras
over a $p$-adic field, {\it Duke Math. J.}, {\bf 50} (1983) 161--169.

\bibitem[Sch13]{Sch13} P. Scholze, The local Langlands correspondence for $GL_n$ over $p$-adic fields. {\it Invent. Math.}, {\bf 192} (2013) 663--715.

\bibitem[SS]{SS}  V. S\'echerre, S. Stevens, Smooth representations of $GL_m(D)$ VI: semisimple types. {\it Int. Math. Res. Not.},
{\bf IMRN 2012}  (2012) 2994--3039. 

\bibitem[Se]{Se}  A. Segal, A family of new-way integrals for the standard $L$-function of cuspidal representations of the exceptional group of type $G_2$,
{\it Int. Math. Res. Not.}, {\bf IMRN 2017} 2014--2099. 

\bibitem[Sh83]{Sh83} F. Shahidi:  Local coefficients and normalization of intertwining
operators for $GL(n)$, {\it Comp. Math.}, {\bf 48} (1983) 271--295.

\bibitem[Tam16]{Tam16}  K.-F. Tam, Admissible embeddings of $L$-tori and the essentially tame local Langlands correspondence, {\it Int. Math. Res. Not.},
{\bf IMRN 2016}  1695--1775. 


\bibitem[T79]{T79}  J. Tate, Number theoretic background, in {\it  Automorphic forms, representations and L-functions,  Part 2}, {\it Proc. Sympos. Pure Math.}, {\bf XXXIII}, Amer. Math. Soc., Providence, R.I., (1979) ,  3--26.

\bibitem[Tu78]{Tu78}  J. Tunnell, On the local Langlands conjecture for $GL(2)$, {\it Invent. Math.}, {\bf 46} (1978) 179--200. 

\bibitem[V93]{V93}  D. Vogan, The local Langlands conjecture, {\it Contemporary Math.}, {\bf 145}  (1993)  305--379.

\bibitem[Ya14]{Ya14}  S. Yamana, $L$-functions and theta correspondence
for classical groups, {\it Invent math}, {\bf 196} (2014) 651--732.

\bibitem[Yu01]{Yu01}  J.-K. Yu, Construction  of  tame  supercuspidal  representations,   {\it J.Amer. Math. Soc.}, {\bf 14} (2001) 579--622.

\bibitem[Yu09]{Yu09}  J.-K. Yu, Bruhat-Tits theory and buildings, {\it Ottawa Lectures on Admissible Representations of Reductive
p-Adic Groups}, {\it Fields Inst. Monogr.}, {\bf 26}, Amer. Math. Soc., Providence, RI (2009) 53--77.

\bibitem[Z80]{Z80}  A. V. Zelevinsky, Induced representations of reductive $p$-adic groups II. On irreducible representations of $GL(n)$. 
{\it Ann. Sci. Ec. Norm. Sup\'er.}, {\bf 13} (1980) 165--210.

\bibitem[Zhu]{Zhu} X. Zhu,  Coherent sheaves on the stack of Langlands parameters,  https://arXiv:2008.02998v2 [math.AG].

\end{thebibliography}
\end{document}